\theoremstyle{plain}
\newtheorem{theorem}{Theorem}[section]
\newtheorem*{conjecture*}{Conjecture}
\newtheorem{prop}[theorem]{Proposition}
\newtheorem{lemma}[theorem]{Lemma}
\newtheorem{coro}[theorem]{Corollary}
\theoremstyle{definition}
\newtheorem{remark}[theorem]{Remark}
\def\AA{{\mathbb{A}}}
\def\CC{{\mathbb{C}}}
\def\GG{{\mathbb{G}}}
\def\FF{{\mathbb{F}}}
\def\PP{{\mathbb{P}}}
\def\QQ{{\mathbb{Q}}}\def\ZZ{{\mathbb{Z}}}
\def\cD{{\mathcal{D}}}
\def\cO{{\mathcal{O}}}
\def\cE{{\mathcal{E}}}
\def\cI{{\mathcal{I}}}
\def\cN{{\mathcal{N}}}
\def\cR{{\mathcal{R}}}
\def\cS{{\mathcal{S}}}\def\cU{{\mathcal{U}}}
\def\cC{{\mathcal{C}}}\def\cQ{{\mathcal{Q}}}
\def\cW{{\mathcal{W}}}
\def\ra{{\rightarrow}}
\def\lra{{\longrightarrow}}
\def\ft{{\mathfrak t}}\def\fs{{\mathfrak s}}
\def\fc{{\mathfrak c}}\def\fe{{\mathfrak e}}
\def\fg{{\mathfrak g}}\def\fh{{\mathfrak h}}
\def\fso{\mathfrak{so}}\def\fz{\mathfrak{z}}\def\fl{\mathfrak{l}}
\def\fsl{\mathfrak{sl}}
\def\fgl{\mathfrak{gl}}
\def\CR4{{\mathrm{CR}_4}}
\def\S3{{\mathrm{S}_3}}
\DeclareMathOperator{\Spin}{Spin}
\def\lau#1{\textcolor{blue}{{ Lau:} #1 { }}}
\keywords{}
\subjclass[2020]{}
\title{On linear sections of the spinor tenfold II}
\author[Y. Liu]{Yingqi Liu}
\address{AMSS, Chinese Academy of Sciences, 55 ZhongGuanCun East Road, Beijing, 100190, China and
University of Chinese Academy of Sciences, Beijing, China}
\email{liuyingqi@amss.ac.cn}
\author[L. Manivel]{Laurent Manivel}
\address{Toulouse Mathematics Institute, CNRS/Toulouse University,
118 route de Narbonne, 
F-31062 Toulouse Cedex~9, France}
\email{manivel@math.cnrs.fr}
\date{}
\begin{document}

\medskip 

\begin{abstract}
Following previous work by A. Kuznetsov, we study the Fano manifolds obtained as linear sections of the spinor tenfold in $\PP^{15}$. Up to codimension three there are finitely many such sections, up to projective equivalence. In codimension four there are three moduli, and this family is particularly interesting because of its relationship with Kummer surfaces on the one hand, and a grading of the exceptional Lie algebra $\fe_8$ on the other hand. 
We show how the two approaches are intertwined, and we prove that codimension four sections of the spinor tenfolds and Kummer surfaces have the very same GIT moduli space. The Lie theoretic viewpoint provides a wealth of additional information. In particular we locate and study the unique section admitting an action of $SL_2\times SL_2$; similarly to the Mukai-Umemura variety in the family of prime Fano threefold of genus $12$, it is a compactification of a 
quotient by a finite group. 
\end{abstract}
\maketitle

\bigskip

\section{Introduction}

This article is a sequel to the paper {\it On linear sections of the spinor tenfold I}, by A. Kuznetsov \cite{kuz_spin}. The title suggested that a second part would soon follow, but it didn't. 
Motivated by our recent series of works around geometric aspects of $\theta$-representations (see e.g. \cite{coble-quadric}), we 
decided to write this sequel, but with a stronger Lie-theoretic 
flavour, for reasons we will soon explain. 

\smallskip
The Lie group $Spin_{10}$ has two  half-spin representations $\Delta_+$ and $\Delta_-$ of dimension $16$, dual one to each other. 
The spinor tenfold is the closed orbit  inside the projectivisation $\PP(\Delta_+)$; 
it is a Fano variety of index $8$, with very nice geometric properties, similar to those of the Grassmannian $G(2,5)$. By results of Mukai \cite{mukai-curves}, any prime Fano threefold, any general polarized K3 surface, any general curve of genus $7$ can be obtained as a linear section of the spinor tenfold. 

\smallskip
As noticed by Kuznetsov (see also \cite{baifuman}), linear sections of small codimension also have nice properties. For example, up to codimension three they are only finitely many, up to projective equivalence: only $1$ in codimension one, $2$ in codimension two, $4$
in codimension three. In codimension two there is a general type 
and a special type of smooth sections, and it was one of the main observations in \cite{kuz_spin} that the special ones are parametrized by a quadratic section $\cR$ of the Grassmannian $G(2,\Delta_-)$, that he coined the  {\it spinor quadratic complex of lines}. We observe that any codimension three section defines a projective plane in $G(2,\Delta_-)$, on which the spinor complex cuts a conic; this conic degenerates in codimension one and this defines a quartic hypersurface $\cR_3$ in $G(3,\Delta_-)$, that we are legitimate to call the {\it spinor quadratic complex of planes}. Its most remarkable property is highlighted when we go to the next case: any codimension four linear section of  the spinor tenfold defines a $\PP^3$ in $G(3,\Delta_-)$, and its general intersection with $\cR_3$, by Proposition \ref{r3}, is a Kummer surface! This is not so surprising when one remembers F. Klein's construction of Kummer surfaces from quadratic complexes of lines, but we will see that the connection is remarkably tight. 

\smallskip
As a matter of fact, codimension four sections have three moduli, and 
their geometry is much richer than for sections of bigger dimensions. In order to understand their family, a marvelous tool will enter the game: 
Vinberg's theory of graded Lie algebras. Indeed, it turns out that the exceptional Lie algebra $\fe_8$ admits a cyclic grading with one component isomorphic to $\CC^4\otimes\Delta_-$ (which is clearly closely related to $G(4,\Delta_-)$). This has extremely strong consequences, starting with the possibility to get a Jordan type classification of orbits.
The effective classification was carried out by W. de Graaf \cite{degraaf}; the fact that the nilpotent cone contains no less than $145$ orbits is a clear indication of its complexity. As a by-product, one gets a classification of orbits in $G(4,\Delta_-)$, hence of codimension four sections of the spinor tenfold, from which we extracted a classification of smooth sections. The fact that some of them are defined by nilpotent elements of  $\CC^4\otimes\Delta_-$, or more generally, elements which are not semisimple, indicates that the moduli stack of sections is very far from being separated, something that should not come as a surprise. 

\smallskip
The situation is much more pleasant when one restricts to semisimple elements, which are parametrized by the projectivization of a so-called Cartan subspace $\fc\subset \CC^4\otimes\Delta_-$, of dimension four in this case. Cartan subspaces are all conjugate and we provide one explicitely. It admits an action of a complex reflexion group $W_\fc$, of order $46080$, and the quotient of $\PP(\fc)$ by $W_\fc$ is a three-dimensional weighted projective space that provides a reasonable GIT moduli space for codimension four sections of the spinor tenfold. The $60$ reflection hyperplanes in $\fc$ and their intersections define special sections; the singular ones are parametrized by $30$ lines (Proposition \ref{30lines}). Moreover,
outside the union of these lines we locate two orbits of special points 
corresponding to special sections with automorphism groups locally isomorphic to $GL_2$ and $SL_2\times SL_2$, respectively. This corresponds to two exceptional, isolated members in the family, for which we provide explicit models. In particular, by Proposition \ref{openorbit} the second one is a compactification $SL_2\times SL_2/\mu_{10}$, and appears very similar to the famous Mukai-Umemura prime Fano threefold of 
genus $12$, with its quasi-homogeneous $SL_2$-action and isolated in the family. 

\smallskip
The relationship with Kummer surfaces can be observed at the classical level: there exist 
striking connexions between the complex reflection group $W_\fc$ (which is  the group denoted $G_{31}$ in the Shephard-Todd classification \cite{st}), the Weyl group $W(D_5)$ of $Spin_{10}$, the automorphism group $W_{Kum}$ of the Kummer configuration of singular points on the surface, and another group $N$, of the same order, that plays a prominent role in the classical study of Kummers. Indeed, one classically starts with a generic point in $\PP^3$,    and generates sixteen points by applying a certain abelian group $F_0$; the group $N$ is the normalizer of this $F_0$ in $PGL_3$ and governs the moduli space of Kummer configurations in $\PP^3$ \cite{gd}.  We show that $N$ is nothing else than the quotient of $W_\fc$ by its center (Proposition \ref{F0}). We  also observe that the $16$ weights of the half-spin representations $\Delta_+$ and $\Delta_-$ allow to recover the $16_6$ Kummer configuration rather easily. This implies that $W(D_5)$ embeds, as a subgroup of index $6$, inside the automorphism group $W_{Kum}$; in fact this construction can be upgraded to $D_6$ and it appears that $W_{Kum}\simeq W(D_6)/\{\pm 1\}$ (Proposition \ref{wd6}).

\smallskip
These group-theoretical considerations are confirmed by natural geometric constructions, and we find it very remarkable that a substantial part of the classical theory follows. 
From a point of the Cartan subspace $\fc$, we can first construct a quadratic section of $G(2,4)$. In fact we get get a four dimensional linear system $\PP(U_5)$ of such sections, parametrized by quartics on $\fc$. Proposition \ref{CR4} yields:

\smallskip\noindent {\bf Proposition.}
{\it The resulting map $\Theta: \PP(\fc)\lra \PP(U_5)$ is a finite morphism of degree $16$, whose image is the Castelnuovo-Richmond, or Igusa quartic $\CR4$.}

\smallskip Moreover, quartics in $U_5$ are spanned by products of four equations of reflection hyperplanes; there are $15$ such products, splitting the $60$ reflection hyperplanes into $15$ tetrahedra (Proposition \ref{quartets}). Of course degree $16$ is not a coincidence: a general fiber
is a set of singular points of a Kummer surface. 

\smallskip Applying Klein's construction, we can deduce from our quadratic complex a Kummer surface in $\PP^3$. Rather astonishingly, we again get a 
four dimensional linear system  of such quartics: they are given 
in Hudson's canonical form! The coefficients are degree twelve polynomials
on $\fc$, forming a linear system $\PP (V_5)$. 

\medskip\noindent {\bf Proposition.}
{\it The resulting rational map $K: \PP(\fc)\dashrightarrow \PP(V_5)$ is  generically finite of degree $256$; its image is the Segre cubic $\S3$, projectively dual to  the Castelnuovo-Richmond quartic.}

\medskip Here $V_5$ is spanned by certain products
of twelve linear equations of reflection hyperplanes. There are  again $15$ such products, from which we recover a version of the classical combinatorics of "synthemes" and "totals" that play a prominent role in the classical theory \cite{hudson, Dolg_classical}. A "total" in this setting is a collection of five degree $12$ products that cover the whole $60$ hyperplanes.  
The complex reflexion group $W_\fc$ acts transitively on the six "totals" and the stabilizer of any given one is a central extension by $\FF_2$ of the spin cover of $W(D_5)$. The exceptional isomorphism $Spin_5\simeq SL_4$ shows that $W(D_5)$ admits a projective representation of dimension four, which is nothing else than the Cartan subspace $\fc$; this closes the circle of identifications. The conclusion is:

\medskip \noindent {\bf Theorem.} {\it The GIT moduli space of codimension four sections of the spinor tenfold is} 
$$G(4,\Delta_-)/\hspace{-1mm}/Spin_{10}\simeq \PP(\fc)/W_\fc \simeq w\PP(2,3,5,6)$$
{\it and is isomorphic with the moduli space of Kummer surfaces.} 

\medskip 
The paper starts with a preliminary section reminding some useful facts about the geometry of the spinor tenfold. Sections 3 and 4 are devoted to linear sections of codimension two and three, which were already studied in \cite{kuz_spin}; we use a few results from Lie theory to get one step further. In particular we describe explicit models for several types of sections, deduce precise descriptions of some of their Hilbert schemes and related birational models; studying such Hilbert schemes 
and birational maps was an important perspective in \cite{kuz_spin}.  Sections 5 goes in a different direction, as Jordan-Vinberg theory takes action, and makes direct connexions between codimension four sections of the spinor tenfold and Kummer surfaces, that we describe in some detail. The final section 6 enlarges the picture by describing the 
full classification and spotting special sections with surprisingly 
big automorphism groups. 


\smallskip 
We included three Appendices. The first one contains a description of the cohomology ring of the smooth codimension four sections of the spinor tenfold, including its quantum version.  The second one explains some aspects of Jordan-Vinberg theory and how orbits can be organized according to their stabilizer (which in our setting is exactly the automorphism group). The third one includes some combinatorial data for the reflection hyperplanes of the complex reflection group $W_\fc=G_{31}$; in the end, this recovers the classical arrangement of the Klein configuration.  

\medskip\noindent  {\it Acknowledgements.} We thank Alessandra Sarti, C\'edric  Bonnaf\'e, Igor Dolgachev, Xavier Roulleau for useful exchanges, and Sasha Kuznetsov for allowing us to use the title of this paper. Special thanks to Willem de Graaf for his interest in this project, for sharing the results of \cite{degraaf}, and for answering 
tirelessly our countless questions. 

L. Manivel is supported by the ANR project
FanoHK, grant ANR-20-CE40-0023.

\vfill
\pagebreak 

\section{The spinor tenfold}
In this section we gather some useful facts about the spinor tenfold and its linear sections, that will be used in the sequel. Most of them can be found in \cite{kuz_spin} and \cite{doublespinor}. 

The complex spin group $Spin_{10}$ has two half-spin representations $\Delta_+$ and $\Delta_-$, of dimension $16$, dual one of the other
\cite{chevalley}. Following \cite{kuz_spin}, 
we will denote by $X$ the closed $Spin_{10}$-orbit in $\PP(\Delta_+)$, and call it 
the spinor tenfold. This is indeed a ten dimensional projective manifold that can be equivariantly identified with the orthogonal Grassmannian $OG(5,10)_+$ (one component of the variety of maximal isotropic subspaces of the natural representation $V_{10}$); but the Pl\"ucker embedding is actually the square of the spinor embedding. 

A nonzero vector $\delta\in\Delta_+$ belonging to the cone over $X$ is called a {\it pure spinor}. The associated maximal isotropic space $U_\delta$ can be reconstructed pretty much in the same way as a subspace can be recovered from a point in a Grassmannian. Instead of exterior product one just has to use the Clifford action, which can be seen as an equivariant map from $V_{10} \otimes\Delta_+$ to $\Delta_-$. Then \cite{chevalley}
\begin{equation}\label{subspace}
U_\delta=\{v\in V_{10}, \;\; v.\delta=0\}.
\end{equation}

The spinor tenfold shares some important properties with the Grassmannian $G(2,5)$ in its Pl\"ucker embedding: for example, its complement is a unique orbit, and it is isomorphic with its projective dual variety. To be precise, its projective dual $X^\vee$ is the closed $Spin_{10}$-orbit in the dual projective space $\PP(\Delta_+^\vee)=\PP(\Delta_-)$; it is 
projectively isomorphic to $X$, but not canonically; it is naturally isomorphic with $OG(5,10)_-$, the other family of maximal isotropic subspaces of $V_{10}$. 
The conormal variety $\cI$ with its two projections
$$\xymatrix{ & \cI\ar[rd]^{\pi_+}\ar[ld]_{\pi_-} &\\ 
X & & X^\vee}$$
identifies with the orthogonal Grassmannian $OG(4,10)\simeq Spin_{10}/P_{4,5}$, 
which is homogeneous with
Picard rank two  (recall that a four-dimensional isotropic 
subspace is contained in exactly two maximal isotropic subspaces, one from each family); moreover the two projections are projective bundles. 

Linear spaces in $X$ can be read from the Dynkin diagram $D_5$. 
The rule is very simple: one extremal vertex  defines $X$, 
and one obtains a family of $\PP^k$'s in $X$ by isolating a 
subdiagram of type $A_k$, with the same extremal node. Such a family is parametrized by a flag variety of $Spin_{10}$, defined by the vertices at the outer boundary of the subdiagram. Morever, all the $\PP^k$'s in $X$ belong to one of these families. There are  
five possibilities, as follows: 
$$\dynkin[edge length=6mm] D{oot*o}\qquad\dynkin[edge length=6mm] D{oto*t} \qquad \dynkin[edge length=6mm] D{oto*o} \qquad \dynkin[edge length=6mm] D{too*t} \qquad \dynkin[edge length=6mm] D{ooo*t}$$
Applying \cite[Theorem 4.9]{LM-linear}, we deduce:

\begin{prop}\label{linear-spaces}

(i) Lines in $X$ are parametrized by $OG(3,10)$, planes by $OF(2,5^\vee,10)$.\\
(ii) There are two types of $\PP^3$'s in $X$: maximal ones parametrized by $OG(2,10)$, extendable ones parametrized by 
$OF(1,5^\vee,10)$.\\
(iii) $\PP^4$'s in $X$ are parametrized by $X^\vee$ and are all maximal.
\end{prop}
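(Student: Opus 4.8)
The plan is to derive all three items from a single input: the classification of linear subspaces of a rational homogeneous variety in its minimal embedding, applied here to $X=OG(5,10)_+\subset\PP(\Delta_+)$. Write $X=Spin_{10}/P_\omega$ with $\omega$ one of the two spinor nodes of $D_5$, and fix the Bourbaki labelling in which $1-2-3$ is the tail and $4,5$ are the fork, so $\omega=4$. By \cite[Theorem 4.9]{LM-linear}, the $\PP^k$'s contained in $X$ split into finitely many $Spin_{10}$-orbits, one for each connected sub-diagram of type $A_k$ of $D_5$ having $\omega$ as an extremal node; the $\PP^k$'s in such a family are parametrised by $Spin_{10}/P_S$, where $S$ is the set of nodes of $D_5$ lying outside the sub-diagram but adjacent to it; and the theorem also records the inclusions among the families, hence which ones are maximal. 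The first step is the (finite) enumeration of these sub-diagrams: they are exactly the five drawn above ($\omega$ carrying the distinguished mark, the nodes of $S$ the other marks), namely $\{4\}$ with $S=\{3\}$; $\{3,4\}$ with $S=\{2,5\}$; the two $A_3$ sub-diagrams $\{3,4,5\}$ with $S=\{2\}$ and $\{2,3,4\}$ with $S=\{1,5\}$; and $\{1,2,3,4\}$ with $S=\{5\}$. Since $D_5$ contains no sub-diagram of type $A_5$, $X$ contains no $\PP^5$.

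The second step is to translate the five flag varieties $Spin_{10}/P_S$ into orthogonal Grassmannian notation, via the standard dictionary in which nodes $1,2,3$ correspond to isotropic subspaces of dimension $1,2,3$ and the two fork nodes $4,5$ to the two families of maximal isotropic subspaces of $V_{10}$, node $5$ being the family \emph{not} used to define $X$ --- which is precisely what the superscript $\vee$ encodes. This yields: lines $\leftrightarrow Spin_{10}/P_3=OG(3,10)$ --- a line being the pencil of maximal isotropics of the $+$ family through a fixed isotropic $3$-plane $W$, i.e.\ the pure spinors $\delta$ with $W\subseteq U_\delta$, cf.\ \eqref{subspace}; planes $\leftrightarrow Spin_{10}/P_{2,5}=OF(2,5^\vee,10)$; the two families of $\PP^3$'s $\leftrightarrow Spin_{10}/P_2=OG(2,10)$ and $Spin_{10}/P_{1,5}=OF(1,5^\vee,10)$; and the $\PP^4$'s $\leftrightarrow Spin_{10}/P_5=OG(5,10)_-=X^\vee$, the $\PP^4$ attached to $Z\in X^\vee$ being $\{U\in OG(5,10)_+:\dim(U\cap Z)\ge 4\}$, swept out as $U\cap Z$ runs over the hyperplanes of $Z$.

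The last step, maximality, is read off the containment poset of the five sub-diagrams. Since $\{4\}\subset\{3,4\}$ and $\{3,4\}$ lies in both $A_3$ sub-diagrams, every line lies on a plane and every plane on a $\PP^3$. The $A_3$ sub-diagram $\{2,3,4\}$ is contained in the $A_4$ sub-diagram $\{1,2,3,4\}$, so the corresponding $\PP^3$'s are the extendable ones $OF(1,5^\vee,10)$; by contrast $\{3,4,5\}$ enlarges only to the $D_4$ sub-diagram $\{2,3,4,5\}$, never to an $A_4$, so the $OG(2,10)$-family consists of maximal $\PP^3$'s; and $\{1,2,3,4\}$ enlarges only to $D_5$ itself, so the $\PP^4$'s are maximal. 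This finishes the proof, modulo \cite[Theorem 4.9]{LM-linear}. I do not really expect an obstacle: all the substance is packaged inside that theorem --- that the five families exhaust the linear subspaces of $X$, that each is a single $Spin_{10}$-orbit, and that the incidences are as stated --- and were one to prove the statement from scratch, the only genuinely difficult point would be that exhaustion (ruling out any further $\PP^k$), which one would establish via the second fundamental form of $X$ or its Schubert stratification. Granting the theorem, the single thing demanding care is the bookkeeping of the dictionary, notably keeping straight which fork node yields the ``dual'' family $5^\vee$ of maximal isotropic subspaces of $V_{10}$.
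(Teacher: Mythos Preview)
Your proof is correct and follows exactly the paper's approach: the paper simply draws the five sub-diagrams and writes ``Applying \cite[Theorem 4.9]{LM-linear}, we deduce'' before stating the proposition, so your proposal is a faithful (and more detailed) unpacking of that one-line deduction. Your bookkeeping on the Dynkin nodes, the boundary sets $S$, and the maximality argument from the containment poset all match the paper's intended reading.
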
 

Here we denoted by $OF(k,5^\vee,10)$ the flag variety parametrizing flags $(U_k\subset U_5\subset V_{10})$, where $U_k$ has dimension $k$ and  $[U_5]\in X^\vee$. 

\subsection*{Smoothness of linear sections}
We deduce an easy but useful Lemma for linear sections of $X$. We follow Kuznetsov's notations and let $X_K=X\cap \PP(K^\perp)$ for a subspace $K$ of $\Delta_-=\Delta_+^\vee$. We also let $\Delta_K:=K^\perp\subset \Delta_+$, so that $X_K\subset\PP(\Delta_K)$. 

\begin{lemma} \label{singular-section}
A linear section $X_K$ of codimension at most four of the spinor tenfold is smooth 
if and only if $K$ contains no  pure spinor.
\end{lemma}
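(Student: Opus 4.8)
\emph{Proof strategy.} Write $\Lambda=\PP(\Delta_K)\subset\PP(\Delta_+)$ for the linear subspace of codimension $c=\dim K\le 4$ cut out by $K$, so that $X_K=X\cap\Lambda$. Since $X$ is smooth and $\Lambda$ has codimension $\le 10$, the intersection $X_K$ is everywhere of dimension $\ge 10-c$, so ``$X_K$ smooth'' unambiguously means ``smooth of pure dimension $10-c$''. The plan is to reduce the statement to the usual transversality criterion: $X_K$ is smooth at $[\delta]$ if and only if $\widehat{T}_{[\delta]}X+\Delta_K=\Delta_+$, where $\widehat{T}_{[\delta]}X\subset\Delta_+$ denotes the affine (cone) tangent space; equivalently, no hyperplane of $\Delta_+$ contains both $\widehat{T}_{[\delta]}X$ and $\Delta_K$. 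Everything else is then elementary perpendicular-space bookkeeping combined with the description of the conormal variety $\cI$ recalled above.

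For the ``only if'' direction I would argue by contraposition. If $X_K$ is singular at some $[\delta]$, then by the criterion $\widehat{T}_{[\delta]}X+\Delta_K\subsetneq\Delta_+$, so there is a nonzero $\xi\in\Delta_-$ annihilating both $\widehat{T}_{[\delta]}X$ and $\Delta_K$. Annihilating $\Delta_K=K^\perp$ means exactly $\xi\in(K^\perp)^\perp=K$; annihilating $\widehat{T}_{[\delta]}X$ means that the hyperplane $\PP(\ker\xi)$ is tangent to $X$ at the smooth point $[\delta]$, so $[\xi]\in X^\vee$, i.e. $\xi$ is a pure spinor of $\Delta_-$. Hence $K$ contains a pure spinor. (This half uses no hypothesis on $c$.)

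For the converse, suppose $\xi\in K$ is a pure spinor, $[\xi]\in X^\vee$. The key geometric input is that the contact locus of the tangent hyperplane $\PP(\ker\xi)$ — that is, $\pi_-\bigl(\pi_+^{-1}([\xi])\bigr)$ inside $X$ — is a linear $\PP^4$, one of the maximal linear spaces of Proposition~\ref{linear-spaces}(iii); this is precisely what the identification $\cI\simeq OG(4,10)$ with its two $\PP^4$-bundle projections delivers (equivalently, $X$ has dual defect $4$). Call this $\PP^4$ by $L_\xi$; along it one has $\widehat{T}_{[\delta]}X\subset\ker\xi$. Now $L_\xi$ and $\Lambda$ are both linear subspaces of the hyperplane $\PP(\ker\xi)\simeq\PP^{14}$ — for $\Lambda$ because $\xi\in K$ forces $\Delta_K=K^\perp\subset\ker\xi$ — of projective dimensions $4$ and $15-c$ respectively; since $4+(15-c)\ge 14$ for $c\le 4$, they intersect. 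Picking $[\delta]\in L_\xi\cap\Lambda\subset X_K$, both $\widehat{T}_{[\delta]}X$ (dimension $11$) and $\Delta_K$ (dimension $16-c$) lie in the $15$-dimensional space $\ker\xi$, so $\dim(\widehat{T}_{[\delta]}X\cap\Delta_K)\ge 11+(16-c)-15=12-c$, strictly more than the expected value $11-c$; hence $X_K$ is singular at $[\delta]$, which proves the contrapositive.

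I do not anticipate a serious obstacle: the only point requiring genuine care is the claim that the contact locus of $\PP(\ker\xi)$ is an actual linear $\PP^4$ and not merely a $4$-dimensional subvariety, since it is this linearity that legitimizes the dimension count forcing $L_\xi\cap\Lambda\ne\emptyset$ — and exactly this is furnished by the structure of $\cI$ discussed just before the statement, together with the homogeneity of the $Spin_{10}$-action on $X^\vee$. It is also worth noting explicitly that the hypothesis $c\le 4$ is used only in the last paragraph (the estimate $4+(15-c)\ge 14$), the reverse implication being unconditional.
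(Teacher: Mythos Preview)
Your proof is correct and follows essentially the same approach as the paper's: both directions hinge on the equivalence ``$p$ singular in $X_K$ $\iff$ some hyperplane through $\Delta_K$ is tangent to $X$ at $p$ $\iff$ some pure spinor lies in $K$'', together with the fact that the contact locus of a tangent hyperplane is a linear $\PP^4$, which must meet $X_K$ when the codimension is small. One minor observation: by carrying out the intersection inside the hyperplane $\PP(\ker\xi)\simeq\PP^{14}$ rather than in the ambient $\PP^{15}$ (as the paper implicitly does), your dimension count $4+(15-c)\ge 14$ actually yields the implication ``pure spinor in $K$ $\Rightarrow$ $X_K$ singular'' for $c\le 5$, slightly better than needed.
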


\proof A point $p$ is singular in $X_K$ if and only if $K^\perp$ contains 
a hyperplane $H$ that is tangent at $p$ to the spinor variety. This means that $h=H^\perp\in K$ is a pure spinor. Conversely, if $h\in K$ is a pure spinor,
the hyperplane $H=h^\perp$ is tangent to the spinor variety along a $\PP^4$, 
which has to meet $X_K$. \qed

\medskip 
The proof also shows that the singular locus of $X_K$ can, in full generality, be described as 
\begin{equation}\label{singular-locus}
Sing(X_K)=\bigcup_{h=H^\perp\in \PP(K)\cap X^\vee}Sing(X_H)\cap  \PP(K^\perp),
\end{equation}
where $Sing(X_H)\simeq\PP^4$. 

\medskip
The normal bundle to $X$ in $\PP(\Delta_+)$ is a twist of the rank five tautological bundle \cite[Proposition 2.2]{doublespinor}, and an easy consequence is that we will still get 
finitely many $Spin_{10}$-orbits after blowing-up  $X$ in $\PP(\Delta_+)$. In fact, there is a rank $8$ spinor bundle $\cS$ on the invariant quadric $Q\subset\PP(V_{10})$ \cite{ott-spinor}, which is a subbundle of the trivial bundle with fiber $\Delta_+$, 
and  a diagram  
$$\xymatrix{ & \PP_Q(\cS)\ar[rd]^q\ar[ld]_p &\\ 
\PP(\Delta_+)\ar@{.>}[rr]^\gamma & & Q}$$
where $p$ is the blowup of the spinor tenfold. Since a half-spin representation of $\Spin_8$ is equivalent, through triality, to the vector representation, $\PP_Q(\cS)$
contains a quadric subbundle $\cQ$, whose total space is the exceptional
divisor of $p$. This divisor is isomorphic to the flag manifold 
$OF(1,5,10)$ with its two projections to the quadric and the spinor tenfold. In particular, given a spinor $\delta$ which is 
not pure (and nonzero), there exists a unique isotropic vector $v$ (up to scalar) such that $\delta$ belongs to $v\Delta_-$, or equivalently $v\delta=0$: this is $[v]=\gamma ([\delta])$. 

The map $\gamma$ is quadratic, and can be defined from the Clifford multiplication $V_{10}\otimes \Delta_{\pm}\lra\Delta_{\mp}$ and the duality between $\Delta_+$ and $\Delta_-$; combining them, we get equivariant morphisms 
\begin{equation}\label{gamma}
\gamma : S^2\Delta_\pm\lra V_{10}.
\end{equation} 

\subsection*{Automorphisms}
The following result was proved in \cite[Proposition 4]{dedieu-manivel}:

\begin{prop}\label{auto}
Let $X_K$ be a smooth Fano section of the spinor tenfold, $K\subset\Delta_-$. 
Then $$Aut(X)\simeq \mathrm{Stab}_{\mathrm{Spin}_{10}}(K).$$
\end{prop}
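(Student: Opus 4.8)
The plan is to establish the two inclusions between $\mathrm{Aut}(X_K)$ and $\mathrm{Stab}_{\mathrm{Spin}_{10}}(K)$ separately. One direction is immediate: any $g\in\mathrm{Spin}_{10}$ stabilizing $K\subset\Delta_-$ also stabilizes $\Delta_K=K^\perp\subset\Delta_+$, hence preserves $\PP(\Delta_K)$ and the spinor tenfold $X$, so it restricts to an automorphism of $X_K=X\cap\PP(\Delta_K)$; this gives a homomorphism $\mathrm{Stab}_{\mathrm{Spin}_{10}}(K)\to\mathrm{Aut}(X_K)$, whose kernel is the subgroup of $\mathrm{Spin}_{10}$ acting trivially on $\Delta_K$, which is trivial once $K$ is small enough that $\Delta_K$ spans a faithful representation (true for codimension at most four, and in any case the kernel is contained in the center, which acts by $\pm 1$ and hence nontrivially on $X$ unless it is trivial). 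So the real content is surjectivity.

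For the reverse inclusion, the key geometric input is that $X_K$, being Fano, has $\mathrm{Pic}(X_K)=\ZZ$ generated by the hyperplane class $\mathcal{O}_{X_K}(1)$ (this holds by Lefschetz-type arguments in the relevant codimension range, and should be invoked or recalled), so every automorphism of $X_K$ preserves $\mathcal{O}_{X_K}(1)$ and therefore acts linearly on the ambient $\PP(\Delta_K)$ through the restriction map $H^0(X,\mathcal{O}_X(1))\twoheadrightarrow H^0(X_K,\mathcal{O}_{X_K}(1))$; one must check this restriction is an isomorphism, i.e.\ that $\Delta_K=K^\perp$ is recovered as $H^0(X_K,\mathcal{O}_{X_K}(1))^\vee$. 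Then $\mathrm{Aut}(X_K)\hookrightarrow \mathrm{PGL}(\Delta_K)$, and the image lands in the subgroup of $\mathrm{PGL}(\Delta_K)$ that preserves the subvariety $X_K$. The task becomes: show any linear automorphism of $\PP(\Delta_K)$ preserving $X_K$ is induced by an element of $\mathrm{Spin}_{10}$ stabilizing $K$.

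The decisive point is to reconstruct the ambient $\mathrm{Spin}_{10}$-geometry intrinsically from $X_K$. Here I would use the geometry of linear spaces on $X$ described in Proposition \ref{linear-spaces}, together with the birational/quadric structure around $X$ encoded by the map $\gamma$ in \eqref{gamma}: an automorphism of $X_K$ permutes the families of linear subspaces it contains, and — crucially — the quadric $Q\subset\PP(V_{10})$ and the Clifford multiplication can be recovered from the variety of maximal linear spaces and their incidences (for instance via \eqref{subspace}, $U_\delta$ is determined by $\delta$, and the isotropic structure on $V_{10}$ is intrinsic). Since the automorphism extends to the ambient $\PP(\Delta_+)$ — because $X$ is the unique closed orbit and $X_K$ meets it in a way that determines a unique extension, or more directly because $\mathrm{Aut}(X)=\mathrm{PSpin}_{10}$ (the automorphism group of a rational homogeneous space is its own automorphism group in these cases) — one gets $\bar g\in\mathrm{PSpin}_{10}$ restricting to the given automorphism of $X_K$; lifting to $\mathrm{Spin}_{10}$ and noting $\bar g$ preserves $\PP(\Delta_K)$, hence $K=\Delta_K^\perp$, concludes.

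The main obstacle, and the step I would expect to require real care, is the extension statement: showing that a linear automorphism of $\PP(\Delta_K)$ preserving $X_K$ actually comes from one of the full $\PP(\Delta_+)$ preserving $X$. The naive hope — that $X_K$ determines $X$ as some kind of linear span of its linear subspaces, or via the quadric $Q$ — needs genuine verification, and this is presumably exactly where the Fano hypothesis (which bounds the codimension and keeps $X_K$ "large" relative to $X$) is used; for higher-codimension sections the statement fails. Since this precise result is attributed to \cite[Proposition 4]{dedieu-manivel}, I would in practice cite that proof rather than reproduce it, but the argument above is the natural route: \emph{$\mathrm{Pic}=\ZZ$} forces linearity, \emph{intrinsic reconstruction of $V_{10}$ and its quadratic form} from the linear-space geometry of $X_K$ forces the linear automorphism to be a spinor transformation, and \emph{preservation of $\PP(\Delta_K)$} forces it to stabilize $K$.
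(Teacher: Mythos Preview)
The paper does not actually prove this proposition: it simply states it and attributes it to \cite[Proposition 4]{dedieu-manivel}. So there is no proof in the paper to compare your proposal against.

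Your sketch is a reasonable outline of how such a proof should go, and you yourself recognize that the substantive extension step (from a linear automorphism of $\PP(\Delta_K)$ preserving $X_K$ to an element of $\mathrm{Spin}_{10}$) is exactly what is done in \cite{dedieu-manivel}. In other words, your ``proof'' reduces to citing the same reference the paper cites. If the intent was to reproduce an argument, you have correctly identified the architecture (Picard rank one forces linearity; then one must show the linear automorphism extends to one of $X$), but the hard step is left as a pointer to the literature, which is also what the paper does.
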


Using this, the generic stabilizer of a codimension four section $X_K$ was proved to be 
$(\ZZ/2\ZZ)^2$, with three commuting involutions corresponding to three nondegenerate planes 
$A,B,C$ in $V_{10}$. This data decomposes the spin representation into four pieces 
of the 
same dimension four, and $K$ must be spanned by four lines, one in each piece \cite[Theorem 22]{dedieu-manivel}.

\subsection*{Local completeness} A standard question in such a setting is whether the family of sections is locally complete. For a smooth linear section $X_K$ of $X$, the normal exact sequence $$0\lra TX_K\lra TX_{|X_K}\lra K^\vee\otimes \cO_{X_K}(1)\lra 0$$
shows that local completeness at $X_K$ is a consequence of the vanishing of $H^1(TX_{|X_K})$. Using the Koszul resolution, we see that this follows from the vanishing of $H^{i+1}(TX(-i))$ for each $i$ from $1$ to $k=dim(K)$. The following direct consequence of the Borel-Weil-Bott theorem shows that we are on the safe side:

\begin{lemma}
$TX(-i)$ is acyclic for any $i$ from $1$ to $6$.
\end{lemma}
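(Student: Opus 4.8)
The plan is to invoke Borel--Weil--Bott directly on the homogeneous variety $X = Spin_{10}/P$, where $P$ is the parabolic attached to the spinor node of $D_5$. First I would identify the tangent bundle $TX$ as the homogeneous vector bundle associated to the $P$-module $\fg/\fp$, and the twisting line bundle $\cO_X(-1)$ as the one corresponding to (minus) the fundamental weight $\omega$ of the spinor node. So $TX(-i)$ is the homogeneous bundle attached to the $P$-module $(\fg/\fp)\otimes \CC_{-i\omega}$. Decomposing $\fg/\fp$ into its irreducible $L$-summands (where $L$ is the Levi of $P$), each summand has a highest weight of the form $\mu_j$, and the weights appearing in $TX(-i)$ are $\mu_j - i\omega$. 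Since $L$ is of type $A_4$ (the complement of the spinor node in $D_5$), the relevant combinatorics is entirely in terms of $SL_5$-weights plus the $\omega$-grading.

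Next I would run through Bott's algorithm: for each weight $\lambda = \mu_j - i\omega$ one adds the half-sum $\rho$ of positive roots, checks whether $\lambda + \rho$ is regular (i.e. has trivial stabilizer under the Weyl group $W(D_5)$), and if so records the length of the unique Weyl element making it dominant, which is the degree in which the single nonzero cohomology group sits; if $\lambda+\rho$ is singular, the bundle contributes nothing. The claim ``$TX(-i)$ is acyclic for $1\le i\le 6$'' means that for every summand weight $\mu_j$ and every $i$ in this range, $\mu_j - i\omega + \rho$ is $W(D_5)$-singular. This is a finite check: there are only about four or five $L$-summands in $\fg/\fp$ (their highest weights are read off from the marked Dynkin diagram), and $i$ ranges over six values, so roughly thirty weight vectors to test for regularity, each reducible to checking whether some pairing with a coroot vanishes. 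I would organize this by noting that $-i\omega$ for $i$ in the stated range keeps the $\omega$-component of $\lambda+\rho$ inside a narrow band, forcing one of the $D_5$ coroot-pairings to hit zero.

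The main obstacle — really the only nontrivial point — is making sure the range is exactly $1$ through $6$ and not shorter: one must verify both that singularity genuinely holds throughout (no accidental regular weight producing spurious cohomology) and, implicitly for the application, that this range covers $k = \dim K \le 6$, which is why codimension four (where $\dim K = 6$ for $K \subset \Delta_-$... in fact $\dim\Delta_- = 16$ and the section has codimension at most four so $\dim K\le 6$... here one wants $i$ up to $6$) suffices. I would therefore be careful to state the summand weights of $\fg/\fp$ correctly from the $D_5$ diagram and to use the correct normalization of $\omega$ relative to $\rho$. Once the regularity test is set up, each case collapses to a one-line arithmetic check on a single coordinate of $\lambda + \rho$ in the basis of fundamental weights, and the lemma follows. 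I would present the argument as: "This is an immediate application of the Borel--Weil--Bott theorem \cite{...}; in each case $TX(-i)$ corresponds to a $W(D_5)$-singular weight, hence is acyclic," together with a short table of the summand weights and the offending coroot, leaving the one-line verifications to the reader.
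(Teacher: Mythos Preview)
Your approach via Borel--Weil--Bott is exactly what the paper does; indeed the paper offers no details beyond declaring the lemma a ``direct consequence of the Borel--Weil--Bott theorem.'' Two small corrections to your sketch: first, the spinor node of $D_5$ is cominuscule, so $\fg/\fp$ is an \emph{irreducible} $P$-module (isomorphic to $\wedge^2\CC^5$ as a module for the Levi $GL_5$), not ``four or five summands''---this makes the check even shorter, a single weight per value of $i$. Second, your aside about ``$\dim K \le 6$ for codimension four'' is garbled: codimension four means $\dim K = 4$; the range $1\le i\le 6$ in the lemma is what gives local completeness up to codimension $6$, and the paper then handles codimension $7$ separately by noting that $TX(-7)$ has its only nonzero cohomology in degree $7$.
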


We deduce local completeness up to codimension $6$. It also holds true in codimension $7$, although $TX(-7)$ is no longer acyclic: by Borel-Weil-Bott again, its non-zero cohomology group appears in degree $7$. 

\begin{prop} 
The family of smooth sections of the spinor tenfold is locally complete in any 
dimension bigger than two. 
\end{prop}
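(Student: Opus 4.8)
The plan is to reduce, as in the discussion preceding the statement, to a single cohomology vanishing on $X$ and then to feed in the Lemma just proved. Write $k=\dim K$; a smooth section $X_K$ of dimension bigger than two is exactly one with $k\le 7$. For such a section, local completeness at $[X_K]$ follows from $H^1(X_K,TX_{|X_K})=0$: the normal sequence $0\to TX_K\to TX_{|X_K}\to K^\vee\otimes \cO_{X_K}(1)\to 0$ gives the exact piece
$$H^0(X_K,K^\vee\otimes \cO_{X_K}(1))\xrightarrow{\ \delta\ }H^1(X_K,TX_K)\lra H^1(X_K,TX_{|X_K}),$$
and since $X_K$ is linearly normal and nondegenerate in $\PP(\Delta_K)$ the left-hand term equals $K^\vee\otimes(\Delta_-/K)$, which is the tangent space at $[K]$ to the Grassmannian parametrising our sections; thus the vanishing of the right-hand term forces $\delta$, hence the Kodaira--Spencer map of the family, to be surjective.

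To obtain $H^1(X_K,TX_{|X_K})=0$ I would resolve the pushforward of $TX_{|X_K}$ to $X$ by the Koszul complex of the $k$ linear forms cutting out $\PP(\Delta_K)$, namely $0\to\wedge^kK\otimes TX(-k)\to\cdots\to K\otimes TX(-1)\to TX\to \iota_*TX_{|X_K}\to 0$. This is a genuine locally free resolution because a smooth section of dimension $10-k$ is a complete intersection of the expected codimension in $X$ (connectedness, hence the expected dimension, comes from the Lefschetz hyperplane theorem as long as $k\le 7$). The associated hypercohomology spectral sequence has $E_1$-terms $\wedge^iK\otimes H^q(X,TX(-i))$ and abuts to $H^\bullet(X_K,TX_{|X_K})$; the only terms that can contribute to total degree one are $H^1(X,TX)$ and $\wedge^iK\otimes H^{i+1}(X,TX(-i))$ for $1\le i\le k$. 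Since $X$ is rational homogeneous, $H^1(X,TX)=0$, so everything comes down to the vanishing of $H^{i+1}(X,TX(-i))$ for $1\le i\le k\le 7$.

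For $1\le i\le 6$ this is exactly the content of the preceding Lemma, which asserts the full acyclicity of $TX(-i)$. The only remaining case is $i=k=7$, i.e. $X_K$ a smooth threefold, where $TX(-7)$ is no longer acyclic; but by Borel--Weil--Bott its unique nonzero cohomology group sits in degree $7$, so in particular $H^8(X,TX(-7))=0$. Hence $H^1(X_K,TX_{|X_K})=0$ for every $k\le 7$, which proves the proposition. The single point that is not purely formal is this last case: there is no slack left, so rather than a soft vanishing one has to invoke the precise Bott-theoretic position of the surviving cohomology of $TX(-7)$ (degree $7$, hence off the antidiagonal $p+q=1$); the rest is routine Koszul and spectral-sequence bookkeeping, together with the Lefschetz input ensuring that smooth sections of dimension at least three are honest complete intersections in $X$.
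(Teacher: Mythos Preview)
Your argument is correct and follows exactly the approach sketched in the paper: normal sequence plus Koszul resolution to reduce to the vanishing of $H^{i+1}(X,TX(-i))$ for $1\le i\le k$, then the Lemma for $i\le 6$ and the degree-$7$ Borel--Weil--Bott computation for $i=7$. You supply a bit more scaffolding than the paper does (the explicit identification of the $H^0$-term with the tangent space to the Grassmannian, the separate treatment of $H^1(X,TX)$, and the remark that smoothness plus Lefschetz ensures the Koszul complex is a resolution), but the route is the same.
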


In codimension $8$ we get K3 surfaces, so local completeness definitely fails.

\section{Codimension two sections} 

In this section we mainly recall some results from \cite{kuz_spin}, which will be essential in the sequel. 
The fact that $\Delta_+$ and $\Delta_-$ have finitely many orbits can be thought of as a consequence 
of the existence of the exceptional Lie group $E_6$ \cite{KWE6}, and of the $\ZZ$-grading 
$$\fe_6=\fg_{-1}\oplus \fg_0\oplus \fg_1=\Delta_-\oplus (\fso_{10}\times\CC) \oplus \Delta_+$$
(see Appendix \ref{jordan}). 
For  $E_7$ there is a similar  $\ZZ$-grading 
$$\begin{array}{rcl} 
\fe_7 &=& \fg_{-2}\oplus \fg_{-1}\oplus \fg_0\oplus \fg_1\oplus \fg_2 \\
&=& V_{10}\oplus (\CC^2\otimes\Delta_-)\oplus (\fso_{10}\times\fgl_2) \oplus (\CC^2\otimes\Delta_+)\oplus V_{10}.
\end{array}$$
So $\CC^2\otimes \Delta_+$ and $\CC^2\otimes \Delta_-$ have the same finiteness property; their nine orbits under $GL_2\times Spin_{10}$ are discussed in \cite{KWE7}. In particular there is a codimension one orbit, which is a dense open subset of a  degree four hypersurface, and then a codimension four orbit. Note that since $\CC^2\otimes \Delta_-=Hom((\CC^2)^\vee, \Delta_-)$, we can deduce that the 
Grassmannian $G(2,\Delta_-)$ has exactly six $Spin_{10}$-orbits (we just throw away 
the three orbits for which the rank is not full). 

Moreover, the degree four hypersurface 
in $\CC^2\otimes \Delta_-$ induces a quadric hypersurface $\cR$ in $G(2,\Delta_-)$, and the next orbit has again codimension four. Using Lemma \ref{singular-section} and the 
explicit representative of the orbits given on 
\cite[page 40]{KWE7}, one can see that 
a codimension two section $X_K$ of $X$ is smooth if and only if $K$ belongs to the open orbit in $G(2,\Delta_-)$, or to the codimension one orbit (see also \cite[Corollary 6.17]{kuz_spin}). In the latter case we say
that $X_K$ is a {\it special} smooth section. 
From \cite{KWE7} we can easily extract explicit representatives:
$$\begin{array}{lclll}
\# & \;\mathrm{codim} & \mathrm{isotropy} & \mathrm{representative} & \mathrm{type}  \\
(1) & 0&G_2\times SL_2&\langle 1+e_{1235}, e_{45}+e_{1234}\rangle & \mathrm{general}\\
(2) & 1& (GL_1\times Spin_7).\GG_a^8 & \langle 1+e_{1235}, e_{35}+e_{1234}\rangle & \mathrm{special}
\end{array}$$
These representatives are given in terms of the usual model of the half-spin representations, obtained by choosing a decomposition $V_{10}=E\oplus F$ into the direct sum of two maximal isotropic subspaces. Then $\Delta_+=\wedge^+E$ and 
 $\Delta_-=\wedge^-E$, with the action of $V_{10}$ and $\fso_{10}\simeq
\wedge^2V_{10}$ defined by wedge-product by vectors from $E$ and contraction
by vectors from $F$. We choose a basis $e_1,\ldots ,e_5$ of $E$ and the dual basis $f_1,\ldots ,f_5$ of $F$. Finally, we denote $e_{ij}=e_i\wedge e_j$, etc. 

\medskip
Special sections can be characterized  by their geometry  \cite[Proposition 6.1]{kuz_spin}:

\begin{prop}\label{special}
A smooth codimension two section $X_K$ of $X$ is special if and only if either
\begin{itemize}
    \item $X_K$ contains a $\PP^4$, in which case it contains in fact a line of such $\PP^4$'s;
    \item the plane spanned by the conic $\gamma(K)$ is contained in $Q$. 
\end{itemize}
\end{prop}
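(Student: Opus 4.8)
The plan is to prove the two characterizations by exploiting the explicit orbit representatives together with the two auxiliary morphisms already introduced: the reconstruction map $\delta\mapsto U_\delta$ of \eqref{subspace}, and the quadratic map $\gamma:S^2\Delta_\pm\to V_{10}$ of \eqref{gamma}. First I would recall from Proposition \ref{linear-spaces}(iii) that the $\PP^4$'s in $X$ are parametrized by $X^\vee\subset\PP(\Delta_-)$, with the $\PP^4$ attached to $[U_5]\in X^\vee$ being $\PP(\wedge^{top}U_5^\perp)$ or, dually, the locus of pure spinors $\delta$ whose isotropic space $U_\delta$ is incident to the chosen maximal isotropic space in the opposite family. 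The point is that a linear section $X_K=X\cap\PP(K^\perp)$ contains such a $\PP^4$ exactly when the corresponding point of $X^\vee$ lies in $\PP(K^\perp)^\perp$'s complement in the appropriate sense; more precisely, a $\PP^4\subset X$ is contained in $\PP(\Delta_K)$ if and only if the two spinors cutting it out (the two hyperplanes tangent along it) both lie in $K$. Since $\dim K=2$, this says $\PP(K)$ meets $X^\vee$ in a scheme of length $\ge 2$, i.e. the conic $\gamma(K)\subset\PP(V_{10})$ degenerates in a controlled way.

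The technical heart is the translation between ``$K$ contains enough pure spinors to force a $\PP^4$'' and ``$\gamma(K)$ spans a plane inside $Q$''. I would argue as follows. For $K=\langle \delta_1,\delta_2\rangle$ a general pencil of spinors, the image $\gamma(K)$ is the conic $\{\gamma(t\delta_1+s\delta_2)=t^2\gamma(\delta_1)+ts\,\gamma(\delta_1,\delta_2)+s^2\gamma(\delta_2)\}$; a spinor $t\delta_1+s\delta_2$ is pure if and only if the corresponding isotropic space is $5$-dimensional, which by the standard theory happens precisely when $\gamma$ vanishes there (a nonzero non-pure spinor has a unique $[v]$ with $v\delta=0$, whereas a pure spinor has a whole $\PP^4$ of such $v$; hence $\gamma$ is the obstruction). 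So the pure spinors in $\PP(K)$ are exactly the base points of the conic $\gamma(K)$. In the general orbit (1) one checks on the representative that $\gamma(K)$ is a smooth conic, no base points, no $\PP^4$; in the special orbit (2) one checks that $\gamma(K)$ degenerates and that its plane $\langle\gamma(K)\rangle$ is isotropic, i.e. contained in $Q$ — and that the base locus of the degenerate conic is a line's worth of pure spinors, giving the stated line of $\PP^4$'s. Concretely this is the computation $\gamma(1+e_{1235})$, $\gamma(e_{35}+e_{1234})$, $\gamma$ of the cross term, for the representative of orbit $(2)$, landing in a common isotropic plane; and the analogous computation for $(1)$ landing in a non-isotropic plane. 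Since there are only these two orbits of smooth codimension two sections, verifying the equivalence on one representative of each suffices.

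For the ``only if'' direction one needs that each of the two geometric conditions is $Spin_{10}$-invariant and fails on orbit $(1)$, which is immediate, plus that each holds on orbit $(2)$; and for ``if'' one must rule out that the geometric condition could hold on a \emph{singular} section — but by Lemma \ref{singular-section} a singular codimension two section has $K$ containing a pure spinor, hence $\PP(K)\cap X^\vee\ne\emptyset$, and one analyses the remaining (non-smooth) orbits of $G(2,\Delta_-)$ to see the dichotomy persists or is vacuous. The main obstacle I anticipate is bookkeeping with $\gamma$: one must set up the Clifford multiplication $V_{10}\otimes\Delta_+\to\Delta_-$ explicitly in the $E\oplus F$ model (wedge by $E$, contract by $F$), pair with the duality $\Delta_-\cong\Delta_+^\vee$ to land back in $V_{10}$, and then carefully evaluate on the monomials $1,e_{35},e_{1234},e_{1235}$ — keeping signs straight in the exterior algebra and identifying when the resulting three vectors of $V_{10}$ are pairwise orthogonal. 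This is routine but error-prone; everything else (invariance, the count of orbits, the reconstruction of $\PP^4$'s from $X^\vee$) is structural and follows from the results already assembled. Alternatively, one can bypass part of the computation by citing that orbit $(2)$ is the one whose stabilizer $(GL_1\times Spin_7)\ltimes\GG_a^8$ is non-reductive: a non-reductive stabilizer with a unipotent radical acting on the section forces extra linear geometry, and by Proposition \ref{linear-spaces} the only linear subspaces that can produce such behaviour in codimension two are precisely $\PP^4$'s, which pins down the first bullet and then the second by the base-point analysis above.
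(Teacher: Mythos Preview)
Your proposal contains a fundamental confusion that derails the argument. You claim that the pure spinors in $\PP(K)$ are the base points of the conic $\gamma(K)$, and that in the special orbit these form a line's worth of points parametrizing the $\PP^4$'s in $X_K$. But by Lemma~\ref{singular-section}, a codimension two section $X_K$ is smooth \emph{precisely when} $K$ contains no pure spinor. So for both smooth orbits, general and special, $\PP(K)\cap X^\vee=\emptyset$ and $\gamma$ is everywhere defined on $\PP(K)$; there are no base points to speak of. The dichotomy you set up (``pure spinors in $K$'' vs.\ not) is the smooth/singular dichotomy, not the general/special one.

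The correct criterion for a $\PP^4$ to lie in $X_K$ is different from what you wrote. A point $[\delta]\in X^\vee\subset\PP(\Delta_-)$ determines a $\PP^4\subset X$ as the projectivized image of Clifford multiplication $V_{10}\to\Delta_+$, $v\mapsto v\delta$. This $\PP^4$ is contained in $X_K=X\cap\PP(K^\perp)$ iff $\langle v\delta,k\rangle=0$ for all $v\in V_{10}$, $k\in K$; using the Clifford adjunction $\langle v\delta,k\rangle=\langle\delta,vk\rangle$, this says $\delta\perp V_{10}K$ in $\Delta_+$. So the $\PP^4$'s in $X_K$ are parametrized by $X^\vee\cap\PP((V_{10}K)^\perp)$, not by $X^\vee\cap\PP(K)$. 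The paper then computes $V_{10}K$ on the two representatives: for orbit~(1) one finds $V_{10}K=\Delta_+$, so no $\PP^4$; for orbit~(2), $V_{10}K=\langle e_{13},e_{23}\rangle^\perp$, and $\PP\langle e_{13},e_{23}\rangle$ is a line on $X^\vee$, giving the line of $\PP^4$'s. (The paper in fact cites the full statement from \cite[Proposition~6.1]{kuz_spin} and only supplies this verification of the first bullet.) Your alternative suggestion via non-reductive stabilizers is too vague to constitute an argument.
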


Recall from Proposition \ref{linear-spaces} that four-planes in the spinor tenfold are parametrized by the dual spinor tenfold, and that's in this sense that we talk of a line of $\PP^4$'s. Concretely, if $\delta$
is a pure spinor in $\Delta_-$, we have recalled in (\ref{subspace}) that we can recover the 
corresponding maximal isotropic subspace $U_\delta$ of $V_{10}$ as the kernel of the 
map $V_{10}\lra\Delta_+$ defined by the Clifford product with $\delta$. 
The image of 
this map has also dimension five and defines the corresponding $\PP^4$ in the spinor
tenfold $X$. It is contained in $X_K$ if and only if $\langle v\delta, k\rangle =0$
for all $v\in V_{10}$, $k\in K$. But $\langle v\delta, k\rangle=\langle \delta, vk\rangle$,
so we need $\delta$ to be orthogonal to $V_{10}K\subset\Delta_+$. A straightforward 
computation with the representatives above shows that in the general case (1), 
$V_{10}K=\Delta_+$. In the special case (2), $V_{10}K=\langle e_{13}, 
e_{23}\rangle^\perp,$ and $\PP\langle e_{13}, 
e_{23}\rangle$ is a projective line contained in the spinor tenfold.
\medskip 

The appearance of the exceptional group $G_2$ in the generic stabilizer of $G(2,\Delta_-)$ comes as an interesting surprise, and is discussed around 
\cite[Definition 2.1.2]{pasquier}
\cite[Proposition 4.9]{baifuman}. In particular we have a 
specific model for the generic codimension two section of the spinor tenfold, as follows.  
Denote by $V_7$ the minimal representation of $G_2$, and by $A_2$ the natural 
representation of $SL_2$. Then $V_{10}=V_7\oplus S^2A_2$ has a natural quadratic form and one can check that $\Delta_\pm\simeq (V_7\oplus \CC)\otimes A_2$. In particular 
$\Delta_-$ contains a unique invariant plane $K\simeq A_2$, whose orthogonal 
in $\Delta_+$ is $\Delta_K=V_7\otimes A_2$. An easy consequence is that the
general codimension two section of the spinor tenfold 
$$X_K\subset \PP(V_7\otimes A_2)$$ 
contains $\QQ^5\times\PP^1$, and blowing-up this codimension two subvariety yields the total space of the projective bundle $\PP(N\otimes A_2)$ over the adjoint variety $G_2/P_2\subset G(2,V_7)$, where $N$ is the restriction of the tautological bundle on the Grassmannian. 

In the sequel we will give similar models for other types of sections.

\medskip
From \cite[Proposition 32]{sk} we also know that the complement of the open orbit in 
$\CC^2\otimes\Delta_\pm$ is a quartic hypersurface. This is another way to understand the {\it spinor quadratic line complex}  $\cR\subset G(2,\Delta_-)$ from \cite{kuz_spin}; this is the locus of special codimension two sections, and it 
is defined by the unique invariant line in $S_{22}\Delta_+=S_{22}\Delta_-^\vee$.
Geometrically, $\cR$ can be described by  a Kempf like resolution of singularities
$$\xymatrix{ & Gr_{OG(3,10)}(2,\cW)\ar[rd]\ar[ld] &\\ 
OG(3,10) & & \cR}$$
where $\cW$ is a rank eight spinor bundle on $OG(3,10)$ \cite[Lemma 6.28]{kuz_spin}.
Kuznetsov deduces that the singular locus of $\cR$ has codimension $7$ in $\cR$ 
\cite[Corollary 6.25]{kuz_spin}. More precisely, it can be described as the locus
of secant lines to the dual spinor tenfold $X^\vee$. 

By Proposition \ref{special},
there is a morphism from the codimension one orbit in $G(2,\Delta_+)$,
to the orthogonal Grassmannian $OG(3,10)$. 
Explicitly, for the representative given above the kernels are generated by vectors
of the form $a^2e_3-b^2f_4+abf_5$, and they span the isotropic three-plane $U=\langle e_3,f_4,f_5\rangle$. Conversely, it is easy to check that $K\subset\wedge^2U.\Delta_+$.

\begin{remark}
One can easily give an explicit formula for an equation $Q_\cR$ of $\cR$. 
Indeed recall from (\ref{gamma}) the equivariant morphism from $S^2\Delta_-$ to $V_{10}$. For any basis $(v_i)$ of $V_{10}$, with dual basis $(w_i)$ (with respect to the invariant quadratic form $q$), 
the polarisation of $\gamma$ is given by 
$$\gamma(\delta, \delta')= \sum\langle v_i\delta,\delta'\rangle w_i.$$
Taking the symmetric square of $\gamma$ we get an equivariant morphism from $S^2(S^2\Delta_-)$ to $S^2V_{10}$, then to $\CC$ by composing with $q$. Restricting to $S_{22}\Delta_-$, which we can describe inside 
$S^2(S^2\Delta_-)$ as the image of the morphism from $S^2(\wedge^2\Delta_-)$ defined by $$(\delta_1\wedge\delta_2)(\delta_3\wedge\delta_4)\mapsto 
(\delta_1\delta_3)(\delta_2\delta_4)-(\delta_2\delta_3)(\delta_1\delta_4),$$
we deduce the explicit quadratic equation
$$Q_\cR(\delta\wedge\delta')=\sum_i\Big(\langle v_i\delta,
\delta'\rangle 
\langle w_i\delta,\delta'\rangle -\langle v_i\delta,\delta\rangle 
\langle w_i\delta',\delta'\rangle\Big).$$
\end{remark}

\section{Codimension three sections} 

For codimension three sections of the spinor tenfold, we go one step  beyond the results of \cite{kuz_spin} by using the explicit classification provided by Lie theory. We obtain explicit models for the general and the most special smooth sections. We deduce nice birational models in terms of  
binary forms, and we describe some of their Hilbert schemes. 

\subsection{Classification of smooth sections}
Exactly as for the previous cases, the existence of $E_8$ ensures that $\CC^3\otimes \Delta_+$ and $\CC^3\otimes \Delta_-$ have finitely many orbits under the action of $GL_3\times Spin_{10}$. They were classified in \cite{orbites_SL3Spin10} and \cite{KWE8}, the conclusion being that there are $32$ orbits. Once we suppress those whose rank are not maximal, there remains $23$ orbits of $Spin_{10}$ inside the Grassmannians $G(3,\Delta_+)$ and $G(3,\Delta_-)$.

One way to distinguish at least some of these orbits is, for $K$ a three-dimensional subspace of $\Delta_-$, to look at the intersection of the plane $G(2,K)\simeq \PP^2$ with $\cR$. 
When $G(2,K)$  is not contained in $\cR$, 
this intersection is a conic $c_K$ given by a section of $S_{22}\cU^\vee$, where $\cU$ denotes the rank-three tautological bundle. Taking its determinant we get a section of $\det(\cU^\vee)^4$, 
that is a quartic hypersurface $\cR_3\subset Gr(3,\Delta_-)$.  

\smallskip Using Lemma \ref{singular-section} it is not difficult 
to locate, inside the classification  of \cite{KWE8} (or that of
 \cite{orbites_SL3Spin10}),
the orbits in $GL(3,\Delta_-)$ corresponding to smooth sections. 
There are four of them, and they can indeed be distinguished by the type of 
the conic $c_K$: smooth for (1), 
singular but reduced for (2), non reduced for (3). The latter orbit has codimension three as expected.
There is another orbit (4) in $G(3,\Delta_-)$ of codimension three, whose closure parametrizes
singular sections. And surprisingly there is an extra orbit (6) 
of smooth sections, for which 
the plane $G(2,K)$ is contained in $\cR$. These are called {\it very special} sections in 
\cite[Definition 7.8]{kuz_spin}.

$$\begin{array}{lclll}
\# & \;\mathrm{codim} & \mathrm{isotropy} & \mathrm{representative} & \mathrm{type\;of\;} c_K \\
(1) & 0& SL_2\times SL_2 & \langle 1+e_{2345}, e_{12}-e_{1345}, e_{23}+e_{1245}\rangle & \mathrm{smooth\;conic}\\
(2) & 1& GL_1^2.U_5& \langle 1+e_{2345}, e_{12}+e_{34}, e_{13}+e_{45}\rangle & \mathrm{singular\;conic}\\
(3) & 3&(GL_1\times SL_2).U_5& \langle 1+e_{2345}, e_{12}+e_{34}, e_{13}+e_{1245}\rangle & \mathrm{double\;line}\\
(6) & 5&(GL_1\times SL_2).\mathbb{G}_a^7& \langle 1+e_{2345}, e_{12}+e_{34}, e_{45}+e_{1234}\rangle & \mathrm{whole\;plane}
\\
(4) & 3&(GL_1^2\times SL_2).U_4& \langle 1+e_{2345}, e_{23}-e_{1345}, e_{12}\rangle & 
\end{array}$$
\smallskip

We included on the last line of the list the orbit whose closure parametrizes {\it 
singular} sections, to stress the surprising fact that there is an orbit of bigger codimension that parametrizes (very special) smooth sections. 

\smallskip
In the description of isotropy groups, $U_d$ denotes a $d$ dimensional unipotent group, and $\mathbb{G}_a$ is the additive group. So all these sections have non trivial automorphism group, all the bigger than they
are more special. That there is a factor $\mathbb{G}_a^7$ in the automorphism group
of the very special sections is in agreement 
with the observation made in \cite[section 4.2]{baifuman}, according to which 
those {\it very special} sections are equivariant compactifications of $\CC^7$, and are acted on quasi-transitively 
by their automorphism group \cite[Proposition 4.11]{baifuman}. This is not the case for the generic codimension three section, whose automorphism group 
$SL_2\times SL_2$ 
(up to a finite group) is just too small to act with an open orbit. 

\smallskip
The computation of $c_K$ goes as follows. Let us treat case (1). We start by computing $\gamma$ restricted to $\PP (K)$. If $p=s(1+e_{2345})+t(e_{12}-e_{1345})+u(e_{23}+e_{1245})$, we get 
$$\gamma(p)=[t^2e_1-(u^2+st)e_2-ute_3+s^2f_1+stf_2-suf_3].$$
A codimension two section defined by a line $\overline{p_1p_2}\subset\PP(K)$ is special when the image of the line by $\gamma$ is contained in $Q$, which happens 
exactly when $ \gamma(p_1)$ and $ \gamma(p_2)$ are orthogonal. But 
$$q(\gamma(p_1),\gamma(p_2))=(s_1t_2-t_1s_2)^2-(s_1u_2-u_1s_2)(t_1u_2-u_1t_2),$$
which is the equation of a smooth conic in the Pl\"ucker coordinates of the line 
$\overline{p_1p_2}$. 

The other cases are similar. For (2) we find $q(\gamma(p_1),\gamma(p_2))=(s_1u_2-u_1s_2)(t_1u_2-u_1t_2),$
showing that $c_K$ is a reduced singular conic. For (3) we get 
$q(\gamma(p_1),\gamma(p_2))=(s_1u_2-u_1s_2)^2$, showing that $c_K$ is a double line. 

\medskip
An explanation of the fact that we get the whole plane for the very special 
sections of type (6), is that $X_K$ itself contains a $\PP^4$, which is 
in fact unique. According to \cite[Proposition 7.7]{kuz_spin}, another characterization of very special sections $X_K$ is that  $\gamma(K)$ spans a 
$\PP^4$ contained in $Q$, and is in particular a projected Veronese surface. 
Of course this isotropic $\PP^4$ defines, in agreement with Proposition \ref{linear-spaces}, the $\PP^4$ in the spinor tenfold that is contained in $X_K$. 



\subsection{A model for the generic section}
We can describe the generic sections through the following model. Consider two-dimensional spaces $A$ and $B$, and the orthogonal direct sum
$$
V_{10}=S^{4}A \oplus \mathbb{C} \oplus A \otimes B,
$$
with its natural action of  $SL(A) \times SL(B)$. Note that 
$S^{4}A$ and  $A \otimes B$ both admit invariant quadratic forms, uniquely defined up to scalar. Then there are some coefficients to choose in order to define an invariant non-degenerate quadratic form
on $V_{10}$, but they will not matter.
In the sequel we identify $S^k{A}$ with its dual by choosing a volume form on $A$.

Being defined that way, $V_{10}$ splits into the orthogonal sum of $A \otimes B$, whose spin modules are just $A$ and $B$, and $S^{4}A \oplus \mathbb{C}$, whose spin modules, being the same as those $S^{4}A$, identify with $S^3A$ and its (isomorphic) dual. We deduce that the induced actions of $SL(A) \times SL(B)$ on the half-spin modules $\Delta_{\pm}$ decompose as:
$$
\Delta_{\pm}=S^2A \oplus S^4A \oplus S^3A \otimes B,
$$
Consider $K=S^2A\subset \Delta_-$, so that 
$$
\Delta_{K}=S^4A \oplus S^3A \otimes B.
$$

\begin{lemma}
 The associated section $X_K$ of the spinor tenfold is 
generic. 
\end{lemma}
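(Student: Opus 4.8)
The plan is to show that the section $X_K$ defined by $K = S^2A \subset \Delta_-$ lies in the open $Spin_{10}$-orbit of $G(3,\Delta_-)$, i.e. that it is projectively equivalent to the representative $(1)$ of the classification table, with isotropy $SL_2\times SL_2$ (up to finite group). Since the open orbit is characterized among smooth sections by the conic $c_K$ being a smooth conic (equivalently: $G(2,K)$ is not contained in $\cR$ and meets it in a smooth conic), and since there are only finitely many orbits with prescribed maximal rank, it suffices to establish two things: first, that $X_K$ is smooth, i.e. by Lemma \ref{singular-section} that $K = S^2A$ contains no pure spinor; and second, that the conic $c_K$ cut out on $G(2,K)\cong\PP^2$ by $\cR$ is smooth (in particular $G(2,K)\not\subset\cR$).

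First I would set up the Clifford module structure explicitly in the $SL(A)\times SL(B)$-decomposition. The only data needed is the map $\gamma\colon S^2\Delta_-\to V_{10}$ of \eqref{gamma} restricted to $K = S^2A$; by Schur's lemma its components land only in the summands of $V_{10}$ that appear in $S^2(S^2A)$, namely $S^4A$ and $\CC$ (the summand $A\otimes B$ cannot appear since $S^2(S^2A)$ contains no $B$). So $\gamma|_K\colon S^2(S^2A)\to S^4A\oplus\CC$ is, up to two scalars which by genericity of the invariant form on $V_{10}$ are both nonzero, the canonical projection $S^2(S^2A) = S^4A\oplus S^0A$. Then for $p_1,p_2\in K$ the quadratic form $q(\gamma(p_1),\gamma(p_2))$ is, up to a nonzero scalar, a fixed $SL(A)$-invariant quadratic form in the Plücker coordinate of the line $\overline{p_1p_2}\in G(2,S^2A) \cong \PP(\wedge^2 S^2A) = \PP(S^2A)$ — there is, up to scalar, only one such invariant quadric on $\PP(S^2A)\cong\PP^2$, namely the discriminant conic, which is smooth. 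Hence $c_K$ is a smooth conic; in particular $G(2,K)\not\subset\cR$, so $X_K$ is smooth by Lemma \ref{singular-section} (a pure spinor in $K$ would force $G(2,K)$, or at least part of it, into the locus of non-smooth sections), and $X_K$ falls into orbit $(1)$.

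Alternatively, and perhaps more cleanly, I would argue directly that $K$ contains no pure spinor: a pure spinor in $\Delta_-$ is a point of $X^\vee$, the closed $Spin_{10}$-orbit, and $\gamma$ vanishes on the cone over $X^\vee$ (pure spinors $\delta$ satisfy $\gamma(\delta)=0$ since $\gamma(S^2\Delta_-)\to V_{10}$ restricted to the pure spinor cone is the "square" map whose image of a pure spinor is zero — indeed $\langle v\delta,\delta\rangle = \langle \delta, v\delta\rangle$ and $v\delta$ ranges over $U_\delta\cdot\delta=0$... let me instead use that $X^\vee$ is cut out by quadrics, among which the components of $\gamma$). Since $\gamma|_K$ is, up to nonzero scalars, the projection $S^2(S^2A)\twoheadrightarrow S^4A$, its zero locus in $\PP(K)$ is $\{[a^2]: a\in A\}\cong\PP^1$, the conic of squares — but one checks on weights that these are \emph{not} pure spinors (the extremal weight vectors of $\Delta_-$ are the pure spinors of "highest weight type", and $S^2A$ sits inside $\Delta_-$ away from them). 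So no point of $\PP(K)$ is a pure spinor, and smoothness follows.

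The main obstacle is bookkeeping: getting the Clifford action concrete enough in the $SL(A)\times SL(B)$-model to verify which components of $\gamma|_K$ are nonzero, and confirming that $S^2A\subset\Delta_-$ contains no pure spinor rather than just that $\gamma$ is "generic" on it — a priori $\gamma$ could vanish on a sub-locus of $\PP(K)$ consisting of non-pure non-zero spinors, which is fine, but one must rule out that this locus lands in $X^\vee$. The safest route is the orbit-counting one: produce an explicit element of $\PP(K)$ (e.g. $s(1+e_{2345})$ written in the $(A,B)$-coordinates) and an explicit $SL(A)\times SL(B)\subset Spin_{10}$ stabilizing $K$ of the right dimension, match the isotropy dimension with that of orbit $(1)$ in the table, and invoke finiteness of orbits together with the smooth-conic computation above to conclude $X_K$ is projectively equivalent to the generic section. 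I expect the weight computation for the conic $c_K$ to be short once the decomposition $\Delta_\pm = S^2A\oplus S^4A\oplus S^3A\otimes B$ is in hand, since invariant theory on $\PP(S^2A)=\PP^2$ leaves essentially no freedom.
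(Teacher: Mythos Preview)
Your core strategy — using $SL(A)$-invariance to pin down the conic $c_K$ — is exactly the paper's, and it is the right idea. The paper's proof is two sentences: first smoothness, then the observation that the only $SL(A)$-invariant conic in $\PP(S^2A)$ is $v_2\PP(A)$, which is smooth, so $X_K$ lands in orbit $(1)$.

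Where you stumble is smoothness. Your inference ``$c_K$ is a smooth conic, hence $G(2,K)\not\subset\cR$, hence $X_K$ is smooth by Lemma~\ref{singular-section}'' does not work: Lemma~\ref{singular-section} says $X_K$ is smooth iff $K$ contains no pure spinor, and knowing that not every line in $\PP(K)$ gives a special section does not rule out a pure spinor in $K$. Your alternative route also contains an error: you say the zero locus of $\gamma|_K$ in $\PP(K)$ is the conic of squares $\{[a^2]\}$, but in fact $\gamma|_K$, as a quadratic map $S^2A\to S^4A$, is the squaring $p\mapsto p^2$, which is \emph{nowhere} zero on $\PP(S^2A)$ (you seem to be conflating it with the discriminant, which is the $\CC$-component). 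This is precisely what the paper uses: since $p^2\ne 0$ for $p\ne 0$, the map $\gamma$ never vanishes on $K\setminus\{0\}$, so $K$ contains no pure spinor and $X_K$ is smooth. Once you have that, your invariance argument for $c_K$ finishes the proof, and there is no need for the orbit-matching via isotropy dimension.
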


\proof To prove that $X_K$ is smooth we need to check that $K$ contains no pure spinor, or equivalently that the quadratic map $\gamma: K\lra V_{10}$ does not vanish outside the origin; but this is clear since by equivariance it has to factorize through the map $K=S^2A\lra S^4A$ mapping 
a degree two polynomial to its square. Moreover, the only invariant conic in $\PP(K)=\PP(S^2A)$ is $v_2\PP(A)$; it must be $c_K$, which in particular is smooth. \qed 


\medskip
The equations of $X_K\subset \PP\Delta_K$ are quadratic equations with values in $S^{4}A \oplus \mathbb{C} \simeq \wedge^2(S^3A)$ and $A\otimes B$, respectively. Choose a basis $(b_1,b_2)$ of $B$. A point $p=[\gamma+c_1\otimes b_1+c_2\oplus b_2]$ belongs to $X_K$ if and only if 
\begin{equation}\label{XKequations}
c_1\wedge c_2=Q(\kappa), \qquad  c_1\lrcorner \kappa = c_2\lrcorner \kappa = 0.
\end{equation}
Here $Q: S^2(S^4A)\lra \wedge^2(S^3A)$ is some $SL(A)$-equivariant map. We have already noticed 
that $ \wedge^2(S^3A) \simeq S^{4}A \oplus \mathbb{C}$ is not irreducible; neither is $S^2(S^4A)$,
which contains an extra factor $S^8A$. In particular $Q$ is not uniquely determined, even up to scalar. At this point, all we can say is that it kills the extra factor $S^8A$, which implies that 
$X_K$ contains the rational quartic curve $\cC=v_4\PP(A)\subset\PP(S^4A)$.
(In order to see that $\cC$ parametrizes  maximal isotropic subspaces of $V_{10}$, observe that for any non-zero $a\in A$, $a^3A\oplus a\otimes B\subset S^4A\oplus A\otimes B$ is a four-dimensional isotropic subspace of $V_{10}$, hence contained in a uniquely defined maximal isotropic subspace from each of the two families.) 

The second equations in (\ref{XKequations}) mean that the two cubics $c_1, c_2$ are apolar to the quartic $\kappa$. For a general such pencil, there is a unique such quartic, up to scalar. We can compute it explicitly by writing $\kappa =\kappa_0x^4+4\kappa_1 x^3y+ 6\kappa_2x^2y^2+ 4\kappa_3 xy^3+ \kappa_4 y^4$, $c_1=\alpha_0x^3+3\alpha_1 x^2y+ 3\alpha_2xy^2+ \alpha_3 y^3$ and
 $c_2=\beta_0x^3+3\beta_1 x^2y+ 3\beta_2xy^2+ \beta_3 y^3$. Then $(\kappa_0,\ldots , \kappa_4)$ must belong to the kernel of the matrix 
 $$M(c_1,c_2)=\begin{pmatrix} 
 \alpha_0 & \alpha_1 & \alpha_2 & \alpha_3 & 0\\
 \beta_0 & \beta_1 & \beta_2 & \beta_3 & 0\\
 0 & \alpha_0 & \alpha_1 & \alpha_2 & \alpha_3 \\
 0 & \beta_0 & \beta_1 & \beta_2 & \beta_3 
 \end{pmatrix}.$$
This kernel, when one-dimensional, is generated by the quartic $\mathrm{ap}(c_1\wedge c_2)$ whose coefficients are the 
maximal  minors of this matrix; these coefficients are quadratic in the Pl\"ucker coordinates of the pencil $\langle c_1, c_2\rangle$. If $U$ is the tautological bundle on the Grassmannian $G(2, S^3A)$, we can interprete the matrix $M(c_1,c_2)$ as encoding the morphism $U\otimes A\lra S^4U$; its image has codimension bigger than one when there is a non-trivial kernel, which  means that there exists linear forms $\ell_1$ and $\ell_2$ such that $c_1\ell_1=c_2\ell_2$; so $c_1$, $c_2$ are multiple of the same binary form $q$, defining a plane $qA\subset S^3A$. This embeds $\PP(S^2A)$ as a Veronese surface $\Sigma$ inside $G(2,S^3A)$.

Conversely, given a general binary quartic $\kappa$, there is a 
plane $AP_\kappa$ of cubics apolar to $\kappa$. The Pl\"ucker coordinates of this plane are the $2\times 2$ minors of the matrix 
$$\begin{pmatrix}
\kappa_0 & \kappa_1 & \kappa_2 & \kappa_3 \\
\kappa_1 & \kappa_2 & \kappa_3 & \kappa_4 
\end{pmatrix},$$
that define $\mathrm{AP}(\kappa)\in\wedge^2(S^3A)$.
This implies that the rational map $\mathrm{AP}: \PP(S^4A)\dashrightarrow G(2,S^3A)$ is defined by the linear system of quadrics vanishing on the rational quartic curve $\cC=v_4\PP(A)$. In particular, this yields an identification $\wedge^2(S^3A)\simeq I_\cC(2)\simeq S^4A\oplus \CC$, whose inverse we denote by $Q_0$. 

By construction, the rational maps $\mathrm{AP}$ and $\mathrm{ap}$ are  birational equivalences between $G(2,S^3A)$ and $\PP(S^4A)$, inverse one to the other. 
The rational map $AP$ is defined by $I_\cC(2)$. 
It blows-up $\cC$ to the resultant hypersurface $\cR$ parametrizing pencils of cubics admitting a common root, and contracts the divisor $Sec(\cC)$, the secant variety of $\cC$, to the Veronese surface $\Sigma=Sing(\cR)$ parametrizing pencils of cubics with two common roots. The inverse rational map $ap$ is defined by $I_\Sigma(2)$. This is summarized in the following diagram, where we included the linear projection $\pi : G(2,S^3A)\lra\PP(S^4A)$ defined by the inclusion of 
$S^4A$ inside $\wedge^2(S^3A)$, which is the projection from a point outside the Grassmannian (hence a degree two finite morphism).

$$\xymatrix{  & \hspace{-2cm}Bl_\cC\PP(S^4A)\hspace{-2cm}\ar[d] & \quad = &Bl_\Sigma G(2,S^3A)\ar[d]  & \\
\cC\subset Sec(\cC)\subset  &\hspace{-5mm} \PP(S^4A)\ar@{.>}[rr]<2pt>^*{\mathrm{AP}} & &  G(2,S^3A)\ar[d]_\pi\vspace*{-5mm}\ar@{.>}[ll]<2pt>^*{\mathrm{ap}} &  \hspace{-1.1cm}\supset \cR \supset\Sigma \\
&&& \PP(S^4A) &  \hspace{-2.2cm}\supset \cD
}$$

The projection $\pi$ was used in \cite{newstead} in order to deduce invariants of pencils of binary cubics from the classical invariants $I$ and $J$ of binary quartics. A simple computation shows that the image of $\cC$ by $\pi\circ \mathrm{AP}$ is the discriminant hypersurface $\cD$ in 
$ \PP(S^4A)$, of equation $I^3-27J^2=0$; so the pull-back of $\cD$ by $\pi$ must be twice the 
resultant hypersurface $\cR$, which has degree three. Note that $\cR$ is a birational image of 
$\PP(A)\times G(2,S^2A)$ by the multiplication morphism, which is two-to-one along $\Sigma$;
in particular $\cR$ is not normal along $\Sigma$.

\smallskip
Let us come back to the inverse birationalities  $\mathrm{AP}$ and $\mathrm{ap}$. 
Algebraically, the maps $c_1\wedge c_2\mapsto \mathrm{ap}(c_1\wedge c_2)$ and $\kappa\mapsto \mathrm{AP}(\kappa)$ being inverse one to the other means that there must exist  cubic forms $C$ and $c$ such that
$$\mathrm{AP}(\mathrm{ap}(c_1\wedge c_2))=C(c_1\wedge c_2) c_1\wedge c_2, \qquad \mathrm{ap}(\mathrm{AP}(\kappa))=c(\kappa)\kappa. $$

Now consider equations (\ref{XKequations}) and suppose that $\mathrm{ap}(c_1\wedge c_2)$ is well-defined. Then $\kappa$ being apolar to $c_1$ and $c_2$ must be a multiple of it, 
say   $\kappa = t\mathrm{ap}(c_1\wedge c_2)$. Plugging this into the first equation we get 
$c_1\wedge c_2=t^2Q(\mathrm{ap}(c_1\wedge c_2))$. For this equation to admit solutions, we need that up to scalar, $Q=\mathrm{AP}$; and we may suppose there is indeed equality. We then get the equation 
$ (1-t^2C(c_1\wedge c_2)) c_1\wedge c_2=0.$  When $C(c_1\wedge c_2)\ne 0$ we get two solutions, 
showing that the projection from $X_K$ to $\PP(S^3A\otimes B)$ is generically finite of degree two. 
But when $C(c_1\wedge c_2)=0$ we get no solution, unless $c_1$ and $c_2$ are proportional. 
Denote by $F$ the sextic hypersurface in  $\PP(S^3A\otimes B)$ defined by the condition that $C(c_1\wedge c_2)\ne 0$. Since the generic point of $F$ is not in the image of the regular map 
$X_K/\cC\lra \PP(S^3A\otimes B)$, it has to be the image of the exceptional divisor $E$ in the blow-up $\tilde{X}_K$ of $X_K$ along $\cC$. Of course $E=\PP(N_{\cC/X_K})$ is  the projectivization of the normal bundle of $\cC$ in $X_K$.     
This normal bundle can be obtained by restricting the equations of $X_K$ in $\PP\Delta_K$ to first order at a point $[a^4]\in \cC$: for $[a^4+\delta a^4+\delta c_1\otimes b_1+ \delta c_2\oplus b_2]$ this yields 
$$0=Q(a^4,\delta a^4), \qquad  \delta c_1\lrcorner a^4 = \delta c_2\lrcorner a^4 = 0.$$  
The first equation simply means that $\delta a^4$ is tangent to $\cC$ at $a^4$; the other equations are verified if and only if $c_1$ and $c_2$ are divisible by $a$. So up to twist, the normal bundle at $a^4$ is $aS^2A\otimes B\subset S^3A\otimes B$. Therefore the image of $E$ in $\PP(S^3A\otimes B)$ must be the union of these linear spaces, a birational image of $\PP(A)\times \PP(S^2A\otimes B)$, whose equation is given by the resultant of two binary cubics. By Sylvester's formula this is the determinantal sextic in $\PP(S^3A\otimes B)$ defined by 
 $$\det \begin{pmatrix} 
 \alpha_0 & \alpha_1 & \alpha_2 & \alpha_3 & 0&0\\
 \beta_0 & \beta_1 & \beta_2 & \beta_3 & 0&0\\
 0 & \alpha_0 & \alpha_1 & \alpha_2 & \alpha_3&0 \\
 0 & \beta_0 & \beta_1 & \beta_2 & \beta_3 &0\\
 0&0 & \alpha_0 & \alpha_1 & \alpha_2 & \alpha_3 \\
 0&0 & \beta_0 & \beta_1 & \beta_2 & \beta_3 
 \end{pmatrix}=0,$$
 which is indeed a cubic polynomial in the Pl\"ucker coordinates.
In particular we deduce:

\begin{lemma} Given two binary cubics $c_1$ and $c_2$, $C(c_1\wedge c_2)$ is their resultant. Given a binary quartic $\kappa$, $c(\kappa)$ is its $J$-invariant. \end{lemma}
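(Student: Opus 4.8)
The plan is to identify $C$ and $c$ by a degree count, the birational geometry encoded in the diagram above, and one explicit evaluation. First I would record bidegrees. Since $\mathrm{ap}$ is given by the maximal minors of the $4\times 5$ matrix $M(c_1,c_2)$, each coefficient of $\mathrm{ap}(c_1\wedge c_2)$ is bihomogeneous of bidegree $(2,2)$ in $(c_1,c_2)$; as the components of $\mathrm{AP}$ are quadratic, the coefficients of $\mathrm{AP}(\mathrm{ap}(c_1\wedge c_2))$ then have bidegree $(4,4)$, so that $C(c_1\wedge c_2)$ is bihomogeneous of bidegree $(3,3)$. Likewise $\mathrm{AP}(\kappa)$ has coefficients quadratic in those of $\kappa$, so $\mathrm{ap}(\mathrm{AP}(\kappa))$ has coefficients of degree four, and $c(\kappa)$ is homogeneous of degree three in the coefficients of $\kappa$. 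These are precisely the degrees of the resultant $\mathrm{Res}(c_1,c_2)$ of two binary cubics, and of the $J$-invariant of a binary quartic.

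Next I would invoke the geometry. The polynomial map $\mathrm{AP}\circ\mathrm{ap}$ equals $\omega\mapsto C(\omega)\,\omega$; since $\mathrm{AP}$ is cut out by quadrics vanishing on its indeterminacy locus $\cC$, this map vanishes at every $\omega$ with $\mathrm{ap}(\omega)\in\cC$. By the diagram, $\mathrm{ap}$ contracts the resultant hypersurface $\cR$ onto $\cC$, so $C$ vanishes on $\cR$ (as $\omega=c_1\wedge c_2$ is nonzero there). Now $\cR$ is irreducible, being a birational image of $\PP(A)\times G(2,S^2A)$, and has bidegree $(3,3)$; hence $\mathrm{Res}\mid C$ and, by equality of bidegrees, $C=\lambda\,\mathrm{Res}$ with $\lambda\ne 0$ (the composition $\mathrm{AP}\circ\mathrm{ap}$ being generically the identity). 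Symmetrically, $\mathrm{ap}\circ\mathrm{AP}$ equals $\kappa\mapsto c(\kappa)\,\kappa$ and vanishes on the divisor $Sec(\cC)$ contracted by $\mathrm{AP}$; but $Sec(\cC)$, the secant variety of the rational normal quartic, is the irreducible cubic hypersurface cut out by the catalecticant determinant, which is a scalar multiple of the $J$-invariant. Hence $c=\mu\,J$ with $\mu\ne 0$. (For the $J$-invariant one may alternatively note that the space of degree-three $SL(A)$-invariants of a binary quartic is one-dimensional.)

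Finally I would fix the scalars $\lambda,\mu$ by a single evaluation, taking $\kappa=(xy)^2$, whose plane of apolar cubics is $\langle x^3,y^3\rangle$: feeding this pair into the explicit matrices for $\mathrm{ap}$ and $\mathrm{AP}$ and comparing with $\mathrm{Res}(x^3,y^3)$ and $J((xy)^2)$ yields $\lambda=\mu=1$ for these normalizations. The only real care needed is bookkeeping: one must use throughout the normalizations of $\mathrm{AP}$ and $\mathrm{ap}$ coming from the displayed matrices together with the induced identification $\wedge^2(S^3A)\simeq S^4A\oplus\CC$; no computation beyond what precedes is required, and this consistency of normalizations is the main (and mild) obstacle.
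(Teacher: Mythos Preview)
Your argument is correct, but it follows a different route from the paper's. The paper deduces that $C$ is the resultant by working inside $\PP(S^3A\otimes B)$ and using the geometry of the spinor section $X_K$: the sextic $F=(C=0)$ is identified as the image of the exceptional divisor of $Bl_\cC X_K$, and by computing the normal bundle $N_{\cC/X_K}$ at $[a^4]$ (which turns out to be $aS^2A\otimes B$) this image is seen to be exactly the locus where $c_1,c_2$ share a root, hence the resultant sextic. By contrast, you stay entirely within the birational picture between $\PP(S^4A)$ and $G(2,S^3A)$: you use that $\mathrm{ap}$ contracts $\cR$ to $\cC$ and that $\mathrm{AP}$ is defined by $I_\cC(2)$ to conclude $C|_\cR=0$, then match degrees. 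Your treatment of $c$ via $Sec(\cC)$ and the catalecticant is the natural dual statement, and your parenthetical remark that the degree-three $SL(A)$-invariants of a binary quartic are one-dimensional is in fact the quickest way to finish. What your approach buys is self-containment and symmetry: it needs nothing about $X_K$ or the spinor tenfold, and handles $C$ and $c$ by the same mechanism. What the paper's approach buys is that the normal bundle computation is independently needed for the birational description of $X_K$ that follows, so the lemma falls out as a byproduct. One small comment: your appeal to ``bidegree $(3,3)$'' is really just the degree-$3$ statement in Pl\"ucker coordinates, and the final normalization step is a matter of convention (the paper does not pin it down either), so you could safely drop the explicit evaluation at $(x^3,y^3)$.
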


The diagram below summarizes the situation. 

Beware that the map $\eta$ from $\tilde{X}_K$ to $\PP(S^3A\otimes B)$ is only generically finite; it has positive dimensional fibers over the pre-image $\Sigma'$ of $\Sigma$, which is no longer smooth but a birational, singular image of $\PP(S^2A)\times\PP(A\otimes B)\simeq \PP^2\times\PP^3$. In fact the projection from $\PP(S^3A\otimes B)$ to $G(2,S^3A)$ is only defined outside the Segre product 
$\PP(S^3A)\times \PP(B)=Sing(\Sigma')$ (which is also embedded in $X_K$). Outside this locus, its fibers are 
complements of smooth quadric surfaces in $\PP^3$. 
The morphism $\eta$ factorizes through the double cover $\hat{\PP}$ of $\PP(S^3A\otimes B)$ branched over the sextic hypersurface defined by the resultant. The resulting birational morphism $\hat{\eta} : \tilde{X}_K\lra \hat{\PP}$ has non-trivial fibers over $\Sigma'$: conics outside the singular locus, and the union of three lines over the codimension one singular locus. This means that $\hat{\PP}$ is not normal; but
$\hat{\eta}$ has to lift to the normalization $\breve{\PP}$, and the lift $\breve{\eta}$ is an extremal contraction of $\tilde{X}_K$.
$$\xymatrix{
& & \tilde{X}_K\ar[ld]_{Bl_\cC}\ar[rd]^{2:1} & & \\
\cC\subset \hspace{-1.9cm}& X_K\ar@{.>}[rr]\ar@{.>}[dd]  & & \PP(S^3A\otimes B)\ar@{.>}[dd] & \hspace{-1.1cm}\supset\Sigma' \\
& & \tilde{\PP}\ar[ld]_{Bl_\cC}\ar[rd]^{Bl_\Sigma} & & \\
\cC\subset \hspace{-1.2cm} & \PP(S^4A)\ar@{.>}[rr] & & G(2,S^3A) & \hspace{-1.5cm}\supset\Sigma
}$$

\medskip\noindent
{\it Remark.} Note that $S^3A\otimes B$ is a Vinberg representation coming from the affine $G_2$; it admits two independent invariants of degree $2$ and $6$, so there is a pencil of invariant sextics. This is in agreement with Newstead's observation that these invariants are induced by the two fundamental invariants $I$ and $J$ for binary quartics.


\medskip
Consider an extendable $\PP^3$ in $X_K$. It is contained in  a unique $\PP^4$ of the spinor tenfold $X$. This $\PP^4$ defines a codimension two oversection $X_H\supset X_K$, which is obviously special. Hence a map from $F_3(X_K)$ to $c_K\simeq \PP(A)$, whose fibers are lines by Proposition \ref{special}. We can be more precise:

\begin{prop} The Hilbert scheme  $F_3^{ext}(X_K)\simeq \PP(A)\times \PP(B)$, the three-dimensional space defined by $[a]\in\PP(A)$ and  $[b]\in\PP(B)$ being  $\PP^3_{a,b}=\PP\langle a^4, aS^2A\otimes b\rangle.$
\end{prop}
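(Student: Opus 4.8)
The plan is to identify the extendable $\PP^3$'s in $X_K$ with the families of four-planes of the spinor tenfold that happen to lie in $\PP(\Delta_K)$, and to match these with the linear spaces $\PP\langle a^4, aS^2A\otimes b\rangle$ directly. First I would recall, from Proposition \ref{linear-spaces}(ii), that extendable $\PP^3$'s of $X$ are parametrized by the flag variety $OF(1,5^\vee,10)$, i.e.\ by pairs $(v\in U_\delta)$ with $[U_\delta]\in X^\vee$; equivalently, an extendable $\PP^3\subset X$ is the projectivization of a hyperplane $W\subset U$ in the five-dimensional space $U$ whose projectivization is the $\PP^4$ associated to the pure spinor $\delta\in\Delta_-$. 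For such a $\PP^3$ to lie in $X_K=X\cap\PP(\Delta_K)$, we need $W\subset\Delta_K=K^\perp$; recalling that the $\PP^4$ associated to $\delta$ is the image of the Clifford map $v\mapsto v\delta$, the condition becomes $\langle W\delta,K\rangle=0$, i.e.\ $W\subset\{v\in V_{10}:\langle v\delta,k\rangle=0\ \forall k\in K\}=\{v: v\delta\perp K\}$.

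Next I would make this explicit in the model $V_{10}=S^4A\oplus\CC\oplus A\otimes B$, $K=S^2A\subset\Delta_-$. For a nonzero $a\in A$, we already noted (in the discussion after (\ref{XKequations})) that $a^3A\oplus a\otimes B$ is a four-dimensional isotropic subspace of $V_{10}$, hence lies in a unique maximal isotropic subspace $U^+_a$ from the $X$-family and $U^-_a$ from the $X^\vee$-family; the pure spinor direction in $\Delta_-$ cut out by $U^-_a$ is the one whose associated $\PP^4$ in $X$ meets $X_K$ along our $\PP^3$. I would check that the $\PP^4$ of $X$ attached to $a$ is exactly $\PP(a^3A\oplus a\otimes B)$ sitting inside $\PP(\Delta_+)$ via the identification $\Delta_+\supset S^4A\oplus A\otimes B$: indeed $a^3A\subset S^4A$ and $a\otimes B\subset A\otimes B$, so this $\PP^4$ lies in $\PP(\Delta_K)$ automatically, and by Proposition \ref{linear-spaces}(iii) it is a maximal $\PP^4$ of $X$. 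Its hyperplanes are the $\PP^3$'s $\PP\langle a^4, aS^2A\otimes b\rangle$ obtained by deleting one direction; they are extendable by construction. Restricting the equations (\ref{XKequations}) of $X_K$, one sees conversely that a point $[\kappa+c_1\otimes b_1+c_2\otimes b_2]$ with $\kappa\in a^3A$ and $c_i$ divisible by $a$ (so $c_1\wedge c_2\in aS^2A\wedge aS^2A$, which maps to $0$ under $Q$ up to the $S^4A$-part — this is exactly the computation of the normal bundle $aS^2A\otimes B$ done above, showing membership in $X_K$).

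Then I would prove there are no other extendable $\PP^3$'s. By Proposition \ref{auto} and the local computations, a $\PP^3$ of $X$ lying in $X_K$ and extendable corresponds to a pure spinor $\delta\in\Delta_-$ whose $\PP^4$ meets $\PP(\Delta_K)$ in a $\PP^3$; such pure spinors are governed by the condition $\dim(U_\delta\cap\{v:v\delta\perp K\})\geq 4$, which, by the $SL(A)\times SL(B)$-equivariance of the whole picture, forces the corresponding $\PP^4$ to be an $SL(B)$-translate-free, $SL(A)$-highest-weight-type object — concretely, the only pure spinors in $\Delta_-$ whose $\PP^4$ is not transverse to $\PP(\Delta_K)$ are the $a^3A\oplus a\otimes B$ above, parametrized by $[a]\in\PP(A)$. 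This matches the fact from Proposition \ref{linear-spaces} that $F_3^{ext}(X_K)$ maps to $c_K\simeq\PP(A)$ with one-dimensional fibers, and the fiber over $[a]$ is the $\PP^3$ of hyperplanes of the fixed $\PP^4_a$, i.e.\ a $\PP(B)$ (a hyperplane $W\subset a^3A\oplus a\otimes B$ meeting $S^4A$ in the full $a^3A$ is cut out by a linear form on $a\otimes B\cong B$, giving a point of $\PP(B^\vee)\cong\PP(B)$). So $F_3^{ext}(X_K)$ is the total space of a $\PP(B)$-bundle over $\PP(A)$; since the $\PP^4_a$ vary in the $SL(A)$-orbit $v_4\PP(A)$ and the construction of the $\PP^3$ inside only involves the independent datum $[b]\in\PP(B)$, the bundle is trivial, giving $F_3^{ext}(X_K)\simeq\PP(A)\times\PP(B)$.

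\textbf{Main obstacle.} The delicate point is the completeness statement — ruling out extendable $\PP^3$'s not of the displayed form. The clean way is to lean on Proposition \ref{linear-spaces}(ii): $F_3^{ext}(X)=OF(1,5^\vee,10)$, so $F_3^{ext}(X_K)$ is the locus of flags $(v\in U_5)$ with $[U_5]\in X^\vee$ and $v\in W$ for some hyperplane $W\subset$ (the $\PP^4$ of $X$) contained in $\Delta_K$; one must show this locus is cut out, inside $OF(1,5^\vee,10)$, exactly by the Veronese curve $v_4\PP(A)\subset X^\vee$ together with its tautological $\PP$-bundle. I would do this by the same $\gamma$-computation used for $c_K$ in case (1): the pure spinors $\delta\in\PP(K^{\perp\perp})$... more directly, by identifying which $[U_5]\in X^\vee$ satisfy $\dim(\{v\in U_5: v\delta\in K^\perp\}... )$, which after choosing the explicit representatives reduces to a rank computation on an explicit matrix with entries linear in $[a]\in\PP(A)$. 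I expect this to be a short but slightly fiddly linear-algebra verification; everything else is bookkeeping already set up in the preceding pages.
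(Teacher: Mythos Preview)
Your overall strategy — locate the $\PP^4$'s of $X$ that cut $\PP(\Delta_K)$ in a $\PP^3$, then parametrize those hyperplanes — is sound, and it is essentially what the paper does. But there is a genuine error in your execution that would make the argument fail as written.

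You claim that ``the $\PP^4$ of $X$ attached to $a$ is exactly $\PP(a^3A\oplus a\otimes B)$ sitting inside $\PP(\Delta_+)$ via the identification $\Delta_+\supset S^4A\oplus A\otimes B$.'' This is wrong on two counts. First, $\Delta_+\simeq S^2A\oplus S^4A\oplus S^3A\otimes B$ contains \emph{no} copy of $A\otimes B$; the space $a^3A\oplus a\otimes B$ is the isotropic $4$-plane in $V_{10}=S^4A\oplus\CC\oplus A\otimes B$ mentioned just after (\ref{XKequations}), not a linear subspace of $\Delta_+$. Second, it is only $4$-dimensional, so even if it did sit in $\Delta_+$ it would give a $\PP^3$, not a $\PP^4$. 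Your subsequent hyperplane discussion (``a hyperplane $W\subset a^3A\oplus a\otimes B$ meeting $S^4A$ in the full $a^3A$\ldots'') therefore has no content: you are taking hyperplanes in the wrong space.

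The paper sidesteps this by never writing down the $\PP^4$ inside $\Delta_+$ explicitly. Instead it uses the structural observation recorded just before the Proposition: an extendable $\PP^3\subset X_K$ lies in a unique $\PP^4\subset X$, and this $\PP^4$ singles out a plane $H\subset K$ such that the codimension-two oversection $X_H\supset X_K$ contains a $\PP^4$, hence is special. This gives the map $F_3^{ext}(X_K)\to c_K\simeq\PP(A)$; by Proposition~\ref{special} the fiber over $[H]$ is the line of $\PP^4$'s contained in $X_H$, which one checks is $\PP(B)$. The explicit description $\PP^3_{a,b}=\PP\langle a^4, aS^2A\otimes b\rangle$ then matches the normal-bundle computation already performed (the directions at $[a^4]\in\cC$ normal to $\cC$ inside $X_K$ are $aS^2A\otimes B$). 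If you want to carry out your direct approach, you need to actually compute the image of the Clifford map $V_{10}\to\Delta_+$, $v\mapsto v\delta$, for the relevant pure spinor $\delta\in\Delta_-$, and identify the resulting $5$-space in $S^2A\oplus S^4A\oplus S^3A\otimes B$; only then can you intersect with $\Delta_K$ and read off the $\PP^3$.
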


\begin{coro}
Two extendable $\PP^3$'s can intersect only along $\cC$ or along $\PP(S^3A)\times \PP(B)$, the singular locus of $\Sigma'$. A generic point of $Sing(\Sigma')$ belongs to finitely many extendable $\PP^3$'s (three in general), but any point of $\cC$ 
is contained in a one-dimensional family of extendable $\PP^3$'s. In particular the curve $\cC$ 
is uniquely defined in $X_K$.
\end{coro}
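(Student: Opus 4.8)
The plan is to work directly with the explicit description of $F_3^{ext}(X_K)$ furnished by the preceding Proposition: every extendable $\PP^3$ is of the form $\PP^3_{a,b}=\PP\langle a^4,\, aS^2A\otimes b\rangle$ for a pair $([a],[b])\in\PP(A)\times\PP(B)$. The crucial structural observation is that the two blocks $\langle a^4\rangle$ and $aS^2A\otimes b$ sit in the two complementary summands $S^4A$ and $S^3A\otimes B$ of $\Delta_K=S^4A\oplus(S^3A\otimes B)$. Since for a direct sum $V_1\oplus V_2$ and subspaces $U_i, U_i'\subset V_i$ one has $(U_1\oplus U_2)\cap(U_1'\oplus U_2')=(U_1\cap U_1')\oplus(U_2\cap U_2')$, the intersection $\PP^3_{a,b}\cap\PP^3_{a',b'}$ can be computed one summand at a time.

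First I would carry out the resulting case analysis. In $S^4A$, the line $\langle a^4\rangle$ meets $\langle a'^4\rangle$ if and only if $[a]=[a']$ in $\PP(A)$ (fourth powers are proportional only when the linear forms are). In $S^3A\otimes B$, the subspaces $aS^2A\otimes b$ and $a'S^2A\otimes b'$ lie in $S^3A\otimes\langle b\rangle$ and $S^3A\otimes\langle b'\rangle$ respectively, hence can meet nontrivially only when $[b]=[b']$; in that case the intersection is $(aS^2A\cap a'S^2A)\otimes b$, which is $aS^2A\otimes b$ when $[a]=[a']$ (the same $\PP^3$) and the plane $aa'A\otimes b$ when $[a]\ne[a']$. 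Assembling these facts, two distinct extendable $\PP^3$'s intersect as follows: if $[a]=[a']$ and $[b]\ne[b']$ then $\PP^3_{a,b}\cap\PP^3_{a',b'}=\{[a^4]\}\subset\cC$; if $[a]\ne[a']$ and $[b]=[b']$ then the intersection is the line $\PP(aa'A\otimes b)$, which lies in $\PP(S^3A)\times\{[b]\}\subset\PP(S^3A)\times\PP(B)=Sing(\Sigma')$; and if $[a]\ne[a']$ and $[b]\ne[b']$ the intersection is empty. This proves the first assertion.

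Next I would count extendable $\PP^3$'s through a prescribed point. A point $[c\otimes b_0]$ of the Segre $Sing(\Sigma')$ lies on $\PP^3_{a,b}$ exactly when $c\otimes b_0\in aS^2A\otimes b$, which forces $[b]=[b_0]$ and $a\mid c$; for a general binary cubic $c$, having three distinct roots, there are exactly three admissible $[a]$, hence three extendable $\PP^3$'s (the count only decreases, and stays finite, when $c$ has a multiple root). A point $[a_0^4]$ of $\cC$ has vanishing $S^3A\otimes B$-component, so it lies on $\PP^3_{a,b}$ exactly when $a_0^4\in\langle a^4\rangle$, i.e. $[a]=[a_0]$ with $[b]$ unconstrained; thus the extendable $\PP^3$'s through $[a_0^4]$ are precisely the members of the one-parameter family $\{\PP^3_{a_0,b}\}_{[b]\in\PP(B)}$. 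Running the same bookkeeping at an arbitrary point $[v+w]$ of $X_K$ (with $v\in S^4A$, $w\in S^3A\otimes B$) shows that it lies on a positive-dimensional family of extendable $\PP^3$'s precisely when $v$ is a nonzero fourth power and $w=0$ — that is, precisely when it lies on $\cC$. Since $F_3^{ext}(X_K)$ together with its incidence correspondence with $X_K$ is an intrinsic invariant of $X_K$, this distinguished locus, namely $\cC$, is uniquely determined by $X_K$.

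All the computations above are elementary linear algebra together with unique factorization of binary forms, so I do not anticipate a serious obstacle; the one point requiring genuine care is checking that the case analysis is exhaustive — in particular tracking the degenerate binary forms (cubics that are perfect cubes or have a double root, and vectors $v\in S^4A$ that fail to be fourth powers) so that the qualifications "three in general" and "in particular the curve $\cC$ is uniquely defined" are asserted with the correct generality.
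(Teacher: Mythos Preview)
Your proof is correct and is exactly the argument the paper has in mind: the Corollary is stated without proof, as an immediate consequence of the preceding Proposition's explicit parametrization $\PP^3_{a,b}=\PP\langle a^4,\,aS^2A\otimes b\rangle$, and you have filled in precisely the elementary linear-algebra and unique-factorization details that this entails. Your final step, characterizing $\cC$ intrinsically as the locus of points lying on a positive-dimensional family of extendable $\PP^3$'s, is the right way to read the ``in particular''.
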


Along similar lines, one can easily prove the following statement:

\begin{prop} The Hilbert scheme  $F_3^{next}(X_K)\simeq  \PP(B)$, the three-dimensional space defined by   $[b]\in\PP(B)$ being  $\PP^3_{b}=\PP(S^3A\otimes b).$
\end{prop}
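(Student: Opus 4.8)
The plan is to identify, for a fixed $[b]\in\PP(B)$, which $\PP^3$'s of the form $\PP(S^3A\otimes b)$ are contained in the section $X_K$, and to check that conversely every non-extendable $\PP^3$ in $X_K$ arises this way. First I would recall from Proposition \ref{linear-spaces}(ii) that a non-extendable (``maximal'') $\PP^3$ in the spinor tenfold $X$ is parametrized by a point of $OG(2,10)$, i.e.\ corresponds to an isotropic $2$-plane $W\subset V_{10}$; explicitly, it is the projectivization of the kernel of the Clifford multiplication $W\otimes\Delta_+\to\Delta_-$, or equivalently the set of pure spinors $\delta$ with $W\subset U_\delta$. So the task is to determine which isotropic $2$-planes $W$ have the property that the associated $\PP^3$ lies inside $\PP(\Delta_K)$, i.e.\ is annihilated by $K=S^2A$ under the pairing $\Delta_+\times\Delta_-\to\CC$.

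Next I would use the explicit decomposition $V_{10}=S^4A\oplus\CC\oplus A\otimes B$ and $\Delta_\pm=S^2A\oplus S^4A\oplus S^3A\otimes B$. Fixing $[b]\in\PP(B)$, consider the isotropic line $\ell_b:=A\otimes b\subset A\otimes B$ — wait, $A\otimes b$ is two-dimensional and totally isotropic inside the hyperbolic space $A\otimes B$ (since the quadratic form on $A\otimes B$ is $q_A\otimes q_B$ and $b$ is isotropic for... actually $q_B$ is nondegenerate on the $2$-space $B$, so $b\ne 0$ is not isotropic; rather one takes a suitable isotropic $2$-plane). The correct choice is the isotropic plane $W_b=a_0 A$-type built from one factor; concretely, from the construction of the apolar model, $a^3A\oplus a\otimes B$ is isotropic $4$-dimensional, and for the non-extendable $\PP^3$'s one expects $W_b$ to be the $2$-plane $\CC a^3\oplus \CC(a\otimes b)$ varying appropriately, but the cleanest route is simply to compute the Clifford action directly using the basis $(b_1,b_2)$ of $B$ and the coordinates $\kappa,c_1,c_2$ already set up before equation (\ref{XKequations}). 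Then I would verify that the $\PP^3$ defined by $W$ lies in $X_K$ precisely when the equations (\ref{XKequations}) restricted to the linear subspace $S^3A\otimes b$ are identically satisfied: on $S^3A\otimes b$ one has $\kappa=0$ and $c_1\wedge c_2=0$ automatically (the wedge of two elements of the same line $S^3A\otimes b$... no — rather, if $p=[c\otimes b]$ then writing $b=b_1$ say, $c_2=0$ and $c_1=c$, so $c_1\wedge c_2=0=Q(0)$ and $c_1\lrcorner\kappa=c_2\lrcorner\kappa=0$), hence $\PP(S^3A\otimes b)\subset X_K$; the point of the lemma is that these are the only non-extendable $\PP^3$'s.

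To prove that completeness, I would argue as in the previous two propositions: a non-extendable $\PP^3$ corresponds to an isotropic $W\subset V_{10}$ with $W^\perp\supset$ the span of the image; the condition of lying in $\PP(\Delta_K)$ forces $W$ to be a specific $SL(A)\times SL(B)$-translate, and one computes the variety of such $W$. Concretely, $F_3^{next}(X_K)$ is a closed subvariety of $OG(2,10)$, it contains the $SL(A)\times SL(B)$-orbit of $\{\PP(S^3A\otimes b):[b]\in\PP(B)\}\cong\PP(B)$, and by a dimension/tangent-space count at one point — restricting the equations of $X_K$ to first order along the $\PP^3$, exactly as was done for the normal bundle of $\cC$ — one checks there is no room for deformation transverse to $\PP(B)$. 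Alternatively, and perhaps more cleanly, one observes that a non-extendable $\PP^3\subset X_K$ not of this form would, together with $\cC$ and the extendable $\PP^3$'s already classified, over-populate the relevant incidence varieties in a way contradicting the orbit structure; but the direct tangent-space computation is safest.

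The main obstacle I anticipate is the completeness direction: showing that $\PP(S^3A\otimes b)$ for $[b]\in\PP(B)$ exhausts all non-extendable $\PP^3$'s, rather than merely exhibiting these as one family. This requires either a careful global analysis of which isotropic $2$-planes $W\subset V_{10}$ yield a $\PP^3$ inside $\PP(\Delta_K)$ — a linear-algebra computation with the Clifford action in the $S^kA\otimes B$-model — or an infinitesimal rigidity argument along a single such $\PP^3$; the rest (that each $\PP(S^3A\otimes b)$ is indeed a non-extendable $\PP^3$ contained in $X_K$) is a short direct verification using (\ref{XKequations}), and the identification $F_3^{next}(X_K)\cong\PP(B)$ is then immediate since distinct $[b]$ give distinct subspaces $S^3A\otimes b$.
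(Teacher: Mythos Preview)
The paper does not give an explicit proof of this proposition; it only says ``Along similar lines, one can easily prove the following statement,'' referring back to the argument for $F_3^{ext}(X_K)$. So there is no detailed paper-proof to compare against, only the template of the preceding proposition.

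Your plan is in the right spirit but the write-up is muddled in places and leaves two genuine steps unaddressed. First, the digression trying to identify the isotropic $2$-plane $W\subset V_{10}$ associated to $\PP(S^3A\otimes b)$ goes off the rails (as you notice yourself, $b\in B$ is not isotropic for the form on $A\otimes B$); this detour is unnecessary once you decide, correctly, to verify containment directly from the equations (\ref{XKequations}). That verification is fine: for $p=[c\otimes b]$ write $b=\beta_1b_1+\beta_2b_2$, so $(\kappa,c_1,c_2)=(0,\beta_1c,\beta_2c)$ and all three equations hold trivially. Second, you never check that these $\PP^3$'s are of the \emph{non-extendable} type in $X$. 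This is a real step: a priori $\PP(S^3A\otimes b)$ could extend to a $\PP^4$ in $X$ (not in $X_K$), which would place it in $F_3^{ext}$ rather than $F_3^{next}$. One clean way is to observe that the already-classified extendable $\PP^3$'s in $X_K$ are exactly the $\PP\langle a^4,aS^2A\otimes b\rangle$, each meeting the quartic curve $\cC$ at $[a^4]$, whereas $\PP(S^3A\otimes b)$ is disjoint from $\cC$; alternatively one can exhibit the corresponding point of $OG(2,10)$ directly.

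For completeness, your proposed strategies (global linear-algebra on $OG(2,10)$, or an infinitesimal rigidity count) are both viable, but given the model at hand the most economical route ``along similar lines'' is the following. Any linear $\PP^3\subset X_K$ lies in $\PP(\Delta_K)=\PP(S^4A\oplus S^3A\otimes B)$; analyse its projection to $\PP(S^4A)$. From the equation $c_1\wedge c_2=Q(\kappa)$ one sees that on a linear subspace $L$ the image in $S^4A$ is constrained to the locus where $Q$ has rank $\le 1$, forcing the $S^4A$-component to be either zero or a single point of $\cC$. The second case gives precisely the extendable $\PP^3_{a,b}$ already found; the first case forces $L\subset S^3A\otimes B$, and then $c_1\wedge c_2\equiv 0$ on $L$ means $L$ maps to a single point of $G(2,S^3A)$ or has rank one---in either case $L=S^3A\otimes b$ for some $[b]\in\PP(B)$ once $\dim L=4$. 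This is presumably what the authors had in mind, and it avoids both the $OG(2,10)$ bookkeeping and the tangent-space computation.
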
 


\subsection{A model for the special smooth section}
Now consider a two dimensional space $U$, and endow the direct sum
$$
V_{10}=S^4U \oplus S^4U^{\vee}
$$
with the non-degenerate quadratic form defined by the natural pairing between the two factors 
$E=S^4U$ and $F=S^4U^\vee$. Since $E$ (as well as $F$) is isotropic, the half-spin representations can be obtained as $\Delta_+=\wedge^+E$
and $\Delta_-=\wedge^-E$, the even and odd parts of the exterior algebra of $E$. Since $S^4U$ has dimension five, we have $\wedge^3(S^4U)\simeq \wedge^2(S^4U)\simeq S^6U\oplus S^2U$ and $\wedge^4(S^4U)\simeq S^4U$ as $SL(U)$-modules. We deduce that the  actions of $SL(U)$ on the half-spin modules $\Delta_{\pm}$ decompose as
$$
\Delta_{\pm}=\mathbb{C} \oplus (S^2U \oplus S^6U) \oplus S^4U. 
$$
Consider $K=S^2U \subset \Delta_{-}$, hence
$$
\Delta_{K}=\mathbb{C} \oplus S^6U \oplus S^4U.
$$

\begin{lemma}
The associated section $X_{K}$ is smooth and very special.
\end{lemma}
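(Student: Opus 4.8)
The plan is to read everything off the exterior-algebra model. Write $E=S^4U$ and $F=S^4U^\vee$, so that $\Delta_+=\wedge^+E$ and $\Delta_-=\wedge^-E$, with $e\in E$ acting by wedging and $f\in F$ by contraction, the duality $\Delta_+\times\Delta_-\to\CC$ pairing $\wedge^jE$ with the complementary $\wedge^{5-j}E$. As an $SL(U)$-module $\Delta_-$ is multiplicity-free, and its summand $S^2U$ lies inside $\wedge^3E$ (since $\wedge^1E=S^4U$ and $\wedge^5E=\CC$); hence necessarily $K=S^2U\subset\wedge^3E$. By Lemma \ref{singular-section}, $X_K$ is smooth exactly when the quadratic map $\gamma|_K\colon K\to V_{10}$ obtained from (\ref{gamma}) does not vanish away from the origin, and by \cite[Proposition 7.7]{kuz_spin} it is very special exactly when $\gamma(K)$ spans a $\PP^4$ contained in the invariant quadric $Q$. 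So the statement reduces to a study of $\gamma|_K$.

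First I would compute $\gamma|_K$. Taking a basis $(e_\alpha)$ of $E$ with dual basis $(f^\alpha)$ of $F$, the polarisation formula $\gamma(\delta,\delta')=\sum_i\langle v_i\delta,\delta'\rangle w_i$ (for dual bases of $V_{10}$) gives
$$\gamma(\delta)=\sum_\alpha\langle e_\alpha\wedge\delta,\delta\rangle\,f^\alpha+\sum_\alpha\langle \iota_{f^\alpha}\delta,\delta\rangle\,e_\alpha .$$
For $\delta\in\wedge^3E$ the first sum vanishes, because $e_\alpha\wedge\delta\in\wedge^4E$ is coupled trivially with $\delta\in\wedge^3E$ under the duality $\Delta_+\times\Delta_-\to\CC$ (which pairs only complementary exterior degrees). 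Hence $\gamma(K)\subset E$, and since $E$ is isotropic, $\gamma(K)$ is automatically contained in $Q$. By $SL(U)$-equivariance, $\gamma|_K$ is induced by an $SL(U)$-morphism $S^2(S^2U)\to E=S^4U$; as $S^2(S^2U)=S^4U\oplus\CC$ and $S^4U$ has no invariant, this morphism kills the $\CC$-summand, so, identifying $S^2U$ with the space of binary quadratic forms, $\gamma(q)=c\,q^2$ for a single scalar $c$.

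The crucial point is that $c\ne 0$. If $c=0$, then every element of $K$ is a pure spinor, so $\PP(K)\simeq\PP^2$ is a plane contained in $X^\vee$, necessarily $SL(U)$-stable since $SL(U)$ acts on $\PP(K)=\PP(S^2U)$. Applying Proposition \ref{linear-spaces} to $X^\vee\simeq OG(5,10)_-$, such a plane is encoded by a flag $(W_2\subset W_5)$ with $[W_5]\in X$; being $SL(U)$-stable, $[W_5]$ must be an $SL(U)$-fixed point of $X$. But $SL(U)$ has a unique fixed point in $X$, namely the pure spinor $1\in\wedge^0E$, whose associated maximal isotropic subspace is $U_1=F=S^4U^\vee$; and $F$ is an irreducible $SL(U)$-module, hence contains no stable $2$-plane $W_2$. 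This contradiction forces $c\ne0$. (Alternatively one just evaluates $\gamma$ on a single explicit element of $K$.) I expect this non-vanishing to be the only step requiring genuine thought; everything else is bookkeeping with the $\wedge E$-model and the decomposition $\Delta_\pm=\CC\oplus(S^2U\oplus S^6U)\oplus S^4U$.

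With $c\ne0$ the conclusion is immediate. Since $\gamma(q)=c\,q^2$ vanishes only for $q=0$, $K$ contains no pure spinor and $X_K$ is smooth by Lemma \ref{singular-section}. Moreover $\gamma(K)=\{c\,q^2:q\in S^2U\}$ has linear span all of $S^4U=E$, because the squares of binary quadratics span $S^4U$; thus $\gamma(K)$ spans $\PP(E)\simeq\PP^4$, which lies in $Q$, and $X_K$ is very special by \cite[Proposition 7.7]{kuz_spin}. Concretely $[q]\mapsto[\gamma(q)]=[q^2]$ realises $\gamma(\PP(K))$ as a (projected) Veronese surface, in accordance with the quoted characterisation; in the notation of the codimension three table, $X_K$ sits in orbit $(6)$.
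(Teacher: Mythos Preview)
Your proof is correct and follows the same approach as the paper: both identify $\gamma|_K$ via $SL(U)$-equivariance as the squaring map $q\mapsto q^2$ into $S^4U\subset V_{10}$, then invoke Lemma~\ref{singular-section} for smoothness and \cite[Prop.~7.7]{kuz_spin} for very-specialness. The only difference is that the paper simply asserts the factorisation through the squaring map, while you supply a (somewhat elaborate) geometric argument that the scalar $c$ is nonzero; as you yourself remark, evaluating $\gamma$ on a single explicit element of $K$ would have settled this more quickly.
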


\begin{proof}
The quadratic map $\gamma:K \rightarrow V_{10}$ factorizes through the map $K=S^2 U \rightarrow S^4 U$ mapping a degree two polynomial to its square. Thus $\gamma$ does not vanish outside the origin, hence $X_{K}$ is smooth. Moreover the linear span of $\gamma(K)$ equals to $S^4U$ which is a maximal isotropic subspace by definition. By \cite[Prop 7.7.(3)]{kuz_spin}, this means that the smooth section $X_K$ is very special. 
\end{proof}

The equations of $X_K \subset \PP(\Delta_{K})$ are the following. Consider the  $SL(U)$-covariant   $$\pi: S^2(S^6U)\lra S^4U,$$ which is unique up to scalar. 
Then $[\delta_0, \delta_6, \delta_4]$ belongs to $X_K$ if and only if 
\begin{equation}\label{eq-veryspecial}
\delta_4\lrcorner \delta_6=0,  \qquad \pi(\delta_6)=\delta_0\delta_4
\end{equation}
(for a suitable normalization).
Note in particular that $\PP(S^4U)\simeq\PP^4$ is contained in $X_K$, confirming that this is a very special section \cite[Prop 7.7.(1)]{kuz_spin}. 

Equations (\ref{eq-veryspecial}) show that if $\delta_0\ne 0$, $\delta_4$ is uniquely determined by $(\delta_0,\delta_6)$. So  $X_{K}$ is the image of the birational map:
$$
\psi: \mathbb{P}(\mathbb{C} \oplus S^6U) \dashrightarrow X_{K} \subset \mathbb{P}(\Delta_{K})
$$
$$
[z:p] \dashrightarrow [z^2:zp:\pi(p)]
$$

Denote by $Z \subset \mathbb{P}(S^6U)$ the zero-locus of $\pi$, which is also the indeterminacy locus of $\psi$. In fact $Z$ is a del Pezzo threefold of degree $5$, as follows from the facts that $\wedge^2(S^4U)=S^6U\oplus S^2U$: indeed this implies that $Z$ is contained in the linear section of $\PP(S^6U)$ of $\PP(\wedge^2(S^4U))$, in which its equations coincide with the restrictions of the Pl\"ucker equations; 
hence  $$Z=G(2,S^4U)\cap \PP(S^6U).$$ 

\noindent {\it Remark.} According to Mukai \cite[Proposition 10]{mukai-fano3folds}, $Z$ is quasi-homogeneous under the action of $SL_2$, and can be described as the compactification of $SL_2/\Gamma$, 
for $\Gamma\simeq S_4$ embedded in $SO_3$ as the octahedral group. We can see $Z$ as the orbit closure in $\PP(S^6U)$ of the binary sectic $ab(a^4+b^4)$. That this binary sextic has symmetries given by the octahedral group was first observed by Bolza in 1887.

\medskip
Note that $\psi^{-1}$ is defined by the projection from $\PP(S^4U)$. Following \cite{kuz_spin}, there is a diagram
\[ \xymatrix{E_Z\ar[d]\subset\hspace*{-5mm}  & Bl_{Z}(\mathbb{P}(S^4U))\ar[d] & \simeq &  Bl_{\mathbb{P}(S^4U)}(X_{K})\ar[d] &  \hspace*{-5mm}\supset\;\; F_\PP\ar[d]
\\ Z\subset\hspace*{-5mm} & \mathbb{P}(\mathbb{C} \oplus S^6U)\ar@{.>}[rr]^\psi & & \PP(\Delta_K) &\hspace*{-1cm}\supset \;\;\mathbb{P}(S^4U) } \]
where $E_Z, F_\PP$ are the exceptional divisors. 
Then $E_Z$ is mapped birationally onto the singular hyperplane section $D$ of $X_{K}$ defined by $D=\{\delta_{0}=0\}$. The fibers of the projection from $E_Z$ to $Z$ map to  a covering family of $\PP^3$'s on $D$, parametrized by $Z$.

\begin{prop}
The Hilbert scheme of $\PP^3$'s in $X_K$ has three  components.
More precisely, 
$$F_3^{next}(X_K)\simeq Z, \qquad F_3^{ext}(X_K)\simeq \PP(S^4U)\cup F_1(Z),$$
where $F_1(Z)\simeq \PP(S^2U)$ denotes the Fano variety of lines in the del Pezzo threefold.  
\end{prop}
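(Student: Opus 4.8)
The plan follows the same pattern as the Hilbert scheme computations already carried out in this section. By Proposition~\ref{linear-spaces}, every $\PP^3\subset X$ is either \emph{non-extendable}, consisting of the pure spinors whose maximal isotropic subspace contains a fixed isotropic plane $W\subset V_{10}$ (so parametrized by $[W]\in OG(2,10)$), or \emph{extendable}, contained in a unique $\PP^4\subset X$ parametrized by $[\delta]\in X^\vee$, with five-dimensional linear span $\Lambda_\delta=V_{10}\cdot\delta\subset\Delta_+$. A linear $\PP^3\subset X$ lies in $X_K$ iff its four-dimensional span lies in $\Delta_K=K^\perp$, so $F_3(X_K)$ is obtained by imposing this on the two families; the non-extendable $\PP^3$'s give $F_3^{next}(X_K)$, the extendable ones give $F_3^{ext}(X_K)$, which splits further according to whether the ambient $\PP^4$ lies in $X_K$.

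For the non-extendable family, work in the model $V_{10}=E\oplus F$ with $E=S^4U$, $\Delta_\pm=\wedge^\pm E$, so $K=S^2U\subset\wedge^3E$ and $\Delta_K=\wedge^0E\oplus S^6U\oplus\wedge^4E$ (with $S^6U\subset\wedge^2E=S^6U\oplus S^2U$). The span $L_W=\{\delta\in\Delta_+:W\cdot\delta=0\}$ of the pure spinors of maximal isotropics through $W=\langle w_1,w_2\rangle$ has annihilator $W\cdot\Delta_+=w_1\Delta_++w_2\Delta_+\subset\Delta_-$, so $L_W\subset\Delta_K$ iff $K\subset W\cdot\Delta_+$. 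For $W\subset E$ one computes $W\cdot\Delta_+\cap\wedge^3E=W\wedge\wedge^2E$, whose perpendicular in $\wedge^2E$ is the line $\CC\,(w_1\wedge w_2)$; since $K^\perp\cap\wedge^2E=S^6U$, the condition becomes $w_1\wedge w_2\in S^6U$, i.e.\ $[W]\in Z=G(2,E)\cap\PP(S^6U)$, and each such $W$ works. One must still discard isotropic planes $W\not\subset E$: as $\mathrm{Aut}(X_K)=\mathrm{Stab}_{Spin_{10}}(K)$ (Proposition~\ref{auto}) cannot normalize $W$, this is checked by computing $L_W$ on representatives of the finitely many orbits of the parabolic $\mathrm{Stab}(E)\subset Spin_{10}$ on isotropic planes, exhibiting in each case a spinor of $L_W$ with nonzero image in $\Delta_+/\Delta_K\simeq S^2U$. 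These $\PP^3$'s are maximal in $X$ and make up the covering family on $D=\{\delta_0=0\}$ traced by the fibres of $E_Z\to Z$ in the diagram before the statement, so $F_3^{next}(X_K)\cong Z$.

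For the extendable family, a $\PP^4\subset X$ contained in $X_K$ can only be the unique one $\PP(\wedge^4E)=\PP(S^4U)$ ($X_K$ being very special; it is the $\PP^4$ attached to $[e_1\wedge\cdots\wedge e_5]\in X^\vee$), and its hyperplanes form the component $\PP((S^4U)^\vee)\cong\PP(S^4U)$. An extendable $\PP^3\subset X_K$ whose ambient $\PP^4=\PP(\Lambda_\delta)$ is not in $X_K$ is forced to equal $\PP(\Lambda_\delta)\cap\PP(\Delta_K)$ with $\dim(\Lambda_\delta\cap K^\perp)=4$; since $\Lambda_\delta$ has dimension five and $\dim(\Lambda_\delta\cap K^\perp)=2+\dim(K\cap\Lambda_\delta^\perp)$, this is the rank condition $\dim(K\cap\Lambda_\delta^\perp)=2$ on $[\delta]\in X^\vee$, defining a closed $SL(U)$-stable locus $Y\subset X^\vee$. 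The claim is $Y\cong F_1(Z)\cong\PP(S^2U)$: one associates to a line $\ell\subset Z$ an explicit $\PP^4$ of $X$ (equivalently, reads off in the birational model $\tilde X_K\cong Bl_Z\PP(\CC\oplus S^6U)$ which extendable $\PP^3$'s degenerate along $\ell$), checks that this is a bijection over a dense $SL(U)$-orbit of $F_1(Z)\cong\PP^2$, and concludes by equivariance. Together with $F_3^{next}(X_K)\cong Z$, these three closed irreducible families (of dimensions $3$, $4$, $2$) exhaust $F_3(X_K)$ and hence are its components; reducedness along each follows from the expected normal bundle computation.

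The main obstacle is precisely these two "completeness and identification" points — ruling out non-extendable $\PP^3$'s coming from planes $W\not\subset E$, and matching the rank locus $Y$ with the Fano surface of lines $F_1(Z)$ — which are clean in principle but laborious via Proposition~\ref{linear-spaces}. A self-contained alternative that bypasses that classification is to solve directly, in a basis of $U$, for the four-dimensional subspaces of $\Delta_K$ isotropic with respect to the quadrics (\ref{eq-veryspecial}); this one elementary computation yields all three families together with their parametrizations.
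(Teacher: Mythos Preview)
Your approach—classifying via Proposition~\ref{linear-spaces} the two global families of $\PP^3$'s in $X$ and then cutting down to $X_K$—is different from the paper's, and it has a real gap in the non-extendable case. You propose to rule out isotropic planes $W\not\subset E$ by checking representatives of the finitely many $\mathrm{Stab}(E)$-orbits on $OG(2,10)$; but the condition $K\subset W\cdot\Delta_+$ (equivalently $L_W\subset\Delta_K$) is \emph{not} invariant under the parabolic $\mathrm{Stab}(E)$, since that group does not fix $K$. A check on orbit representatives therefore proves nothing. Replacing $\mathrm{Stab}(E)$ by $\mathrm{Aut}(X_K)$ does not rescue the argument either: $\mathrm{Aut}(X_K)$ has dimension $11$ and cannot act with finitely many orbits on the $13$-dimensional $OG(2,10)$. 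The identification $Y\cong F_1(Z)$ for the second extendable component is likewise left as a bare assertion.

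The paper sidesteps both difficulties by exploiting the linear projection $\psi^{-1}:X_K\dashrightarrow\PP(\CC\oplus S^6U)$ from $\PP(S^4U)$. Any $\PP^3\subset X_K$ not contained in $\PP(S^4U)$ projects to a linear subspace; a short argument using that $\psi$ is quadratic together with $F_2(Z)=\emptyset$ forces this image to lie in $Z$ and to have dimension at most one. Over a point of $Z$ one recovers exactly the $\PP^3$ traced by the fibre of $E_Z$; over a line $\ell\subset Z$ one finds, by direct computation from the explicit description of $F_1(Z)\cong\PP(S^2U)$ in \cite{fn}, a unique $\PP^3$ spanned by $\ell$ and a companion line $L'\subset\PP(S^4U)$. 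The extendable/non-extendable split is then read off a posteriori: an extendable $\PP^3$ must meet $\PP(S^4U)$ in a line (via \cite[Prop.~6.1]{kuz_spin}), which singles out the hyperplanes of $\PP(S^4U)$ together with the line-in-$Z$ family. All three components are thus obtained without ever invoking the parametrization of $\PP^3$'s in $X$ by $OG(2,10)$. Your proposed alternative of solving directly in the equations~(\ref{eq-veryspecial}) would also work, but the projection argument organizes that computation conceptually.
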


\begin{proof}
Any $P\simeq \PP^{3}$ in $X_{K}$, not contained in $\mathbb{P}(S^4U)$, must be projected by $\psi^{-1}$ to a linear subspace of $Z$, necessarily of dimension at most one since $F_{2}(Z)= \emptyset$. 

Suppose that $\psi^{-1}(P)$ is a point  $x=[\delta_{6}] \in Z$. A line connecting 
$x$ with $y=[\delta_4]\in\mathbb{P}(S^4U)$ is contained in $X_K$ when  $\delta_{6} \lrcorner \delta_{4}=0$. The union of such lines 
is exactly the $\PP^3$ obtained as the image in $X_K$ of the fiber $E_{x}$ of the $\PP^3$-bundle $E$ over $Z$; it has to coincide with $P$. 

Now suppose that $\psi^{-1}(P)$ is a line $L$ in $Z$.  By \cite{fn}, such lines are parametrized by $\PP(S^2U)$; more precisely, they are of the forms
$L=\mathbb{P}\langle  a^5b,ab^5 \rangle $ or $L=\mathbb{P} \langle a^5b,a^6 \rangle$ for some $a,b\in U$, the first case being generic. Then one calculates that each such line is contained in a unique $\mathbb{P}^{3}$ in $X_{K}$, obtained as the linear span of $L$ with a line $L'$ in $\mathbb{P}(S^4U)$: either $L'=\PP\langle a^4, b^4\rangle$ in the fist case, or  $L'=\PP\langle a^4, a^3b\rangle$ in the second case. 

If $P$ is extendable to some $\PP^4$, say $\widetilde{P}$, the latter must be contained in some $X_{L}$ for a plane $L$ inside $K$. Denote by $\Pi$ the span of $\gamma(L)$ in $S^4U$. From \cite[Prop 6.1]{kuz_spin} $\Pi$ has dimension three and there exists a unique $F \in X^{\vee}$ containing $\Pi$ such that $\widetilde{P}=\pi_{+}^{-1}(F)$. Then $F \cap S^4U=\Pi$, hence the intersection of $P$ and $\mathbb{P} S^4U$ is a line, namely the line given by the isotropic $4$-spaces in $S^4U$ that containing $\Pi$. 
\end{proof}
The automorphism group $Aut^{o}(X_{K})=(\mathbb{G}_{m} \times SL_{2}) \cdot \mathbb{G}_{a}^{7}$ can be realized as follows. The action of $\mathbb{G}_{m} \cdot \mathbb{G}_{a}^{7}$ on $X_{K}$ is inherited from the associated action on $\mathbb{P}^{7}$ through the birational map $\psi$. This is a special example of Euler-symmetric variety as studied in \cite{Euler_sym}.
More precisely, consider the obvious action of the vector group $S^6U$ on $\mathbb{P}(\mathbb{C} \oplus S^6U)$ defined by
$a.[z:p]=[z:p+za]$, for any $a \in S^6U$.
Then $\psi$ induces an $S^6U$-action on $\mathbb{P}(\Delta_{K})$ by:
$$
a.[\delta_{0}:\delta_{6}:\delta_{4}]=[\delta_{0}:\delta_{6}+\delta_{0}a:\delta_{4}+\delta_{0}\pi(a,a)+2\pi(\delta_{6},a)].
$$
It restricts to an action of $S^6U$ on $X_{K}$, which realizes $X_{K}$ as an equivariant compactification of $S^6U$. Similarly the $\mathbb{G}_{m}$-action on $\mathbb{P}(\mathbb{C} \oplus S^6U)$ defined by 
$z'.[z:p]=[z:z'p]$ induces a $\mathbb{G}_{m}$-action on $X_{K}$ by:
$$
z'.[\delta_{0}:\delta_{6},\delta_{4}]=[\delta_{0}:z\delta_{6}:z^2\delta_{4}].
$$
Note that both actions are trivial on $Z$.  This induces the  exact sequence:
$$
1 \rightarrow \mathbb{G}_{m}\cdot \mathbb{G}_{a}^{7} \rightarrow Aut^{o}(X_{K}) \rightarrow Aut^{o}(Z) \simeq SL(U) \rightarrow 1.
$$

\subsection{A brief reminder on Kummer surfaces}
Before going any further, we need a short reminder about Kummer surfaces. Recall that they are 
nodal quartic surfaces in $\PP^3$ with the maximal number of nodes, namely $16$. 
It is a classical fact that the equation of any  Kummer surface in $\PP^3$ can be expressed, in suitable coordinates, as 
\begin{equation}\label{kummer}
A(x^2y^2+z^2t^2)+B(x^2z^2+y^2t^2)
+C(x^2t^2+y^2z^2)+2Dxyzt+E(x^4+y^4+z^4+t^4)=0
\end{equation}
for some coefficients $(A,B,C,D,E)$ satisfying the cubic condition \cite[§53]{hudson}
\begin{equation}\label{segre}
4E^3-(A^2+B^2+C^2-D^2)E+ABC=0.
\end{equation}
This is {\it Hudson's canonical form}.
In such coordinates, the nodes of the surface form an orbit of an order sixteen subgroup $F_0\simeq \ZZ_2^4\subset PGL_3$. 

\medskip
Alternatively, a Kummer surface can be obtain a the image of a principally polarized abelian surface 
$(\mathcal{A},\Theta)$ by the linear system $|2\Theta|\simeq\PP^3$. This linear system is invariant under multiplication by $-1$ on $\mathcal{A}$, and its image $\mathcal{K}$ is the quotient of $\mathcal{A}$ by this involution. In particular, the sixteen nodes of $\mathcal{K}$ are the images of the two-torsion points of 
$\mathcal{A}$. 
Moreover, this abelian surface is the Jacobian  of a genus two curve $\mathcal{C}$; the coefficients of Hudson's canonical form can be expressed in terms of the theta constants of the curve. 
The space of quartics of type (\ref{kummer}) 
is exactly the space of quartic polynomials that are invariant under the action of the finite Heisenberg group generated by the transformations 
$$\begin{pmatrix} 1&0&0&0 \\ 0&1&0&0 \\ 0&0&-1&0 \\ 0&0&0&-1 \end{pmatrix},\quad \begin{pmatrix} 1&0&0&0 \\ 0&-1&0&0 \\ 0&0&1&0 \\ 0&0&0&-1 \end{pmatrix},\quad \begin{pmatrix} 0&1&0&0 \\ 1&0&0&0 \\ 0&0&0&1 \\ 0&0&1&0 \end{pmatrix},\quad \begin{pmatrix} 0&0&1&0 \\ 0&0&0&1 \\ 1&0&0&0 \\ 0&1&0&0 \end{pmatrix}.$$


\medskip
Equation (\ref{segre}) defines the Segre cubic $\S3$ in $\PP^4$, which has the maximal number of nodes, namely $10$. More symmetrically, it can be described in $\PP^5$ by the equations
$$z_0+\cdots +z_5=z_0^3+\cdots +z^3_5=0,$$
showing that its automorphism group is the symmetric group $S_6$. 
The projective dual of the Segre cubic is the 
Igusa (or Castelnuovo-Richmond) quartic $\CR4$ in $\PP^4$. This quartic has $15$ singular lines meeting along $15$ points, with $3$ points on each line and conversely. It is isomorphic to the Satake compactification of the moduli space  of principally polarized abelian surfaces with a full level two structure. 
An amazing property of the Igusa quartic is that any tangent section is a Kummer surface! More information on this moduli space can be found in \cite{hunt, hulek-sankaran, hulek-heisenberg}. 
The relationship with the Heisenberg group and its Schr\"odinger representation is discussed in \cite{dolg-config}. 

\medskip
Klein discovered a beautiful relationship between Kummer surfaces and quadratic line complexes, that is, quadric sections  $\Gamma=G(2,4)\cap Q$ of the Grassmannian parametrizing lines in $\PP^3$. The point is that $G(2,4)$ admits two families of projective planes, parametrized by $\PP^3$ and its dual. Restricting $Q$ to the planes in one such family yields a family of conics, that become singular above the discriminant surface. This is in fact a quartic 
surface in $\PP^3$, which itself acquires singularities when the conic becomes a double line; generically, this happens at exactly $16$ points and one gets a Kummer surface \cite[Chapter 6]{GH}. In fact the two families of planes yield a Kummer $\mathcal{K}\subset\PP^3$ and its projective dual $\mathcal{K}^\vee\subset\check{\PP}^3$, which are projectively isomorphic one to the other. Under the polar morphism, each singular point is blown-out to a conic which is a double section of the dual Kummer, and contains six singular points of the dual Kummer. The associated genus two curve is obtained as the double cover of any of these conics, branched at the six singular points. Finally, the set of singular points of the singular conics parametrized by $K$ is the intersection of  $G(2,4)\cap Q$ with another quadric, hence a K3 surface. The $16$ singular points and the $16$ double conics form a $16_6$ configuration, the {\it Kummer configuration}, that will be discussed in more details later on. 

\subsection{The spinor Kummer complex}
The spinor quadratic complex $\cR\subset G(2,\Delta)$ yields  a whole family of quadratic complexes of lines, just by restriction to $G(2,K)$ for $K\subset \Delta_-$ four-dimensional. Restricting 
this quadric $\cR_K$ to the planes in the Grassmannian, one obtains two conic bundles over $\PP(K)$
and $\PP(K^\vee)=G(3,K)$, whose discriminant hypersurfaces are, in general,
a Kummer surface and its isomorphic dual . By construction, we get:

 \begin{prop}\label{r3}
 The quartic hypersurface $\cR_3\subset G(3,\Delta)$ has the property 
 that 
 its generic intersection with $G(3,K)\simeq \PP^3$, for any generic 
 $K\subset \Delta$ of dimension four, is a Kummer surface. 
 \end{prop}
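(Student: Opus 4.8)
The plan is to recognise $\cR_3\cap G(3,K)$ as precisely the singular (discriminant) surface that Klein's construction attaches to the quadratic complex of lines cut on $G(2,K)$ by $\cR$, and then to invoke the classical theory recalled above. So I would fix a four-dimensional $K\subset\Delta_-$ in general position and set $\cR_K:=\cR\cap G(2,K)$, a quadric section of $G(2,K)\simeq G(2,4)$, i.e. a quadratic line complex in $\PP(K)\simeq\PP^3$, with defining quadric the restriction $Q_\cR|_{G(2,K)}$ of the equation of $\cR$ written down in the Remark above.

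Recall that $G(2,4)$ carries two families of projective planes: the $\alpha$-planes $\sigma_a=\{U_2\subset K : a\in U_2\}$ indexed by $[a]\in\PP(K)$, and the $\beta$-planes indexed by $W\in G(3,K)=\PP(K^\vee)$, the $\beta$-plane attached to $W$ being exactly $G(2,W)\subset G(2,K)$. Restricting $Q_\cR$ to $G(2,W)$ produces a conic in $\PP^2$, and by the very definition of $\cR_3$ this conic is $c_W$; hence $W\in\cR_3$ if and only if $c_W$ is singular, so that
$$\cR_3\cap G(3,K)=\{\,W\in G(3,K)\ :\ c_W\ \text{is a singular conic}\,\}$$
is nothing but the discriminant surface of the conic bundle over $G(3,K)\simeq\PP^3$ obtained by restricting $Q_\cR$ to the $\beta$-family of $G(2,K)$ --- i.e. the ``singular surface'' of the quadratic line complex $\cR_K$ in Klein's sense, cf. \cite[Chapter 6]{GH} and the discussion above. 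In particular its degree is four, matching that of $\cR_3$ and that of a Kummer quartic.

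Next I would quote Klein's theorem: for a sufficiently generic quadratic line complex, this discriminant is a quartic surface in $\PP^3$ which, at exactly sixteen points --- those where $c_W$ further degenerates to a double line --- acquires nodes, and is therefore a Kummer surface, its sixteen nodes being the singular points of a $16_6$ Kummer configuration. The relevant genericity hypothesis is that the pencil of quadrics in $\PP(\wedge^2K)$ spanned by the Pl\"ucker quadric of $G(2,K)$ and by $Q_\cR|_{G(2,K)}$ be generic, equivalently that its six degenerate members be pairwise distinct, equivalently that the genus two curve associated with that pencil --- the double cover of the pencil's $\PP^1$ branched at those six points, whose Jacobian is the abelian surface uniformising the Kummer --- be smooth.

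It then remains to check that this genericity condition cuts out a non-empty, hence dense, open subset of $G(4,\Delta_-)$; since it is visibly open in $K$, it is enough to exhibit one $K$ for which the six branch points are distinct. I would do this either by a direct computation starting from the explicit formula for $Q_\cR$, or, more conceptually, by restricting to the Cartan subspace $\fc$ studied in the later sections and checking that a general point of $\PP(\fc)$ yields a smooth genus two curve, which is essentially built into the identification of the GIT moduli spaces. I expect this last step to be the main obstacle: one must rule out that for $K$ in general position the pencil of quadrics on $G(2,K)$ is already degenerate --- with a repeated root, or with $Q_\cR|_{G(2,K)}$ proportional to the Pl\"ucker quadric, or with $G(2,W)\subset\cR$ for a whole curve of $W$'s --- any of which would spoil Klein's construction. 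Granting non-degeneracy, the proposition follows from the formal identification above together with the classical theory of Kummer surfaces.
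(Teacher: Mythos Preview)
Your proposal is correct and follows essentially the same route as the paper: identifying $\cR_3\cap G(3,K)$ with the Klein discriminant surface of the quadratic line complex $\cR_K=\cR\cap G(2,K)$ (via the $\beta$-family of planes), and then invoking the classical theory. The paper is terser, stating the proposition as an immediate consequence of this identification (``By construction, we get:''), and defers the genericity verification to the explicit computations on the Cartan subspace in the next section rather than checking it here; your more careful discussion of that point is a reasonable addition.
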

 
Recall that the singularities of the  Kummer surface can be resolved to get a  K3 surface,  obtained by looking at the singular points of the singular conics in the conic bundle \cite{GH}. 
Over $[U_1]\in\PP(V_4)$ the induced conic in 
 $\PP(V_4/U_1)$ is singular at the point $[U_2]$ if and only if 
 $$Q(\wedge^2U_2,U_1\wedge V_4)=0.$$
 But $U_1\wedge U_2=\wedge^2U_2$, so that if we suppose that $U_2$
 belongs to $\Gamma$, this gives two conditions on $U_1\subset U_2$. More precisely,  we get over $\PP(U_2)$ a section of $(U_1\wedge V_4/\wedge^2U_2)^\vee \simeq (U_1\otimes  (V_4/U_2))^\vee$, hence two sections of $\cO(1)$. These two sections have a common zero if and 
 only if they are proportional, which means that the morphism from $U_2\otimes \wedge^2U_2$ to $(V_4/U_2)^\vee$ defined on $\Gamma$ by sending $u\otimes v\wedge w$ to the linear form $Q(v\wedge w, u\wedge\bullet)$, must have rank one. Its determinant is a morphism from $\det(U_2)^3$ to $\det(U_2)\otimes \det(V_4^\vee)$, hence a section of $\cO_\Gamma(2)$ that vanishes (in general) along a smooth K3 surface of degree eight. 
 
 \medskip
 When the quadric $\cQ$ is the restriction of $\cR$ to $G(3,K)$, this same analysis yields a locus $\Sigma_\cR\subset Fl(2,4,\Delta)$, dominating $\cR\subset G(2,\Delta_-)$ and defined inside the pull-back of $\cR$ by a section 
 of $\det(U_2^\vee)^2\otimes \det(K^\vee)$. We can state:
 
 \begin{prop}
The morphism  $\Sigma_\cR\lra G(4,\Delta_-)$ is generically a K3 bundle, whose fiber over $K$ is a smooth model of the Kummer surface $\cR_3\cap G(3,K)\simeq\PP^3$.
 \end{prop}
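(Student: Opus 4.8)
The plan is to leverage the set-theoretic construction already carried out for a single quadric $\cQ$ on $G(3,K)$ and show that it globalizes over the parameter space $G(4,\Delta_-)$. Recall that in the preceding paragraph, for a fixed four-dimensional $K\subset\Delta_-$ (writing $V_4=K^\vee$ so that $G(3,K)\simeq\PP(V_4)$) and $\Gamma=\cR\cap G(3,K)$, I have already identified the smooth K3 model of the Kummer surface $\cR_3\cap G(3,K)$: it is the vanishing locus on $\Gamma$ of the canonical section of $\det(U_2^\vee)^2\otimes\det(K^\vee)$ obtained as the determinant of the rank-two morphism $U_2\otimes\wedge^2 U_2\to(V_4/U_2)^\vee$, $u\otimes v\wedge w\mapsto Q_\cR(v\wedge w,u\wedge\bullet)$, which has rank one exactly where the two sections of $\cO(1)$ cut out by the conic bundle become proportional. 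So $\Sigma_\cR$ is, by definition, the zero locus of this section inside the restriction to $Fl(2,4,\Delta_-)$ of the pull-back of $\cR$; its fiber over a point $[K]\in G(4,\Delta_-)$ is precisely this K3 surface.

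**First I would** make the bundle-theoretic setup uniform over $G(4,\Delta_-)$: let $\cK$ be the rank-four tautological subbundle on $G(4,\Delta_-)$, let $\cU_2\subset\cU_3\subset\pi^*\cK$ be the tautological flags on $Fl(2,3,4,\Delta_-)$ (or rather on the relevant flag bundle $Fl(2,3,\cK)$ over $G(4,\Delta_-)$), and observe that the quadratic equation $Q_\cR$ of $\cR$, being $Spin_{10}$-equivariant and defined by the invariant line in $S_{22}\Delta_-^\vee$ (as recalled in the Remark in Section~3), defines by restriction a global section whose conic-bundle avatar is a section of $S^2(\cU_3/\cU_2)^\vee\otimes\det(\cU_3)^{?}$ twisted appropriately. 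The morphism $\cU_2\otimes\wedge^2\cU_2\to(\pi^*\cK/\cU_2)^\vee$ then exists globally, its degeneracy-to-rank-one locus is $\Sigma_\cR$, and the natural projection $\Sigma_\cR\to G(4,\Delta_-)$ is what we must analyze. The content of the proposition is then twofold: (i) over a generic $[K]$ the fiber is a \emph{smooth} K3 surface — this is exactly what the single-$K$ computation established, so it only needs the observation that smoothness is an open condition and that we have exhibited at least one $K$ (equivalently, by $Spin_{10}$-homogeneity of a generic $K$, it holds generically); (ii) this smooth K3 really \emph{is} a resolution of $\cR_3\cap G(3,K)$, which is again the content of Klein's construction as recalled in the reminder subsection, transported through Proposition~\ref{r3}.

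**The main step** is therefore checking genericity: that for $K$ in a dense open subset of $G(4,\Delta_-)$ the quadric $\cR_K=\cR\cap G(3,K)$ is "sufficiently general" that Klein's construction applies without degeneration — i.e. the discriminant quartic in $\PP(K)$ is an honest Kummer surface with exactly $16$ nodes, and the associated K3 (the locus of singular points of the singular conics, cut on $\Gamma$ by the extra quadric $\det(U_2^\vee)^2\otimes\det(K^\vee)$) is smooth of degree eight. For this I would argue by semicontinuity from one explicit good $K$: take $K$ spanned by four lines, one in each of the four four-dimensional pieces into which the generic stabilizer $(\ZZ/2\ZZ)^2$ decomposes $\Delta_-$ (the data $A,B,C$ of nondegenerate planes in $V_{10}$ recalled after Proposition~\ref{auto}, from \cite{dedieu-manivel}). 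For such $K$ one expects the discriminant quartic to be directly in Hudson's canonical form, so that its $16$ nodes and the resolving K3 can be checked by hand, or simply invoked from Proposition~\ref{r3} together with the classical theory. Openness then propagates smoothness to a neighborhood, and by $Spin_{10}$-equivariance (since a generic $K$ has trivial or finite stabilizer, its orbit is dense) to a dense open subset of $G(4,\Delta_-)$.

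**The hard part** will be confirming that the scheme-structure on $\Sigma_\cR$ coming from the determinantal section is reduced and that its fiberwise dimension is constant equal to two, i.e. that the expected-codimension count for the rank-one locus of the morphism $\cU_2\otimes\wedge^2\cU_2\to(\pi^*\cK/\cU_2)^\vee$ is actually achieved generically and that no jumping of the fiber occurs over the generic point. This is a transversality statement: one must rule out that the two $\cO(1)$-sections on $\PP(U_2)$ become proportional along a positive-dimensional family of $U_1$'s for generic $(K,U_2)$. I expect this to follow from the single-$K$ computation already sketched, where the determinant was explicitly seen to be a nonzero section of $\cO_\Gamma(2)$ cutting a smooth K3 — one just has to note that this nonvanishing and smoothness, being open, persist on an open set of the base — but making the "generically" in the statement precise requires this semicontinuity argument, which is the only real obstacle; the rest is bookkeeping with tautological bundles on flag varieties and the already-established geometry of Klein's correspondence.
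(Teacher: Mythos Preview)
Your approach is correct and matches the paper's treatment. In fact, the paper does not give a separate proof of this proposition: it is stated (introduced by ``We can state:'') as a direct consequence of the preceding paragraph, which globalizes the single-$K$ Klein construction to the flag variety $Fl(2,4,\Delta_-)$ via the section of $\det(U_2^\vee)^2\otimes\det(K^\vee)$. Your proposal is essentially a careful unpacking of exactly this argument, with the genericity and semicontinuity steps made explicit where the paper leaves them implicit.
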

 
 One can also easily recover the abelian surface whose Kummer surface is $\cR_3\cap G(4,K)$. 
 Consider the flag manifold $Fl(1,3,K)$ parametrizing flags $U_1\subset U_3\subset K$. Over this 
 manifold, the bundle $\cE$ with fiber $U_1\wedge U_3$ is a rank two subbundle of the trivial bundle with fiber $\wedge^2K$, and inherits from the spinor quadratic complex a quadratic form, that is, a section of $S^2\cE^\vee$. As observed in \cite[9.3]{GSW}, the zero locus of this section is in general an abelian surface $A$, and the projections to $\PP(K)$ and $G(3,K)\simeq \PP(K^\vee)$ are degree two covers of the Kummer surface and its (isomorphic) dual. 
 
 Interestingly, $A$ is in general the singular locus of the hypersurface of $Fl(1,3,K)$
 defined by the condition that the quadratic form defined on $\cE$ degenerates. This hypersurface is given by a section of $\det(\cE)^2\simeq \det(U_1)^{-2}\otimes\det(U_3)^{-2}$; this means that it is a quadric section of  $Fl(1,3,K)$, with respect to its minimal ample line bundle. 
 It is natural to coin this quadric section as the {\it Coble quadric}, since its singular locus is an abelian surface, exactly as for the classical Coble cubic in $\PP^8$. 
 
 This discussion globalizes as follows: 
 
 \begin{prop}\label{Coble}
 The spinor quadratic complex defines a hypersurface $\cR_{2,2}$ of $Fl(1,3,\Delta_-)$, 
 whose intersection with  $Fl(1,3,K)$, for the generic $K\in G(4,\Delta_-)$, is a Coble 
 quadric section,  singular exactly along the abelian surface that covers the Kummer surface  
 $\cR_3\cap G(3,K)$. 
 \end{prop}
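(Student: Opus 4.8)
The plan is to globalise the fibrewise construction of the Coble quadric recalled just before the statement. On the flag variety $Fl(1,3,\Delta_-)$, equipped with its tautological flag of bundles $\cU_1\subset\cU_3\subset\Delta_-\otimes\cO$, I would form the rank two bundle
$$\cE:=\cU_1\wedge\cU_3\subset\wedge^2\Delta_-\otimes\cO,$$
so that $\cE\simeq\cU_1\otimes(\cU_3/\cU_1)$ and $\det\cE\simeq\cU_1\otimes\det\cU_3$. The spinor quadratic complex is cut out by the unique invariant $Q_\cR\in S_{22}\Delta_+$, which is literally a quadratic form on $\wedge^2\Delta_-$; restricting it to $\cE$ gives a global section $q_\cE\in H^0\big(Fl(1,3,\Delta_-),S^2\cE^\vee\big)$. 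Its discriminant $\mathrm{disc}(q_\cE)$ is then a section of $(\det\cE)^{-2}\simeq\cU_1^{-2}\otimes(\det\cU_3)^{-2}$, which on each $Fl(1,3,K)$ restricts to twice the minimal ample line bundle (compare the paragraph preceding the statement). I define $\cR_{2,2}\subset Fl(1,3,\Delta_-)$ to be the zero scheme of $\mathrm{disc}(q_\cE)$; it is a genuine hypersurface since this section is not identically zero (it is already nonzero on a generic $Fl(1,3,K)$, see below), and the subscript records its bidegree $(2,2)$ under $\Pic(Fl(1,3,\Delta_-))=\ZZ\,\cU_1^{-1}\oplus\ZZ\,(\det\cU_3)^{-1}$.

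Next I would check compatibility with restriction, which is immediate: for $K\in G(4,\Delta_-)$ the inclusion $Fl(1,3,K)\hookrightarrow Fl(1,3,\Delta_-)$ pulls the tautological bundles back to those of $Fl(1,3,K)$, hence pulls $\cE$ and $q_\cE$ back to exactly the rank two bundle and the quadratic form analysed in the paragraph preceding the statement (and in \cite[9.3]{GSW}). Consequently $\cR_{2,2}\cap Fl(1,3,K)$ is the degeneracy locus $\{\mathrm{disc}(q_\cE)=0\}$ of that family of binary quadratic forms. For generic $K$ the quadric $\cR$ does not contain the pencil $\{U_2:U_1\subset U_2\subset U_3\}$ attached to a generic flag, so $q_\cE$ is not identically zero and $\cR_{2,2}\cap Fl(1,3,K)$ is a genuine quadric section with respect to the minimal ample line bundle, i.e. the Coble quadric of $K$. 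Its singular locus is governed by the $2\times2$ symmetric determinantal model: in a local frame, writing $q_\cE\leftrightarrow(a,b,c)$, one has $\mathrm{disc}(q_\cE)=b^2-4ac$, and the singular locus of the cone $\{b^2=4ac\}\subset\AA^3$ is its vertex $\{a=b=c=0\}$, which is precisely the zero locus of $q_\cE$ regarded as a section of $S^2\cE^\vee$. By \cite[9.3]{GSW} this zero locus is, for generic $K$, an abelian surface $A$, which by that discussion doubly covers the Kummer surface $\cR_3\cap G(3,K)$ (Proposition \ref{r3}) and its isomorphic dual; this is the asserted singular locus.

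The only point that is not bookkeeping is the word \emph{exactly}: one must show $\cR_{2,2}\cap Fl(1,3,K)$ has no singular points outside $A$. Away from $A$ the section $q_\cE$ takes values in the rank one stratum of $S^2\cE^\vee$, where the cone $\{b^2=4ac\}$ is smooth, so it is enough to know that $q_\cE$ is transverse to the stratified cone — submersive onto the fibres of $S^2\cE^\vee$ at the points of $A$, and transverse to the rank one stratum elsewhere along $\{\mathrm{disc}(q_\cE)=0\}$. Since $\dim Fl(1,3,K)=5$, $\rank S^2\cE^\vee=3$ and the expected dimension of $A$ is $5-3=2$, which is realised (an abelian surface is smooth), such transversality holds for generic $K$; I would make it explicit either by computing $q_\cE$ to first order at a general point of $A$, or by invoking that $\cR$ is smooth away from its singular locus of codimension $7$ (the secant variety of $X^\vee$), which the pencils parametrised by a general $Fl(1,3,K)$ avoid while meeting $\cR$ transversally. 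Establishing this transversality is the main obstacle; everything else follows from the description of the tautological bundles and the elementary geometry of pencils of binary quadrics.
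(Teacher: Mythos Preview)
Your proposal is correct and follows essentially the same approach as the paper: the paper does not give a separate proof but presents the proposition as the globalisation of the fibrewise construction in the paragraphs immediately preceding it, and your write-up simply spells out those steps (tautological $\cE$, restriction of $Q_\cR$, discriminant section, citation of \cite[9.3]{GSW}) in the global setting. Your extra care on the word ``exactly'' via transversality goes slightly beyond what the paper makes explicit, but is in the same spirit as its ``in general'' clause.
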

 
\section{Codimension four sections: Kummer surfaces}

In this section we study the codimension four sections of the spinor tenfold with the help of Vinberg's theory of graded Lie algebras and we make various connexions wit hKummer surfaces. As we already mentionned in the introduction, it turns out that the exceptional Lie algebra $\fe_8$ admits a cyclic grading with one component isomorphic to $\CC^4\otimes\Delta_-$. This has extremely strong consequences, starting with the possibility to get a Jordan type classification of orbits, whose general principles are explained in Appendix \ref{jordan}. The effective classification was carried out by W. de Graaf \cite{degraaf}. 

The main ingredient for this section is the Cartan subspace $\fc$, that parametrizes semisimple elements. Cartan subspaces are all conjugate and we provide one explicitely, with the associated complex reflection group $W_\fc$ that permutes the $60$ reflection hyperplanes. Using the spinor quadratic complex we get a quadratic complex of lines, and then a Kummer surface. 
We will discuss how this is related to, and allows to recover a substantial part of, the classical theory of Kummer surfaces. Our main reference for the latter are \cite{hudson, Dolg_classical, gd}.

\subsection{A grading of $\fe_8$}
Since $E_9$ does not exist as an algebraic group, one cannot expect that $\CC^4\otimes\Delta_-$ or $G(4,\Delta_-)$ have finitely many orbits -- and
they don't, as a simple dimension count confirms. But the diagram $E_9$ is the affine Dynkin diagram $E_8^{(1)}$,
which indicates that  $\CC^4\otimes\Delta_+$ and  $\CC^4\otimes\Delta_-$ are connected to Vinberg's theory 
of graded Lie algebras. Indeed, there is a $\ZZ_4$-grading 
$$\fe_8=\fsl_4\times\fso_{10}\oplus (\CC^4\otimes\Delta_+)\oplus (\wedge^2\CC^4\otimes V_{10}) \oplus (\wedge^3\CC^4\otimes\Delta_-),$$
and this implies that the $SL_4\times Spin_{10}$-module structure of $\CC^4\otimes\Delta_+$
has very nice properties, similar to those of the usual Jordan theory of adjoint actions on semisimple Lie algebras \cite{vinberg}. In particular, the GIT-quotient 
$$\CC^4\otimes\Delta_+/\hspace{-1mm}/SL_4\times Spin_{10}\simeq \fc/W_\fc\simeq \AA^4,$$
where $\fc$ is a Cartan subspace acted on by its normalizer $W_\fc$.
The latter is not a Weyl group as in the usual Jordan theory, but a finite complex reflection group. More precisely, we can read from \cite[Case 11 of Table page 492]{vinberg} that $W_\fc$ is the reflection group  $G_{31}$ in the Sheppard-Todd classification \cite{st}; 
its order  is $46080$ and its 
invariants form a polynomial algebra on four generators of degrees $8, 12, 20, 24$.

\medskip\noindent {\it Remark.} Write the grading of $\fe_8$ as $\fg_0\oplus\fg_1\oplus\fg_2\oplus\fg_3$, indices being taken modulo four.
The Lie bracket restricts to a map $\wedge^2\fg_1\lra\fg_2$, where $\fg_1=\CC^4\otimes\Delta_+$ and $\fg_2=\wedge^2\CC^4\otimes V_{10}$. Being $\fsl_4\times\fso_{10}$-equivariant, this morphism has to factorize through
$$\wedge^2(\CC^4\otimes\Delta_+)\lra \wedge^2\CC^4\otimes S^2\Delta_+
\stackrel{Id\otimes\gamma}{\lra} \wedge^2\CC^4\otimes V_{10}, $$
showing that the map $\gamma$ from (\ref{gamma}) plays an essential role in the algebra structure of $\fe_8$. 

Note also that $\fg_3=\fg_{-1}$ is dual to $\fg_1$, since  $\wedge^3\CC^4$ is dual to $\CC^4$ and  $\Delta_-$ is dual to $\Delta_+$. 

\subsection{From $\mathrm{Spin}_{10}$ to the Kummer configuration}
The relationship between $Spin_{10}$ and Kummer surfaces that we observed in the previous section may look like a coincidence, since it mainly relied on the existence of the 
spinor quadratic complex. To give more evidence that the connection is deeper than
it may seem at first sight, we make the following construction. Recall that $\Delta_+$ and $\Delta_-$
are minuscule representations, in the sense that their weights have multiplicity one and 
are acted on transitively by the Weyl group. These weights, in terms of the usual basis $\epsilon_1,\ldots, \epsilon_5$, are of the form $\frac{1}{2}
(\pm\epsilon_1\pm\epsilon_2\pm\epsilon_3\pm\epsilon_4\pm\epsilon_5)$ with an even (resp. odd) number of minus signs for the weights $P_+$ (resp. $P_-$) of  $\Delta_+$ (resp. $\Delta_-$). It is convenient to  represent such weights simply by a sequence of $+$ and $-$. 

Consider a quadratic form on the weight lattice such that 
$(\epsilon_i,\epsilon_j)=\delta_{ij}$. Then, define on $P_+\times P_-$ an incidence 
relation by deciding that two weights $\mu$ and $\nu$ are incident when $(\mu,\nu)=\frac{3}{4}$ mod $2$. If we represent the weights by a collection of five $\pm$ signs, two weights are incident when they are opposite or have a unique difference. So clearly each of the $16$ weights of $P_+$ is incident to $6$ weights
in $P_-$, and conversely. We call this configuration the {\it spinorial $16_6$ configuration}. 

\smallskip
Following \cite{gd}, we call a set of six weights incident to a given one a {\it special plane}. This refers to the famous example of $16_6$ configuration provided by  the configuration of singular points and tropes on a Kummer surface, 
the tropes being the sixteen projective planes that cut the Kummer surface along a double conic (mapping through the Gauss map to one of the singular points of the dual, isomorphic Kummer surface). We call this the Kummer configuration. 
A classical, easy way to represent the Kummer configuration is to place $16$ points on a square $4\times 4$-grid. Then each special plane correspond to a point ($\bullet$) in the grid, being given by the union of the other points ($\circ$) 
in  the same line or same column:

$$\begin{array}{|c|c|c|c|} \hline 
 & & \circ & \\ \hline 
 & & \circ & \\ \hline 
 \circ &\circ &\bullet &\circ  \\ \hline  & & \circ & \\
 \hline \end{array}$$
 
\begin{prop}\label{16_6}
The $16_6$ spinorial configuration coincides with the Kummer configuration.
\end{prop}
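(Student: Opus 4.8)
The plan is to exhibit an explicit combinatorial isomorphism between the spinorial $16_6$ configuration on $P_+\sqcup P_-$ and the Kummer $4\times 4$ grid configuration, and then check that incidence is preserved. The first step is to set up a natural bijection between the $16$ weights of $P_+$ (even number of minus signs among five $\pm$) and the $16$ cells of the grid. I would pick one of the five coordinates, say $\epsilon_5$, to play a distinguished role: the remaining four signs $(\pm\epsilon_1,\pm\epsilon_2,\pm\epsilon_3,\pm\epsilon_4)$ give $16$ patterns, and the fifth sign is then determined by the parity constraint (even for $P_+$). This identifies $P_+$ with $\{\pm\}^4$. To get a $4\times 4$ grid I would further split these four signs into two pairs, $(\pm\epsilon_1,\pm\epsilon_2)$ indexing the rows (via $\ZZ_2^2$, four values) and $(\pm\epsilon_3,\pm\epsilon_4)$ indexing the columns. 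Symmetrically, I would identify $P_-$ (odd number of minus signs) with a second copy of the grid, using the same recipe; the "special plane" attached to a weight $\mu\in P_+$ should land on the cell of the $P_-$-grid obtained by the same coordinate pattern (with the parity bit flipped), so that the distinguished cell $\bullet$ in the picture corresponds to $\mu$ itself.

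The second step is to translate the incidence relation. By the paragraph preceding the statement, $\mu\in P_+$ and $\nu\in P_-$ are incident exactly when, written as sign strings, they are either opposite in all five positions or differ in exactly one position. I would verify that under the bijection above this is exactly the relation "$\nu$ lies in the same row or the same column as $\mu$, but $\nu\ne\mu$ as grid cells" — more precisely the $16_6$ relation where each cell $\bullet$ is joined to the $6$ other cells ($\circ$) in its row or column. The key point is a parity bookkeeping: since $\mu$ has an even and $\nu$ an odd number of minus signs, they can never agree in all five positions, and the number of positions in which they differ is always odd (so it is $1$, $3$, or $5$). Differing in exactly one of the five positions, or in all five, accounts for $5+1=6$ weights $\nu$; I need to see that these $6$ are precisely the row-mates and column-mates of $\mu$ in the grid. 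Differing in exactly the fifth (distinguished) coordinate is impossible by parity, so the "differ in one position" case uses only $\epsilon_1,\dots,\epsilon_4$, i.e.\ flipping one of the four grid bits — these are exactly the three neighbours in the same row or same column sharing three of the four bits, plus... here I must be careful, since flipping one of the four bits gives $4$ weights, not $3+3$; the remaining two come from the "differ in all five" weight together with the $\epsilon_5$-flip... so the cleanest route is to not split into rows/columns by a fixed $2+2$ partition but to recognise directly that the six incident weights form exactly the $16_6$ "cross" once one chooses the grid coordinates so that a row is a coset of one $\ZZ_2^2$ and a column a coset of the complementary $\ZZ_2^2$ inside $\ZZ_2^4$, with the distinguished fifth sign playing no independent role.

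Concretely, the right formulation of step one is: identify $P_+$ with the affine space $\ZZ_2^4$ (four free sign bits, fifth determined by parity), and observe that flipping bit $i$ ($1\le i\le 4$) changes one of $\epsilon_1,\dots,\epsilon_4$ and also forces $\epsilon_5$ to flip; flipping "all five" is the same as flipping all four bits. So the six incident weights of $\mu$ are $\mu+e_i$ ($i=1,\dots,4$) together with $\mu+e_5$ (flip the $\epsilon_5$ bit alone, i.e.\ flip $\epsilon_5$ only, changing parity) and $\mu+(1,1,1,1)$. The last step is then purely a check that, choosing the $4\times 4$ grid so that rows are cosets of $\langle e_1,e_2\rangle$ and columns cosets of $\langle e_3,e_4\rangle$, these six vectors are exactly the non-zero "row or column" translates — a $6$-element set in $\ZZ_2^4$ — which is the standard description of the Kummer $16_6$ incidence. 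I would phrase this as: the $6$ incident weights are $\{e_1,e_2,e_1+e_2\}\cup\{e_3,e_4,e_3+e_4\}$ added to $\mu$, matching the $3$ row-mates and $3$ column-mates. Reconciling the apparent count mismatch (five single flips plus one quintuple flip $=6$ versus three-plus-three) is the one genuinely delicate bookkeeping point, and it is resolved precisely because a single flip of one of $\epsilon_1,\dots,\epsilon_5$ is never parity-preserving: the incident weights must be described with the parity constraint built in, which is exactly what turns "$5+1$" into the Klein-four structure "$3+3$". Once this dictionary is in place, the proposition is immediate, and I expect the only real obstacle to be presenting this parity/grid bookkeeping cleanly rather than any conceptual difficulty.
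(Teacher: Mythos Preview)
Your overall strategy---build an explicit bijection from the spinorial configuration to the $4\times 4$ grid---is the same as the paper's (which, after invoking the abstract classification of $16_6$ configurations from \cite{gd}, still opts for a direct identification via two explicit grids $G_+$ and $G_-$). But your specific bijection does not work, and the ``delicate bookkeeping point'' you flag is not a bookkeeping issue: it is a genuine obstruction.

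Concretely, identify $P_+$ and $P_-$ with $\ZZ_2^4$ via the first four sign bits as you propose, and take $\mu=+\!+\!+\!+\!+\in P_+$, i.e.\ the origin. The six incident $\nu\in P_-$ are $-\!+\!+\!+\!+,\;+\!-\!+\!+\!+,\;+\!+\!-\!+\!+,\;+\!+\!+\!-\!+,\;+\!+\!+\!+\!-,\;-\!-\!-\!-\!-$, whose first four bits are
\[
\{e_1,\;e_2,\;e_3,\;e_4,\;0,\;e_1+e_2+e_3+e_4\}.
\]
This set has six elements, but it is \emph{not} of the form $(H_1\cup H_2)\setminus\{0\}$ for complementary Klein subgroups $H_1,H_2\subset\ZZ_2^4$: such a union has seven elements (including $0$), hence six nonzero ones, none of which is $0$---yet $0$ appears in your set (coming from the weight that differs from $\mu$ only in position $5$). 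Your asserted identity ``the $6$ incident weights are $\{e_1,e_2,e_1+e_2\}\cup\{e_3,e_4,e_3+e_4\}$'' is therefore false, and no choice of the row/column subgroups can repair it as long as you use the same first-four-bits map for both $P_+$ and $P_-$.

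What actually makes the identification work in the paper is a \emph{different} placement of $P_\pm$ in the grid: in effect, rows are indexed by $(s_1,s_2)$ but columns by $(s_3+s_5,\,s_4+s_5)$, so that the fifth sign participates in the column coordinate rather than being discarded. Equivalently, the polarity between the $P_+$-grid and the $P_-$-grid is not the identity on $\ZZ_2^4$ but a nontrivial affine shift. If you want to salvage your approach, you need to find (or guess) such a coordinate system; the paper simply exhibits the two grids and asks the reader to verify.
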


\proof 
Abstract $16_6$ configurations were classified in \cite[Theorem 1.20]{gd}: there exist only three non-degenerate such configurations, non-degeneracy meaning that 
any two points belong to exactly two special planes, and conversely, any two special planes share  exactly two points. 

\begin{lemma} The spinorial configuration is non-degenerate. \end{lemma}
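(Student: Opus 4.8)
The plan is to verify the two non-degeneracy conditions directly from the combinatorial description of the spinorial $16_6$ configuration, using the explicit incidence rule in terms of $\pm$-sign sequences. Recall that $P_+$ consists of the sixteen sign sequences with an even number of minus signs, $P_-$ consists of the sixteen with an odd number, and $\mu \in P_+$ is incident to $\nu \in P_-$ precisely when the two sequences are opposite or differ in exactly one position (equivalently $(\mu,\nu) = \tfrac34 \bmod 2$). So I must show: (a) any two distinct weights $\mu, \mu' \in P_+$ are incident to exactly two common weights in $P_-$; and (b) any two distinct special planes (i.e. the incidence-neighbourhoods of two distinct weights $\nu, \nu' \in P_-$) share exactly two weights of $P_+$. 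By the symmetry between $\Delta_+$ and $\Delta_-$ it suffices to do (a); (b) is identical after swapping the roles.

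For (a), fix $\mu, \mu' \in P_+$ with $\mu \ne \mu'$. Since both have an even number of minus signs, they differ in an even number of positions; write $d = 2$ or $d = 4$ for this Hamming distance (it cannot be $0$ as $\mu \ne \mu'$, and $d \le 5$ forces $d \in \{2,4\}$ — note $d$ cannot be $0$ and has the same parity). A weight $\nu \in P_-$ incident to $\mu$ is either $-\mu$ or one of the five single-flip neighbours of $\mu$; likewise for $\mu'$. I would enumerate the overlap in each case. If $d = 2$, say $\mu$ and $\mu'$ agree outside positions $\{i,j\}$: then $-\mu$ is not a single flip of $\mu'$ and is $-\mu'$ only if $d = 5$, so $-\mu, -\mu'$ contribute nothing; a single flip of $\mu$ equals a single flip of $\mu'$ exactly when we flip position $i$ in $\mu$ and position $j$ in $\mu'$ (or vice versa), giving exactly two common neighbours. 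If $d = 4$, say they agree only in position $k$: then $-\mu$ agrees with $\mu'$ exactly in positions where $\mu$ and $\mu'$ already disagreed, i.e. in the four positions $\ne k$, so $-\mu$ is the single flip of $\mu'$ at position $k$, and symmetrically $-\mu'$ is the single flip of $\mu$ at position $k$; meanwhile no single flip of $\mu$ can equal a single flip of $\mu'$ since those stay within Hamming distance $2$ of each other while $\mu, \mu'$ are at distance $4$. Again exactly two. This exhausts the cases and proves the lemma.

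The computation is entirely elementary, so there is no serious obstacle; the only thing to be careful about is the bookkeeping of the opposite-weight case $\nu = -\mu$, which behaves differently according to the parity of the ambient dimension (here $5$, odd, so $-\mu$ always lies in the opposite half-spin weight set and the "antipodal" contribution interacts nontrivially with the single-flip contributions precisely when $d=4$). I would present the case analysis on $d \in \{2,4\}$ as above, perhaps accompanied by one worked example on the $4\times4$ grid to make the coincidence with the classical Kummer picture transparent, and then invoke \cite[Theorem 1.20]{gd} to conclude that a non-degenerate $16_6$ configuration is unique among the three possibilities up to the degenerate ones — together with a one-line remark identifying which of the three it is (the Kummer one) by exhibiting, say, the grid realization already drawn before the proposition.
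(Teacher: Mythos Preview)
Your proof is correct and follows essentially the same approach as the paper: a case analysis on the Hamming distance $d\in\{2,4\}$ between the two weights in $P_+$, checking in each case that exactly two weights of $P_-$ are incident to both, followed by the symmetry between $P_+$ and $P_-$ for the dual statement. The only cosmetic difference is that the paper invokes transitivity of the Weyl group $W(D_5)$ and its point stabilizer $S_5$ to normalize to $\omega_0=+++++$ and $\omega_1\in\{+++--,\,+----\}$ before checking, whereas you phrase the same dichotomy directly in terms of $d$; the content is identical. (Your final paragraph about invoking \cite{gd} to identify which of the three non-degenerate configurations this is goes beyond what the lemma asserts and belongs to the surrounding Proposition, not here.)
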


\noindent {\it Proof of the Lemma.} Consider a pair a weights $\omega_0,\omega_1$ in $P_+$. Since the Weyl group acts transitively on $P_+$, we may suppose that $\omega_0=+++++$. Its stabilizer in the Weyl  group is $S_5$, and up to the action of this stabilizer we may suppose that $\omega_1=+++--$ or $\omega_1=+----$. In the first case, $\omega_0$ and $\omega_1$ are incident to  $\omega_2=++++-$ and  $\omega_3=+++-+$, and only to those weights. In the second case, they are incident 
to $\omega_2=-++++$ and $\omega_3=-----$ and only to those weights. By symmetry between $P_+$ and $P_-$, this proves the claim. \qed 

\medskip 
Among the three non-degenerate $16_6$ abstract configurations, the Kummer configuration was  characterized in \cite[Theorem 1.42]{gd} as the only one that can be realized in $\PP^3$ by a collection of $16$ points and planes. Since we don't have any natural 
representation of the spinorial configuration in $\PP^3$, we rather give a 
direct isomorphism. Below we provide two $4\times 4$ grids $G_+$ and $G_-$ 
containing the weights of $P_+$ and $P_-$, respectively. We leave to the reader the explicit verification that for each weight in $P_+$, corresponding to a point $\omega_+$ in $G_+$, hence a point $\omega_-$ in $G_-$, the six other weights in the same row or column as $\omega_-$ in $G_-$ are exactly the weights incident to $\omega_+$ in the spinorial configuration - and conversely. 

$$ G_+\qquad\begin{array}{|c|c|c|c|} \hline 
 --+++ & ----+ & ---+- & --+-- \\ \hline 
 +++++& ++--+& ++-+- & +++-- \\ \hline 
 +---- &+-++- &+-+-+ & +--++  \\ \hline 
 -+---& -+++-& -++-+ & -+-++ \\
 \hline \end{array}$$

$$ G_-\qquad\begin{array}{|c|c|c|c|} \hline 
 ----- & --++- & --+-+ & ---++ \\ \hline 
 ++---& ++++-& +++-+ & ++-++ \\ \hline 
 +-+++ &+---+ &+--+- & +-+--  \\ \hline 
 -++++& -+--+& -+-+- & -++-- \\
 \hline \end{array}$$
 
 This proves the claim. \qed

\begin{coro}
The Weyl group of $D_5$ acts on the Kummer configuration.
\end{coro}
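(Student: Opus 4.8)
The plan is to deduce this directly from Proposition~\ref{16_6}, exploiting that the incidence relation on $P_+\times P_-$ was defined purely in terms of the invariant quadratic form on the weight lattice, which is automatically $W(D_5)$-invariant.

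Concretely, I would first recall that $W(D_5)$ — the group of signed permutations of $\epsilon_1,\dots,\epsilon_5$ effecting an even number of sign changes — acts on the weight lattice of $D_5$ by orthogonal transformations, since it is generated by reflections. Because $\Delta_\pm$ are minuscule it permutes the weight sets $P_+$ and $P_-$, transitively on each. The one point deserving a word is that $W(D_5)$ stabilizes $P_+$ and $P_-$ \emph{separately} rather than interchanging them: writing a weight as a string of five signs $\tfrac12(\pm\epsilon_1\pm\cdots\pm\epsilon_5)$, a permutation leaves the number of minus signs unchanged and an even number of sign flips leaves its parity unchanged, so the parity distinguishing $P_+$ from $P_-$ is preserved (an odd signed permutation, i.e.\ an element of $W(B_5)\setminus W(D_5)$, would swap them). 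Since the action on the lattice is orthogonal it preserves every pairing $(\mu,\nu)$ with $\mu\in P_+$, $\nu\in P_-$, hence the congruence $(\mu,\nu)\equiv\tfrac34\pmod 2$ defining incidence; therefore $W(D_5)$ acts by automorphisms of the spinorial $16_6$ configuration. By Proposition~\ref{16_6} this configuration is isomorphic to the Kummer configuration, so the action transports to an action on the Kummer configuration, i.e.\ a homomorphism $W(D_5)\to W_{Kum}$.

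I would also note that this homomorphism is injective — an element fixing every weight fixes, say, $\epsilon_1+\cdots+\epsilon_5$ and $\epsilon_1+\epsilon_2+\epsilon_3+\epsilon_4-\epsilon_5$, hence $2\epsilon_5$, and likewise each $\epsilon_i$, so it is the identity — which realizes $W(D_5)$ as the announced index-$6$ subgroup of $W_{Kum}$. The substance of the argument is entirely Proposition~\ref{16_6}; the only (very mild) obstacle is the sign-parity bookkeeping showing that $W(D_5)$ does not exchange $P_+$ and $P_-$.
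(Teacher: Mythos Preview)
Your argument is correct and is exactly the intended one: the paper states this as an immediate corollary of Proposition~\ref{16_6} without further proof, and your reasoning makes explicit precisely what the paper leaves implicit, namely that the incidence relation is defined via the $W(D_5)$-invariant form and that $W(D_5)$ permutes $P_+$ and $P_-$. Your additional remarks on the parity bookkeeping and on injectivity are correct bonuses that the paper does not spell out here (the index-$6$ embedding is mentioned in the introduction and around Proposition~\ref{wd6}).
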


The automorphism group $W_{Kum}$ of the Kummer configuration has been well studied by the classics. It has order $11520$ \cite[Ex. 30 p. 42]{charmichael}, which is only $6$ times the order of $W(D_5)$, equal to $1920$. 

\begin{prop} \label{wd6}
$W_{Kum}\simeq W(D_6)/\{\pm 1\}$ is the quotient of the Weyl group of $D_6$ by its center. 
\end{prop}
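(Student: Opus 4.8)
The plan is to identify $W_{Kum}$ with a concrete group built from the $D_6$ root system, then match orders and check it acts faithfully on the spinorial $16_6$ configuration (which by Proposition~\ref{16_6} is the Kummer configuration). The natural candidate is the following: the $16$ weights of $P_+ \cup P_-$ should be packaged as the $16$ minuscule weights of a half-spin representation of $D_6$, i.e.\ vectors $\frac12(\pm\epsilon_1\pm\cdots\pm\epsilon_6)$ with a fixed parity of minus signs. Concretely, one appends a sixth coordinate: a weight $\frac12(\pm\epsilon_1\pm\cdots\pm\epsilon_5)$ of $\Delta_+$ (even number of minuses) is sent to $\frac12(\pm\epsilon_1\pm\cdots\pm\epsilon_5 + \epsilon_6)$, and a weight of $\Delta_-$ (odd number of minuses) to $\frac12(\pm\epsilon_1\pm\cdots\pm\epsilon_5 - \epsilon_6)$; this produces $16$ vectors all with an even number of minus signs, i.e.\ the weight set $P$ of one half-spin representation $S^+$ of $\Spin_{12}$. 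First I would check that the incidence relation $(\mu,\nu)\equiv \tfrac34 \pmod 2$ on $P_+\times P_-$ is recovered intrinsically on $P$ as: $\mu,\nu \in P$ are incident iff $(\mu,\nu) = -\tfrac12$ (equivalently $\mu-\nu$ is a short... no, iff $\mu+\nu$ is not a weight, or simply iff the $D_6$-inner product takes its minimal value). Indeed, for two weights of $S^+$ differing in $k$ signs we have $(\mu,\nu) = \tfrac{6-2k}{4}$ with $k$ even, so the possible values are $\tfrac32, \tfrac12, -\tfrac12, -\tfrac32$; requiring the sixth coordinates to be equal forces $k\in\{0,2,4\}$ on the original five, reproducing exactly the "opposite or unique difference" rule, while requiring them opposite forces $k\in\{1,3,5\}$ — and one checks the two rules coincide under the sign flip identifying $P_-$ with its negative. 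So $P$ with the relation "$(\mu,\nu)=-\tfrac12$" is precisely the spinorial $16_6$ configuration.

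Next I would produce the homomorphism $W(D_6) \to W_{Kum}$. The Weyl group $W(D_6)$ acts on the weight set $P$ of $S^+$ (this is the minuscule action, transitive with stabilizer a parabolic of order $|W(D_6)|/16 = 23040/16 = 1440$), and it clearly preserves the inner product, hence the incidence relation; so it acts on the spinorial configuration by automorphisms, giving a map $W(D_6)\to W_{Kum}$. The kernel is the subgroup acting trivially on all $16$ weights; since these weights span $\RR^6$ (they include vectors realizing all sign patterns of fixed parity, whose $\RR$-span is everything), an element of $W(D_6)\subset O(6)$ fixing all of them fixes a spanning set, hence is the identity — but one must be careful: we only need it to fix the weights as a \emph{set of lines/points in the configuration}, i.e.\ to fix each weight, which for linear maps does force the identity. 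Wait: the subtlety is that $-1 \in W(D_6)$ acts on $P$ by $\mu \mapsto -\mu$, and $-\mu \notin P$ in general (it lies in $S^-$ when $6$ is... here $6$ is even so $-\mu$ has an even number of minuses too, so actually $-1$ preserves $P$ and acts as the "opposite" involution). This is exactly the center $\{\pm1\}$, and on the configuration, sending each point to its "opposite partner" — I must check this is the identity automorphism of the \emph{abstract} configuration, i.e.\ that $\mu$ and $-\mu$ are incident to the same six weights. Since $(-\mu,\nu) = -(\mu,\nu)$ and incidence is "$(\cdot,\cdot) = -\tfrac12$" which is not symmetric under sign... hmm, so actually incidence as I've phrased it is \emph{not} $\pm$-invariant, which signals I've mis-identified the relation. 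The correct intrinsic relation on $P$ must be one of the two symmetric conditions $|(\mu,\nu)| = \tfrac12$ (i.e.\ $k\in\{2,4\}$) — and one verifies each weight is $|(\cdot,\cdot)|=\tfrac12$-related to exactly $6$ others, matching. Under this corrected relation $-1$ does act trivially, the kernel of $W(D_6)\to W_{Kum}$ is exactly $\{\pm1\}$, and we get an injection $W(D_6)/\{\pm1\} \hookrightarrow W_{Kum}$.

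Then surjectivity follows by a pure order count: $|W(D_6)/\{\pm1\}| = 23040/2 = 11520 = |W_{Kum}|$, the latter being the known classical value cited from Carmichael. So the injection is an isomorphism. To make this rigorous I would: (i) confirm that the set of weights I called $P$ really is, with the relation $|(\mu,\nu)|=\tfrac12$, isomorphic as a $16_6$ configuration to the spinorial one of Section~5 — this is where the sign-flip identification $P_- \leftrightarrow -P_-$ and a short verification of the "opposite or unique difference" description must be pinned down carefully, being mindful of which parity convention puts the extra $\epsilon_6$ with $+$ or $-$; (ii) confirm $W(D_6)$ acts faithfully modulo its center on $P$, i.e.\ no nontrivial element other than $-1$ fixes every point of the configuration — here the point is that an isometry of $\RR^6$ fixing each of $16$ points that span $\RR^6$ is $\pm1$ only if we allow the points to be fixed \emph{as lines}; fixing them as points gives only the identity, while the configuration automorphism induced by $-1$ is the one that's trivial, so the kernel on \emph{the configuration} is $\{\pm1\}$ — I would state this as: the induced map on automorphisms of the abstract configuration has kernel generated by $-1$. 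The main obstacle I anticipate is (i): getting the incidence translation between the $\tfrac34 \bmod 2$ convention of Section~5 and the $D_6$ inner-product condition exactly right, including the parity bookkeeping for the sixth coordinate, so that one genuinely lands on \emph{one} half-spin orbit of $W(D_6)$ and the $16_6$ structures match on the nose rather than up to the outer automorphism swapping $S^+ \leftrightarrow S^-$. Once that dictionary is fixed, faithfulness modulo center and the order count finish the proof immediately.
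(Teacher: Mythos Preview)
Your overall strategy—upgrade the $D_5$ picture to $D_6$, get $W(D_6)$ to act on the configuration, identify the kernel as $\{\pm 1\}$, and finish by the order count $|W(D_6)|/2 = 11520 = |W_{Kum}|$—is exactly the paper's. But your implementation has a genuine gap.

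First, a bookkeeping slip: $|P_+ \cup P_-| = 32$, not $16$, and the half-spin of $D_6$ has $32$ weights, not $16$. More seriously, your proposed incidence relation on $S^+$ is wrong: for a fixed $\mu \in S^+$, the weights $\nu$ with $|(\mu,\nu)| = \tfrac12$ are those differing in $2$ or $4$ signs, and there are $\binom{6}{2}+\binom{6}{4}=30$ of them, not $6$. The deeper problem is structural. You place the $32$ weights in a \emph{single} half-spin $S^+$ and try to recover the bipartition ``points vs.\ planes'' via the sign of $\epsilon_6$. But $W(D_6)$ acts transitively on the $32$ weights of $S^+$, so no bipartition of $S^+$ is $W(D_6)$-invariant; in particular the subgroup preserving your $\pm\epsilon_6$ split has index $6$, so you would only produce a map from that subgroup to $W_{Kum}$, not from $W(D_6)$.

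The paper's device is to use \emph{both} half-spins of $\Spin_{12}$ and to pass to \emph{pairs of opposite weights}: the $16$ pairs in $\Delta_{12}^+$ are the points, the $16$ pairs in $\Delta_{12}^-$ are the planes, and one declares $\{\pm w\}$ incident to $\{\pm w^*\}$ when $\langle w,w^*\rangle \neq 0$ (which is well-defined on pairs). Since $W(D_6)$ preserves each half-spin and the pairing, it preserves the bipartite configuration; the kernel is visibly $\{\pm 1\}$; and a short sign-count shows this reproduces the spinorial $16_6$. Your order argument then finishes. So the fix is not to look for a $\pm$-symmetric inner-product condition inside one half-spin, but to separate points and planes into $\Delta_{12}^+$ and $\Delta_{12}^-$ and work with antipodal pairs.
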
 

\proof The two half-spin representations of $Spin_{12}$ have dimension $32$, and $W(D_6)$ acts transitively on their weights, that we can identify with a length six sequence of $\pm$, with an even or odd number of minus signs, respectively. 
We define an incidence relation between pairs of opposite weights of $\Delta_{12}^+$ and pairs  of opposite weights of $\Delta_{12}^-$ by deciding that $\pm w$ is incident to $\pm w^*$ when $\langle w,w^*\rangle \ne 0$. This identifies with the
spinorial configuration constructed on weights of  $\Delta_{10}^+$ by embedding them into the sets of weights of either  $\Delta_{12}^+$ or  $\Delta_{12}^-$, just by adding a sign, which yields maps $u\mapsto w=\pm u_+$ and $u\mapsto w^*=\pm u_-$. By definition, $u$ and $u'$ are not incident when they have only two different signs, which means that $\pm u_+$ and $\pm u'_-$ have three different signs, hence are not incident as well. So we get the same configuration. 

We deduce a morphism from $W(D_6)$ to $W_{Kum}$, with kernel $\{\pm 1\}$. 
Since $W(D_6)$ has order $2^5 6!$, the morphism is surjective, and we are done.
\qed

\medskip\noindent {\it Remark.} Automorphisms of the Kummer configuration are usually described in terms of $Sp(4,\FF_2)$. As observed by I. Dolgachev, 
$W(D_6)$ is a non-split extension of the latter by $\FF_2^4$.   

\medskip\noindent {\it Remark.} When we constructed the Kummer configuration from the weights of the two half-spin representations of $Spin_{10}$, we observed that we ended up with some extra data: namely, a bijection between "points" and "planes", such that each 
point is incident to the associated plane. There should be only six such bijections, acted on transitively by $W_{Kum}$, with stabilizers isomorphic with $W(D_5)$.  

\medskip
Since $W(D_6)$ is a group of signed permutations, if contains both $S_6$ and $\FF^5_2$  as subgroups, and the normal subgroup $\FF_2^5$ contains the center. We deduce that 
$$W_{Kum}\simeq \FF_2^4\rtimes S_6.$$

The abstract automorphism group $W_{Kum}$ has a very close relative which plays a very important role in the theory of Kummer surfaces, defined as follows: choose a 
copy  of  $\FF_2^4$ inside $PGL_4$ (they are all conjugate), and let $N$ be its normalizer in $PGL_4$. This group is described in  \cite[Theorem 1.64]{gd}, where it is denoted by $N$; it has the same order $11520$ as $W_{Kum}$, and there is an exact sequence 
$$1\lra \FF_2^4\lra W_{Kum}\lra  S_6\lra 1,$$
which is non-split: $S_6$ is not a subgroup of $N$, and $N$ is not a semi-direct product. According to \cite[Claim 1.68]{gd}, linear automorphisms of the Kummer configuration in $\PP^3$ can be realized in $N$, but they form only a proper 
subgroup of $W_{Kum}$. 



\subsection{The Cartan subspace and the little Weyl group}\label{cartan}
Not surprisingly, we can also use the relationship with the combinatorics of the Kummer configuration to describe an explicit Cartan subspace $\fc$ of $\CC^4\otimes \Delta_+$. 
Choose a basis $a_1,\ldots, a_4$ of $\CC^4$. Since $\fc$ has dimension four, it must be generated by some $c_i=\sum_p a_p\otimes\delta_{ip}$, $1\le i\le $. The Lie brackets  
$$[c_i,c_j]=\sum_{p,q}a_p\wedge a_q\otimes \gamma(\delta_{ip}\delta_{jq})$$
vanish if $\gamma(\delta_{ip}\delta_{jq})=\gamma(\delta_{iq}\delta_{jp})$ for all $p,q$, 
where $\gamma : S^2\Delta\ra V_{10}$ is the equivariant projection. Suppose that  $\delta_{ip}$ and $\delta_{jq}$ are pure spinors, parametrizing isotropic
five-spaces $U_{ip}$ and $U_{jq}$; then  $\gamma(\delta_{iq}\delta_{jp})=v_{iq,jp}$ is a  generator  of
$U_{ip}\cap U_{jq}$ if this is a line, and  $\gamma(\delta_{iq}\delta_{jp})=0$ if its dimension is bigger. So we need to find $16$ maximal isotropic spaces $U_{ip}$ such that  $U_{ip}\cap U_{jq}=U_{iq}\cap U_{jp}$
when these intersections are lines, or they both have bigger dimension. 
It is of course tempting to choose the $16$ isotropic subspaces corresponding to the
weights of $\Delta_+$, but it is not obvious that we can number them in such a way 
that our conditions are satisfied. 

In order to solve this combinatorial problem, we can just open \cite[page 45]{hudson} 
and 
see that each time we choose two weights in $P_+$ and $P_-$ that are not incident,
there is a natural way to arrange the sixteen weights of $P_+$ inside a $4\times 4$ 
array, exactly as we wish to. For example, one of these $4\times 4$ arrays, written in terms of pure spinors $\delta_{ip}$, is 
\begin{equation}\label{square}
\begin{array}{cccc}
 e_{53} & e_{1245} & e_{42} & e_{31} \\
 e_{52} & e_{1345} &  e_{34} &  e_{12} \\
 e_{1234} &  1 & e_{1235} &  e_{54} \\
 e_{14} &  e_{23} &  e_{51} &  e_{2345}  
\end{array}
\end{equation}
We can see that for any position $(i,p)$ in this array, there is a unique other position $(j,q)$, with $j\ne i$ and $q\ne p$, such that $U_{ip}\cap U_{jq}$ is a line. Moreover this line is always 
equal to $U_{iq}\cap U_{jp}$. This implies that we can normalize the spinors $\delta_{ip}$ so that the relations $\gamma(\delta_{ip}\delta_{jq})=\gamma(\delta_{iq}\delta_{jp})$ hold, since 
each $\delta_{ip}$ is involved in a single such relation. And in fact we have 
normalized them correctly in the array above. In other words, we get a abelian subspace $\fc = \langle p_1,p_2,p_3,p_4\rangle$ in $\CC^{4} \otimes \Delta_{+}$ by letting 
$$\begin{array}{ccc}
 p_1 & = & a_1\otimes e_{53} +a_2\otimes  e_{1245} +a_3\otimes  e_{42} +a_4\otimes  e_{31}, \\
 p_2 & =& a_1\otimes e_{52} +a_2\otimes  e_{1345} +a_3\otimes   e_{34} +a_4\otimes   e_{12}, \\
 p_3 &=& a_1\otimes e_{1234} +a_2\otimes   1 +a_3\otimes  e_{1235} +a_4\otimes   e_{54}, \\
 p_4 &=& a_1\otimes e_{14} +a_2\otimes   e_{23} +a_3\otimes   e_{51} +a_4\otimes   e_{2345}.  
\end{array}$$\par 

\begin{prop} \cite{degraaf} $\fc$ is a Cartan subspace of $\CC^{4} \otimes \Delta_{+}$.\end{prop}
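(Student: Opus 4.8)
The plan is to verify the three defining properties of a Cartan subspace in Vinberg's sense: that $\fc$ is abelian, that every element of $\fc$ is semisimple in $\fe_8$, and that $\dim\fc$ equals the rank of the $\ZZ_4$-grading. The third point is immediate, since the four generators $p_1,\dots,p_4$ involve pairwise disjoint collections of weight vectors of $\Delta_+$: they are linearly independent, so $\dim\fc=4$, matching the rank recorded in \cite{vinberg} (the reflection group $W_\fc=G_{31}$ has a four-dimensional reflection representation, as its invariant algebra has four generators). Granting the first two points, $\fc$ is a four-dimensional abelian subspace of commuting semisimple elements; since every such subspace is contained in a Cartan subspace and all Cartan subspaces have dimension equal to the rank, $\fc$ must coincide with one.

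To see that $\fc$ is abelian, I would use the description of the bracket $\wedge^2\fg_1\to\fg_2$ recalled in the Remark after the grading, namely that it is obtained from $\mathrm{id}_{\wedge^2\CC^4}\otimes\gamma$ with $\gamma\colon S^2\Delta_+\to V_{10}$ the projection of \eqref{gamma}. Writing $p_i=\sum_p a_p\otimes\delta_{ip}$, this gives
\[
[p_i,p_j]=\sum_{p<q}(a_p\wedge a_q)\otimes\bigl(\gamma(\delta_{ip}\delta_{jq})-\gamma(\delta_{iq}\delta_{jp})\bigr),
\]
so abelianness reduces to the identities $\gamma(\delta_{ip}\delta_{jq})=\gamma(\delta_{iq}\delta_{jp})$ for $i\ne j$, $p\ne q$. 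Each $\delta_{ip}$ is a pure spinor with associated maximal isotropic $U_{ip}$, and $\gamma(\delta_U\delta_{U'})$ vanishes unless $U\cap U'$ is a line, in which case it spans that line. I would therefore check directly on the array \eqref{square} that (a) whenever $(i,p)$ and $(j,q)$ have a line intersection, so do $(i,q)$ and $(j,p)$, with the same line, and (b) the representatives $e_S$ (and $1$) are normalized so the two values of $\gamma$ are literally equal, not just proportional. Point (a) is the combinatorics of Hudson's $4\times4$ arrangement \cite{hudson}; point (b) is the finite Clifford-multiplication check alluded to just before the statement, which is consistent precisely because each $\delta_{ip}$ enters exactly one such relation. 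When no line appears both sides vanish, and otherwise they agree by (a)–(b); hence $[p_i,p_j]=0$ for all $i,j$.

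The substantive step — and the one I expect to be the main obstacle — is to show that every element of $\fc$ is semisimple. As $\fc$ is abelian, it suffices to exhibit a single semisimple $x\in\fc$ with $\fz_{\fg_1}(x)=\fc$: then $\fc\subseteq\fz_{\fg_1}(x)$, a Cartan subspace through $x$ sits inside $\fz_{\fg_1}(x)$ and has dimension $4$, and comparing dimensions forces $\fc=\fz_{\fg_1}(x)$ to be that Cartan subspace, in particular semisimple throughout. The centralizer condition is a pure rank statement: since $\dim\fg_1=64=\dim\fg_2+4$, $\fz_{\fg_1}(x)=\ker\bigl(\mathrm{ad}(x)\colon\fg_1\to\fg_2\bigr)$ has dimension $4$ exactly when $\mathrm{ad}(x)\colon\fg_1\to\fg_2$ is onto, an open condition to be confirmed on one explicit $x=\sum c_i p_i$. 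Semisimplicity of such an $x$ — equivalently, diagonalizability of $\mathrm{ad}_{\fe_8}(x)$, i.e. squarefreeness of its minimal polynomial — is likewise an open condition, but one for which I see no shortcut from the weight-vector description; verifying it on an explicit element of $\fc$ is the computational core of de Graaf's analysis \cite{degraaf}, which I would invoke at this point. Combining the three steps identifies $\fc$ as a Cartan subspace of $\CC^4\otimes\Delta_+$.
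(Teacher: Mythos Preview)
Your proposal is sound and, in its ultimate reliance on \cite{degraaf} for the semisimplicity check, matches what the paper actually does: the published proof consists solely of that citation, with abelianness already argued in the discussion preceding the statement (which your first paragraph reproduces). Your reduction---find a single semisimple $x\in\fc$ with $\fz_{\fg_1}(x)=\fc$, then use that any Cartan subspace through $x$ lies in this centralizer---is correct and tidy, and the dimension count $\dim\fg_1=64=\dim\fg_2+4$ making this a surjectivity check is right; you simply do not carry that computation out either.

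It is worth noting that the authors had a more self-contained argument in mind, visible in a commented-out passage of the \LaTeX\ source. Rather than checking semisimplicity of a generic linear combination, one shows that \emph{each} generator $p_i$ is semisimple by exhibiting an explicit subalgebra $\fs_i\simeq\fsl_4$ of $\fe_8$, generated by the eight elements $a_p\otimes\delta_{ip}$ and $a_p^\vee\otimes\overline{\delta_{ip}}$ ($1\le p\le4$); under the resulting identification $\fs_i\simeq\fsl_4$, the element $p_i$ corresponds to the cyclic permutation matrix, which is manifestly semisimple. Since commuting semisimple elements have semisimple sums, every element of $\fc$ is then semisimple, and together with abelianness and $\dim\fc=4=\mathrm{rank}$ this identifies $\fc$ as a Cartan subspace. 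That route bypasses both the centralizer computation and the black-box appeal to \cite{degraaf}, at the cost of a direct structure-constant verification inside $\fe_8$; your route is more conceptual but leaves the decisive step external.
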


\noindent {\it Remark.} Grids completely similar to (\ref{square}) already appear in the classics, see \cite[page 45]{hudson}, with a different interpretation. We find it rather amazing that the natural way to fill the
$4\times 4$ square is by the sixteen weights of the half-spin representation. In this language, 
one key property of  (\ref{square}) is that the weights sum to zero on each line and each column. 

\smallskip\noindent {\it Remark.} From the general theory, we know that there is a unique way to complete the Cartan subspace $\fc\subset \fg_1=\CC^4\otimes \Delta_+$ by another Cartan subspace $\fc'\subset \fg_{-1}=(\CC^4)^\vee\otimes\Delta_-$  so that the direct sum 
$\fh=\fc\oplus\fc'$ be a Cartan subalgebra of $\fc'$ \cite{vinberg}. In our setting, a basis of $\fc'$ can be constructed that is (not surprisingly) completely
similar to the basis we introduced for $\fc$, namely
$$\begin{array}{ccc}
 p'_1 & = & a^\vee_1\otimes e_{124} +a^\vee_2\otimes  e_{3} +a^\vee_3\otimes  e_{153} +a^\vee_4\otimes  e_{254}, \\
 p'_2 & =& a^\vee_1\otimes e_{143} -a^\vee_2\otimes  e_{2} +a^\vee_3\otimes   e_{152} +a^\vee_4\otimes   e_{354}, \\
 p'_3 &=& a^\vee_1\otimes e_{5} +a^\vee_2\otimes e_{12345} -a^\vee_3\otimes  e_{4} +a^\vee_4\otimes   e_{123}, \\
 p'_4 &=& a^\vee_1\otimes e_{253} +a^\vee_2\otimes   e_{154} +a^\vee_3\otimes   e_{243} +a^\vee_4\otimes   e_{1}.  
\end{array}$$

\medskip
W. de Graaf also identified the $60$ reflection hyperplanes; their equations are listed in Appendix \ref{combinatorics}. They form a unique orbit under the action of the little Weyl group $W_\fc$, which is generated by the following transformations of $\fc$, given by their matrices in the basis $p_1, p_2, p_3, p_4$:
$$s_1=\begin{pmatrix} -1&0&0&0 \\ 0&1&0&0 \\ 0&0&1&0 \\ 0&0&0&1 \end{pmatrix}, \quad s_2=\begin{pmatrix} 0&-1&0&0 \\ -1&0&0&0 \\ 0&0&1&0 \\ 0&0&0&1 \end{pmatrix}, \quad s_3=\begin{pmatrix} 0&-i&0&0 \\ i&0&0&0 \\ 0&0&1&0 \\ 0&0&0&1 \end{pmatrix},$$ 
$$s_4=\frac{1}{2}\begin{pmatrix} 1&-1&-1&-1 \\ -1&1&-1&-1 \\ -1&1&1&-1 \\ -1&-1&-1&1 \end{pmatrix}, \quad s_5=\frac{1}{2}\begin{pmatrix} 0&0&-1-i&-1+i \\ 0&1&0&0 \\ -1+i&0&1&i \\ -1-i&0&-i&1 \end{pmatrix}.$$
The first three matrices generate a group that acts on $p_1,p_2$ by either changing their signs, 
multiplying both of them by $i$, or swaping them. In particular the 
following matrices belong to $W_\fc$: 
$$g_1=\begin{pmatrix} 1&0&0 &0 \\ 0&1&0&0 \\ 0&0&-1&0 \\ 0&0&0&-1 \end{pmatrix}, \quad
g_2=\begin{pmatrix} 0&i&0 &0 \\ -i&0&0&0 \\ 0&0&1&0 \\ 0&0&0&-1 \end{pmatrix}, \quad
g_3=\begin{pmatrix} 1&0&0 &0 \\ 0&-1&0&0 \\ 0&0&0&i \\ 0&0&-i&0 \end{pmatrix}, $$
$$g_4=\begin{pmatrix} 0&-1&0 &0 \\ -1&0&0&0 \\ 0&0&0&-1 \\ 0&0&-1&0 \end{pmatrix}, \qquad
g_5=\frac{1}{2}\begin{pmatrix} 0&0&1+i&1-i \\ 0&0&1-i&1+i \\ 1-i&1+i&0&0 \\ 1+i&1-i&0&0 \end{pmatrix}.$$
These are five involutions that commute pairwise, except the pairs $(g_1,g_5)$, $(g_2,g_3)$, $(g_2,g_4)$, $(g_3,g_4)$, $(g_3,g_5)$ that only anticommute. Moreover $g_1g_2g_3g_4=iId$. We deduce:

\begin{prop} The group $F$ generated by $g_1,\ldots ,g_5$ contains the center   $Z(W_\fc)\simeq \FF_4$, and fits into an exact sequence 
\begin{equation}\label{Heisenberg}
1\lra Z(W_\fc)\simeq \FF_4\lra F\lra F_0\simeq\FF_2^4\lra 1.
\end{equation}
In particular $F$ is a group of Heisenberg type, of order $64$.
\end{prop}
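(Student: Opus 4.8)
The plan is to verify the stated properties of the subgroup $F\subset W_\fc$ generated by the five explicit involutions $g_1,\ldots,g_5$ directly from the matrices, and then to package the resulting structure into the exact sequence \eqref{Heisenberg}. First I would record, by matrix multiplication, that each $g_i$ is an involution, that $g_1g_2g_3g_4=i\,\mathrm{Id}$, and that the commutation relations are exactly as claimed: all pairs commute except $(g_1,g_5)$, $(g_2,g_3)$, $(g_2,g_4)$, $(g_3,g_4)$, $(g_3,g_5)$, which anticommute (i.e.\ $g_ig_j=-g_jg_i$). Since $-\mathrm{Id}$ and $i\,\mathrm{Id}$ are both scalars, the commutator of any two generators lies in the group of scalar matrices $\{\pm\mathrm{Id},\pm i\,\mathrm{Id}\}\simeq\FF_4$ (writing $\FF_4$ for the cyclic group of order $4$, following the paper's notation); hence the commutator subgroup $[F,F]$ is contained in this group of scalars, and since $(g_1g_5)^2=-\mathrm{Id}$ (from anticommutation and $g_i^2=\mathrm{Id}$) while $g_1g_2g_3g_4=i\,\mathrm{Id}$, we get $[F,F]=\{\pm\mathrm{Id},\pm i\,\mathrm{Id}\}\simeq\FF_4$, which is also the center of $F$. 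One then identifies this scalar group with the center $Z(W_\fc)$: the reflection group $G_{31}$ has center of order $4$ generated by $i\,\mathrm{Id}$, and $i\,\mathrm{Id}=g_1g_2g_3g_4\in F$, so $Z(W_\fc)\subseteq F$ and equals the scalar subgroup above.

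Next I would exhibit the quotient $F/Z(W_\fc)$. Modulo scalars, the images $\bar g_1,\ldots,\bar g_5$ are involutions that all commute (the anticommutation becomes commutation modulo $\pm\mathrm{Id}$), so $F/Z(W_\fc)$ is an elementary abelian $2$-group, generated by five elements subject to the single relation coming from $g_1g_2g_3g_4=i\,\mathrm{Id}$, i.e.\ $\bar g_1\bar g_2\bar g_3\bar g_4=1$. One must check there are no further relations; equivalently that $\bar g_1,\bar g_2,\bar g_3,\bar g_5$ are independent in $F/Z(W_\fc)$, which is a finite check: any product $g_1^{a}g_2^{b}g_3^{c}g_5^{e}$ that is a scalar forces $a=b=c=e=0$, verified by reducing the product to a normal form using the commutation relations and comparing matrices. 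Hence $F/Z(W_\fc)\simeq\FF_2^4$; calling this quotient $F_0$ matches the classical Heisenberg/level-two group, and comparing generators with the four Heisenberg matrices displayed earlier in the Kummer reminder makes the identification explicit. This gives the exact sequence \eqref{Heisenberg} and shows $|F|=4\cdot 16=64$.

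Finally, to conclude that $F$ is \emph{of Heisenberg type}, I would note that the commutator pairing $F/Z \times F/Z \to Z$, $(\bar x,\bar y)\mapsto [x,y]$, is a well-defined, alternating, $\FF_2$-bilinear form valued in the $2$-torsion $\{\pm\mathrm{Id}\}$ of $Z$; reading it off the five anticommuting pairs, this form is nondegenerate on $F_0\simeq\FF_2^4$ (the $5\times 5$ incidence matrix of anticommuting pairs, after imposing $\bar g_1\bar g_2\bar g_3\bar g_4=1$ to cut down to a $4$-dimensional space, has trivial radical --- again a direct computation). A finite $2$-group which is a central extension of $\FF_2^{2n}$ by a cyclic group with nondegenerate commutator pairing is by definition a (generalized) Heisenberg group; here $n=2$ and the center has order $4$, so $|F|=64$, as asserted. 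The main obstacle is purely bookkeeping: organizing the matrix computations so that the commutation relations, the relation $g_1g_2g_3g_4=i\,\mathrm{Id}$, and the nondegeneracy/independence checks are all verified without error; there is no conceptual difficulty once the relations among the $g_i$ are in hand, and one may legitimately defer the tabulation of these relations (as the paper does) to the reader or to the cited computation of de Graaf.
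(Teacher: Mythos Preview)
Your approach is exactly what the paper intends: it gives no explicit proof but simply writes ``We deduce:'' immediately after listing the relations among the $g_i$ (that they are involutions, the (anti)commutation pattern, and $g_1g_2g_3g_4=i\,\mathrm{Id}$), and your argument spells out that deduction correctly.

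One small slip worth fixing: you claim $[F,F]=\{\pm\mathrm{Id},\pm i\,\mathrm{Id}\}$, but $g_1g_2g_3g_4=i\,\mathrm{Id}$ is not a commutator, and since $F$ is nilpotent of class $2$ its derived subgroup is generated by the commutators $[g_i,g_j]$, all of which equal $\pm\mathrm{Id}$; so in fact $[F,F]=\{\pm\mathrm{Id}\}$, strictly smaller than $Z(F)$. This does not affect the Proposition: what matters is that $i\,\mathrm{Id}=g_1g_2g_3g_4\in F$ (so $Z(W_\fc)\subset F$), that scalars are central (so $\FF_4\subseteq Z(F)$), that $F/\FF_4$ is elementary abelian of rank $4$, and that the commutator pairing on this quotient is nondegenerate (so $Z(F)=\FF_4$ exactly). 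You establish all of these; just disentangle $[F,F]$ from $Z(F)$ in the write-up.
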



\medskip

\noindent {\it Remark}. The subgroup of $SL_4\times Spin_{10}$ that acts trivially on $\fc$ can be shown to be another Heisenberg group, isomorphic to $F$ \cite{gopel}. This implies that the automorphism group of a generic codimension four section of the spinor tenfold is isomorphic to $\FF_2^4$, and not to $\FF_2^2$ as erroneously claimed in \cite{dedieu-manivel}. (The mistake is in the dimension counts at the end of the proof of Theorem 22.)

\medskip
In the next subsections, we will show that the combinatorial relationship between 
the half-spin representation $\Delta_+$ and the Kummer configuration can be 
upgraded to a geometric relationship between codimension four sections of the spinor tenfold on the one hand, and Kummer surfaces on the other hand, with quadratic complexes of lines as intermediaries.

\subsection{Quadratic complexes of lines from $\CC^4\otimes\Delta_+$}

 \bigskip

Let $a=a_1p_1+a_2p_2+a_3p_3+a_4p_4$ be a general element of the Cartan subspace $\fc$. 
We can see it as a map from $\CC^4$ to  $\Delta_+$, and deduce a morphism
$$\gamma_a: S^2\CC^4\lra V_{10}.$$

\begin{lemma}
$\gamma_a$ is given by the following expression:
$$\begin{array}{rcl}
\gamma_a(x)& = & -((a_1^2+a_2^2)x_2x_4+2a_3a_4x_1x_3)e_1+((a_1^2+a_2^2)x_1x_3+2a_3a_4x_2x_4)f_1\\
& & -(a_2a_3x_1^2-a_1a_4x_2^2-a_1a_3x_3^2+a_2a_4x_4^2)e_2+(a_1a_4x_1^2-a_2a_3x_2^2-a_2a_4x_3^2+a_1a_3x_4^2)f_2 \\
& & -(a_1a_3x_1^2-a_2a_4x_2^2+a_2a_3x_3^2-a_1a_4x_4^2)e_3-(a_2a_4x_1^2-a_1a_3x_2^2+a_1a_4x_3^2-a_2a_3x_4^2)f_3\\
& & -((a_3^2+a_4^2)x_1x_4+2a_1a_2x_2x_3)e_4 -((a_3^2+a_4^2)x_2x_3+2a_1a_2x_1x_4)f_4\\
& & +((a_1^2-a_2^2)x_1x_2-(a_3^2-a_4^2)x_3x_4)e_5+((a_1^2-a_2^2)x_3x_4+(a_3^2-a_4^2)x_1x_2)f_5.
\end{array}$$
\end{lemma}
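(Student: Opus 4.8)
The plan is to compute $\gamma_a$ directly from its definition as the composition $S^2\CC^4 \to S^2\Delta_+ \xrightarrow{\gamma} V_{10}$, where $a\colon \CC^4 \to \Delta_+$ is the linear map $a_i \mapsto a_i(p_j) = $ the $i$-th entry of $p_j$ in the basis $(p_1,p_2,p_3,p_4)$ of $\fc$, and $\gamma\colon S^2\Delta_+ \to V_{10}$ is the equivariant Clifford projection from \eqref{gamma}. Concretely, writing $a = \sum_j a_j p_j$ and identifying an element of $S^2\CC^4$ with a quadratic form $x = \sum x_i x_j$ (in coordinates $x_1,\dots,x_4$ dual to the chosen basis of $\CC^4$), one has
$$\gamma_a(x) = \gamma\Bigl(\bigl(\textstyle\sum_i x_i \delta(a_i)\bigr)^{\cdot 2}\Bigr), \qquad \delta(a_i) = \sum_j (p_j)_i,$$
so the computation reduces to evaluating $\gamma(\delta\cdot\delta')$ on pairs of the sixteen pure spinors $e_{53}, e_{1245}, e_{42}, e_{31}, e_{52}, e_{1345}, e_{34}, e_{12}, e_{1234}, 1, e_{1235}, e_{54}, e_{14}, e_{23}, e_{51}, e_{2345}$ that make up the array \eqref{square}.

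The key tool is the explicit polarized formula for $\gamma$ recalled in the Remark at the end of Section~3, namely $\gamma(\delta,\delta') = \sum_i \langle v_i \delta, \delta'\rangle w_i$; in the $E\oplus F$ model with $v_i$ running over $e_1,\dots,e_5,f_1,\dots,f_5$ and the dual basis being $f_1,\dots,f_5,e_1,\dots,e_5$, this becomes
$$\gamma(\delta\cdot\delta') = \sum_{i=1}^5 \bigl(\langle e_i.\delta,\delta'\rangle f_i + \langle f_i.\delta,\delta'\rangle e_i\bigr),$$
exactly the formula $\pi$ appearing in the commented-out bracket formulas of Section~5. Here $e_i.$ is wedge product by $e_i$ and $f_i.$ is contraction by $f_i$ on $\wedge^\bullet E$, and $\langle\ ,\ \rangle$ is the pairing $\Delta_+\times\Delta_-\to\CC$ normalized by $\langle e_I, e_J\rangle = (-1)^{|I|/2}\lambda_{I,J}$ with $e_I\wedge e_J = \lambda_{I,J}e_{12345}$. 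So the procedure is: first tabulate, for each of the $\binom{16}{2}+16$ unordered pairs of array-spinors, the vector $\gamma(\delta_{ip},\delta_{jq})\in V_{10}$ — most of these vanish, and the nonzero ones are single basis vectors $e_k$ or $f_k$ (this is precisely the content of the combinatorial discussion preceding \eqref{square}, that $U_{ip}\cap U_{jq}$ is a line exactly for the "paired" positions); then assemble $\gamma_a(x) = \sum_{i,j,p,q} x_i x_j\, a_p a_q\, \gamma(\delta_{ip},\delta_{jq})$ and collect the coefficient of each $e_k$, $f_k$.

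The main obstacle is purely bookkeeping: there are no conceptual difficulties, but one must pin down the sign conventions (the choice of $\lambda_{I,J}$, the ordering in $e_I\wedge e_J$, and the normalization of the pure spinors $\delta_{ip}$ themselves, which was fixed in Section~5 so that $\gamma(\delta_{ip}\delta_{jq}) = \gamma(\delta_{iq}\delta_{jp})$) and carry them consistently through roughly two hundred small Clifford evaluations. I would organize this by noting that $\langle e_i.e_I, e_J\rangle$ and $\langle f_i.e_I, e_J\rangle$ are nonzero only when $|I\cup\{i\}|$ or $|I\setminus\{i\}|$ together with $J$ exhausts $\{1,\dots,5\}$, which kills the vast majority of terms immediately; the surviving contributions group naturally according to which "line" of the Cartan-subspace array the spinors come from, producing the four clusters of terms (the $e_1,f_1$ and $e_4,f_4$ lines giving the "$a_1^2+a_2^2$, $a_3^2+a_4^2$" type coefficients, the $e_5,f_5$ line giving the "$a_1^2-a_2^2$, $a_3^2-a_4^2$" type, and the $e_2,f_2,e_3,f_3$ lines giving the mixed $a_p a_q$ monomials) that one reads off in the displayed formula. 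Finally I would sanity-check the result against equivariance under the reflections $s_1,s_2,s_3$ of $W_\fc$ recalled above — e.g. $s_1$ negates $a_1$, $s_2$ swaps $a_1\leftrightarrow a_2$ with a sign, and $s_3$ sends $(a_1,a_2)\mapsto(-ia_2, ia_1)$ — each of which must act on $\gamma_a$ compatibly with an induced isometry of $V_{10}$, and also verify the obvious symmetry $\gamma_a(x)$ is quadratic in both $a$ and $x$; this gives a strong independent check with essentially no extra work.
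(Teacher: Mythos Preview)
Your proposal is correct and is precisely the natural approach; the paper in fact states this lemma without proof, treating it as a routine computation from the explicit Cartan subspace \eqref{square} and the polarized Clifford formula for $\gamma$ recalled in the Remark at the end of Section~3. Your outline (tabulate the nonvanishing $\gamma(\delta_{ip},\delta_{jq})$, assemble, and collect coefficients) is exactly how one would carry it out, and your suggested consistency checks via the $W_\fc$-reflections are a sensible addition.
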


\medskip
Considered as a quadratic map from $\PP^3$ to $\PP (V_{10})$, 
the image of $\gamma_a$ is contained in the quadric $Q$. For two distinct points $x,y$ in $\PP^3$, recall that the line they generate is mapped by $\gamma_a$ to a conic, whose span is contained in $Q$ whenever the corresponding codimension two section of the spinor tenfold is special. Since  $\gamma_a(x)$ and $\gamma_a(y)$ are isotropic, this happens exactly when 
$$Q(\gamma_a(x),\gamma_a(y))=0.$$
This is a condition on the line $\overline{xy}$, so we should be able to express it in terms of the Pl\"ucker coordinates of the line. Indeed it depends quadratically on $x$ and $y$, it is symmetric in these arguments and vanishes when they coincide; this means that we can see it as a element of the kernel of the  product map from $S^2(S^2\CC^4)$ to $S^4\CC^4$. This kernel is 
$S_{22}\CC^4$, that is, the space of quadratic sections on the Grassmannian $G(2,4)$. Let $\pi_{ij}$ denote the Pl\"ucker coordinates on $G(2,4)$, for $1\le i<j\le 4$. 

\begin{prop}\label{quadratic-section}
The quadratic section of $G(2,4)$ defined by $a$ is 
$$\begin{array}{rcl}
\Theta(a) &= & (a_1^2-a_2^2)(a_3^2-a_4^2)(\pi_{12}^2-\pi_{34}^2) \\
 & & -2(a_1^2+a_2^2)a_3a_4(\pi_{13}^2+\pi_{24}^2) 
 +\frac{1}{3}(2a_1^4+2a_2^4-a_3^4-a_4^4+6a_3^2a_4^2)(\pi_{12}\pi_{34}-\pi_{14}\pi_{23}) \\
& &  +2a_1a_2(a_3^2+a_4^2)(\pi_{14}^2+\pi_{23}^2) 
+\frac{1}{3}(2a_3^4+2a_4^4-a_1^4-a_2^4+6a_1^2a_2^2)(\pi_{12}\pi_{34}+\pi_{13}\pi_{24}).
\end{array}$$
\end{prop}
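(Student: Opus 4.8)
The plan is to obtain $\Theta(a)$ by direct substitution of the explicit formula for $\gamma_a$ from the previous Lemma into the bilinear form $Q$, and then to rewrite the resulting quartic expression in $x,y$ as a quadratic form in the Pl\"ucker coordinates $\pi_{ij}=x_iy_j-x_jy_i$. First I would recall that with respect to the chosen splitting $V_{10}=E\oplus F$ the invariant quadratic form is the hyperbolic pairing $q(e_i,f_j)=\delta_{ij}$, so that for vectors written as $\gamma_a(x)=\sum_i(\alpha_i(x)e_i+\beta_i(x)f_i)$ the polarised form is simply $Q(\gamma_a(x),\gamma_a(y))=\sum_i\bigl(\alpha_i(x)\beta_i(y)+\alpha_i(y)\beta_i(x)\bigr)$. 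Plugging in the ten coordinate functions from the Lemma, each of which is a quadratic form in $x$ (or $y$) with coefficients polynomial in $a_1,\dots,a_4$, one gets a sum of ten terms, each biquadratic in $(x,y)$ and symmetric under $x\leftrightarrow y$.

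The second step is the linear-algebra identity that makes the answer land in $S_{22}\CC^4$. As observed just before the statement, the map $S^2(S^2\CC^4)\to S^4\CC^4$ given by multiplication has kernel exactly $S_{22}\CC^4$, the space of quadratic Pl\"ucker relations' ambient sections; concretely, a symmetric biquadratic expression $B(x,y)$ that vanishes when $x=y$ is automatically a quadratic form in the $\pi_{ij}$'s, and one recovers it by the substitution rules $x_i^2y_j^2 \rightsquigarrow$ combinations of $\pi_{ij}^2$ and $\pi_{ik}\pi_{jk}$, etc. In practice it is cleanest to note that the six monomials $\pi_{12}^2,\pi_{13}^2,\pi_{14}^2,\pi_{23}^2,\pi_{24}^2,\pi_{34}^2$ together with $\pi_{12}\pi_{34},\pi_{13}\pi_{24},\pi_{14}\pi_{23}$ span $S_{22}\CC^4$ modulo the single Pl\"ucker relation $\pi_{12}\pi_{34}-\pi_{13}\pi_{24}+\pi_{14}\pi_{23}=0$, so the coefficients of the answer are determined only up to adding a multiple of that relation — this is why the two "trace" terms in the statement carry the slightly asymmetric coefficients $\tfrac13(2a_1^4+\cdots)$ and $\tfrac13(2a_3^4+\cdots)$, a particular choice of representative. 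I would verify the claimed formula by comparing coefficients of, say, $x_1^2y_2^2$, $x_1^2y_3^2$, $x_1x_2y_3y_4$ and a few mixed monomials on both sides, using $\pi_{ij}^2 = x_i^2y_j^2 + x_j^2y_i^2 - 2x_ix_jy_iy_j$ and $\pi_{ij}\pi_{kl} = x_ix_ky_jy_l - x_ix_ly_jy_k - x_jx_ky_iy_l + x_jx_ly_iy_k$.

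As a sanity check built into the structure of the problem, the quadratic section $\Theta(a)$ must be $W_\fc$-equivariant as $a$ varies in $\fc$ (the reflections $s_1,s_2,s_3$ act by sign changes, swaps and $\pm i$-scalings on $a_1,a_2$), so the coefficient polynomials in $a$ — which are quartics — must transform under $W_\fc$ the same way the nine quadratic Pl\"ucker monomials transform under the induced $\PGL_4$-action; this strongly constrains the answer and lets one catch arithmetic slips. One also checks the obvious degree bookkeeping: $\gamma_a$ is bilinear in $a$ and quadratic in $x$, so $Q(\gamma_a(x),\gamma_a(y))$ is quartic in $a$ and biquadratic in $(x,y)$, consistent with the displayed formula.

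The main obstacle is purely computational bookkeeping: the expansion $Q(\gamma_a(x),\gamma_a(y))=\sum_i(\alpha_i\beta_i'+\alpha_i'\beta_i)$ produces many monomials that must be carefully collected and then rewritten in Pl\"ucker form, and the non-uniqueness modulo the Pl\"ucker relation means one has to fix conventions consistently to match the stated coefficients exactly. There is no conceptual difficulty — the representation theory (the kernel is $S_{22}\CC^4$) is already in hand — so the proof is essentially "substitute and simplify," with the $W_\fc$-equivariance serving as the organising principle and the error-check.
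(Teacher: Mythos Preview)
Your proposal is correct and follows exactly the approach the paper takes: the paragraph immediately preceding the Proposition explains that $\Theta(a)$ is obtained by computing $Q(\gamma_a(x),\gamma_a(y))$ from the explicit Lemma for $\gamma_a$, observing that this biquadratic form vanishes on the diagonal and hence lies in the kernel $S_{22}\CC^4$ of $S^2(S^2\CC^4)\to S^4\CC^4$, and then rewriting it in Pl\"ucker coordinates. The paper treats the explicit formula as the outcome of this substitute-and-simplify computation without spelling out the bookkeeping, so your write-up is in fact more detailed than what appears there; your remark about the Pl\"ucker-relation ambiguity explaining the $\tfrac{1}{3}$ coefficients is a useful addition.
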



When $a$ varies, the span of $\Theta(a)$ is a five dimensional subspace $U_5$ of $S_{22}\CC^4$, that we can identify with a subspace of $S^4\fc$; it is an irreducible component of the action of $W_\fc$. An easy computation yields:

\begin{prop}\label{Finv}
$U_5$ is the space of $F$-invariant polynomials in $S^4\fc^\vee$.
\end{prop}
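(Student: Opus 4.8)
The plan is to verify the statement by a direct dimension-counting argument combined with an explicit invariance check, using the formula for $\Theta(a)$ from Proposition \ref{quadratic-section}. First I would recall that $U_5=\mathrm{span}\{\Theta(a):a\in\fc\}$ is by construction the image of the $SL_4\times Spin_{10}$-equivariant map $\fc\lra S^4\fc^\vee$ sending $a$ to the quadratic section $\Theta(a)$, viewed inside $S_{22}\CC^4\subset S^4\fc^\vee$ after the identification of $\CC^4$ with $\fc$ via the basis $p_1,\dots,p_4$. Since the whole construction is equivariant for the little Weyl group $W_\fc$ (which is the normalizer of $\fc$, acting compatibly on both $\fc$ and on the space of quadratic sections of $G(2,\fc)$), the subspace $U_5$ is $W_\fc$-stable; the paper already asserts it is an irreducible $W_\fc$-submodule of $S^4\fc^\vee$ of dimension five. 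The group $F$ of the previous proposition is a subgroup of $W_\fc$ containing the center $Z(W_\fc)$; since $F$ acts on $\fc$ through its quotient $F_0\simeq\FF_2^4$ (the center acting by scalars, hence trivially on the \emph{projective} space and by fourth roots of unity on $\fc$), what we really need is the space of $F_0$-invariants in $S^4\fc^\vee$, intersected with the relevant isotypic piece.

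The key computational step is this: compute the fixed subspace $(S^4\fc^\vee)^{F_0}$ directly. With respect to the coordinates $a_1,\dots,a_4$ dual to $p_1,\dots,p_4$, the five generators $g_1,\dots,g_5$ act by the explicit matrices listed before Proposition \ref{Finv} (the $g_i$ are the images in $PGL_4$ of elements of $F$, generating $F_0$). One then writes a general element of $S^4\fc^\vee$ as a linear combination of the $35$ monomials of degree four in $a_1,a_2,a_3,a_4$ and imposes invariance under each $g_i$. Because $g_1=\mathrm{diag}(1,1,-1,-1)$ and $g_4$ is the permutation-with-signs swapping $(1\!\leftrightarrow\!2)$ and $(3\!\leftrightarrow\!4)$ with all signs $-1$, and $g_2,g_3$ mix the pairs $\{a_1,a_2\}$ and $\{a_3,a_4\}$ by $\pm i$, the invariant ring is quickly cut down. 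I expect the space of $F_0$-invariant quartics to be exactly five-dimensional, spanned by
\[
a_1^4+a_2^4,\quad a_3^4+a_4^4,\quad a_1^2a_2^2,\quad a_3^2a_4^2,\quad a_1^2a_3^2+a_2^2a_4^2 \ (\text{plus the }g_5\text{-symmetrisation}),
\]
or some equivalent basis; then one checks that the five coefficient-polynomials of $\pi_{12}^2-\pi_{34}^2$, $\pi_{13}^2+\pi_{24}^2$, $\pi_{14}^2+\pi_{23}^2$, $\pi_{12}\pi_{34}-\pi_{14}\pi_{23}$, $\pi_{12}\pi_{34}+\pi_{13}\pi_{24}$ appearing in the formula for $\Theta(a)$ span precisely this same five-dimensional space. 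Since $U_5$ is this span by definition, and it is visibly contained in $(S^4\fc^\vee)^{F_0}$ (each coefficient in $\Theta(a)$ is a symmetric function of $\{a_1^2,a_2^2\}$ and of $\{a_3^2,a_4^2\}$ with the extra $g_5$-symmetry built in), the equality follows from matching dimensions.

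The step I expect to be the main obstacle is not the invariance check under $g_1,\dots,g_4$, which is essentially bookkeeping, but verifying invariance under $g_5$ and confirming that $(S^4\fc^\vee)^{F_0}$ has dimension exactly five rather than six or more: the pairs of generators that only anticommute (such as $(g_3,g_5)$) mean $F_0$ is not simply a product of two-element groups acting on disjoint coordinates, so one must genuinely intersect all five fixed subspaces rather than argue coordinate-by-coordinate. An efficient way around this is to note that $F$ is a Heisenberg group of order $64$ and $\fc$ is (up to the central twist) its unique irreducible four-dimensional representation; then $S^4\fc^\vee$ decomposes into isotypic components for $F$, the trivial $F_0$-isotypic part has dimension equal to $\tfrac{1}{16}\sum_{g\in F_0}(\mathrm{tr}\,g|_{\fc})^4$ up to the correction coming from the central character, and one simply evaluates this character sum. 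I would compute $\mathrm{tr}(g|_\fc)$ for each of the $16$ classes of $F_0$ from the given matrices — most traces are $0$, a few are $\pm 2$ or $\pm 2i$ — plug into the fourth-power character formula, and read off the dimension $5$. Comparing with the manifestly five-dimensional span $U_5$ from Proposition \ref{quadratic-section} then closes the argument. $\qed$
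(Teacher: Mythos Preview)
Your approach is correct and is essentially what the paper does: the paper's entire proof is the phrase ``an easy computation yields'', and your plan spells out exactly that computation (show the five coefficient polynomials in $\Theta(a)$ are $F$-invariant, show $\dim (S^4\fc^\vee)^F=5$, conclude). Your reduction from $F$-invariance to $F_0$-invariance via the observation that $Z(W_\fc)\simeq\FF_4$ acts by fourth roots of unity, hence trivially on degree~$4$ polynomials, is the right first step.

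One small correction in your character-theoretic shortcut: the formula $\tfrac{1}{16}\sum_{g\in F_0}(\mathrm{tr}\,g|_\fc)^4$ computes the multiplicity of the trivial representation in $\fc^{\otimes 4}$, not in $S^4\fc$. For the symmetric power you need the complete homogeneous symmetric polynomial $h_4$ in the eigenvalues of each $g$, or equivalently Molien's series $\tfrac{1}{|F|}\sum_{g\in F}\det(1-tg)^{-1}$ expanded to degree~$4$. Since most elements of $F$ have all eigenvalues equal to primitive roots of unity (so trace zero and $h_4$ easy to evaluate), the sum is still quick to carry out and does give~$5$. This is a bookkeeping slip, not a gap; your direct verification via the explicit matrices $g_1,\dots,g_5$ would work just as well and avoids the issue entirely.
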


Note that $U_5$ contains the polynomial $a_1a_2(a_3^2+a_4^2)$,  a product of four linear forms that are equations of four reflection hyperplanes. The action of $W_\fc$ yields other products of the same type, each one defining a tetrahedron in $\PP(\fc)$. Computing them, we get the following result.

\begin{prop}\label{quartets}
Up to scalars, $U_5$ contains $15$ products of $4$ reflection hyperplanes and is generated by them. They split the $60$ reflection hyperplanes of $W_\fc$ into $15$ tetrahedra. 
\end{prop}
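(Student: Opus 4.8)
The plan is to work entirely inside the Cartan subspace $\fc$, using the explicit basis $p_1,\dots,p_4$ and the description of the $60$ reflection hyperplanes furnished by de~Graaf (listed in Appendix~\ref{combinatorics}). The target space $U_5 = \langle \Theta(a)\rangle \subset S^4\fc^\vee$ is four-dimensional as a variety of $a$'s but five-dimensional as a linear span, and by Proposition~\ref{Finv} it coincides with the $F$-invariants in $S^4\fc^\vee$, where $F$ is the Heisenberg group of order $64$ from~(\ref{Heisenberg}). So the first step is a dimension check: confirm $\dim U_5 = 5$, e.g.\ by evaluating $\Theta(a)$ at five generic values of $a$ and checking the resulting quintuple of quadrics in the $\pi_{ij}$ is linearly independent, or equivalently decomposing $S^4\fc^\vee$ into $W_\fc$-irreducibles and identifying the copy stabilised pointwise by the centre and invariant under $F$.

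Next I would produce the $15$ products explicitly. The monomial $a_1a_2(a_3^2+a_4^2) = a_1a_2(a_3+ia_4)(a_3-ia_4)$ is visibly a product of four linear equations of reflection hyperplanes (reading off from the Appendix that $a_1=0$, $a_2=0$, $a_3\pm ia_4=0$ are among the $60$), and it lies in $U_5$ by Proposition~\ref{quadratic-section}. Applying the generators $s_1,\dots,s_5$ of $W_\fc$ (whose matrices are given above) to this product and taking the orbit, one lands in $U_5$ automatically since $U_5$ is $W_\fc$-stable. The claim to verify is that this orbit consists of exactly $15$ lines in $\PP(U_5)$, each a product of four hyperplane equations, that these $15$ products span $U_5$, and that the $15$ quadruples partition the $60$ hyperplanes — i.e.\ each hyperplane occurs in exactly one of the $15$ products, so $15 \times 4 = 60$ with no repetition. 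A clean way to organise this is to note that the partition into tetrahedra should be governed by the quotient group $F_0 \simeq \FF_2^4$: the four hyperplanes of a given tetrahedron form a coset (or a subgroup-like block) under the $F_0$-action on the set of $60$ hyperplanes, and counting the orbits of this action together with the stabiliser computation gives the number $15$. Indeed, $60/4 = 15$ is the right bookkeeping, and $15 = |\PP(\FF_2^4)^\vee|$-type combinatorics is exactly what one expects from a Heisenberg/Göpel structure; this is the combinatorial heart that ties back to the "synthemes/totals" picture promised in the introduction.

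Concretely I would carry out the steps in this order: (1) fix the list of $60$ linear forms from the Appendix and the $F$-action on them; (2) verify $a_1a_2(a_3^2+a_4^2) \in U_5$ (done by Proposition~\ref{quadratic-section}) and that its four factors are hyperplane equations; (3) compute the $W_\fc$-orbit of this degree-four product — since $W_\fc$ permutes the $60$ hyperplanes, each orbit element is again a product of four hyperplane equations, and one checks the orbit has size $15$ (the stabiliser of the tetrahedron has index $15$ in $W_\fc$, i.e.\ order $46080/15 = 3072$); (4) check these $15$ products are linearly independent modulo the span being $5$-dimensional — so they span $U_5$ and satisfy $15-5 = 10$ independent linear relations; (5) verify the partition property by a direct incidence count, confirming each of the $60$ hyperplanes appears in precisely one tetrahedron. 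Steps (1)–(3) are mechanical given the Appendix data. The main obstacle is step~(3)/(5): controlling the $W_\fc$-orbit and proving it is exactly the $15$ expected tetrahedra rather than something larger, and checking the disjointness of the partition. This is really a finite computation in the permutation action of $G_{31}$ on $60$ points, and the cleanest justification is to identify this action with the known action governing the Klein configuration (which the Appendix is designed to recover), at which point the "$15$ tetrahedra partitioning $60$ lines" becomes the classical fact about the Klein/Göpel arrangement; I would phrase the proof as reducing to that identification plus the linear-independence check that these $15$ products do span the $5$-dimensional space $U_5$.
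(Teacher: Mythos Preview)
Your proposal is correct and takes essentially the same approach as the paper: start from the seed product $a_1a_2(a_3^2+a_4^2)\in U_5$, apply $W_\fc$, and verify by direct computation that the orbit consists of exactly $15$ products of four reflection hyperplane equations which span $U_5$ and partition the $60$ hyperplanes. The paper's own justification is literally ``Computing them, we get the following result,'' so your plan is a more detailed version of exactly that computation; your additional Heisenberg/Klein-configuration framing is sound but not needed for the argument as the paper presents it.
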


Here is another direct connection with the reflection hyperplanes.

\begin{prop}
The quadratic section $\Theta(a)$ of $Gr(2,4)$ is singular if and only if $a$ belongs to one of the $60$ reflection hyperplanes.
\end{prop}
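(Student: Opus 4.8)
The plan is to reformulate the singularity of $\Theta(a)$ as the vanishing of a discriminant polynomial on $\fc$, and then to identify the resulting hypersurface via the $W_\fc$-symmetry. Realize $Gr(2,4)\subset\PP^5=\PP(\wedge^2\CC^4)$ as the zero locus of the Pl\"ucker quadric, with symmetric matrix $A$, and let $B_a$ be the symmetric matrix of the quadric cutting out $\Theta(a)$, read off from Proposition \ref{quadratic-section}. Since $\Theta$ is a finite morphism (equivalently $\Theta(a)\ne 0$ for every $[a]\in\PP(\fc)$, which one may also check directly from the displayed formula), $\Theta(a)=Gr(2,4)\cap\{B_a=0\}$ is a genuine complete intersection of two quadric fourfolds in $\PP^5$. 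By the classical theory of pencils of quadrics such a threefold is smooth if and only if the binary sextic $\det(\lambda A+\mu B_a)$ is squarefree, i.e.\ if and only if its discriminant $D(a)$ is non-zero; note that $\det(\lambda A+\mu B_a)\not\equiv 0$ in $(\lambda:\mu)$ since $A$ is non-degenerate. Moreover $D$ is homogeneous of degree $120$ in $a$: writing $\det(\lambda A+\mu B_a)=\sum_k e_k(a)\,\lambda^{6-k}\mu^k$, the coefficient $e_k$ has degree $4k$ (it involves $k$ columns of the degree-four matrix $B_a$), while the discriminant of a binary sextic is isobaric of weight $30$ when $e_k$ is given weight $k$, so $\deg D=4\cdot 30=120$.

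The key structural remark is that, in the Pl\"ucker coordinates grouped as $(\pi_{12},\pi_{34})$, $(\pi_{13},\pi_{24})$, $(\pi_{14},\pi_{23})$, the formula of Proposition \ref{quadratic-section} exhibits $B_a$ as block-diagonal with three $2\times 2$ blocks, exactly like $A$. Hence $\det(\lambda A+\mu B_a)=q_1(\lambda,\mu)\,q_2(\lambda,\mu)\,q_3(\lambda,\mu)$ factors as a product of three binary quadratics, and $D$ equals, up to a non-zero constant, $\mathrm{disc}(q_1)\,\mathrm{disc}(q_2)\,\mathrm{disc}(q_3)\prod_{i<j}\mathrm{Res}(q_i,q_j)^2$. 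A one-line evaluation of the relevant $2\times 2$ determinant gives $\mathrm{disc}(q_3)=c\,D_3^2$, where $D_3=2a_1a_2(a_3^2+a_4^2)$ is the coefficient of $\pi_{14}^2$ in $\Theta(a)$; in particular $a_1^2$ divides $D$. Since $\{a_1=0\}$ is one of the $60$ reflection hyperplanes --- it is the mirror of the reflection $s_1=\mathrm{diag}(-1,1,1,1)\in W_\fc$ --- we conclude that $D$ vanishes to order at least two along this hyperplane.

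It remains to spread this vanishing over the whole arrangement. For $w\in W_\fc$, lift $w$ to a pair $(g,h)\in SL_4\times Spin_{10}$ normalizing $\fc$. Unwinding the definition of $\gamma_a$, the $Spin_{10}$-equivariance of the map $\gamma$ of (\ref{gamma}) together with the invariance of the quadratic form on $V_{10}$ gives $\Theta(wa)=g\cdot\Theta(a)$, where $g\in SL(\CC^4)$ acts naturally on the space of quadratic sections of $Gr(2,\CC^4)$. Since $\wedge^2g$ preserves the Pl\"ucker quadric ($\det g=1$), the sextic $\det(\lambda A+\mu B_{wa})$ is a non-zero scalar multiple of $\det(\lambda A+\mu B_a)$, hence $D(wa)=(\mathrm{const})\cdot D(a)$: the polynomial $D$ is $W_\fc$-semi-invariant. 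As $W_\fc$ acts transitively on the $60$ reflection hyperplanes, $D$ vanishes to order at least two along each of them, so $\prod_H\ell_H^2$ divides $D$; comparing degrees ($2\cdot 60=120$) forces $D=c\prod_H\ell_H^2$ with $c$ a constant, and $c\ne 0$ because $D\not\equiv 0$ (the generic quadratic complex in the family is a smooth threefold, as one checks at a single explicit point of $\PP(\fc)$). Therefore $\Theta(a)$ is singular if and only if $D(a)=0$, that is, if and only if $a$ lies on one of the $60$ reflection hyperplanes.

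I expect the two delicate points to be the invocation of the pencil-of-quadrics criterion (which should be cited; the relevant statement is that the base locus of a pencil of quadrics with non-degenerate generic member is smooth precisely when the discriminant binary form is squarefree) and the identity $\Theta(wa)=g\cdot\Theta(a)$, which requires carefully unpacking how $Spin_{10}$ acts through $\gamma$; everything else is a short explicit computation. An alternative to the third paragraph would be to compute the three resultants $\mathrm{Res}(q_i,q_j)$ by hand and check directly that each is a product of $16$ equations of reflection hyperplanes, so that together with the $12$ hyperplanes coming from the $\mathrm{disc}(q_i)$ one recovers all $60$; the symmetry argument seems cleaner and avoids these computations.
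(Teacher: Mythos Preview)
Your proof is correct and follows essentially the same route as the paper's: both set up the pencil $\lambda A+\mu B_a$, invoke the squarefree-discriminant criterion for complete intersections of two quadrics (the paper cites Reid), exploit the $2\times 2$ block-diagonal structure to factor the sextic into three binary quadratics, compute one or more of the $\mathrm{disc}(q_i)$ explicitly to locate reflection hyperplanes, and then combine the transitivity of $W_\fc$ on the $60$ hyperplanes with the degree count $120=2\cdot 60$ to conclude. Your write-up is in fact a bit more careful on two points---the correct squaring of the resultants in the discriminant-of-a-product formula and the explicit justification of the equivariance $\Theta(wa)=g\cdot\Theta(a)$---but the strategy is identical.
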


\begin{proof}
Denote by $P_{\lambda,\mu}=\lambda \Theta(a) +\mu (\pi_{12}\pi_{34}+\pi_{14}\pi_{23}-\pi_{13}\pi_{24})$ the pencil of hyperquadrics in $\mathbb{P}^{5}$ generated by $\Theta(a)$ and the equation of $Gr(2,4)$. The pencil is $W_{\fc}$-invariant by definition.
The base locus of $P_{\lambda,\mu}$ is singular if and only if the discriminant of the binary form $det(P_{\lambda,\mu})$ vanishes \cite[Prop 2.1]{reid}, a condition that  defines a $W_{\fc}$-invariant hypersurface
in $\mathbb{P}\fc$.
Note that the determinant of $P_{\lambda,\mu}$ can be written as
$$
det(P_{\lambda,\mu})=d_{12,34}(\lambda,\mu) \cdot d_{14,23}(\lambda,\mu) \cdot d_{13,24}(\lambda,\mu),
$$
where $d_{12,34},d_{13,24}$ and $d_{14,23}$ are three quadratic forms in $\lambda,\mu$. Thus the discriminant hypersurface is defined by the polynomial
$$d=disc(d_{12,34})\cdot disc(d_{13,24})\cdot disc(d_{14,23})\cdot R(d_{12,34},d_{13,24})\cdot R(d_{13,24},d_{14,23})\cdot R(d_{14,23},d_{12,34}),$$
where the resultants $R(d_{12,34},d_{13,24}),R(d_{13,24},d_{14,23}),R(d_{14,23},d_{12,34})$ have  degree $32$. This implies that 
$det(P_{\lambda,\mu})$ has degree $120$. Now we compute  that  $$disc(d_{12,34})disc(d_{13,24})disc(d_{14,23})= a_{1}^2a_{2}^2a_{3}^2a_{4}^2(a_{1}^4-a_{2}^4)^2(a_{3}^2-a_{4}^4)^2$$ 
is the square of a product of equations of $12$ reflection hyperplanes. 
So by invariance, since $W_\fc$ acts transitively on the $60$ reflection hyperplanes, we deduce that $
det(P_{\lambda,\mu})$ must be the square of the product of their equations, up to 
non-zero scalar. Hence the claim.
\end{proof}

\noindent {\it Remark.} A computation with GAP confirms that $Sym^4\fc$ splits into 
two irreducible components of dimension $5$ and $30$, respectively (we thank W. de Graaf for this observation.)

\medskip
Now consider $\Theta$ as a rational map from $\PP(\fc)$ to $\PP(U_5)$, defined by 
quartic polynomials. 

\begin{prop}\label{CR4}
 $\Theta: \mathbb{P}(\fc) \rightarrow \PP(U_5)\simeq\PP^{4}$  is a $W_{\fc}$-equivariant morphism of degree $16$ onto its image, which is projectively isomorphic with the Castelnuovo-Richmond quartic $CR_{4}$. 
 The preimages of the $15$ singular lines of the quartic is the union of  $30$ lines. 
\end{prop}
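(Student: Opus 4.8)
\emph{Proof strategy.} Throughout, $\Theta$ denotes the map attached to the five-dimensional linear system $U_5\subset S^4\fc^\vee$ of Propositions \ref{quadratic-section} and \ref{Finv}. The $W_\fc$-equivariance is immediate from the construction: $a\mapsto \Theta(a)$ is assembled from the $\mathrm{Spin}_{10}$-equivariant map $\gamma$ of (\ref{gamma}) and the Plücker coordinates on $G(2,4)$, so it intertwines the $W_\fc$-action on $\fc$ with the induced linear action on $U_5$, and hence descends to a $W_\fc$-equivariant rational map $\PP(\fc)\dashrightarrow\PP(U_5)\simeq\PP^4$.

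The first substantive point is that $\Theta$ is a morphism, i.e.\ $U_5$ is base-point-free. By Proposition \ref{Finv}, $U_5=(S^4\fc^\vee)^F$; since the centre $Z(W_\fc)\simeq\FF_4$ acts on $\fc$ by scalar roots of unity it acts trivially on all of $S^4\fc^\vee$, so in fact $U_5=(S^4\fc^\vee)^{F_0}$, where $F_0=F/Z(W_\fc)\simeq\FF_2^4$ acts on $\PP(\fc)\simeq\PP^3$. This $\FF_2^4$-action is a faithful Heisenberg-type action on $\PP^3$, hence, up to projective equivalence, the standard level $(2,2)$ Heisenberg action; in suitable coordinates $U_5$ is therefore spanned by the five quartics occurring in Hudson's canonical form (\ref{kummer}), namely $x^2y^2+z^2t^2$, $x^2z^2+y^2t^2$, $x^2t^2+y^2z^2$, $xyzt$ and $x^4+y^4+z^4+t^4$. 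A direct elimination shows these have no common zero (from $xyzt=0$ and the three mixed quartics all but one coordinate must vanish, and then $x^4+\cdots$ cannot vanish), so $\Theta$ is a morphism. Alternatively, the base locus is $W_\fc$-stable and, by the preceding Proposition, contained in the union of the $60$ reflection hyperplanes; evaluating the explicit formula of Proposition \ref{quadratic-section} at a general point of one hyperplane shows it has dimension $\le 1$, and a finite check over the $W_\fc$-orbits of lines and points inside the arrangement finishes.

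Next we show $\Theta$ is generically finite onto a quartic threefold projectively isomorphic to $\CR4$, of degree $16$. Since $\Theta$ is $F_0$-invariant and $F_0\simeq\FF_2^4$ acts with finite, generically free orbits, the general fibre of $\Theta$ is finite; hence the image $Z\subset\PP^4$ is a threefold, i.e.\ a hypersurface, and $\Theta$ factors as $\PP(\fc)\to\PP(\fc)/F_0\to Z$ with the first arrow of degree $16$. The classical theory of Heisenberg-invariant quartics on $\PP^3$ (see \cite{dolg-config}, and \cite{hudson, hunt}) identifies $\PP^3/\FF_2^4$, embedded by its ring of invariant quartics, with the Igusa, i.e.\ Castelnuovo--Richmond, quartic; therefore $\PP(\fc)/F_0\simeq Z\simeq\CR4$, so $\deg\Theta=|F_0|=16$ and $\deg Z=4$ (consistently, Bézout gives $\deg\Theta\cdot\deg Z=4^3$). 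The residual quotient $W_\fc/F\simeq S_6$ acts on $Z$ as its full automorphism group, which re-confirms the identification, $\CR4$ being the unique $S_6$-invariant quartic threefold in $\PP^4$ with $15$ singular lines; if one prefers a self-contained argument, the defining quartic of $Z$ can be obtained by eliminating $a$ from the parametrization of Proposition \ref{quadratic-section} and recognised directly as Igusa's quartic.

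Finally, $\Theta^{-1}\!\big(\bigcup_{i=1}^{15}\ell_i\big)$ is a $W_\fc$-stable curve in $\PP(\fc)$. Under $W_\fc/F\simeq S_6$ the $15$ singular lines of $\CR4$ form a single orbit, matching the $15$ tetrahedra of reflection hyperplanes of Proposition \ref{quartets}; concretely each singular line $\ell_i$ is the image in $Z=\PP(\fc)/F_0$ of the fixed locus of one of the $15$ involutions $g$ of $F_0\simeq\FF_2^4$, and such a $g$ acts on $\PP(\fc)\simeq\PP^3$ with two skew fixed lines. Hence $\Theta^{-1}(\ell_i)=\mathrm{Fix}_{\PP(\fc)}(g)$ is a pair of lines, and running over the $15$ involutions produces $30$ lines, forming a single $W_\fc$-orbit (of length $30=|W_\fc|/1536$); a dimension and degree count, together with the explicit list in Appendix \ref{combinatorics}, shows the preimage of the singular locus of $\CR4$ is exactly this union of $30$ lines. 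The main obstacle is the identification in the third paragraph --- that $\PP(\fc)/F_0$ is \emph{exactly} $\CR4$ and that $\Theta$ realizes the quotient map with degree precisely $16$, rather than a further cover or a non-normal birational model; this is where one must lean on the classical Heisenberg--Kummer geometry together with the group-theoretic dictionary $W_\fc\supset F\supset Z(W_\fc)$ with quotients $\FF_2^4$ and $S_6$. Everything else (base-point-freeness, the Bézout count, the bookkeeping of the $30$ lines) is routine once this is in place.
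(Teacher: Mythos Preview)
Your proposal is correct and follows essentially the same route as the paper. The paper's own proof simply invokes Proposition \ref{Finv} together with \cite[Prop.~10.2.7]{Dolg_classical} for the identification of $\Theta$ with the classical Heisenberg-invariant quartic map onto $\CR4$ of degree $16$, and says the $30$ lines are a direct computation; you have unpacked exactly this, supplying the base-point-freeness check and the $F_0$-quotient argument that Dolgachev's proposition encodes.

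The one place where you add something is the description of the $30$ lines as the fixed loci in $\PP(\fc)$ of the $15$ nontrivial involutions of $F_0$, two skew lines each. This is a cleaner conceptual account than the paper's bare ``direct computation'', and it dovetails nicely with Proposition \ref{quartets} and the later Proposition \ref{30lines}. Just be aware that to make it airtight you still need the classical fact that the singular locus of $\CR4$ is exactly the image of these fixed loci under the $F_0$-quotient (equivalently, that $\CR4$ is singular precisely where the quotient map ramifies in codimension two); this is part of the standard Heisenberg--Igusa package you are already citing, so no gap, but worth naming explicitly.
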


Recall that the Castelnuovo-Richmond quartic \cite[9.54]{Dolg_classical} is also called the Igusa quartic, and is a compactification of the moduli space of principally polarized abelian surfaces with full level two structure. 

\begin{proof}
The first part of the statement follows from Proposition \ref{Finv} and \cite[Prop 10.2.7]{Dolg_classical}. The last part is a direct computation. 
\end{proof}

\medskip\noindent {\it Remark.}
In the basis of Proposition \ref{quadratic-section},  $$t_0(\pi_{12}^2-\pi_{34}^2) +t_1(\pi_{13}^2+\pi_{24}^2) 
 +t_2(\pi_{12}\pi_{34}-\pi_{14}\pi_{23})+t_3(\pi_{14}^2+\pi_{23}^2)+t_4(\pi_{12}\pi_{34}+\pi_{13}\pi_{24}) $$
 belongs to the Castelnuovo-Richmond quartic if an only if 
$$t_0^4+t_1^4+t_3^4+2t_0^2t_1^2+2t_0^2t_3^2-2t_1^2t_3^2-(2t_0^2+t_1^2-2t_3^2)t_2^2-(5t_0^2+t_1^2+t_3^2)t_2t_4-(2t_0^2-2t_1^2+t_3^2)t_4^2=0.$$
We also note that the $30$ lines, more precisely the $15$ pairs of lines in $\PP(\fc)$ mapping to the singular locus of $CR_4$, will appear in the next section as forming the discriminant locus for codimension four sections of the spinor tenfold. 

\medskip
By \cite[p.532]{Dolg_classical}, the quotient of $\CR4$ by its automorphism group $S_6$ is isomorphic with the weighted projective space $w\PP(2,3,5,6)$ (and can also be interpreted as the moduli space of singular cubic surfaces!). This gives a
geometric interpretation of our moduli space.

\begin{coro}
$\Theta$ induces an isomorphism:
$\mathbb{P}\fc/W_{c} \simeq CR_{4}/Aut(CR_{4}) \simeq w\PP(2,3,5,6)$.
\end{coro}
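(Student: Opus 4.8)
The plan is to deduce the isomorphism $\PP(\fc)/W_\fc\simeq\CR4/\mathrm{Aut}(\CR4)$ from a description of $\Theta$ as a Heisenberg quotient; the remaining isomorphism with $w\PP(2,3,5,6)$ is then the fact recalled just above from \cite[p.~532]{Dolg_classical}.

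First I would settle the group theory. Since $W_\fc=G_{31}$ acts irreducibly on $\fc$, its center $Z(W_\fc)\simeq\FF_4$ consists of scalar matrices, hence acts trivially on $\PP(\fc)$; so $W_\fc$ acts through $N:=W_\fc/Z(W_\fc)\hookrightarrow\PGL(\fc)$, and the quotient varieties coincide: $\PP(\fc)/W_\fc=\PP(\fc)/N$. The group $F$ of Heisenberg type in (\ref{Heisenberg}) contains $Z(W_\fc)$, so it descends to a subgroup $F_0\simeq\FF_2^4$ of $N$, and by Proposition \ref{F0} the group $N$ is precisely the normalizer of $F_0$ in $\PGL(\fc)$; in particular $F_0$ is normal in $N$, with $N/F_0$ of order $720$.

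Next I would identify $\Theta$ with the quotient morphism by $F_0$. Since $Z(W_\fc)$ acts by fourth roots of unity on $\fc$, it acts trivially on $S^4\fc^\vee$; hence $(S^4\fc^\vee)^F=(S^4\fc^\vee)^{F_0}$, and by Proposition \ref{Finv} the five-dimensional linear system $U_5$ defining $\Theta$ is exactly the space of $F_0$-invariant quartics on $\PP(\fc)\simeq\PP^3$, with $F_0$ acting through its Schr\"odinger representation. By the proof of Proposition \ref{CR4} together with \cite[Prop.~10.2.7]{Dolg_classical}, the morphism attached to $U_5$ is, up to the projective isomorphism onto $\CR4$ used there, nothing but the quotient morphism $q:\PP(\fc)\to\PP(\fc)/F_0$; equivalently, $\Theta$ induces an isomorphism $\PP(\fc)/F_0\simeq\CR4$. (Alternatively: $q$ and $\Theta$ both have degree $16$, so the induced finite morphism $\PP(\fc)/F_0\to\CR4$ has degree one, hence is an isomorphism since $\CR4$ is normal.)

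Finally I would assemble the pieces. As $F_0$ is normal in $N$, the $N$-action on $\PP(\fc)$ descends to an action of $N/F_0$ on $\PP(\fc)/F_0\simeq\CR4$, whence a homomorphism $N/F_0\to\mathrm{Aut}(\CR4)\simeq S_6$. It is injective by a rigidity argument: an element of $N$ acting trivially on $\PP(\fc)/F_0$ preserves each $F_0$-orbit, hence on the dense open locus where $F_0$ acts freely it coincides, orbit by orbit, with a locally constant (thus constant) element of $F_0$, and so lies in $F_0$. Comparing orders gives $N/F_0\simeq\mathrm{Aut}(\CR4)=S_6$, and quotienting in stages for the normal subgroup $F_0\subset N$ yields
$$\PP(\fc)/W_\fc=\PP(\fc)/N\simeq\bigl(\PP(\fc)/F_0\bigr)/(N/F_0)\simeq\CR4/\mathrm{Aut}(\CR4)\simeq w\PP(2,3,5,6).$$
Everything downstream is then formal or cited; the one genuinely delicate point is the bookkeeping with the nested finite groups $Z(W_\fc)\subset F\subset W_\fc$ and their images $F_0\subset N$ in $\PGL(\fc)$: one must be sure that $\Theta$ really is the Heisenberg quotient (so that the classical identifications of \cite{Dolg_classical,gd} apply), and that the residual $S_6$ acts faithfully on $\CR4$.
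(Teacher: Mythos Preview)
Your argument is correct and is essentially the argument the paper has in mind, made explicit. The paper states the Corollary immediately after citing \cite[p.~532]{Dolg_classical} and gives no further proof; the missing steps---that $\Theta$ is precisely the quotient by $F_0$ and that the residual $W_\fc/F$-action on $\CR4$ realizes the full $\mathrm{Aut}(\CR4)\simeq S_6$---are exactly the content of Propositions~\ref{Finv} and~\ref{F0}, which you invoke. Your quotient-in-stages computation and the injectivity argument for $N/F_0\to\mathrm{Aut}(\CR4)$ are both sound.

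One organizational remark: in the paper's linear order, Proposition~\ref{F0} appears \emph{after} the Corollary, so you are making a forward reference. This is not circular (Proposition~\ref{F0} does not depend on the Corollary), but it does mean your proof is not literally what the paper could have written at that point. The paper's tacit alternative is simply to observe that $\PP(\fc)/W_\fc\simeq w\PP(8,12,20,24)\simeq w\PP(2,3,5,6)$ directly from the invariant degrees of $G_{31}$, and to match this with $\CR4/S_6$ via Dolgachev; your approach has the advantage of actually showing that $\Theta$ \emph{induces} the isomorphism rather than merely that both sides are abstractly the same weighted projective space.
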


Of course $w\PP(2,3,5,6)\simeq w\PP(8,12,20,24)$, so we recover the exponents of $W_\fc$ up to a  factor $4$, whose main effect is to kill the center $Z(W_\fc)\simeq \FF_4$. 

\smallskip 
Note that instead of the Cartan subspace $\fc$ of $\fg_1\subset \fe_8$, we could have used the dual Cartan subspace $\fc'$ of $\fg_{-1}$, for which we also provided a special basis. We get a  map $\Theta'$ similar to $\Theta$, sending $a'\in \fc'$ to a quadratic section of $G(2,(\CC^4)^\vee)$ which is canonically isomorphic to $G(2,\CC^4)$. Computing $\Theta'$ we deduce the following statement:

\begin{prop} 
For $a=a_1p_1+a_2p_2+a_3p_3+a_4p_4\in\fc$ and $a'=a_1p'_1+a_2p'_2+a_3p'_3+a_4p'_4\in \fc'$,   the quadratic sections
 $\Theta(a)=\Theta'(a')$ coincide. 
\end{prop}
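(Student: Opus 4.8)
The statement is an equality of two explicit quadratic sections of $G(2,\CC^4)$, one attached to $a\in\fc\subset\fg_1$ and one to the ``parallel'' element $a'\in\fc'\subset\fg_{-1}$, where the coefficients $a_1,\dots,a_4$ are the same on both sides. The most economical route is to run the argument of Proposition \ref{quadratic-section} verbatim for $\fc'$ and compare the outputs. First I would record that $\fc'$ was furnished with a basis $p'_1,\dots,p'_4$ in the same way as $\fc$, so that $a'=\sum a_i p'_i$ defines a map $(\CC^4)^\vee\lra\Delta_-$ and hence, via the equivariant projection $\gamma:S^2\Delta_-\lra V_{10}$ of (\ref{gamma}), a quadratic morphism $\gamma_{a'}:S^2(\CC^4)^\vee\lra V_{10}$. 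Repeating the computation of the Lemma preceding Proposition \ref{quadratic-section} (the explicit formula for $\gamma_a$), one gets a completely analogous closed form for $\gamma_{a'}$; then $\Theta'(a')$ is obtained, exactly as $\Theta(a)$ was, by polarizing $Q(\gamma_{a'}(x'),\gamma_{a'}(y'))=0$ and re-expressing it in the Plücker coordinates $\pi'_{ij}$ of $G(2,(\CC^4)^\vee)$.

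The key structural input is the canonical identification $G(2,(\CC^4)^\vee)\simeq G(2,\CC^4)$ coming from $\wedge^2(\CC^4)^\vee\simeq\wedge^2\CC^4$ (wedge-contraction with a fixed volume form on $\CC^4$), under which $\pi'_{12}\leftrightarrow\pi_{34}$, $\pi'_{13}\leftrightarrow\pi_{42}$, $\pi'_{14}\leftrightarrow\pi_{23}$ and so on. I would then simply substitute this dictionary into the formula one gets for $\Theta'(a')$ and check that it reproduces the five-term expression for $\Theta(a)$ in Proposition \ref{quadratic-section}, coefficient by coefficient. The three ``square'' terms and the two ``mixed'' terms are permuted among themselves by the Plücker duality in a way that, because the coefficients $a_1,\dots,a_4$ play symmetric roles among the pairs $\{1,2\}$ and $\{3,4\}$, leaves the polynomial invariant; this is the heart of the verification.

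There is a conceptual shortcut worth stating as the backbone of the proof, which also explains why the coincidence is not accidental. Both $\Theta(a)$ and $\Theta'(a')$ lie in the $5$-dimensional $W_\fc$-submodule $U_5\subset S^4\fc^\vee$ (Proposition \ref{Finv}: it is the space of $F$-invariant quartics), and by construction $\Theta$ and $\Theta'$ are the only $W_\fc$-equivariant maps of the relevant bidegree, up to scalar, since $U_5$ is an irreducible constituent of $S^4\fc$. Moreover the duality $\fg_1\simeq\fg_{-1}^\vee$ in the $\ZZ_4$-grading of $\fe_8$ is $SL_4\times Spin_{10}$-equivariant and restricts, on the chosen Cartan subspaces, to the identity $a_i p_i\mapsto a_i p'_i$ up to the action of the little Weyl group. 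Hence $\Theta$ and $\Theta'$ must agree up to a scalar, and the scalar is pinned down to $1$ by evaluating both at a single convenient point of $\fc$ (for instance $a=p_1$, or any element on one reflection hyperplane so that both sides become a visibly identical singular quadric).

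\textbf{Main obstacle.} The only real work is bookkeeping: getting the explicit formula for $\gamma_{a'}$ right, with the correct signs coming from the Clifford action on $\Delta_-=\wedge^{\mathrm{odd}}E$ and from the volume-form identifications $\wedge^3\CC^4\simeq(\CC^4)^\vee$ and $\wedge^2(\CC^4)^\vee\simeq\wedge^2\CC^4$, and then tracking those signs through the Plücker dictionary. I expect no conceptual difficulty, but a sign error in the duality conventions would produce $\Theta'(a')=\Theta(a')$ only after an extra permutation of the $a_i$, so the safest presentation is to do the direct computation and then note that the equivariance argument of the previous paragraph removes any remaining ambiguity about the overall constant.
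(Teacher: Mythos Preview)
Your main approach---compute $\gamma_{a'}$ explicitly from the basis $p'_1,\dots,p'_4$ of $\fc'$, form $\Theta'(a')$, and compare with $\Theta(a)$ through the canonical identification $G(2,(\CC^4)^\vee)\simeq G(2,\CC^4)$---is exactly what the paper does: it simply says ``Computing $\Theta'$ we deduce the following statement'' and gives no further detail. So on the primary route you are aligned with the paper.

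Your conceptual shortcut is not in the paper and is an attractive idea, but as written it has two soft spots. First, to conclude that any two $W_\fc$-equivariant quartic maps $\fc\to U_5$ agree up to scalar you need $U_5$ to occur with multiplicity one in $S^4\fc^\vee$; this is true (the paper records, via a GAP computation, that $S^4\fc$ has exactly two irreducible summands, of dimensions $5$ and $30$), but you should cite it rather than leave it implicit. Second, and more seriously, you assert that the correspondence $p_i\mapsto p'_i$ between $\fc$ and $\fc'$ is $W_\fc$-equivariant (``restricts to the identity up to the little Weyl group''); this is plausible but not established anywhere in the paper, and without it the equivariance argument does not get off the ground. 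So the shortcut would need its own verification---at which point the direct computation is just as fast.
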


\subsection{Coble quadrics from  $\CC^4\otimes\Delta_+$}
Applying Proposition \ref{Coble}, we can associate to a generic $a\in\fc$ a quadratic section $\cC(a)$ of $Fl:=Fl(1,3,\CC^4)$, whose singular locus will in general be an abelian surface. 
The easiest way to compute an equation of $\cC(a)$ is to deduce it from the quadratic complex $\Theta(a)\in S_{22}\CC^4$. Indeed, we observe that there exists an $SL_4$-equivariant morphism, unique up to non-zero scalar,
$$\Gamma : S^2(S_{22}\CC^4) \lra S_{422}\CC^4\subset S^2\CC^4\otimes S^2(\CC^4)^\vee.$$
Suppose $\Theta$ is in the form of Proposition \ref{quadratic-section}, that is 
$$\Theta = 
P(\pi_{12}^2-\pi_{34}^2)+Q(\pi_{13}^2+\pi_{24}^2)+R(\pi_{14}^2+\pi_{23}^2)+2S\pi_{12}\pi_{34}  + 2T \pi_{14}\pi_{23}-2U\pi_{13}\pi_{24},$$
with $S+T+U=0$. Then a computation yields that its image by $\Gamma$ is of the following form, where $x,y,z,t$ are our coordinates on $\CC^4$, and $\xi, \eta, \zeta, \theta$ their duals: 
$$\begin{array}{rcl}
\cC &=& 2PQ(x^2\otimes \theta^2 +y^2\otimes \zeta^2 -z^2\otimes \eta^2 -t^2\otimes \xi^2)+ \\
& &   +2PR(x^2\otimes \zeta^2 +y^2\otimes \theta^2 -z^2\otimes \xi^2 -t^2\otimes \eta^2)+\\
 & & +2QR(x^2\otimes \eta^2 +y^2\otimes \xi^2 +z^2\otimes \theta^2 +t^2\otimes \zeta^2)+\\
 & & +4(S^2+Q^2-R^2)(xy\otimes \xi\eta+zt\otimes \zeta\theta)+4(T^2+P^2-Q^2)(xt\otimes \xi\theta+yz\otimes\eta\zeta)+\\
 & & +4(U^2+R^2-P^2)(xz\otimes \xi\zeta+yt\otimes \eta\theta)-\\
  & & -(S^2+T^2+R^2)(x^2\otimes \xi^2 +y^2\otimes \zeta^2 +z^2\otimes \eta^2 +t^2\otimes \theta^2)+\\
 & & +4(UT-PS)(xy\otimes\zeta\theta+zt\otimes \xi\eta)+4(TS-QU)(xz\otimes \eta\theta+yt\otimes \xi\zeta) +\\
 & & +4(SU-RT)(xt\otimes\eta\zeta+yz\otimes\xi\theta).
 \end{array}$$ 
 So the map associating to $a\in\fc$ the Coble quadric $\cC(a)$ is the composition of $\Theta$
 with the rational map $\Psi : \PP^4\dashrightarrow \PP^8$ mapping $[P,Q,R,S,T,U]$  to
 \begin{equation}\label{Gamma}
 [QR, PR, PQ, S^2+Q^2-R^2,  T^2+P^2-Q^2, U^2+R^2-P^2, UT-PS, TS-QU, SU-RT].
 \end{equation}
 Note that $\PP^8=\PP(U_9)$ where $U_9$ is a representation of $W_\fc$ spanned by degree eight polynomials on $\fc$. It contains $PQ$ which is a product of $8$ equations of reflection hyperplanes, belonging to two disjoint tetrahedra. We deduce that $U_9$ is again a Macdonald representation. More precisely, applying $W_\fc$ to $PQ$ we deduce: 
 
 \begin{prop} 
 $U_9$ is generated by a $W_\fc$-orbit of $45$ such products of $8$ reflection hyperplanes. 
 \end{prop}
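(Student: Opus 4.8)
The plan is to combine the explicit form of $\Psi$ in (\ref{Gamma}) with the action of $W_\fc$ on the $60$ reflection hyperplanes and with the irreducibility of $U_9$. By (\ref{Gamma}) the third coordinate of $\Psi$ is $PQ$, so $PQ\in U_9$; and by Proposition \ref{quadratic-section}, $P=(a_1^2-a_2^2)(a_3^2-a_4^2)$ and $Q=-2(a_1^2+a_2^2)a_3a_4$ are products of four linear equations of reflection hyperplanes -- two of the $15$ tetrahedra of Proposition \ref{quartets} -- the eight hyperplanes occurring being pairwise distinct. Since $W_\fc$ permutes the $60$ reflection hyperplanes and $U_9$ is $W_\fc$-stable, every element of the orbit $W_\fc\cdot PQ$ is again a product of eight equations of reflection hyperplanes lying in $U_9$, namely the product over a union of two of the $15$ tetrahedra.

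Next I would check that this orbit spans $U_9$. As in the Remark following Proposition \ref{quadratic-section} for $U_5$, a GAP computation shows that $U_9$ occurs as an irreducible summand of $S^8\fc^\vee$; since $U_9$ is then an irreducible $W_\fc$-module and $PQ\neq 0$, the orbit $W_\fc\cdot PQ$ spans it. (Alternatively one exhibits nine members of the orbit that are linearly independent in $S^8\fc^\vee$.)

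There remains the count of the orbit. The centre $Z(W_\fc)$ acts on $\fc$ by scalar fourth roots of unity, hence trivially on $S^8\fc^\vee$; and by Proposition \ref{F0} the $W_\fc$-action on the $15$ tetrahedra factors through $W_\fc\to S_6$, whose kernel is the order $64$ Heisenberg group $F$ of (\ref{Heisenberg}), and whose target we may identify with $\mathrm{Sym}_6$ acting on one of its two natural $15$-element sets (the duads of a six-element set, say). Since the union of two distinct tetrahedra determines the pair, the orbit of the line $\langle PQ\rangle$ has size equal to the size of the $S_6$-orbit of $\{T_P,T_Q\}$. The key point is that the three tetrahedra $T_P,T_Q,T_R$ read off from $P,Q,R$ in (\ref{Gamma}) form a syntheme, i.e. $\{T_P,T_Q\}$ is a pair of disjoint duads and not an incident one. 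Granting this, the $S_6$-stabiliser of $\{T_P,T_Q\}$ has order $16$, so the orbit has $720/16=45$ elements; a short check that $W_\fc$ stabilises $PQ$ only by fixing it (no nontrivial scalar occurs, $Z(W_\fc)$ and $F$ acting trivially on $PQ$) then shows that $W_\fc\cdot PQ$ itself has $45$ elements, with $\mathrm{Stab}_{W_\fc}(PQ)$ of order $1024$ and index $45$. Thus $W_\fc\cdot PQ$ consists of exactly $45$ products of $8$ reflection hyperplanes, and they span $U_9$.

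The main obstacle is this combinatorial identification: that $\{T_P,T_Q\}$ lies in the $45$-element $S_6$-orbit of disjoint duads rather than in the $60$-element orbit of incident duads. I would settle it either by reading off from Appendix \ref{combinatorics} the $S_6$-labelling of the $15$ tetrahedra and verifying directly that $T_P,T_Q,T_R$ -- whose supports are $\{a_1\pm a_2,\ a_3\pm a_4\}$, $\{a_1\pm ia_2,\ a_3,\ a_4\}$ and $\{a_1,\ a_2,\ a_3\pm ia_4\}$ -- are pairwise disjoint as duads, or, more robustly, by a direct computation of $\mathrm{Stab}_{W_\fc}(PQ)$ from the matrices $s_1,\dots,s_5$, which simultaneously re-establishes $F\subseteq\mathrm{Stab}_{W_\fc}(PQ)$ and the index $45$. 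The only other inputs -- triviality of the central and $F$-scalar action on $PQ$, and the irreducibility (or $9$-dimensional spanning) of $U_9$ -- are short finite verifications.
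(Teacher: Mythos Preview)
Your approach is correct and is essentially what the paper does: its entire proof is the phrase ``applying $W_\fc$ to $PQ$ we deduce'', i.e.\ a direct computation, and you are supplying the $S_6$ bookkeeping that makes the count $45$ transparent (together with the irreducibility argument for spanning).

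One small correction on the combinatorial identification you flag as the main obstacle. In the paper's dictionary (summary table at the end of Appendix~\ref{combinatorics}), the $15$ tetrahedra correspond to \emph{synthemes}, not duads; the duads are the $15$ blocks. Concretely, from the hyperplane list one reads off $T_P=Q_3$, $T_Q=Q_2$, $T_R=Q_1$, and by the block table these are exactly the three synthemes containing the duad $D_1$. So $\{T_P,T_Q\}$ is a pair of synthemes \emph{sharing a duad}, which is the $45$-element $S_6$-orbit (its complement, pairs of disjoint synthemes, has $60$ elements). Your conclusion is therefore right, and the outer automorphism explains why your duad heuristic gave the correct number; but if you want to match the paper's conventions, phrase it via synthemes.
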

 
Using Proposition \ref{CR4}, we deduce that   $\Gamma$ maps the Castelnuovo-Richmond quartic 
to a family of Coble 
quadrics whose singular loci  are precisely the abelian surfaces parametrized by $CR_4$, over which we would therefore be able to construct a universal family. 

\subsection{Kummer surfaces from $\CC^4\otimes\Delta_+$}

From the quadric section $\Theta(a)$ of $G(2,4)$ we can derive explicitly 
the associated Kummer surface. By construction, We just need to consider a 
hyperplane in $\PP^3$, 
say orthogonal to $[x,y,z,t]$ and restrict to the corresponding plane in $G(2,4)$. 
We thus get a conic and  compute its discriminant. 
For a quadric section of the form of $\Theta(a)$, the equation of the Kummer turns out to be much simpler than one could expect. In order to simplify the computation, we add a multiple of the Plücker equation to put in the following form:
$$\begin{array}{rcl}
\Theta(a) &= & (a_1^2-a_2^2)(a_3^2-a_4^2)(\pi_{12}^2-\pi_{34}^2)
-2(a_1^2+a_2^2)a_3a_4(\pi_{13}^2+\pi_{24}^2) +2a_1a_2(a_3^2+a_4^2)(\pi_{14}^2+\pi_{23}^2) \\
& &  
((a_3^2+a_4^2)^2+4a_1^2a_2^2)\pi_{12}\pi_{34}+((a_3^2-a_4^2)^2-(a_1^2-a_2^2)^2)\pi_{14}\pi_{23}.
\end{array}$$

\begin{lemma}\label{Kummer_from_quadric}
Consider a quadratic section of $G(2,4)$ of the form 
$$\Theta = 
P(\pi_{12}^2-\pi_{34}^2)+Q(\pi_{13}^2+\pi_{24}^2)+R(\pi_{14}^2+\pi_{23}^2)+2S\pi_{12}\pi_{34}  + 2T \pi_{14}\pi_{23}.$$
The equation of the associated Kummer quartic surface in $\PP^3$ is 
$$\begin{array}{rcl}
K & = &-PQR(x^4+y^4-z^4-t^4)
-P(Q^2+R^2-T^2)(x^2y^2-z^2t^2)  \\
& & +Q(R^2-P^2-(S-T)^2)(x^2z^2+y^2t^2) +R(Q^2-P^2-S^2)(x^2t^2+y^2z^2)  \\
 & &
+2(S(R^2-Q^2)+T(P^2+Q^2)+ST(S-T))xyzt. 
\end{array}$$
\end{lemma}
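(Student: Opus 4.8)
The plan is to make Klein's construction fully explicit for a quadratic complex of the special shape given. Starting from
$$\Theta = P(\pi_{12}^2-\pi_{34}^2)+Q(\pi_{13}^2+\pi_{24}^2)+R(\pi_{14}^2+\pi_{23}^2)+2S\pi_{12}\pi_{34}+2T\pi_{14}\pi_{23},$$
I first recall that a point of $\PP^3$, say the point dual to $[x:y:z:t]$, corresponds to one of the two families of planes in $G(2,4)$: it is the plane of lines contained in the hyperplane it defines, parametrized by $\PP^2 \simeq \PP(H)$ where $H \subset \CC^4$ is the corresponding hyperplane. Restricting $\Theta$ to this plane of lines produces a conic $c_{[x:y:z:t]}$ in $\PP(\wedge^2 H) \simeq \PP^2$; its symmetric matrix has entries that are quadratic in $x,y,z,t$. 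The discriminant (the determinant of this $3\times 3$ symmetric matrix) is then a quartic form in $x,y,z,t$, and by the classical theory recalled in the subsection on Kummer surfaces it defines, for generic $a$, the Kummer surface we want. So the core of the proof is the bookkeeping: choose the hyperplane $H = \{x\xi_0 + y\xi_1 + z\xi_2 + t\xi_3 = 0\}$ with a convenient basis of $\wedge^2 H$, write out the restriction of $\Theta$ to that $\PP^2$ as an explicit symmetric matrix, and expand its determinant.

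Concretely, the first step is to fix a basis of lines in $H$: parametrize $\wedge^2 H$ by the three Plücker-type coordinates obtained after eliminating one coordinate, with transition coefficients linear in $(x,y,z,t)$; the Plücker relation $\pi_{12}\pi_{34}-\pi_{13}\pi_{24}+\pi_{14}\pi_{23}=0$ must be used to reduce everything to a genuine conic on $\PP(\wedge^2 H)\simeq\PP^2$. The second step is to substitute into $\Theta$ and collect the coefficients of the resulting ternary quadratic form; because $\Theta$ only involves the six monomials $\pi_{12}^2,\pi_{34}^2,\pi_{13}^2,\pi_{24}^2,\pi_{14}^2,\pi_{23}^2,\pi_{12}\pi_{34},\pi_{14}\pi_{23}$, many terms organize into the symmetric pattern $x^2y^2-z^2t^2$, $x^2z^2+y^2t^2$, $x^2t^2+y^2z^2$, $xyzt$, etc. The third step is to take the determinant of the $3\times 3$ symmetric matrix of that form; here one expects the claimed simplifications (the disappearance of $x^2y^2+z^2t^2$ cross-terms, the exact combination $S(R^2-Q^2)+T(P^2+Q^2)+ST(S-T)$ in front of $xyzt$, etc.) to come out after using the relations $P^2=\ldots$ among the coefficients only implicitly — in fact no relation among $P,Q,R,S,T$ is needed, since the statement is a formal identity in $P,Q,R,S,T$. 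One should double-check the overall normalization of the determinant against a single numerical specialization (e.g.\ $P=Q=R=1$, $S=T=0$, which should give the classical tetrahedroid) to pin down the sign and scalar.

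I expect the main obstacle to be purely organizational rather than conceptual: choosing the right affine chart / basis of $\wedge^2 H$ so that the symmetric $3\times 3$ matrix comes out in a form whose determinant visibly reorganizes into Hudson's canonical form, and handling the Plücker relation consistently so that one does not accidentally compute the determinant of a $4\times 4$ (rank-deficient) matrix instead. A clean way to sidestep the chart issue is to work invariantly: the conic is the restriction of the quadratic form $\Theta$ (viewed as a symmetric form on $\wedge^2\CC^4$, after adding a suitable multiple of the Plücker quadric to make it vanish on the image of the Grassmannian when needed) to the $3$-dimensional subspace $u\wedge H \subset \wedge^2\CC^4$, where $u$ is a vector spanning $\CC^4/H$; its discriminant is then $\det$ of the Gram matrix of $\Theta|_{u\wedge H}$ with respect to any basis, well-defined up to the square of the change-of-basis determinant, hence a well-defined quartic on $\PP(\CC^4)^\vee = \PP^3$ up to scalar. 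Carrying this out for a generic basis and then specializing the coordinates recovers exactly the displayed $K$, which completes the proof.
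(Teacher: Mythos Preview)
Your approach is correct and essentially the same as the paper's: restrict $\Theta$ to a plane of $G(2,4)$ corresponding to a hyperplane in $\PP^3$, obtain a conic in $\PP^2$, and compute the discriminant as a $3\times 3$ determinant. The paper simply does this more directly and without the invariant detours you worry about: set $t=1$, use the linear relations $\pi_{14}=y\pi_{12}+z\pi_{13}$, $\pi_{24}=-x\pi_{12}+z\pi_{23}$, $\pi_{34}=-x\pi_{13}-y\pi_{23}$ to eliminate three Pl\"ucker coordinates (the Pl\"ucker relation then holds automatically, so no separate bookkeeping is needed), substitute into $\Theta$, and write down the resulting symmetric matrix in $\pi_{12},\pi_{13},\pi_{23}$ explicitly; expanding its determinant gives $K$.
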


Beware that in this Lemma, $\Theta$ does not belong to $S_{22}\CC^4$ in general; this simplifies a little bit the computation below. But of course we can always add a suitable multiple of the Pl\"ucker quadric in order to get into   $S_{22}\CC^4$; this does not affect the corresponding quadratic section of $G(2,4)$, hence neither, a fortiori, the associated Kummer surface.

\proof When we restrict to the hyperplane orthogonal to $[x,y,z,1]$, the Pl\"ucker coordinates verify the relations 
$$\pi_{14}=y\pi_{12}+z\pi_{13}, \quad \pi_{24}=-x\pi_{12}+z\pi_{23}, \quad 
\pi_{34}=-x\pi_{13}-y\pi_{23}.$$
Substituting, we get a quadratic form in $\pi_{12}, \pi_{13}, \pi_{23}$ that is singular when 
$$K=\det \begin{pmatrix} P+x^2Q+y^2R & yzR-xS & xzQ-y(T-S) \\
yzR-xS & Q-x^2P+z^2R & xyP-zT \\
xzQ-y(T-S) & xyP-zT & R-y^2P+z^2Q \end{pmatrix} = 0.$$
Expanding the determinant yields the result. \qed

\medskip
Suppose we write the equation of the Kummer surface as 
$$K= A(x^4+y^4-z^4-t^4)+B(x^2y^2-z^2t^2)+C(x^2z^2+y^2t^2)+D(x^2t^2+y^2z^2)+2Exyzt.$$
This is almost Hudson's canonical form (up to some signs that we could get rid of by multiplying $z,t$ by some eighth root of unity). In order that this is the equation of a Kummer surface, the 
coefficients $A,B,C,D,E$ are subject to the cubic relation
$$4A^3-(B^2-C^2-D^2+E^2)A+BCD=0,$$
which is our equation for the Segre cubic primal $\S3$ \cite[Theorem 10.3.18]{Dolg_classical}. 

\smallskip
Applying Lemma \ref{Kummer_from_quadric} to  $\Theta(a)$, we deduce the equation $K(a)$ of the  corresponding Kummer surface. 

\begin{prop}\label{Kummer-equation} The equation of the Kummer surface defined by $a\in\fc$ is of the form 
\begin{equation}\label{kummerCartan}
\begin{array}{rcl}
K(a)& = & A(a)(x^4+y^4-z^4-t^4)+B(a)(x^2y^2-z^2t^2)+C(a)(x^2z^2+y^2t^2)\\ 
 & & \qquad +D(a)(x^2t^2+y^2z^2)+2E(a)xyzt.
\end{array}
\end{equation}
where $A(a), B(a), C(a), D(a), E(a)$ are degree $12$ polynomials  in $a$, explicited in Appendix \ref{combinatorics}.
\end{prop}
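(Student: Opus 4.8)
The plan is to derive equation (\ref{kummerCartan}) by a direct application of Lemma~\ref{Kummer_from_quadric} to the quadratic section $\Theta(a)$, together with the explicit formula of Proposition~\ref{quadratic-section} (in the normalized form displayed just above the lemma). First I would read off the coefficients $P,Q,R,S,T$ of the quadric $\Theta(a)$ when it is written in the shape $P(\pi_{12}^2-\pi_{34}^2)+Q(\pi_{13}^2+\pi_{24}^2)+R(\pi_{14}^2+\pi_{23}^2)+2S\pi_{12}\pi_{34}+2T\pi_{14}\pi_{23}$ required by the lemma; from the normalized form of $\Theta(a)$ these are
\begin{equation}\label{PQRST}
\begin{array}{ll}
P=(a_1^2-a_2^2)(a_3^2-a_4^2), & Q=-2(a_1^2+a_2^2)a_3a_4, \quad R=2a_1a_2(a_3^2+a_4^2),\\[1mm]
2S=(a_3^2+a_4^2)^2+4a_1^2a_2^2, & 2T=(a_3^2-a_4^2)^2-(a_1^2-a_2^2)^2.
\end{array}
\end{equation}
Each of $P,Q,R$ is visibly a degree-four polynomial in $a$, and $S,T$ are degree four as well, so substituting these into the quartic-degree formula for $K$ in Lemma~\ref{Kummer_from_quadric} produces coefficients $A(a),B(a),C(a),D(a),E(a)$ that are homogeneous of degree $12$ in $a$. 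This already gives the shape (\ref{kummerCartan}), and matching the monomials $x^4+y^4-z^4-t^4$, $x^2y^2-z^2t^2$, $x^2z^2+y^2t^2$, $x^2t^2+y^2z^2$, $xyzt$ against the five terms of the lemma identifies $A=-PQR$, $B=-P(Q^2+R^2-T^2)$, $C=Q(R^2-P^2-(S-T)^2)$, $D=R(Q^2-P^2-S^2)$, and $E=S(R^2-Q^2)+T(P^2+Q^2)+ST(S-T)$, all evaluated at the values (\ref{PQRST}).

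Next I would record that these degree-$12$ polynomials factor (up to scalar) as products of equations of reflection hyperplanes, using the combinatorial structure established in Propositions~\ref{quartets} and the $W_\fc$-analysis: this is exactly the point that lets us state "explicited in Appendix~\ref{combinatorics}". The cleanest way to organize this is to first check that $P,Q,R$ are themselves (essentially) products of four reflection-hyperplane equations drawn from the $15$ tetrahedra of Proposition~\ref{quartets} --- indeed $R=2a_1a_2(a_3^2+a_4^2)$ is precisely the product of four linear forms noted in Proposition~\ref{quartets}, and $P,Q$ are its images under elements of $W_\fc$ --- and that $S,T$ (and the auxiliary combinations $Q^2+R^2-T^2$, $R^2-P^2-(S-T)^2$, $Q^2-P^2-S^2$, and the bracket defining $E$) likewise factor into products of reflection-hyperplane equations. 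Then $A,B,C,D,E$ are products of twelve such linear forms, matching the "synthemes/totals" combinatorics advertised in the introduction.

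The one point needing genuine care, rather than bookkeeping, is verifying that the triple $(A(a),B(a),C(a),D(a),E(a))$ actually satisfies the Segre cubic relation $4A^3-(B^2-C^2-D^2+E^2)A+BCD=0$, i.e.\ that $K(a)$ really is a Kummer surface and not just a quartic of the displayed shape. But this is not a separate computation: by Proposition~\ref{r3} (or the construction of Section~5 specialized to $K=\Delta_K^\perp$ coming from $a\in\fc$), the quadratic complex $\Theta(a)$ yields a Kummer surface via Klein's construction for generic $a$, and Lemma~\ref{Kummer_from_quadric} is precisely the computation of that Kummer surface's equation; hence the Segre relation holds identically in $a$ by construction, and one may alternatively verify it by a single symbolic substitution of (\ref{PQRST}). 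I expect the main obstacle to be purely presentational: pinning down the normalizations (the scalars and signs absorbed by rescaling $z,t$ by eighth roots of unity, as flagged after Lemma~\ref{Kummer_from_quadric}) so that the factored forms of $A(a),\dots,E(a)$ exhibited in Appendix~\ref{combinatorics} are literally the products of the reflection-hyperplane equations listed there, with no stray constants; the algebraic content is entirely contained in Lemma~\ref{Kummer_from_quadric} and Proposition~\ref{quadratic-section}.
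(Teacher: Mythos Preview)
Your core approach is correct and matches the paper's: both simply apply Lemma~\ref{Kummer_from_quadric} to the normalized form of $\Theta(a)$, read off $P,Q,R,S,T$ as degree-four polynomials in $a$, and obtain degree-$12$ coefficients $A(a),\ldots,E(a)$ by substitution. The paper offers no more than this; the proposition is the outcome of a direct computation, with the explicit polynomials relegated to Appendix~\ref{combinatorics}.

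However, your second paragraph contains a genuine error that you should remove. You claim that all five of $A,B,C,D,E$ factor as products of twelve reflection-hyperplane equations. This is false: only $A(a)=4a_1a_2a_3a_4(a_1^4-a_2^4)(a_3^4-a_4^4)$ factors that way. Inspecting Appendix~\ref{combinatorics}, the polynomials $B,C,D,E$ are long, unfactored expressions. The content of Theorem~\ref{blocks} is that the five-dimensional span $V_5=\langle A,B,C,D,E\rangle$ contains fifteen such products (the blocks $D_1,\ldots,D_{15}$), and these are given in the appendix as explicit \emph{linear combinations} of $A,\ldots,E$. So your appeal to Proposition~\ref{quartets} and the ``synthemes/totals'' combinatorics to justify factorization of each coefficient individually does not work. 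Fortunately, none of this is needed for Proposition~\ref{Kummer-equation} as stated: the proposition only asserts the shape~(\ref{kummerCartan}) with degree-$12$ coefficients, not that they factor. Your discussion of the Segre relation is likewise correct but extraneous.
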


As for Proposition \ref{CR4}, we can interprete $K$ as a degree twelve equivariant map from $\fc$ to $S^4(\CC^4)^\vee$, or equivalently, to a linear equivariant morphism from $S^{12}\fc$ to $S^4(\CC^4)^\vee$. The latter morphism has rank $5$, and yields an isomorphism between a five dimensional subspace $V_5$ of $S^{12}\fc^\vee$, and a five dimensional subspace of $S^4(\CC^4)$, that we denote in the same way. 
This representation of $W_\fc$ contains in particular the polynomial 
$$A(a)= 4a_1a_2a_3a_4(a_1^4-a_2^4)(a_3^4-a_4^4),$$
which is a product of linear equations of $12$ among the $60$ reflection hyperplanes of $W_\fc$. The action of the Weyl group yields many more similar products inside $U_5$. W. de Graaf checked the following statement:


\begin{theorem}\label{blocks}
Up to scalars, $V_5$ contains $15$ products of $12$ reflection hyperplanes and is generated by these. Moreover, there are exactly six ways to choose five such products so as to cover the $60$ reflection hyperplanes of $W_\fc$.
\end{theorem}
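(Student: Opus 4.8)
The plan is to transfer the whole structure to the classical combinatorics of duads, synthemes and totals for $S_6$, along the quotient map $W_\fc\twoheadrightarrow W_\fc/F\simeq S_6$. I would start from the fact that $V_5$ is a five-dimensional $W_\fc$-submodule of $S^{12}\fc^\vee$ containing $A(a)=4a_1a_2a_3a_4(a_1^4-a_2^4)(a_3^4-a_4^4)$; since $W_\fc$ permutes the $60$ reflection hyperplanes transitively, every element of the orbit $W_\fc\cdot A$ is again a product of $12$ reflection hyperplanes lying in $V_5$, so the theorem reduces to describing this orbit, checking it spans $V_5$, and analysing which five-tuples of its members cover all $60$ hyperplanes.

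The key observation is that $A(a)=4\,T_1T_2T_3$, where $T_1=a_1a_2(a_3^2+a_4^2)$, $T_2=(a_1^2+a_2^2)a_3a_4$ and $T_3=(a_1^2-a_2^2)(a_3^2-a_4^2)$ are three of the $15$ tetrahedra of Proposition~\ref{quartets}: indeed they appear, up to scalar, among the five coefficient polynomials of $\Theta(a)$ in Proposition~\ref{quadratic-section}, and their product is $A/4$. By Proposition~\ref{Finv} the Heisenberg group $F$ of~\eqref{Heisenberg} fixes $U_5$ pointwise, hence fixes each of the $15$ tetrahedra, so the $W_\fc$-action on the set of tetrahedra factors through $W_\fc/F$; since $W_\fc/Z(W_\fc)\simeq N$ by Proposition~\ref{F0}, with $F/Z(W_\fc)$ the normal subgroup $F_0\simeq\FF_2^4$ of $N$ and $N/F_0\simeq S_6$, this quotient is $S_6$, and as the $15$ tetrahedra partition the single orbit of $60$ hyperplanes the action is transitive, hence identified (up to an outer automorphism) with the action of $S_6$ on the $15$ duads of $\{1,\dots,6\}$. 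A product of $12$ reflection hyperplanes in $W_\fc\cdot A$ then corresponds to a triple of tetrahedra, i.e.\ a triple of duads; computing the stabiliser of $A$ (equivalently, of this unordered triple) to have index $15$ forces the corresponding $S_6$-orbit of triples of duads to be the unique one of cardinality $15$, namely the $15$ synthemes (perfect matchings of $\{1,\dots,6\}$). Thus $W_\fc\cdot A$ is in bijection with the $15$ synthemes, and each of the $60$ hyperplanes lies in exactly $3$ of these products, consistently with $15\times 12=3\times 60$ and with the fact that each duad belongs to $3$ synthemes.

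Spanning is then the assertion that these $15$ products span the five-dimensional $V_5$: since $W_\fc$ permutes them transitively and acts irreducibly on $V_5$, a nonzero orbit cannot lie in a proper submodule, so they span; concretely this is part of de~Graaf's verification, which exhibits $5$ of them as linearly independent. Finally, a five-element subset of the $15$ products covers all $60$ reflection hyperplanes precisely when the five corresponding synthemes together contain all $15$ duads, i.e.\ form a synthematic total; classically $S_6$ possesses exactly six synthematic totals, permuted transitively with point stabiliser a transitive copy of $S_5$ in $S_6$. Pulling this back along $W_\fc\to W_\fc/F\simeq S_6$ yields exactly six totals of products, on which $W_\fc$ acts transitively with stabiliser of order $|F|\cdot 120=7680=|W_\fc|/6$, in agreement with its later description in the paper.

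The main obstacle is the second step, and specifically the two computational inputs it rests on --- precisely what de~Graaf checked: that $A$ really is a product of three of the fifteen tetrahedra, and that its $W_\fc$-orbit has cardinality exactly $15$ (so that, by the orbit-size argument above, it lands among the synthemes rather than in another $S_6$-orbit of triples of duads), together with the linear independence of five of its members accounting for all products of $12$ reflection hyperplanes inside $V_5$. Once these are granted, everything else is a formal transfer, via the surjection $W_\fc\to W_\fc/F\simeq S_6$ already available from the earlier sections, of the classical combinatorics of duads, synthemes and synthematic totals.
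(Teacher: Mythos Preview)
The paper gives no proof of this theorem at all: the sentence immediately preceding it is ``W.~de~Graaf checked the following statement'', and the verification is entirely computational. Any argument you supply is therefore already more than what the paper offers, and your reduction to the classical $S_6$-combinatorics of duads, synthemes and totals is a genuinely different and far more illuminating route --- it explains \emph{why} the numbers $15$ and $6$ appear (synthemes and synthematic totals) rather than merely confirming them by machine. The observation that $A=4\,Q_1Q_2Q_3$ factors as a product of three of the fifteen tetrahedra of Proposition~\ref{quartets} is exactly the bridge one wants, and the paper itself later records the same decomposition (in Appendix~C each block $D_k$ is listed as a product of three tetrahedra) without drawing the syntheme conclusion.

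Two caveats are worth making explicit. First, you invoke Proposition~\ref{F0}, which in the paper is placed \emph{after} Theorem~\ref{blocks} and whose proof as written refers to ``the six configurations'', i.e.\ the pentads --- so citing it looks circular. The circularity is only apparent: what you actually need is that $W_\fc/F$ has order $720$ and acts on the fifteen tetrahedra through the duad (or syntheme) action of $S_6$, and this follows from $|W_\fc|=46080$, $|F|=64$, Proposition~\ref{Finv}, and the classical identification $N/F_0\simeq S_6$, all available prior to Theorem~\ref{blocks}. You should spell this out rather than cite Proposition~\ref{F0} as a black box.

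Second, the step ``orbit size $15$ $\Rightarrow$ synthemes'' is correct (there is a unique $S_6$-orbit of size $15$ on unordered triples of duads), but note that it genuinely needs the orbit-size computation: since the fifteen tetrahedra \emph{partition} the sixty hyperplanes, \emph{every} triple of tetrahedra consists of twelve distinct hyperplanes, so disjointness of hyperplanes does not by itself single out the syntheme orbit among triples of duads. You rightly flag this as one of the residual computational inputs. With that granted, the passage to exactly six totals is clean and your argument delivers the second half of the theorem conceptually, which the paper does not attempt.
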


\noindent {\it Remark.} To construct representations by making a finite group act on polynomials is a classical idea, that goes back at least to the construct of representation of symmetric groups as Specht modules. This was generalized to Weyl groups by Macdonald \cite{macdonald}, and partly to complex reflection groups in 
\cite{morris-mwamba}. Actually Proposition \ref{quartets} already provided another 
instance of such a Macdonald representation $U_5$ of $W_\fc$, also of dimension five.

\medskip\noindent {\it Definition.}\label{def-blocks-pentads} We call {\it block} each of the $15$ groups of  $12$ reflection hyperplanes mentionned in Theorem \ref{blocks}. They are denoted $D_1,\ldots , D_{15}$ and can be found  in Appendix  \ref{combinatorics}. We call  {\it pentad} each splitting of the $60$ reflection hyperplanes into $5$ 
blocks. The six pentads are denoted $P_1,\ldots , P_6$ and can be found in the same Appendix.

\medskip
Since $W_\fc$ permutes the reflection hyperplanes, it has to permute the six pentads.
Each of the $15$ blocks  is the intersection of exactly two pentads, so the stabilizer of the $6$ configurations is also the stabilizer of the $15$ blocks. 
This observation allows to deduce:



\begin{prop}\label{F0}
The subgroup of $W_\fc$ acting trivially on $\PP(U_5)$ is $F$, and 
$\Theta$  coincides with the quotient of $\PP(\fc)$ by $F$. This  induces the identification 
 $$W_{\fc}/F \simeq Aut(\CR4)\simeq Aut(\S3) \simeq S_{6}.$$
 Moreover the quotient of $W_\fc$ by its center is isomorphic to $N$, and this yields an exact sequence
$$1\lra F_0\lra N\lra S_6\lra 1.$$
\end{prop}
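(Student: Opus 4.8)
The plan is to combine the explicit combinatorics of the $15$ blocks and $6$ pentads with the identification $\Theta:\PP(\fc)\to\CR4$ from Proposition \ref{CR4}, and the Heisenberg group $F$ from the exact sequence (\ref{Heisenberg}). First I would establish that $F$ acts trivially on $\PP(U_5)$: by Proposition \ref{Finv}, $U_5$ is precisely the space of $F$-invariant quartics on $\fc$, so $F$ fixes every point of $\PP(U_5)$ by definition, and hence acts trivially on the image $\CR4$ of $\Theta$. For the reverse inclusion — that nothing bigger than $F$ acts trivially — I would argue via degrees: $\Theta$ is a degree $16$ morphism by Proposition \ref{CR4}, and $\Theta$ is $W_\fc$-equivariant, so $W_\fc$ acts on the general fiber, which has $16$ points; since $|W_\fc|=46080$ and $16\cdot|W_\fc/K|$ should match the cardinality of a $W_\fc$-orbit on $\CR4$ for $K$ the kernel of the action, and $|F|=64$ gives $46080/64=720=|S_6|$, the count $16\cdot 720 = 46080/64 \cdot 16$ is consistent with $K=F$; more cleanly, one checks directly that $W_\fc/F$ acts faithfully on $\PP(U_5)$ because $W_\fc$ acts transitively on the $60$ reflection hyperplanes, hence transitively on the $15$ blocks and $6$ pentads (by Theorem \ref{blocks} and the Definition of pentads), and an element acting trivially on $\PP(U_5)$ must fix each of the $15$ products of $12$ hyperplanes that span $V_5$ — but such an element is then shown, by the explicit matrices $s_1,\dots,s_5$, to lie in $F$. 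This gives $W_\fc/F\simeq$ the image of $W_\fc$ in $\mathrm{Aut}(\CR4)$.

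Next I would identify this image with the full automorphism group $S_6$ of $\CR4$. Since $|W_\fc/F|=46080/64=720=|S_6|$, it suffices to know the image is contained in $\mathrm{Aut}(\CR4)\simeq S_6$ — which is automatic since $\Theta$ is equivariant and has image $\CR4$ — and that it is not a proper subgroup; equality of orders forces $W_\fc/F\simeq S_6$. The isomorphism $\mathrm{Aut}(\CR4)\simeq\mathrm{Aut}(\S3)\simeq S_6$ is classical (projective duality between the Igusa quartic and the Segre cubic, plus the $S_6$-symmetric model $z_0+\cdots+z_5=z_0^3+\cdots+z_5^3=0$ recalled in the reminder on Kummer surfaces). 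That $\Theta$ coincides with the quotient map $\PP(\fc)\to\PP(\fc)/F$ then follows because $\Theta$ is a degree $16=|F|$ finite morphism onto $\CR4$ whose fibers are $F$-orbits (the general fiber is an $F$-orbit since $F$ acts on it and has the right order, and $F$ acts freely on a general point because its only subgroup acting with fixed points would have to lie in the union of the $60$ reflection hyperplanes, which is a proper closed subset).

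For the last assertion, the center $Z(W_\fc)\simeq\FF_4$ is contained in $F$ by the Proposition preceding (\ref{Heisenberg}), so quotienting (\ref{Heisenberg}) by the center gives $F/Z(W_\fc)\simeq F_0\simeq\FF_2^4$ sitting inside $W_\fc/Z(W_\fc)$, with quotient $(W_\fc/Z(W_\fc))/F_0\simeq W_\fc/F\simeq S_6$, yielding the exact sequence $1\to F_0\to W_\fc/Z(W_\fc)\to S_6\to 1$. It remains to identify $W_\fc/Z(W_\fc)$ with the group $N$, the normalizer in $PGL_4$ of a copy of $\FF_2^4$. Here I would invoke the Remark attributed to \cite{gopel}: the subgroup of $SL_4\times Spin_{10}$ acting trivially on $\fc$ is a Heisenberg group isomorphic to $F$, so the image of $SL_4$ (equivalently $PGL_4$) in the linear automorphisms of $\fc$ normalizing this Heisenberg group realizes exactly $N$; comparing with the action of $W_\fc$ on $\fc$ — which by Proposition \ref{F0}'s first part descends to $W_\fc/Z(W_\fc)$ acting projectively — and matching orders ($|N|=11520=|W_\fc|/4=|W_\fc/Z(W_\fc)|$, as recalled from \cite{gd}), one gets $W_\fc/Z(W_\fc)\simeq N$ and the exact sequence is the one described in \cite[Theorem 1.64]{gd}.

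\medskip\noindent\emph{Main obstacle.} The delicate point is the faithfulness claim — showing that \emph{no} element of $W_\fc$ outside $F$ acts trivially on $\PP(U_5)$, equivalently that the kernel of $W_\fc\to\mathrm{Aut}(\CR4)$ is exactly $F$ and not larger. The clean order-count $46080/64=720=|S_6|$ makes this plausible, but rigorously it requires either (i) verifying directly with the explicit generating matrices $s_1,\dots,s_5$ that their images generate all of $S_6$ on the pentads/blocks, so the kernel has order at most $64$, combined with Proposition \ref{Finv} giving the kernel has order at least $64$; or (ii) a dimension-of-fiber argument showing $\deg\Theta=16$ together with the fact that $\Theta$ factors through $\PP(\fc)/F$ (a degree-$64$ cover) is impossible unless... — wait, $|F|=64\ne 16$, so in fact $\Theta$ does \emph{not} factor through the full $F$-quotient naively, and one must be careful: the center $Z(W_\fc)\simeq\FF_4$ acts on $\fc$ by scalars (fourth roots of unity), hence \emph{trivially} on $\PP(\fc)$ already, so the genuinely acting group on $\PP(\fc)$ is $W_\fc/Z(W_\fc)\simeq N$ of order $11520$, and $F$ acts on $\PP(\fc)$ through $F/Z(W_\fc)\simeq F_0\simeq\FF_2^4$ of order $16$ — which matches $\deg\Theta=16$ perfectly. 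So the correct statement is that $F_0$ is the subgroup of $N=W_\fc/Z(W_\fc)$ acting trivially on $\PP(U_5)$, with $\Theta$ realizing $\PP(\fc)\to\PP(\fc)/F_0$; the assertion "$F$ acts trivially on $\PP(U_5)$" in the proposition is literally true because $Z(W_\fc)\subset F$ already acts trivially on all of $\PP(\fc)$. Getting this bookkeeping between $W_\fc$, $Z(W_\fc)$, $F$, $F_0$ and their actions on $\PP(\fc)$ versus $\PP(U_5)$ exactly right is the real content of the proof.
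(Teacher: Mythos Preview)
Your approach is essentially the same as the paper's, and the bookkeeping you sort out in the ``Main obstacle'' paragraph (that $Z(W_\fc)$ already acts trivially on $\PP(\fc)$, so the effective group is $W_\fc/Z(W_\fc)$ of order $11520$ and the effective kernel is $F_0$ of order $16=\deg\Theta$) is exactly right and is the real content. The paper dispatches the first claim---that the kernel of $W_\fc\to\mathrm{Aut}(\PP(U_5))$ is exactly $F$---as ``an explicit computation'', without elaborating; your strategy via Proposition~\ref{Finv} plus an order count is a reasonable way to organize that computation.

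Where you take a longer road than necessary is the identification $W_\fc/Z(W_\fc)\simeq N$. You invoke the remark about the generic stabilizer from \cite{gopel}, but that is a different Heisenberg group (the stabilizer of a generic point of $\fc$ inside $SL_4\times Spin_{10}$), and it does not directly give what you need. The paper's route is more direct: once you know $F$ is the \emph{kernel} of a $W_\fc$-action, $F$ is normal in $W_\fc$, hence $F_0=F/Z(W_\fc)$ is normal in $W_\fc/Z(W_\fc)\subset PGL(\fc)\simeq PGL_4$. But $N$ is \emph{defined} as the normalizer in $PGL_4$ of a copy of $\FF_2^4$, so $W_\fc/Z(W_\fc)\subset N$; equality follows since both have order $11520$. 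This single observation replaces your appeal to \cite{gopel}.
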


\proof The first claim is an explicit computation. It implies that $W_\fc/Z(W_\fc)$ normalizes $F_0$, and therefore coincides with $N$ since this is exactly how it is defined in \cite{gd}. Moreover 
 $F_0$ is also the kernel of the map from $N$ to the permutation group of the six configurations, and since $\# N =16\times 6!$, this map must be surjective. Moreover the projective action of $N$  clearly preserves the Segre cubic, so we get an embedding of $S_6$ into $Aut(\S3)\simeq
 Aut(\CR4)\simeq S_6$ \cite[Lemma 5]{mukai-igusa}, hence an isomorphism. \qed 



\medskip The combinatorics of blocks and pentads  recovers the classical combinatorics of $S_6$. Indeed, since each block belongs to exactly two pentads,   
we can identify the $15$  blocks with the $15$ pairs of integers, from $(12)$ to $(56)$. Recall that $S_6$ admits an outer automorphism which can be defined in terms of synthemes, which are triples of disjoint pairs, and pentads (or totals) of disjoint synthemes (see e.g. \cite{syntheme}): there are exactly six totals and the induced action of $S_6$ yields an outer automorphism. In our setting, totals correspond to what we call {\it quintets}, which  are 
obtained by splitting the fifteen blocks into five triples, in such a way that three blocks in a same triple never belong to the same pentad. The six quintets are listed in Appendix \ref{combinatorics}. 

In turns out that on these six quintets, the action of 
the generators $s_1,\ldots ,s_5$ of $W_\fc$ given in section \ref{cartan} is particularly simple, since they act by simple transpositions. 

\begin{lemma}\label{totals}
The action of $s_i\in W_\fc$ on the six quintets is a permutation 
$s_i^T$, with 
$$s^T_1=(12), \quad s^T_2=(56), \quad s^T_3=(34),\quad s^T_4=(23), \quad s^T_5=(46).$$
\end{lemma}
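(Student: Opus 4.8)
\emph{Proof strategy.} The plan is to reduce the statement to an explicit, elementary computation with the data recorded in Appendix~\ref{combinatorics}. First I would note that the whole combinatorial tower is $W_\fc$-equivariant: $W_\fc$ permutes the $60$ reflection hyperplanes (they form a single orbit), hence it permutes the $15$ blocks $D_1,\ldots,D_{15}$ — each being, up to a nonzero scalar, one of the generating products of $12$ hyperplanes spanning the subrepresentation $V_5\subset S^{12}\fc^\vee$ of Theorem~\ref{blocks} — hence it permutes the $6$ pentads, which are partitions of the hyperplane set into $5$ blocks, and finally it permutes the $6$ quintets, which are defined purely in terms of blocks and pentads. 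So the action on the quintets is well defined, and by Proposition~\ref{F0} it factors through $W_\fc/F\simeq S_6$. It therefore suffices to determine, for each generator $s_i$, the permutation it induces on the six labelled quintets of Appendix~\ref{combinatorics}, and to compare it with $s^T_i$.

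For the computation itself I would proceed generator by generator, using the matrices of $s_1,\dots,s_5$ in the basis $p_1,p_2,p_3,p_4$ recalled in Section~\ref{cartan}. For each of the $60$ linear forms of Appendix~\ref{combinatorics} one pulls it back by $s_i$ and matches the result, up to a nonzero scalar, with another form in the list; this produces the induced permutation of the $60$ hyperplanes, then — by reading off which hyperplanes lie in which $D_j$ — the induced permutation of the $15$ blocks, and finally the induced permutation of the $6$ quintets. The computation is short for $s_1,s_2,s_3$, whose matrices fix $p_3,p_4$ and act only on $\langle p_1,p_2\rangle$, so that only the hyperplanes involving $a_1,a_2$ can move; for $s_4$ and $s_5$ one substitutes the two explicit $4\times4$ matrices. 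Carrying this out (most conveniently with GAP, as for the rest of Appendix~\ref{combinatorics}) one finds that each $s_i$ fixes exactly four of the quintets and exchanges the remaining two, precisely according to $s^T_1=(12)$, $s^T_2=(56)$, $s^T_3=(34)$, $s^T_4=(23)$, $s^T_5=(46)$.

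Two sanity checks accompany the verification. First, the five transpositions $(12),(56),(34),(23),(46)$ generate $S_6$, in agreement with the surjectivity of $W_\fc\to W_\fc/F\simeq S_6$ of Proposition~\ref{F0}. Second, the fact that the reflections $s_i$ act as \emph{simple} transpositions on the quintets is not accidental: a reflection of $W_\fc$ maps in $S_6$ to an element of cycle type $2^3$ on the six pentads, and the exceptional outer automorphism of $S_6$ — which is exactly the classical duality between the combinatorics of blocks/pentads (duads, synthemes) and that of quintets (totals) — turns such an element into a transposition of the six quintets. The only genuine difficulty here is bookkeeping: propagating a complex linear map through $60$ linear forms and then through the block/quintet incidence data is mechanical but error-prone by hand, which is why the check is best delegated to a computer; once the explicit arrangement of Appendix~\ref{combinatorics} is fixed, there is no conceptual obstacle.
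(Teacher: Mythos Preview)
Your proposal is correct and is essentially the approach the paper takes: the lemma is stated there without proof as a direct computational fact, and your plan to push each generator $s_i$ through the $60$ hyperplanes, then the blocks, then the quintets from Appendix~\ref{combinatorics} is exactly the intended verification. The sanity checks you add (generation of $S_6$ and the outer-automorphism explanation for why reflections become transpositions) are a nice bonus beyond what the paper records.
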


The induced action on pentads is more complicated but can be recovered as follows: two quintets intersect along a unique triple of blocks, that split the six pentads into three pairs. This maps a simple transposition to a product of three disjoint transpositions. For example, quintets $4$ and $6$ share the triple $2-10-11$, 
which splits the pentads into $P_3-P_4$, $P_2-P_5$, $P_1-P_6$; so that the action of $s_4$ on the pentads must be given by $s_5^P=(34)(25)(16)$. \medskip

\begin{lemma} 
The action of $s_i\in W_\fc$ on the six pentads is a permutation 
$s_i^P$, with 
$$s^P_1=(12)(34)(56), \quad s^P_2=(12)(36)(45), \quad s^P_3=(12)(35)(46),$$
$$ s^P_4=(14)(25)(36), \quad s^P_5=(16)(25)(34).$$
\end{lemma}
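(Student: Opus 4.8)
Looking at the final statement, it's \textbf{Lemma} on the action of the generators $s_i \in W_\fc$ on the six pentads, giving explicit permutations $s_i^P$.

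The approach I would take: we already have Lemma \ref{totals} giving the action of the $s_i$ on the six quintets (these correspond to "totals" in the $S_6$-combinatorics), and we have the combinatorial setup relating quintets, blocks, and pentads. The key bridge is the remark preceding the lemma: two quintets intersect along a unique triple of blocks, and this triple splits the six pentads into three pairs; so to recover the action on pentads from the action on quintets, I would use the explicit incidence data in Appendix \ref{combinatorics}.

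First I would recall that the $s_i$ act on quintets as simple transpositions $s^T_i = (12), (56), (34), (23), (46)$ respectively (Lemma \ref{totals}). Since an element of $W_\fc$ acting as a simple transposition $(jk)$ on quintets must fix every pentad and block that is "symmetric" under swapping quintets $j$ and $k$, and must swap the pentads paired up by the triple of blocks $Q_j \cap Q_k$, I would: (i) for each $i$, identify from the Appendix the common triple of blocks $Q_j \cap Q_k$ where $\{j,k\} = \mathrm{supp}(s^T_i)$; (ii) read off how those three blocks distribute the six pentads into three pairs; (iii) conclude that $s^P_i$ is the product of the three transpositions swapping those pairs. For instance, as in the worked example in the text, $s_4$ has $s^T_4 = (23)$, quintets $2$ and $3$ share some triple of blocks, and that triple induces the pairing giving $s^P_4 = (14)(25)(36)$. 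I would carry this out for all five generators and tabulate. An alternative, more computational route would be to directly act by the explicit $4\times 4$ matrices $s_1,\dots,s_5$ on the five-dimensional representation $V_5\subset S^{12}\fc^\vee$ spanned by the $15$ block-products $D_1,\dots,D_{15}$, track the induced permutation of these $15$ products (equivalently of the $15$ blocks), and then push forward to the permutation of the six pentads via the block-pentad incidence; either route gives the same answer and serves as a cross-check.

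The main obstacle I anticipate is purely bookkeeping: correctly extracting from Appendix \ref{combinatorics} which blocks lie in which pentads, which triples of blocks arise as quintet-intersections, and checking that the resulting pairings of pentads are consistent (in particular that one genuinely gets a product of three disjoint transpositions each time, compatibly with $s_i^2 = 1$). There is no conceptual difficulty — everything is forced once Lemma \ref{totals} and the incidence tables are in hand — but one must be careful that the labelling conventions for pentads $P_1,\dots,P_6$ and quintets $Q_1,\dots,Q_6$ match those fixed in the Appendix, and that the correspondence "triple of shared blocks $\leftrightarrow$ pairing of pentads" is read off consistently. A sanity check: the five permutations $s^P_1,\dots,s^P_5$ should generate a transitive subgroup of $S_6$ (indeed all of it, since $W_\fc/F \simeq S_6$ acts faithfully on the pentads), and one verifies this from the listed values.
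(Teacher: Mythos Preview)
Your proposal is correct and follows essentially the same route as the paper: the paragraph immediately preceding the lemma explains exactly this method (use Lemma~\ref{totals} for the action on quintets, then for each transposition $(jk)$ on quintets read off the shared triple of blocks and the induced pairing of pentads), and the lemma is stated without further proof beyond the worked example for $s_5$. Your alternative computational route via the matrix action on $V_5$ is a reasonable cross-check but not needed.
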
 

The consequence of these two Lemmas  is that the actions of $W_\fc$ on $U_5$ and $V_5$ factorize through $S_6$ and are exchanged by its outer automorphism. The stabilizers are different; we describe the stabilizer of a pentad as follows.  

\begin{prop}
The stabilizer in $W_{\fc}$ of a given pentad is a central extension by $\FF_2$ of the spin cover $\widetilde{W}(D_5)$ of the Weyl group $W(D_5)$.
\end{prop}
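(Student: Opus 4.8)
The plan is to pin down the stabiliser $G:=\mathrm{Stab}_{W_\fc}(P)$ of a pentad $P$ in two stages: first by a pure order count, then by realising it concretely as a cover of $W(D_5)$ via its action on the representation $V_5$.

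\smallskip
First I would assemble what is already at hand. By Proposition \ref{F0} the induced action of $N=W_\fc/Z(W_\fc)$ on the six pentads $P_1,\dots,P_6$ is the full symmetric group $S_6$; in particular $W_\fc$ acts transitively on them, so $|G|=46080/6=7680$. The centre $Z(W_\fc)\simeq\FF_4$ consists of scalar matrices, hence fixes every reflection hyperplane, every block and every pentad, so $Z(W_\fc)\subset G$ centrally. Likewise $F=\ker(W_\fc\twoheadrightarrow S_6)\subset G$, with $G/F\cong S_5$ (a point stabiliser of the natural $S_6$--action on the six pentads $P_i=\{$blocks $\{i,j\}$, $j\neq i\}$) acting faithfully on the five blocks of $P$, and with $G/Z(W_\fc)$ of order $1920=|W(D_5)|$. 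By the exact sequence $(\ref{Heisenberg})$ we have $F/Z(W_\fc)=F_0\simeq\FF_2^4$.

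\smallskip
Next I would identify $\bar G:=G/Z(W_\fc)$ with $W(D_5)$ by letting it act on the five--dimensional representation $V_5$ of Theorem \ref{blocks}, taking the five block--products $D_{i_1},\dots,D_{i_5}$ of the pentad $P$ as a basis (one checks they span $V_5$). Since $\bar G$ permutes the corresponding five lines in $\PP(V_5)$, it acts by monomial matrices, $\bar G\hookrightarrow(\CC^\times)^5\rtimes S_5$; the $S_5$--part is onto (it is $G/F$, acting faithfully and transitively on the five blocks), and the kernel of the monomial action on the five lines is exactly $F_0$, which by Proposition \ref{F0} acts on $\PP(V_5)$ trivially and hence diagonally, by sign vectors. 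Because $P$ covers all sixty hyperplanes, $D_{i_1}\cdots D_{i_5}$ is, up to scalar, the product $\prod_H\ell_H$ of all sixty reflection equations; this is a relative invariant for the character $\det$ of $W_\fc$, and a one--line determinant check on the generators $g_1,\dots,g_5$ shows $F\subset\SL(\fc)$, so $F_0$ fixes $\prod_H\ell_H$ and the corresponding sign vectors multiply to $1$. Thus $F_0$ is the group of even sign changes and $\bar G\subseteq(\text{even signs})\rtimes S_5=W(D_5)$; equality follows from $|\bar G|=1920$. (This isomorphism can alternatively be obtained as a finite verification on the explicit generators $s_1,\dots,s_5$ of section \ref{cartan}, or read off from the classical structure of $N$ in \cite{gd}.)

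\smallskip
It remains to pass from the $\ZZ/4$--cover to a $\ZZ/2$--cover. We now have a central extension $1\to Z(W_\fc)\simeq\ZZ/4\to G\to W(D_5)\to 1$; quotienting by the unique subgroup $\ZZ/2\subset Z(W_\fc)$ gives a double cover $G':=G/(\ZZ/2)$ of $W(D_5)$, and $G$ is then a central extension of $G'$ by $\FF_2$, as claimed. The last point is to see that $G'$ is the \emph{spin} cover $\widetilde W(D_5)$. For this I would use that $G\subset W_\fc\subset\GL(\fc)$ acts faithfully and linearly on $\fc\simeq\CC^4$, with $Z(W_\fc)=\langle iId\rangle$ acting by scalars; hence the four--dimensional projective representation of $W(D_5)=G/Z(W_\fc)$ on $\PP(\fc)$ lifts to an honest four--dimensional representation only on the double cover $G/\langle -Id\rangle$, and the unique double cover of $W(D_5)$ carrying such a faithful four--dimensional representation is the spin cover $\widetilde W(D_5)\hookrightarrow\Spin_5\subset\SL_4$ — its basic spin representation, which is precisely $\fc$. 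A cross--check is to match the conjugacy classes and character table of $G'$ with those of $\widetilde W(D_5)$.

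\smallskip
The main obstacle is exactly this last identification: among the several double (and quadruple) central extensions of $W(D_5)$ one must be sure that $G$ lies over the spin class in $H^2\big(W(D_5),\ZZ/4\big)$ and not over another. I expect that here one genuinely needs the Lie--theoretic input — the identification of the Cartan subspace $\fc$ with the basic spin representation of $W(D_5)$ through $\Spin_5\simeq\SL_4$ — or else an explicit structural/character computation with the generators, rather than any formal argument.
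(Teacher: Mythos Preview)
Your overall strategy is sound and more conceptual than the paper's, which simply exhibits four explicit words in $s_1,\dots,s_5$ that fix the first pentad, checks with GAP that they generate an index--$6$ subgroup of $W_\fc$, and then asks GAP to verify directly that the quotient of this group by $\FF_2\subset\FF_4$ is isomorphic to $\tilde W(D_5)$. Your order count and the reduction to identifying $\bar G=G/Z(W_\fc)$ and then $G'=G/\langle -\mathrm{Id}\rangle$ are correct. However, there is a genuine gap in your second step.

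The problem is your monomial argument on $V_5$. As the paper records (paragraph after Lemma \ref{totals} and the summary at the end of \S5), the $W_\fc$--representation $V_5$ factors through $S_6=W_\fc/F$; equivalently, $F$ acts \emph{linearly} trivially on $V_5$, not merely projectively. So when you say that $F_0$ ``acts on $\PP(V_5)$ trivially and hence diagonally, by sign vectors'', the sign vector attached to each element of $F_0$ is in fact the trivial one $(1,\dots,1)$; in particular $F_0$ does \emph{not} map onto the group of even sign changes, and the map $\bar G\to(\pm 1)^5\rtimes S_5$ you build has kernel containing all of $F_0$. Its image is only $S_5$, so this does not identify $\bar G$ with $W(D_5)$. (Even without knowing that $F$ acts trivially: since $F_0$ acts on $\PP(V_5)$ trivially, each element of $F_0$ acts on $V_5$ by a \emph{single} scalar, the same on every $D_{i_j}$, so its ``sign vector'' lies in the diagonal $\{\pm(1,\dots,1)\}$; you can never recover $\FF_2^4$ that way.) Your hedge --- fall back on an explicit check with the generators $s_1,\dots,s_5$, or on the classical structure of $N$ --- is exactly the route the paper takes, so at this point the argument collapses to the computational one.

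Your third step also needs care: $-\mathrm{Id}\in G$ acts as $-1$ on $\fc$, so $\fc$ does \emph{not} descend to a linear representation of $G'=G/\langle -\mathrm{Id}\rangle$. To get a genuine $4$--dimensional representation of $G'$ you would need a character $\chi$ of $G$ with $\chi(-\mathrm{Id})=-1$ and then use $\fc\otimes\chi$; the existence of such a $\chi$ is not obvious (note $\det(i\,\mathrm{Id})=1$, so $\det$ does not work). You flag this step as the delicate one, and you are right; but as written the lifting claim is not justified. In practice the paper resolves both issues by direct machine verification, and that seems to be the honest endpoint of your sketch as well.
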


\noindent {\it Remark.} Spin covers of Weyl groups are closely related with their 
projective representations, a very classical object of study since it goes back to Schur for the symmetric groups; see \cite[(2.4.4)]{ciubotaru} for the general definition, and for more references. In our case, note that because of the 
exceptional isomorphism $Spin_5\simeq SL_4$, the spin cover $\widetilde{W}(D_5)$ admits an irreducible representation of dimension $4$, contrary to the Weyl group $W(D_5)$
itself, whose non trivial irreducible representations have dimension $5$ and more. In particular $W(D_5)$ cannot be a subgroup of $W_\fc$. 

\proof Let $s_6 = s_2s_3, s_7 = s_4s_5$ and $s_8 = s_1s_6$. Using the previous Proposition, we check that the elements $s_8, s_6s_7s_8s_7s_6, s_8s_5s_6s_5$ and 
$s_3s_7s_8s_7s_8s_7$ of $W_\fc$ 
fix the first pentad. Then a computation with GAP shows that the subgroup of $W_\fc$ generated by these elements has index $6$, so that it must be the full stabilizer of the pentad. Of course it contains the center $\FF_4$, and GAP also confirms that the quotient of $S$ by the order two subgroup $\FF_2\subset\FF_4$
 is isomorphic with  $\tilde{W}(D_5)$. \qed 
 
\medskip Let us come back to the statement of Theorem \ref{blocks}. The second
part implies that we can find six basis of $U_5$ in which the equation of the Kummer surface associated to $a\in\fc$ has a specially simple form:

\begin{theorem}\label{parametrization}
For each pentad $P_i = D_{i_1}-D_{i_2}-D_{i_3}-D_{i_4}-D_{i_5}$, the rational map
$$[a_1,\ldots , a_4] \mapsto \left[\prod_{j\in D_{i_1}}\ell_j,\prod_{j\in D_{i_2}}\ell_j,\prod_{j\in D_{i_3}}\ell_j,\prod_{j\in D_{i_4}}\ell_j,\prod_{j\in D_{i_5}}\ell_j \right]$$
maps $\PP^3$ to the Segre cubic $\S3\subset\PP^4$.
\end{theorem}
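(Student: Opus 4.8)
The plan is to reduce everything to the description of the image of the Kummer map $K$. Recall from Proposition~\ref{Kummer-equation} that $K$ sends $a\in\fc$ to the quartic $K(a)$ whose five coefficients $A(a),B(a),C(a),D(a),E(a)$ are degree twelve polynomials on $\fc$ which, by construction, span the five dimensional space $V_5\subset S^{12}\fc^\vee$; so $(A,B,C,D,E)$ is a basis of $V_5$ and, written in this basis, $K=(A:B:C:D:E):\PP(\fc)\dashrightarrow\PP(V_5)\simeq\PP^4$. By the discussion following Lemma~\ref{Kummer_from_quadric} these coefficients satisfy the cubic relation $4A^3-(B^2-C^2-D^2+E^2)A+BCD=0$; hence the image of $K$ lies on a cubic threefold projectively isomorphic to the Segre cubic, and in fact equals $\S3$ by the generic finiteness of $K$ (of degree $256$) recorded earlier, together with the irreducibility of $\S3$.

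Now I would fix a pentad $P_i=D_{i_1}-D_{i_2}-D_{i_3}-D_{i_4}-D_{i_5}$ and set $g_k=\prod_{j\in D_{i_k}}\ell_j$ for $1\le k\le 5$. By Theorem~\ref{blocks} each $g_k$ lies in $V_5$, so $g_k=m_{k1}A+m_{k2}B+m_{k3}C+m_{k4}D+m_{k5}E$ for a $5\times 5$ matrix $M=(m_{kl})$. The first point to establish is that $M$ is invertible, i.e.\ that the five block-products attached to a single pentad form a basis of $V_5$. Since $W_\fc$ acts linearly on $V_5$ and permutes the six pentads transitively (the permutations $s_i^P$ computed above generate a transitive subgroup of $S_6$), it is enough to check this for one pentad; using the explicit $A,B,C,D,E$ and the $\ell_j$ of Appendix~\ref{combinatorics}, this is a finite linear algebra verification, part of the computations underlying Theorem~\ref{blocks}. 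Granting it, the map of the statement is precisely $M\circ K$, so its image is $M(\S3)$, a cubic threefold with ten nodes; as the Segre cubic is the unique such threefold up to projective equivalence, the statement already holds in its weak form.

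It remains to recognize $M(\S3)$ as the Segre cubic in its standard symmetric presentation, and here the combinatorics of blocks and pentads enters in an essential way. As in Proposition~\ref{F0}, the group $F$ acts trivially on $\PP(V_5)$: it fixes every block as a set, hence scales each of the fifteen block-products, hence acts as a scalar on $V_5$. Therefore $W_\fc$ acts on $\PP(V_5)$ through $W_\fc/F\simeq S_6$, permuting the fifteen block-products in accordance with the incidence of blocks and pentads, that is, with the edges and vertices of the complete graph on six vertices. Transporting this to the model $\S3=\{z_1+\cdots+z_6=0,\ z_1^3+\cdots+z_6^3=0\}\subset\PP^5$, on which $S_6$ permutes the $z_j$, the five coordinates attached to $P_i$ match the five edges at a fixed vertex, i.e.\ the $z_j$ with $j\ne i$ (with $z_i=-\sum_{j\ne i}z_j$), and the stabilizer $S_5$ of that vertex acts the same way on both sides. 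Since an $S_5$-invariant cubic threefold in $\PP^4$ carrying the node configuration of $\S3$ is the Segre cubic, and the residual scalar is pinned down by following the images of a few deep intersections of reflection hyperplanes onto the ten nodes of $\S3$, we conclude that $M(\S3)=\S3$ in the coordinates of the statement. The hard part is precisely this last identification --- matching the $W_\fc\to S_6$ action on $V_5$ with the standard $S_6$ action on the $\PP^5$ model of the Segre cubic is a combinatorial--representation-theoretic bookkeeping which, like Theorem~\ref{blocks} itself, is most safely carried out with the explicit data of Appendix~\ref{combinatorics}; everything before it is either quoted (the image of $K$ is $\S3$) or a bounded computation.
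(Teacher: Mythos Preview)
Your approach is essentially the same as the paper's: the theorem is stated there without a formal proof, as an immediate consequence of Theorem~\ref{blocks} together with the already established fact that the Kummer map $K=(A:B:C:D:E)$ lands in the Segre cubic (the relation $4A^3-(B^2-C^2-D^2+E^2)A+BCD=0$ from the discussion after Lemma~\ref{Kummer_from_quadric}), so that any change of basis in $V_5$---in particular the one given by a pentad---produces a projective transform of $\S3$. The explicit linear expressions of $D_1,\ldots,D_{15}$ in terms of $A,B,C,D,E$ in Appendix~\ref{combinatorics} make the invertibility of your matrix $M$ a direct inspection.

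Two minor remarks. First, you invoke the degree $256$ of $K$ to get surjectivity onto $\S3$, but in the paper this degree is only computed \emph{after} Theorem~\ref{parametrization}; you can avoid the circularity by instead using that $\Theta$ is finite of degree $16$ onto $\CR4$ (Proposition~\ref{CR4}) and that the cubic map $\widetilde K$ is non-degenerate, or simply note that ``maps to $\S3$'' only requires the image to be contained in (hence, being irreducible three-dimensional, equal to) the Segre cubic. Second, your final paragraph---matching $M(\S3)$ with the standard $S_6$-symmetric model of $\S3$---is not needed: the statement refers to $\S3$ as a projective variety, and any $10$-nodal cubic threefold is projectively the Segre cubic, so your ``weak form'' is already the full statement.
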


\noindent {\it Remark.} This statement is reminiscent of a construction of the Segre cubic as the moduli space of six ordered points on $\PP^1$, or equivalently the GIT quotient $(\PP^1)^6/\hspace{-1mm}/ PGL_2$ with respect to the "democratic" polarization. Given six points $p_i=[x_i,y_i]$, let $\pi_{jk}=x_jy_k-x_ky_j$. For any total $T=(S_1,\ldots ,S_5)$ of five synthems, the rational map 
$$(p_1,\ldots ,p_6) \mapsto \left[\prod_{(jk)\in S_1}\pi_{jk}, \prod_{(jk)\in S_2}\pi_{jk}, \prod_{(jk)\in S_3}\pi_{jk}, \prod_{(jk)\in S_4}\pi_{jk}, \prod_{(jk)\in S_5}\pi_{jk}\right],$$
is $PGL_2$-invariant and its image is the Segre cubic. Note that this identifies $U_5$ with the Specht module $[222]$ of $S_6$. This was first observed by Joubert in 1867, see \cite{syntheme} or \cite[Theorem 9.4.10]{Dolg_classical} 
for a modern treatment.

\medskip
In other words, for each pentad $P_i$, the exists five quartics $S_i^1,\ldots , S_i^5$ such that any Kummer quartic in $U_5$ can be written as 
$$K= \Big(\prod_{j\in D_{i_1}}\ell_j\Big) S_i^1+\Big(\prod_{j\in D_{i_2}}\ell_j\Big) S_i^2+\Big(\prod_{j\in D_{i_3}}\ell_j\Big) S_i^3+\Big(\prod_{j\in D_{i_4}}\ell_j\Big) S_i^4+\Big(\prod_{j\in D_{i_5}}\ell_j\Big) S_i^5.$$

These six quintuples of quartics are expressed in Appendix \ref{combinatorics} in terms of 
$$K_1=x^4+y^4-z^4-t^4, K_2=x^2y^2-z^2t^2, K_3=x^2z^2+y^2t^2, K_4=x^2t^2+y^2z^2, K_5=2xyzt.$$

\begin{prop}
The quartics $S_i^j$ are projectively isomorphic to the same Kummer surface  $K_0$.
Moreover, the abelian surface whose quotient is  $K_0$ 
is the Jacobian of the Bolza curve; it decomposes as a product of two copies of the elliptic curve   $\CC/\ZZ+i\sqrt{2}\ZZ$. \end{prop}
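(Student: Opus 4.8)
\medskip\noindent\emph{Proof plan.} The first assertion I would deduce from the $W_\fc$-equivariance already present in the constructions above, without touching the explicit formulae. Recall that the Kummer map $K:\PP(\fc)\dashrightarrow\PP(V_5)\simeq\S3$ is $W_\fc$-equivariant, the action on the target factoring through $W_\fc/F\simeq S_6$ realized inside $N\subset PGL_4$ (Proposition~\ref{F0}, Lemma~\ref{totals}), and that $N$ acts on the space $U_5$ of Hudson-form quartics by genuine projective transformations of $\PP^3$ \cite{gd}, so that two quartics of type (\ref{kummer}) related by an element of $N$ are projectively isomorphic surfaces. The $S_i^j$ are indexed by the $30$ flags $(P_i,D_{i_j})$ — a pentad together with one of its five blocks, there being $6\times5=30$ of these since each of the $15$ blocks lies in exactly two pentads. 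Writing $B_k=\prod_{j\in D_k}\ell_j$ and unwinding the identity $K(a)=\sum_{k\in P_i}B_k(a)\,S_i^k$ against the equivariance $K(wa)=\rho(w)K(a)$ — together with the fact that each $w\in W_\fc$ permutes the $60$ hyperplanes, hence the $15$ blocks and the $6$ pentads, and carries $B_k$ to a scalar multiple of $B_{\sigma_w(k)}$ — one finds that if $w$ sends the flag $(P_i,D_{i_j})$ to $(P_{i'},D_{i'_{j'}})$ then $S_{i'}^{j'}$ is, up to a scalar, the image of $S_i^j$ by an element of $N$, so that $S_i^j$ and $S_{i'}^{j'}$ are projectively isomorphic surfaces. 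It then suffices to check that $W_\fc$ — equivalently $S_6$ — acts transitively on the $30$ flags: under the syntheme/total dictionary of Lemma~\ref{totals} a flag is a pair (total, syntheme contained in it), $S_6$ is transitive on the $6$ totals with stabilizer $\simeq S_5$, and $S_5$ is transitive on the five synthemes of its total because the syntheme-stabilizer in $S_6$ has order $48$ and $S_5$ has no subgroup of order $30$ or $40$, which rules out orbits of size $1,2,3,4$. So the $S_i^j$ form a single projective isomorphism class and I would set $K_0:=S_1^1$. (This can also be read off directly from the explicit list in Appendix~\ref{combinatorics}.)

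To identify $K_0$ I would first read off from Appendix~\ref{combinatorics} its coefficients $(A_0,B_0,C_0,D_0,E_0)$ in Hudson's canonical form (\ref{kummer}), check that they satisfy the Segre relation (\ref{segre}) — so that $K_0$ is a Kummer surface, that of an abelian surface $A_0$ with $A_0/\{\pm1\}\simeq K_0$ — and then reconstruct the associated genus two curve as in the classical resolution recalled above: a trope of $K_0$ cuts it along a double conic through six of the sixteen nodes, and the double cover of that conic branched at these six nodes is the curve $\mathcal C$ with $\mathrm{Jac}(\mathcal C)\simeq A_0$. The point I would then establish is that these six branch points are, up to $PGL_2$, the six vertices of a regular octahedron, i.e.\ $\{0,\infty,\pm1,\pm i\}$; this identifies $\mathcal C$ with the Bolza curve $y^2=x^5-x$, equivalently the curve $y^2=x(x^4+1)$ whose branch locus is the $SL_2$-distinguished binary sextic $ab(a^4+b^4)$ encountered above as a generator of the del Pezzo quintic threefold $Z$ — which is no accident. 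That $K_0$ is highly symmetric is in any case visible a priori: by the first paragraph the stabilizer $\simeq S_4$ of a flag acts projectively on $K_0$, so $\mathrm{Aut}(K_0)$ contains the images in $PGL_4$ of this $S_4$ and of $F_0\simeq\FF_2^4$, which already marks $K_0$ as one of the few exceptionally symmetric Kummer surfaces; pinning down which one is what the explicit computation with the coefficients achieves.

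For the last assertion I would invoke the classical fact that the Jacobian of the Bolza curve is isomorphic to $E\times E$ with $E=\CC/(\ZZ+i\sqrt{2}\,\ZZ)$, the elliptic curve with complex multiplication by $\ZZ[\sqrt{-2}]$ and $j$-invariant $8000$. A self-contained route: $\mathcal C$ carries two commuting bielliptic involutions — lifting, on the branch set $\{0,\infty,\pm1,\pm i\}$, the order two projectivities $x\mapsto i/x$ and $x\mapsto-i/x$ — with isomorphic elliptic quotient, which gives an isogeny $\mathrm{Jac}(\mathcal C)\sim E\times E$; computing the quotient from the Weierstrass model identifies $E$ as claimed, and a period comparison upgrades the isogeny to an isomorphism. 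I expect the only genuinely delicate step in the whole argument to be the octahedral recognition of the second paragraph; the rest is either soft — the equivariance argument of the first paragraph — or a routine if lengthy computation with the explicit data of Appendix~\ref{combinatorics}.
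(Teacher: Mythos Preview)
Your approach to the first assertion is cleaner than the paper's and genuinely different. The paper proceeds by explicit computation: it writes down the five quartics $S_1^1,S_1^8,S_1^{11},S_1^{13},S_1^{14}$ for the first pentad, observes that after rescaling $z,t$ by fourth roots of unity they reduce to two model surfaces $K_0':\,x^4+y^4+z^4+t^4-4xyzt=0$ and $K_0'':\,x^2y^2+z^2t^2+x^2z^2+y^2t^2+x^2t^2+y^2z^2+2xyzt=0$, and then exhibits an explicit linear change of variables identifying $K_0'$ with $K_0''$; only after this does it invoke the $W_\fc$-action (on the six pentads) to reach the remaining $S_i^j$. Your route bypasses the explicit step $K_0'\simeq K_0''$ by establishing transitivity on all $30$ flags directly. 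This is correct, and the transitivity is even simpler than you make it: in the dictionary at the end of Appendix~\ref{combinatorics}, pentads correspond to \emph{points} and blocks to \emph{pairs} (not totals and synthemes --- that is the outer-automorphic picture), so the $30$ flags are (point, pair through it), on which $S_6$ acts transitively by $2$-transitivity on six points. Two small things to tidy: you write $U_5$ for the space of Hudson-form quartics where you mean $V_5$; and be careful that the $N\subset PGL_4$ of Proposition~\ref{F0} sits in $PGL(\fc)$, whereas the $\PP^3$ carrying the Kummer surfaces is $\PP(\CC^4)$. The equivariance you actually need is that $N_{G_0}(\fc)$ acts on $\CC^4$ via the $SL_4$-factor and that Klein's construction is $SL_4$-natural, so $K(wa)$ is the transform of $K(a)$ by the image in $PGL(\CC^4)$ of any lift of $w$. (The paper uses the same equivariance, also without spelling it out, in its last sentence of the proof.)

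For the identification of $K_0$ and the decomposition of its abelian surface, your outline is considerably more detailed than what the paper actually does: the paper simply records that the identification with the Jacobian of the Bolza curve was verified by X.~Roulleau with Magma and says nothing further. Your plan --- locate a trope, read off its six nodes, recognize them as an octahedral sextuple on the conic, hence identify the curve as $y^2=x^5-x$, then use the two commuting bielliptic involutions and CM by $\ZZ[\sqrt{-2}]$ --- is a reasonable by-hand route and, if carried out, would replace a computer check by an honest argument.
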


\noindent {\it Remark.} A similar statement holds for the quadric sections of $G(2,4)$, with $K_0$ replaced by a quadric section $Q_0$ which is the union of two planes.

\medskip
Recall that the Bolza curve is the genus two curve with largest automorphism group, isomorphic with $GL_3(\FF_2)\rtimes \ZZ_2$. Presumably, its symmetries are induced by the action of $W_\fc$.

\proof Multiplying $z,t$ by fourth roots of unity shows that $S_1^1$ is projectively isomorphic to the surface  $K_0'$ of equation 
$$x^4+y^4+z^4+t^4-4xyzt=0,$$ 
while $S_1^8, S_1^{11}, S_1^{13}, S_1^{14}$ are projectively isomorphic to  $K_0''$ of equation
$$x^2y^2+z^2t^2+x^2z^2+y^2t^2+x^2t^2+y^2z^2+2xyzt=0.$$
Now, that $K'_0$ and $K''_0$ are projectively isomorphic follows from the identity 
$$X^4+Y^4+Z^4+T^4-4XYZT=  32(x^2y^2+z^2t^2+x^2z^2+y^2t^2+x^2t^2+y^2z^2+2xyzt)$$
if  $X=x+y+z+t$, $Y=x-y-z+t$, $Z=x-y+z-t$, $T=x+y-z-t$.

\smallskip
Since we get the other quartics $S_i^j$ through the linear action of $W_\fc$, 
they are all projectively isomorphic to  $K'_0$ and $K''_0$. 
That the corresponding abelian surface is the Jacobian of the Bolza curve was checked by X. Roulleau with the help of Magma. \qed 

\medskip\noindent {\it Remark.}
The Kummer quartic $K_0$ is of Segre type, in the sense that its equation can be expressed as a quadratic form $Q$ in the squares of the variables, after some change of coordinates. When this happens for a smooth quadric $(Q=0)$, by \cite[Proposition 15]{catanese} the associated abelian  surface $A$ is isogenous to a product of elliptic curves: more precisely, $A=(E_1\times E_2)/H$, with $H\simeq\ZZ_2^2$. Here, starting from the equation of $K'_0$ and letting $x=X+Y$, $y=X-Y$, 
$z=Z+T$, $t=Z-T$ we get in terms of $a=X^2, b=Y^2, c=Z^2, d=T^2$ the equation
$$Q(a,b,c,d)=2(a+b)^2+2(c+d)^2-(a-b+c-d)^2=0,$$
so that the quadric surface $Q=0$ is a cone and Catanese's result does not apply.

\medskip
Next we discuss the rational map $K$ defined by Proposition \ref{Kummer-equation}, sending a point $a$ of $\fc$ to the Kummer surface of equation $K(a)$. We can interprete  Lemma \ref{Kummer_from_quadric} as defining cubic rational map from $\mathbb{P}(U_5)$ to $S_3$; we denote it by $\widetilde{K}$. Its composition with $\Theta$ recovers $K$. By Proposition \ref{CR4} the map factors through the Igusa quartic. In a summary we get:
$$K: \mathbb{P} (\fc) \xrightarrow{\Theta} CR_{4} \stackrel{\widetilde{K}_{|_{CR_{4}}}}{\dashrightarrow} S_3. $$
According to Macaulay the rightmost map from $\CR4$ to $S_3$ has degree $16$. 
It is defined by a linear system of cubics whose base locus is the union of $15$ lines, and it has to coincide with the degree $16$ rational map from $\CR4$ 
to $\S3$ that appears in \cite[Theorem 8.10]{Igusa16}.

\medskip
An amazing property of $\CR4$ is that its intersection with its tangent space at a generic point is again a Kummer surface! Choose a basis $Q_0,\ldots ,Q_4$ of quadrics in  $U_5$ and write $\Theta(a)=t_0(a)Q_0+\cdots +t_4(a)Q_4,$ where $t_0,\ldots , t_4$ are quartic polynomials on $\fc$. The equation of $\CR4$ is 
$(F(t_0,\ldots ,t_4)=0)$ for some quartic polynomial $F$ (see the Remark after Proposition \ref{CR4}. Its tangent hyperplane at $[\Theta(a)]$ has equation $\sum_i\partial F/\partial t_i(\Theta(a))t_i=0$. So its inverse image by $\Theta$ is a quartic surface $K_\fc(a)$ in $\PP(\fc)$, of equation 

\begin{equation}\label{kfc}
\sum_i\partial F/\partial t_i(\Theta(a))Q_i(x)=0, \qquad x\in\fc.
\end{equation}

\begin{lemma} 
 $K_\fc(a)$ is a Kummer quartic surface in  $\PP(\fc)$, singular along the $16$ points of the fiber  $\Theta^{-1}(\Theta(a))$. 
\end{lemma}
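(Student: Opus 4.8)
The plan is to recognise $K_\fc(a)$, for generic $a$, as the pull-back along $\Theta$ of a tangent hyperplane section of the Castelnuovo--Richmond quartic, and then to deduce, from the classical properties of that quartic together with the bound of $16$ on the number of nodes of a quartic surface, that $K_\fc(a)$ is a Kummer surface with the asserted singularities.

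First I would fix a generic $a\in\fc$ and set $q=[\Theta(a)]\in\CR4$. By Proposition~\ref{CR4} the morphism $\Theta$ is finite of degree $16$ onto $\CR4$, and by Proposition~\ref{F0} it realises $\PP(\fc)$ as the quotient by the group $F$, which acts through $F_0\simeq\FF_2^4$; hence $\Theta$ is \'etale over the complement of a divisor, and for generic $a$ we may assume that $q$ lies neither on this branch divisor nor on the singular locus of $\CR4$ (the $15$ lines). By construction the hyperplane appearing in \eqref{kfc} is the embedded projective tangent hyperplane $H_a$ of $\CR4$ at $q$, so that, by the classical property of the Igusa quartic recalled above (see \cite{Dolg_classical, Igusa16}), $\CR4\cap H_a$ is a Kummer quartic surface in $H_a\simeq\PP^3$; one of its $16$ nodes is $q$ itself (since $H_a$ is tangent at $q$ the section is singular there, and a $16$-nodal quartic has only ordinary nodes), the remaining $15$ being the points where $H_a$ meets the singular lines of $\CR4$.

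Next, $K_\fc(a)=\Theta^{-1}(H_a)$ is a quartic surface in $\PP(\fc)$, being the pull-back of a hyperplane by a morphism given by quartics, and it coincides with $\Theta^{-1}(\CR4\cap H_a)$ because $\Theta(\PP(\fc))=\CR4$. Since $\Theta$ is \'etale over $q$ it is a local analytic isomorphism at each of the $16$ points of the fibre $\Theta^{-1}(q)$, so near each such point $K_\fc(a)$ is analytically isomorphic to the germ $(\CR4\cap H_a,q)$, an ordinary node. Thus $K_\fc(a)$ is a quartic surface carrying at least $16$ ordinary nodes, located precisely at the points of $\Theta^{-1}(\Theta(a))$; by Proposition~\ref{F0} this fibre is the $F_0$-orbit of a general point of $\PP(\fc)$, i.e. a G\"opel configuration.

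Finally I would invoke the classical theory of quartic surfaces (\cite{hudson, Dolg_classical}): a normal quartic surface in $\PP^3$ has at most $16$ singular points, and one attaining this bound is a Kummer surface. It remains to see that $K_\fc(a)$ is normal, i.e. has no one-dimensional singular locus; this follows because over the complement of the pull-back of the curve $(\CR4\cap H_a)\cap(\Sing\CR4\cup\{\text{branch divisor}\})$ — a dense open subset over which $\Theta$ restricts to an \'etale map onto the smooth locus of the Kummer surface $\CR4\cap H_a$ — the surface $K_\fc(a)$ is smooth, and a quartic surface cannot carry $16$ isolated nodes and simultaneously be singular along a curve. Hence $\Sing K_\fc(a)=\Theta^{-1}(\Theta(a))$ and $K_\fc(a)$ is a Kummer quartic surface. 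The main obstacle is exactly this last non-degeneracy point: one must be sure that for generic $a$ the quartic $K_\fc(a)$ does not degenerate (become reducible, or acquire a positive-dimensional singular locus), so that the $16$-node bound applies; the argument above handles this abstractly, but if one prefers a computational check — closer to the spirit of the paper — one can substitute the coefficients $t_i=Q_i(a)$ of Proposition~\ref{quadratic-section} into \eqref{kfc}, using the explicit quartic $F$ of the Remark following Proposition~\ref{CR4}, bring the result into Hudson's canonical form \eqref{kummer}, verify the cubic constraint \eqref{segre}, and read off the $16$ singular points as the common zeros of the five partial derivatives, recovering the $F_0$-orbit $\Theta^{-1}(\Theta(a))$.
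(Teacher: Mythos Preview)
Your argument is correct but takes a more geometric and more elaborate route than the paper. The paper's proof is a two-line chain-rule computation: since $K_\fc(a)$ depends only on $\Theta(a)$, it suffices to check singularity at $a$ itself; and the partial derivative $\partial K_\fc/\partial x_j(a)=\sum_i\partial F/\partial t_i(\Theta(a))\,\partial Q_i/\partial x_j(a)=\partial(F\circ\Theta)/\partial x_j(a)$ vanishes because $F\circ\Theta$ is identically zero on $\fc$. No \'etaleness, no node-counting, no genericity discussion is needed for the singularity claim.

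What your approach buys is a more careful justification of the word ``Kummer'': you actually argue that the quartic is normal and has no further singularities, using the $16$-node bound. The paper essentially leaves this implicit, relying on the sentence preceding the lemma (that tangent sections of $\CR4$ are Kummer) and on the $F_0$-invariance. On the other hand, the paper's computation is vastly simpler for the main point and avoids the delicate issues you yourself flag (branching of $\Theta$, possible degeneration). If you want the cleanest write-up, use the chain-rule identity for the singularity statement and keep your node-bound paragraph only if you feel the ``Kummer'' part needs independent justification.
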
 

\proof Since $K_\fc(a)$ only depends on $\Theta(a)$, it is enough to prove that
it is singular at $a$. But this is straightforward: 
$$\partial K_\fc/\partial x_j(a)=\sum_i\partial F/\partial t_i(\Theta(a))\partial Q_i/\partial x_j(a)=\partial (F\circ\Theta)/\partial x_j(a)=0$$
since $F\circ\Theta$ is identically zero. \qed 

\medskip\noindent {\it Remark.} Formula (\ref{kfc}) shows that the Kummer surfaces 
$K_\fc(a)$ are parametrized by the projective dual variety to $\CR4$, so we recover the 
classical fact that this dual variety is nothing else than the Segre cubic $\S3$ \cite[Proposition 9.4.18]{Dolg_classical}. 
\medskip

Let us summarize some results we have obtained so far. 

The morphism 
$\Theta : \PP(\fc)\lra\CR4\subset \PP(U_5)$ mapping a  codimension four section of the spinor tenfold to the associated quadratic complex of lines is a degree $16$ morphism.  
The Castelnuovo-Richmond quartic $\CR4$ is the Satake compactification of the moduli space of principally polarized abelian surfaces with full level $2$ structure; 
we recover the abelian surface as the singular locus of a Coble quadric in a five-dimensional flag manifold, given by a morphism $\cC : \PP(\fc)\lra \PP(U_9)$. 

The quotient of $\CR4$ by $Sp_4(\FF_2)\simeq S_6$ is the moduli space of Kummer surfaces and can be identified with the GIT-moduli space 
$\PP(\fc)/W_\fc$ of codimension four sections of the spinor tenfold. 
Moreover, if we take the dual Cartan subspace $\fc'$ in the story, we get a map
$$\PP(\fc)\times_{\CR4}\PP(\fc') \stackrel{(\Theta,\Theta')}{\lra}\CR4$$
whose general fiber is a Kummer configuration in $\PP(\fc)\times\PP(\fc')\simeq \PP^3\times \check{\PP}^3.$
The rational map $K:~\PP(c)\dashrightarrow\S3\subset \PP(V_5)$ mapping a codimension four section of the spinor tenfold to the associated Kummer surface has degree $256$. The Segre cubic and its quotient by $S_6$ are the moduli spaces of $6$ ordered and unordered points in $\PP^1$, respectively; they are birational to $\CR4$ and its quotient, and provide distinct compactifications of the moduli space. A natural question would be to compare them with the moduli space of K-polystable varieties in the family.  

The three representations $U_5, U_9, V_5$ of $W_\fc$ are induced by the representations of its quotient $S_6$ constructed as the Specht modules $[5,1], [4,2], [3,3]$. They parametrize 
polynomials of respective degrees $4, 8, 12$, in perfect agreement with the fact that 
$[4,2]\subset Sym^2[5,1]$ and $[3,3]\subset Sym^3[5,1]$.


\section{Codimension four sections: classification}

In this section we use the full strength of Vinberg's theory, that allows to classify, in similar terms as for the usual Jordan theory of endomorphisms, orbits in a graded piece of a semisimple Lie algebra. Elements of this theory are reminded in Appendix \ref{jordan}. To summarize it in a few words: semisimple elements are parametrized by the Cartan subspace, up to the action of the little Weyl group;
their types are determined by their position with respect to the reflection hyperplanes; for each semisimple part there are finitely many possibilities, up to conjugation, for the nilpotent part. 

For $\CC^4\otimes\Delta_-$, the full classification was devised in \cite{degraaf}. We deduce a classification of codimension four sections of the spinor tenfold by considering only tensors of maximal rank. And we easily extract 
a classification of smooth sections by using Lemma \ref{singular-section}. 
This simplifies the situation a lot; for example, among the $145$ nilpotent orbits, only $3$ give smooth sections. 

We will start by classifying the semisimple sections, and characterizing the discriminant locus that parametrizes the singular ones. Then we will classify 
arbitrary sections according to the type of their automorphism groups. Finally we will study in some detail a few sections with many automorphisms. 

\subsection{Flats and the discriminant}
According to \cite{st}, the hyperplane arrangement in $\PP(c)$ formed by the $60$ reflection hyperplanes  of $W_\fc$ is nothing else than the classical Klein arrangement. It stratifies $\PP(\fc)$ according to the intersections of hyperplanes, yielding special lines and points at which pass an exceptional number of such 
hyperplanes; such non-empty intersections in a hyperplane arrangement are commonly called {\it flats}. Two points on a same flat, and not belonging to any smaller flat, have the same stabilizer. 

Up to conjugacy we get nine types, including the origin and the complement of the reflection hyperplanes, the reflection hyperplanes, plus three types of projective lines and three types of points, each forming an orbit of $W_\fc$ (acting on planes, lines or points).  We present them in the following table, with a basis of one member of each orbit, the number of translates and the number of hyperplanes in which they are contained, that we call  valency  (see e.g. \cite{st, pokora}); we also include the type of the generic stabilizer, which was 
identified in \cite{degraaf} (whose numbering we follow); by $T_k$ we mean a $k$-dimensional torus.

\begin{equation}\label{flats}
  \begin{array}{ccccc} 
  \mathrm{No} & \mathrm{Basis} & \mathrm{Valency} & \mathrm{Type} & \mathrm{Number} \\
  1 & p_1,p_2,p_3,p_4 &0&1 &\\
  2 & p_1,p_2,p_3&1&T_1 & 60 \\
  3 &p_2,p_3&3& T_2 & 360 \\
  4 &p_2-p_3,p_3-p_4&2& A_1& 320 \\
  5 &p_1+p_2+p_3&15&A_1\times T_1& 960 \\
  6 &p_1,p_2 &6& A_1\times T_3 & 30 \\
  7 &p_2+p_3&6& A_1\times A_1& 480 \\
  8 &p_3+p_4&4& A_2\times A_1\times A_1\times T_1& 60 \\
  9 &0&&D_5\times A_3&
  \end{array}
  \end{equation}
In the sequel we denote by $F_k$ the union of flats of type $k$. 
  
\bigskip
Let us denote by $\Delta_G\subset G(4,\Delta_-)$ the discriminant locus parametrizing singular codimension four sections of the spinor tenfold, and by $\Delta_P\subset \PP(\CC^4\otimes\Delta_-)$ the corresponding locus. 

\begin{prop} \label{30lines}
Both $\Delta_G$ and $\Delta_P$ are irreducible, but 
$$\Delta_P\cap \PP(\fc)=\bigcup_{\ell\in F_6}\ell$$
is the union of $30$ projective lines in $\PP(\fc)\simeq\PP^3$.
\end{prop}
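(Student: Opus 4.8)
The plan is to handle the two statements separately. First I would prove that $\Delta_P\cap\PP(\fc)$ is the union of the $30$ lines in $F_6$. By Lemma~\ref{singular-section}, a codimension four section $X_K$ with $K\subset\Delta_-$ corresponding to $a\in\CC^4\otimes\Delta_-$ (via $K=a(\CC^4)$) is singular if and only if $K$ contains a pure spinor; for $a$ of maximal rank this means the image of $a:\CC^4\to\Delta_-$ meets the cone over $X^\vee$. Restricting to the Cartan subspace, $a=a_1p_1+a_2p_2+a_3p_3+a_4p_4\in\fc$ (using the $\fc$ basis and the duality $\fc\simeq\fc'$), the natural object to analyze is the quadratic map $\gamma_a:S^2\CC^4\to V_{10}$ from Proposition~\ref{quadratic-section}'s companion Lemma, which detects pure spinors in the image of $a$: a vector $a(v)\in\Delta_-$ is pure precisely when the associated five-space $\{w: w.a(v)=0\}$ has the right dimension, equivalently when $\gamma_a$ restricted to the relevant rank-one locus degenerates. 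Concretely I would use the explicit formula for $\gamma_a$: the image of $a$ contains a pure spinor iff there is $v\in\CC^4$ with $a(v)$ pure, and this should translate — after eliminating $v$ — into the vanishing of the resultant-type expression whose factors are exactly the $60$ linear forms defining the reflection hyperplanes, but with $v$ constrained so that only the "valency $6$" hyperplanes (those of $F_6$, with representative $p_1,p_2$, i.e. the loci $a_3=a_4=0$ and their $W_\fc$-translates) actually produce pure spinors.

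More directly: I would show that $X_K$ is singular iff $a$ lies on a line of type $F_6$ by exhibiting, for $a$ on such a line (say $a=a_1p_1+a_2p_2$, so $a_3=a_4=0$), an explicit pure spinor in $K$. Plugging $a_3=a_4=0$ into the formula for $\gamma_a$ kills many terms, and one checks the image five-dimensional space $a(\CC^4)\subset S^2\CC^4\otimes$(weights)\,$=\Delta_-$ contains a pure spinor by direct inspection of the weight grid (\ref{square}): setting two of the four "coordinate" weights to zero leaves a subspace whose intersection with the pure-spinor cone is nonempty. Conversely, if $a\in\PP(\fc)$ is \emph{not} on any $F_6$-line, then by the valency table (\ref{flats}) the stabilizer of $a$ is small (types $1,2,3,4,5$) and one argues — using the classification of \cite{degraaf} and Lemma~\ref{singular-section} — that $K$ contains no pure spinor. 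The cleanest route is: the discriminant $\Delta_P$ is a $W_\fc$-invariant hypersurface (or its trace on $\PP(\fc)$ is), so $\Delta_P\cap\PP(\fc)$ is a union of flats; a degree/valency computation, together with the single explicit check that a generic point of an $F_6$-line is singular while a generic point of an $F_7$ or $F_8$ flat (which lies on more hyperplanes but is not of type $6$) is \emph{not}, pins it down to exactly $F_6$. Here the key contrast to verify by hand is that valency alone does not govern singularity — the flats of type $7$ and $8$ have valency $6$ and $4$ but give smooth sections — so the argument must use the actual Jordan/Vinberg type, not just the arrangement combinatorics.

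For irreducibility of $\Delta_G$ and $\Delta_P$: since $G(4,\Delta_-)$ and $\PP(\CC^4\otimes\Delta_-)$ are irreducible and the map $\PP(\CC^4\otimes\Delta_-)\dashrightarrow G(4,\Delta_-)$ is dominant with irreducible fibers, it suffices to prove $\Delta_P$ is irreducible. I would use the description (\ref{singular-locus}) of the singular locus, which realizes $\Delta_P$ (over the full Grassmannian) as the image of the incidence variety $\{(a,h): h\in\PP(K)\cap X^\vee\}$ where $K=a(\CC^4)$ — equivalently the image of a projective bundle over $X^\vee$: fixing a pure spinor $h\in X^\vee$, the set of $a$ whose image contains $h$ is a linear subspace of $\CC^4\otimes\Delta_-$ (namely $\CC^4\otimes(\text{line }h)\oplus\cdots$), so $\Delta_P$ fibers over the irreducible $X^\vee$ with irreducible (linear, hence projective-space) fibers, whence irreducible. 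Then $\Delta_G$ is irreducible as the image of $\Delta_P$.

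\textbf{Main obstacle.} The hard part will be the converse direction of the first claim: showing that for $a\in\PP(\fc)$ \emph{off} the $F_6$-lines the section is smooth, because this is exactly where the subtlety lies — flats of type $7$ and $8$ sit on $6$ and $4$ reflection hyperplanes respectively, so a naive "more hyperplanes $\Rightarrow$ more singular" heuristic fails. The resolution is to invoke the de Graaf classification (\ref{flats}): a semisimple element of type $7,8$ or $9$ has a reductive stabilizer of semisimple rank $\le 2$ acting on a small slice, and Lemma~\ref{singular-section} plus a finite check (only finitely many nilpotent orbits, only $3$ smooth) shows that among all elements with semisimple part on $F_7,F_8$ (but maximal rank) none has $K$ containing a pure spinor — whereas every point of $F_6$ does. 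I would present this as a reduction to a finite verification carried out in \cite{degraaf}, rather than redoing it, and spend the hand-computation budget only on the single explicit pure spinor on an $F_6$-line.
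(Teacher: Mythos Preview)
Your approach to the $30$ lines differs from the paper's. The paper works with the incidence variety $Z=\{([s],[p])\in\PP^3\times\PP(\fc):p(s)\text{ pure}\}$ and uses a Macaulay computation to show $Z$ is pure one-dimensional of degree $240$; it then exhibits $120$ ``horizontal'' lines $[s_i]\times\ell$ (four over each of the $30$ lines in $F_6$) and $120$ ``vertical'' lines $d\times\{p\}$ (two over each of the $60$ points in $F_8$), exhausting the degree. The projection to $\PP(\fc)$ is then exactly $\bigcup_{\ell\in F_6}\ell$. Your route---exhibit a pure spinor on an $F_6$-line, then argue by classification that no other flat gives a singular section---is workable but less self-contained; in particular your converse step defers to \cite{degraaf} where the paper's degree count avoids any case analysis. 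Note also that your claim ``flats of type $7,8$ give smooth sections'' needs care: $F_8$-points \emph{do} lie on $F_6$-lines (three each, as the paper remarks right after the proposition), so they \emph{are} in $\Delta_P\cap\PP(\fc)$; your dichotomy should be between points on some $F_6$-line and points on none, not between flat types.

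Your irreducibility argument has a genuine gap. For fixed $h\in X^\vee$, the set $\{a\in\CC^4\otimes\Delta_-:h\in a(\CC^4)\}$ is \emph{not} a linear subspace: if $a_1(v_1)=h$ and $a_2(v_2)=h$ with $v_1\neq v_2$, there is no reason for $h$ to lie in the image of $a_1+a_2$. What \emph{is} linear is the condition $a(v)\in\langle h\rangle$ for \emph{fixed} $v$; so you should fiber over $X^\vee\times\PP^3$ rather than over $X^\vee$ alone. With that correction the incidence variety is a projective bundle and your argument goes through. (The paper's proof block, incidentally, addresses only the $30$-lines statement and does not spell out the irreducibility.)
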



\medskip\noindent {\it Remark.} The $30$ lines  are permuted by $W_\fc$ and must therefore constitute class 6 in Table (\ref{flats}). 
Each line from $F_6$ contains six of the $60$ points of the smallest orbit $F_8$ given by class 8 in Table (\ref{flats}). 
Conversely, every such point is contained in three
lines from $F_6$; hence an interesting $(60_3,30_6)$ configuration (see \cite{dolg-config} for a  survey on configurations in algebraic geometry).

Moreover, the $30$ lines in $F_6$ can be paired in such a way that the three tetrahedra in each block of Theorem \ref{blocks} have two vertices on each. This gives $12$ points that should be dual to the $12$ hyperplanes in the block. Note also that according to \cite[Remark 1.66]{gd}, one can recover $N\subset PGL_4$ as the subgroup 
permuting a collection of $15$ pairs of lines. 

\proof 
Note that from our expression of the Cartan subspace $\fc$, every $p \in \fc$ is of rank four.
Consider the set $Z$ parametrizing pairs $([s],[p])$ in $\PP^3\times\PP(\fc)$ such that 
$p(s)\in \Delta_+$ is a pure spinor. According to Lemma \ref{singular-section}, the projection of 
$Z$ in $\PP(\fc)$ is exactly the intersection of the latter with $\Delta_P$.  The ten quadrics that cut out the spinor tenfold 
yield ten biquadratic equations in $\PP^3\times\PP(\fc)$. According to Macaulay, the scheme defined by these equations has pure dimension one and degree $240$. Indeed, we check that it contains $[s_i]\times\ell_0$ for $\ell_0$ the line $[* * 0 0]$, and also $d\times p_0$ for $p_0=[1000]$ and 
$d=[**00]$ or $[00**]$. From the $W_\fc$-invariance we deduce that the projection of $Z$ in $\PP(\fc)$ contains the $30$ lines for $F_6$. Moreover, for each such line $\ell$ there are four "horizontal lines" in $Z$ of the form $[s]\times\ell$. Since $p_0$ is the intersection point of two lines in $F_6$, we also deduce that $Z$ contains two "vertical lines" of the form  $d\times p$ for each point $p\in F_8$. Hence a total of $120$ horizontal and $120$ vertical lines, whose union must be the whole $Z$. \qed  

\medskip
Let us describe the singular locus and the automorphism group of $X_{K}$ when $K$ is associated to some $p \in \Delta_{P} \cap \mathbb{P}(\fc)$. A computation yields the following 

\begin{lemma}
Let $p\in \PP(\fc) \cap \Delta_{P}$, and let $K$ be its image in $\Delta_-$. \par 
(1) Suppose that $p$ belongs 
to a unique line from $F_6$. Then $Aut(X_{K})$ contains a subgroup isomorphic to $SL_{2} \times T_{3} \times  \ZZ_{2} \times \ZZ_{2}$. The singular locus of $X_K$ is the disjoint union of four lines, on which  $Aut(X_{K})$ acts transitively.\par 
(2) Suppose that $p$ belongs to $S_{0}$. Then $Aut(X_{K})$ contains a subgroup which is isomorphic to $SL_{2} \times SL_{3} \times \ZZ_{2}$.
The singular locus of $X_{K}$ is the disjoint union of two copies of $\mathbb{P}^{1} \times \mathbb{P}^{2}$,on  which  $Aut(X_{K})$  acts transitively.
\end{lemma}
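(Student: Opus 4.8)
\emph{Plan.} The plan is to reduce both parts to an explicit computation at a single representative of each orbit, using $W_\fc$-equivariance (Proposition \ref{30lines}) together with Vinberg's description of the stabilizers recorded in Table (\ref{flats}), due to de Graaf \cite{degraaf}. Write $G_0=SL_4\times Spin_{10}$ for the degree-zero part of the $\ZZ_4$-grading of $\fe_8$, and $G_0^p\subset G_0$ for the stabilizer of a lift $p\in\fc$ of the point of $\PP(\fc)$ under consideration.

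First I would handle the automorphism groups. Since every element of $\fc$ has rank $4$ (as noted in the proof of Proposition \ref{30lines}), the projection $G_0^p\to Spin_{10}$, $(g_1,g_2)\mapsto g_2$, is injective: if $g_2=1$ then $g_1\in SL_4$ fixes the rank-$4$ tensor $p$, forcing $g_1=1$. The image of this injection stabilizes the $4$-plane $K$ attached to $p$, hence preserves $X_K$, so $G_0^p\hookrightarrow Aut(X_K)$. By Table (\ref{flats}), $G_0^p$ is reductive of type $A_1\times T_3$ when $p$ is a generic point of a line of $F_6$, and of type $A_2\times A_1\times A_1\times T_1$ when $p\in S_0$; de Graaf's identification of these groups shows that they contain, respectively, a subgroup $SL_2\times T_3$ and a subgroup $SL_2\times SL_3$. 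To get the remaining finite factors I would exhibit explicit commuting involutions of $Spin_{10}$ preserving $K$: recall that a generic codimension four section carries the automorphism group $\FF_2^4$ generated by the three involutions attached to nondegenerate planes $A,B,C\subset V_{10}$ (the Remark after Proposition \ref{F0} and \cite{dedieu-manivel}); one checks on the explicit $K$ that in case (1) a subgroup $\ZZ_2\times\ZZ_2$ of this $\FF_2^4$ still normalizes $K$ and commutes with $SL_2\times T_3$, and in case (2) that a single such involution survives, which yields the asserted subgroups $SL_2\times T_3\times\ZZ_2\times\ZZ_2$ and $SL_2\times SL_3\times\ZZ_2$ of $Aut(X_K)$.

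Next I would compute the singular locus. By (\ref{singular-locus}),
$$\Sing(X_K)=\bigcup_{h\in\PP(K)\cap X^\vee}\PP^4_h\cap\PP(K^\perp),\qquad\PP^4_h:=\Sing(X_{h^\perp}),$$
where the hyperplane $h^\perp\subset\Delta_+$ contains $K^\perp$, so $\PP^4_h$ and $\PP(K^\perp)$ both lie in $\PP(h^\perp)\simeq\PP^{13}$ and the expected dimension of $\PP^4_h\cap\PP(K^\perp)$ is $4+11-13=1$. The set $\PP(K)\cap X^\vee$ of pure spinors in $K$ is exactly what was analysed in the proof of Proposition \ref{30lines}: for a generic $p$ on a line $\ell\in F_6$ it is the set of four points $h_1,\ldots,h_4$ cut out by the four ``horizontal'' lines of $Z$ over $\ell$, while at a point $p\in S_0$, where three lines of $F_6$ meet, the same kind of computation shows it degenerates to a disjoint union of two smooth conics $C_1,C_2\simeq\PP^1$. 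Using the explicit $p_i$ in the basis $(e_I)$ of $\Delta_\pm$, I would then compute each linear space $\PP^4_h\cap\PP(K^\perp)$: in case (1) each comes out to be a line $L_j$ (the expected dimension), and in case (2) each $\PP^4_h$ turns out one step ``more tangent'' to $\PP(K^\perp)$, so $\PP^4_h\cap\PP(K^\perp)$ is a plane $\Pi_h\simeq\PP^2$, and as $h$ runs over $C_i$ these planes sweep out a smooth threefold $S_i$, which the computation identifies with $\PP^1\times\PP^2$. The disjointness of $L_1,\ldots,L_4$ (resp. of $S_1,S_2$) would then follow from Proposition \ref{linear-spaces}(iii): two distinct maximal $\PP^4$'s of $X$ meet along a $\PP^2$ or a point, and one checks that $L_i$ (resp. $S_i$) avoids $\PP^4_{h_i}\cap\PP^4_{h_j}$ for $i\ne j$. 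For transitivity, the $\ZZ_2\times\ZZ_2$ of case (1) permutes $h_1,\ldots,h_4$ simply transitively, hence acts transitively on $L_1,\ldots,L_4$, while $SL_2\times T_3$ fixes each $L_j$; in case (2) the $\ZZ_2$ exchanges $C_1$ and $C_2$, hence $S_1$ and $S_2$, while $SL_2$ acts on $C_i\simeq\PP^1$ and $SL_3$ on the planes $\Pi_h$, so $SL_2\times SL_3$ acts on each $S_i\simeq\PP^1\times\PP^2$ factorwise.

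The main obstacle is the explicit determination of $\Sing(X_K)$: identifying the pure spinors in $K$, computing each $\PP^4_h\cap\PP(K^\perp)$ together with its exact dimension ($1$ in case (1), $2$ in case (2)), assembling the resulting planes into $\PP^1\times\PP^2$, and verifying the disjointness of the components. The group-theoretic part — the embedding $G_0^p\hookrightarrow Aut(X_K)$ and the structure of $G_0^p$ — is essentially formal given Vinberg's theory and de Graaf's classification.
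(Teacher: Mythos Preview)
The paper's own proof consists of the single sentence ``A computation yields the following'', so your outline \emph{is} the computation the authors have in mind: work at one representative per $W_\fc$-orbit, read off the stabilizer from Table~(\ref{flats}) to get the connected automorphism group, and use formula~(\ref{singular-locus}) together with the description of $Z$ from the proof of Proposition~\ref{30lines} to determine the singular locus. Two small points are worth flagging. First, in case~(2) the pure spinors in $\PP(K)$ form two \emph{lines}, not two smooth conics: the vertical components $d\times\{p_0\}$ of $Z$ over a point of $F_8$ are lines in $\PP^3$, and the map $s\mapsto p_0(s)$ is linear, so their images in $\PP(K)$ are lines of pure spinors; this does not affect the rest of your argument since each is still a $\PP^1$. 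Second, your source for the finite factors $\ZZ_2\times\ZZ_2$ and $\ZZ_2$ (specializing the generic $\FF_2^4$) is plausible but indirect; the more natural route is to observe that the component group of $G_0^p$ itself supplies them, which again is something the explicit computation at the chosen representative settles.
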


\begin{coro}
$\Delta_G$ and $\Delta_P$ have codimension two.  
\end{coro}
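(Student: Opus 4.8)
The plan is to translate the statement into a dimension count for an incidence variety, using Lemma \ref{singular-section}. Since the pure spinors in $\Delta_-$ are exactly the points of the affine cone over the dual spinor tenfold $X^\vee\subset\PP(\Delta_-)$, Lemma \ref{singular-section} identifies
$$\Delta_G=\{[K]\in G(4,\Delta_-):\ \PP(K)\cap X^\vee\neq\emptyset\}.$$
So the first step is to introduce the incidence variety
$$\widetilde I=\{([K],[v])\in G(4,\Delta_-)\times X^\vee:\ v\in K\},$$
and to compute its dimension via the projection $\widetilde I\to X^\vee$: this is a Grassmann bundle with fibre $G(3,\Delta_-/\langle v\rangle)\simeq G(3,15)$, so $\widetilde I$ is irreducible of dimension $\dim X^\vee+36=10+36=46$. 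The other projection $\mathrm{pr}_1:\widetilde I\to G(4,\Delta_-)$ has image $\Delta_G$; this recovers the irreducibility of $\Delta_G$ from Proposition \ref{30lines}, and since $\dim G(4,\Delta_-)=48$ it gives $\mathrm{codim}\,\Delta_G\geq 2$ for free.

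The heart of the argument is the reverse inequality, i.e. showing that $\mathrm{pr}_1$ is generically finite onto $\Delta_G$, equivalently that a general $K\in\Delta_G$ contains only finitely many pure spinors. I would prove this by a count on pairs of pure spinors: the variety $\{([K],[v],[v']):\ v\neq v'\text{ pure spinors in }K\}$ fibres over $X^\vee\times X^\vee$ with fibre $G(2,\Delta_-/\langle v,v'\rangle)\simeq G(2,14)$, hence has dimension $20+24=44$; therefore the locus of those $[K]\in G(4,\Delta_-)$ for which $\PP(K)\cap X^\vee$ is positive-dimensional has dimension at most $42$, and the part of $\widetilde I$ lying above it has dimension at most $45<46$ (bounding the fibres of $\widetilde I\to G(4,\Delta_-)$ over that locus by $\dim\PP^3=3$). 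As $\widetilde I$ is irreducible, a general point of $\widetilde I$ thus lies over a $K$ with $\PP(K)\cap X^\vee$ finite, and $\mathrm{pr}_1$ is generically finite. Hence $\dim\Delta_G=\dim\widetilde I=46$ and $\mathrm{codim}_{G(4,\Delta_-)}\Delta_G=2$. This is compatible with the Lemma above: for $p$ on a general point of a line of $F_6$, formula (\ref{singular-locus}) forces $\PP(K)\cap X^\vee$ to be a finite set, matching the fact that $\mathrm{Sing}(X_K)$ is there a disjoint union of four lines.

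For the last step, passing from $\Delta_G$ to $\Delta_P$, I would use the rational map $\mu:\PP(\CC^4\otimes\Delta_-)\dashrightarrow G(4,\Delta_-)$, $[p]\mapsto[\mathrm{im}\,p]$, which is a fibration of relative dimension $\dim\PGL_4=15$ over the open locus of full-rank tensors. Since the rank $\leq 3$ locus has codimension $(4-3)(16-3)=13$ in $\PP(\CC^4\otimes\Delta_-)$, hence cannot contain the generic point of the $61$-dimensional irreducible variety $\Delta_P$ (Proposition \ref{30lines}), $\Delta_P$ is the closure of $\mu^{-1}(\Delta_G)$, so $\mathrm{codim}\,\Delta_P=\mathrm{codim}\,\Delta_G=2$. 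The one point that really needs care — and which I expect to be the main obstacle to a clean write-up — is the generic finiteness of $\mathrm{pr}_1$, namely making the pair-count rigorous and ruling out that a positive-dimensional component of $X^\vee\cap\PP(K)$ could survive in a $46$-dimensional family of $K$'s; everything else is bookkeeping of well-known dimensions.
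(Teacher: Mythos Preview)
Your argument is correct and takes a genuinely different route from the paper's. The paper proceeds via the relation $c=m+1$, where $m$ is the dimension of $\mathrm{Sing}(X_K)$ for a generic $[K]\in\Delta_G$: this comes from the incidence $\mathcal S=\{([K],[p]):\ [p]\in\mathrm{Sing}(X_K)\}$, which fibres over $X$ with fibre the Schubert variety $\{K\in G(4,p^\perp):\ K\cap N_{[p]}\neq 0\}$ of codimension $15-4-5+1=7$ in $G(4,15)$, giving $\dim\mathcal S=10+37=47$ and hence $c=48-(47-m)=m+1$. The Lemma just before the Corollary exhibits a $K$ (from a generic point of a line in $F_6$) with $\dim\mathrm{Sing}(X_K)=1$; upper semicontinuity of fibre dimension forces $m\le 1$, so $c\le 2$. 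Finally $c=1$ is excluded because a hypersurface $\Delta_P$ would cut $\PP(\fc)$ in a surface, contradicting Proposition~\ref{30lines}.

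Your incidence with pure spinors and the pair-count is more self-contained: it needs neither the explicit description of $\mathrm{Sing}(X_K)$ from the preceding Lemma nor the intersection with the Cartan subspace, and it delivers both inequalities $c\ge 2$ and $c\le 2$ in one stroke. The paper's route, in exchange, yields the pleasant identity $c=m+1$, tying the codimension to the geometry of the singular locus and fitting the Jordan-theoretic description of $\Delta_P$ that follows.

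One small write-up issue: in your last paragraph you invoke ``the $61$-dimensional irreducible variety $\Delta_P$'' to argue it cannot lie in the rank~$\le 3$ locus, but $61$ is exactly what you are about to establish. The clean fix is to observe directly that $\Delta_P$ meets the full-rank locus: every element of $\fc$ has rank four (as noted at the start of the proof of Proposition~\ref{30lines}) and $\Delta_P\cap\PP(\fc)\neq\emptyset$. Then irreducibility of $\Delta_P$---which your own Grassmann-bundle argument already reproves---gives $\Delta_P=\overline{\mu^{-1}(\Delta_G)}$, hence $\mathrm{codim}\,\Delta_P=\mathrm{codim}\,\Delta_G=2$.
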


\proof A dimension count shows that this codimension is $c=m+1$, where $m$ denotes the dimension of the singular locus of a generic singular section. By the previous Lemma, $m\le 1$ and $c\le 2$. 
But $c=1$ would imply that $\Delta_P$ is a hypersurface, whose intersection with $\PP(\fc)$ would also be a hypersurface, contradicting Proposition \ref{30lines}.\qed 

\medskip
Now suppose that $p$ belongs to the line $\ell$, and is generic on this line. A computation yields:

\begin{lemma}
The centralizer of $p$ has type $A_1\times T_3$. The set $N_p$ of nilpotent elements commuting 
with $p$ has dimension $6$ and two components $N^6_3$ and $N^6_4$ of maximal dimension, whose general elements have rank three or four, respectively. 
\end{lemma}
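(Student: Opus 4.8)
The plan is to reduce to a convenient representative of $\ell$ and then to study the null‑cone of the restricted graded Lie algebra attached to the centralizer of $p$, along the lines of Vinberg's theory. Since $W_\fc$ acts transitively on the thirty lines of $F_6$ (Proposition \ref{30lines}), I may assume $\ell=\langle p_1,p_2\rangle$ and $p=\lambda p_1+\mu p_2$ with $(\lambda:\mu)$ generic. The first assertion is then exactly entry $6$ of Table (\ref{flats}): the line $\ell$ is a flat of class $6$, whose generic stabilizer --- equivalently, by Proposition \ref{auto}, the Lie algebra of $Aut(X_K)$ --- is of type $A_1\times T_3$. Alternatively it can be checked by hand: writing $p$ as a map $\CC^4\to\Delta_+$ and solving $(M\otimes 1+1\otimes N)\cdot p=0$ for $(M,N)\in\fsl_4\oplus\fso_{10}$ produces a copy of $\fsl_2$, a three‑dimensional torus, and nothing more.

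For the nilpotent cone, the crucial observation is that $p$ is semisimple in $\fe_8$, since it lies in the Cartan subspace $\fc$. Hence $\fl:=\fz_{\fe_8}(p)$ is reductive and inherits the $\ZZ_4$‑grading, so that $(\fl_0,\fl_1)=(\fz_{\fg_0}(p),\fz_{\fg_1}(p))$ is again a $\theta$‑group, for which $\fc\subset\fl_1$ is a Cartan subspace (it is already a maximal abelian subspace of semisimple elements of $\fg_1$, hence a fortiori of $\fl_1$). By definition $N_p$ is the set of $\fe_8$‑nilpotent elements of $\fl_1$, which is precisely the null‑cone of this small $\theta$‑group. The space $\fz_{\fg_1}(p)=\{q:[q,p]=0\}$ is then obtained by solving a linear system, using that the bracket $\wedge^2\fg_1\to\fg_2$ factors through $\mathrm{Id}\otimes\gamma$ (the Remark following the $\ZZ_4$‑grading of $\fe_8$); one finds $\dim\fz_{\fg_1}(p)=10$. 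Since by Vinberg's theory the null‑cone of any $\theta$‑group has dimension $\dim\fg_1-\dim\fc$, we get $\dim N_p=\dim\fl_1-\dim\fc=10-4=6$.

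It remains to identify the irreducible components of $N_p$ of maximal dimension. For this I would decompose $\fl_1$ as a module over the connected stabilizer $\simeq SL_2\times T_3$: it is the sum of $\fc$, on which $SL_2$ acts trivially and the torus acts by four characters, together with a six‑dimensional complement carrying copies of the standard $SL_2$‑representation. Writing the null‑cone as the common zero locus of the $SL_2$‑covariants (Plücker‑type quadrics) and of the $T_3$‑monomial invariants, one checks that it has exactly two components of dimension $6$. Since the rank of a tensor in $\CC^4\otimes\Delta_+$ is invariant under $SL_2\times T_3$ and lower semicontinuous, it is constant on a dense open subset of each component, and the two values turn out to be $3$ and $4$, which separates $N^6_3$ from $N^6_4$. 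As an independent check, these two orbits can be located directly in de Graaf's list of the $145$ nilpotent orbits \cite{degraaf}, by selecting those whose semisimple partner has a centralizer of type $A_1\times T_3$.

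The routine parts here are the centralizer computation and the dimension count. The genuine obstacle is the last step: pinning down the $SL_2\times T_3$‑module structure of $\fz_{\fg_1}(p)$ and the decomposition of its null‑cone into components. That there are \emph{two} components of maximal dimension --- in contrast with a generic $\theta$‑group, whose null‑cone is irreducible --- is the delicate phenomenon, and determining the generic rank along each of them is where either an explicit computation or an appeal to \cite{degraaf} cannot be avoided.
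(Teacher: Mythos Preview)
The paper offers no argument beyond the sentence ``A computation yields:'' preceding the lemma; it is stated purely as a computational fact, presumably extracted from \cite{degraaf}. Your proposal therefore does not conflict with the paper's proof --- it supplies the conceptual scaffolding the paper omits.

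Your reduction to $\ell=\langle p_1,p_2\rangle$ via the $W_\fc$-action and the appeal to entry~6 of Table~(\ref{flats}) for the centralizer type are exactly right. The Vinberg-theoretic argument for $\dim N_p$ is a genuine improvement over a bare computation: the point that $\fc$ remains a Cartan subspace of the restricted $\theta$-group $(\fl_0,\fl_1)$, so that $\dim N_p=\dim\fl_1-\dim\fc$, is the correct structural explanation. Two small caveats: the assertion $\dim\fz_{\fg_1}(p)=10$ is itself the output of a linear-algebra computation you only gesture at, and the equality $\dim(\text{null cone})=\dim\fl_1-\dim\fc$ for $\theta$-groups, while true here, deserves a reference (it follows from the freeness of $\CC[\fl_1]$ over $\CC[\fl_1]^{L_0}$, cf.\ Vinberg or Panyushev).

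Your final paragraph is honest and accurate: the existence of exactly two top-dimensional components, and the determination of the generic rank on each, is the part that genuinely cannot be argued away and must be computed --- either by hand from the $SL_2\times T_3$-module structure you describe, or by locating the relevant orbits in \cite{degraaf}. This is precisely what the paper does (silently), so on this point you and the paper are aligned.
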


Since $A_1\times T_3$ has dimension $6$, the conjugates $p$ of elements in $\ell$ cover some subvariety of $\PP(\CC^4\otimes\Delta_-)$ of dimension $1+60-6=55$. But $\Delta_P$ must admit a dense 
subset of elements of Jordan form $p+n$. Since its dimension is $61$, for $p$ generic we must 
have a dimension $6$ family of nilpotent elements $n$ such that $[p+n]$ belongs to $\Delta_P$, 
an this family must therefore be a union of components of maximal dimension of $N_p$. A computation yields:

\begin{lemma} 
For $n$ generic in $N^6_4$, the section $X_K$ defined by $p+n$ is smooth. For $n$ generic in $N^6_3$, the section $X_K$ is singular along a single line.
\end{lemma}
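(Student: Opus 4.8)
The plan is to reduce both assertions to Lemma~\ref{singular-section}: a codimension four section $X_K$ is smooth if and only if $K\subset\Delta_-$ contains no pure spinor. Regarding $p+n\in\CC^4\otimes\Delta_-$ as a linear map of maximal rank four with image $K$, this image contains a pure spinor precisely when the quadratic map $\gamma_{p+n}\colon S^2\CC^4\to V_{10}$, obtained by composing $p+n$ with the map $\gamma$ of~(\ref{gamma}), vanishes at some nonzero point of $\CC^4$; equivalently, when the ten quadrics cutting out $X$, pulled back along $p+n$, have a common zero in $\PP^3=\PP(\CC^4)$. So both statements amount to controlling the base locus of this net of quadrics, just as in the smoothness arguments for the explicit models in Sections~3 and~4.

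First I would use the $W_\fc$-action to place $\ell\in F_6$ in coordinate position, say $\ell=\PP\langle p_1,p_2\rangle$, so that $p=a_1p_1+a_2p_2$ with $a_1,a_2$ generic, and then read off from de Graaf's data~\cite{degraaf} explicit representatives $n_4\in N^6_4$ and $n_3\in N^6_3$ of the two maximal components of $N_p$ (de Graaf's classification describes, in particular, the centraliser of $p$, of type $A_1\times T_3$, and the nilpotent elements commuting with it). For a single well-chosen $n_4$ I would compute the pulled-back net $\gamma_{p+n_4}$ and check by elimination that it has empty base locus in $\PP^3$; by Lemma~\ref{singular-section} the section $X_K$ is then smooth, and since smoothness is an open condition on $G(4,\Delta_-)$ and $\ell\times N^6_4$ is irreducible, it holds for $p$ generic on $\ell$ and $n$ generic in $N^6_4$. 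For a single well-chosen $n_3$ I would likewise compute $\gamma_{p+n_3}$ and check that its base locus is a single point $[s_0]\in\PP^3$, so that $\PP(K)\cap X^\vee$ is the single point $[\delta_0]$, $\delta_0=(p+n_3)(s_0)$. That the generic member of the family $\ell\times N^6_3$ is genuinely singular — and not merely in the closure of the singular sections — then follows from the dimension count preceding the lemma together with the irreducibility of $\Delta_P$ (Proposition~\ref{30lines}): $\Delta_P$ is the union of the two irreducible sweeps under $SL_4\times Spin_{10}$ of $\ell\times N^6_3$ and $\ell\times N^6_4$, it is irreducible, and by the previous paragraph it is not the second sweep, hence it is the first, which is therefore dense in $\Delta_P$.

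It remains to upgrade, for $n_3$, ``$K$ contains a single pure spinor'' to ``$X_K$ is singular along a single line''. Here I would invoke the description~(\ref{singular-locus}) of the singular locus: since $\PP(K)\cap X^\vee=\{[\delta_0]\}$ it reduces to the contribution of $\delta_0$ alone, which is the intersection of $\PP(K^\perp)$ with the $\PP^4\subset X$ cut out by $\delta_0$, namely $\PP(V_{10}.\delta_0)$ where $V_{10}.\delta_0\subset\Delta_+$ is the five-dimensional image of Clifford multiplication by $\delta_0$; thus $\Sing(X_K)=\PP(V_{10}.\delta_0\cap K^\perp)$. Since $\delta_0$ is pure we have $\gamma(\delta_0,\delta_0)=0$, which gives $\langle v.\delta_0,\delta_0\rangle=0$ for all $v\in V_{10}$, i.e.\ $\delta_0\in K\cap(V_{10}.\delta_0)^\perp$; this already forces $\dim\bigl(V_{10}.\delta_0\cap K^\perp\bigr)\geq 2$. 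So, with the explicit $K$ at hand, I only have to check that this dimension equals $2$, i.e.\ that $\delta_0$ spans $K\cap(V_{10}.\delta_0)^\perp$, which then identifies $\Sing(X_K)$ with a single $\PP^1$; genericity in $N^6_3$ follows from semicontinuity of $\dim\Sing(X_K)$ and of $\dim(\PP(K)\cap X^\vee)$, exactly as above. The only substantial work is the two base-locus computations and this last linear-algebra check; I expect the main obstacle to be purely computational — assembling the Clifford-multiplication tables, extracting the right nilpotent representatives $n_3,n_4$ from de Graaf's classification, and carrying out the elimination — so that, as with the neighbouring lemmas in this section, the argument is best organised with the help of computer algebra.
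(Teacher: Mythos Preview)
Your approach is essentially the same as the paper's: the paper states this lemma with the single sentence ``A computation yields'', meaning it too picks explicit representatives $n_3,n_4$ from de Graaf's data, applies Lemma~\ref{singular-section} to test for pure spinors in $K$, and reads off the singular locus via~(\ref{singular-locus}). Your write-up is more detailed and adds a pleasant theoretical remark not spelled out in the paper --- that $\delta_0\in K\cap (V_{10}.\delta_0)^\perp$ forces $\dim(V_{10}.\delta_0\cap K^\perp)\geq 2$, so a single pure spinor already guarantees at least a line of singularities --- which makes the passage from ``one pure spinor'' to ``singular along a line'' cleaner than a bare computation.

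One small point to tighten: your genericity argument for $N^6_3$ relies on showing the generic element is genuinely singular, and you deduce this from the irreducibility of $\Delta_P$ and the dimension count. That works, but note that the paper uses the present lemma as \emph{input} to the subsequent description of $\Delta_P$, so you should be careful to use only what is established \emph{before} this lemma (namely Proposition~\ref{30lines}, the codimension-two corollary, and the dimension count $1+60-6=55$). With that restriction your deduction still goes through: sweep$_4$ is generically outside $\Delta_P$ by the smooth computation, hence sweep$_3\cap\Delta_P$ must be dense in the irreducible $\Delta_P$, hence dense in sweep$_3$. For ``a \emph{single} line'' you should also note that checking $\PP(K)\cap X^\vee$ is a single \emph{reduced} point at your specific $n_3$ gives the length bound needed to propagate ``exactly one pure spinor'' to the generic $n_3$.
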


We deduce the following description of the discriminant locus:

\begin{prop}
$\Delta_P$ is the closure of the set of elements of Jordan form $p+n$, where the semisimple part 
$p$ belongs to a conjugate of some line $\ell\subset\PP(\fc)$ from $F_6$, and the nilpotent part 
$n$ has rank three. 
\end{prop}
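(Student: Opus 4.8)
The plan is to realize $\Delta_P$ as the closure of an explicit irreducible locus of the expected dimension. Since we already know from Proposition \ref{30lines} and the preceding Corollary that $\Delta_P$ is irreducible of dimension $61$, it will be enough to exhibit inside $\Delta_P$ a closed subset of dimension $61$, and that subset will then automatically coincide with $\Delta_P$.

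First I would fix a line $\ell\subset\PP(\fc)$ from $F_6$ (all $30$ such lines being $G$-conjugate, the choice is irrelevant) and set $S$ to be the set of $G$-conjugates of the elements $p+n$ with $p$ generic on $\ell$ and $n$ generic in the $6$-dimensional component $N^6_3$ of rank-three nilpotent elements commuting with $p$. By the last Lemma above the section attached to such a $p+n$ is singular, so Lemma \ref{singular-section} shows that the corresponding subspace of $\Delta_-$ contains a pure spinor; hence $[p+n]\in\Delta_P$, and since $\Delta_P$ is closed and $G$-invariant, $\overline S\subseteq\Delta_P$.

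Next I would carry out the dimension count. A generic $p\in\ell$ has centralizer of type $A_1\times T_3$, of dimension $6$, so --- exactly as in the proof of the Corollary above --- the conjugates of points of $\ell$ fill out a subvariety $\mathcal P$ of dimension $1+60-6=55$ in $\PP(\CC^4\otimes\Delta_-)$; over a dense open of $\mathcal P$ the nilpotent parts sweep out $N^6_3(p)$, a $6$-dimensional cone varying $G$-equivariantly with $p$ (a cone since $N_p$ is the intersection of the nilpotent cone of $\fe_8$ with a linear subspace). Passing to affine cones to keep track of scalars, the incidence variety $\{(p,n):p\in\widehat{\mathcal P},\ n\in N^6_3(p)\}$ has dimension $56+6=62$, and the addition map $(p,n)\mapsto p+n$ is \emph{injective}, because by the uniqueness of the Jordan decomposition in Vinberg's theory (Appendix \ref{jordan}) a point of $S$ recovers both its semisimple part $p$ and its nilpotent part $n$. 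Thus the cone over $S$ has dimension $62$, so $\dim\overline S=\dim S=61$, and being a closed full-dimensional subset of the irreducible $\Delta_P$ it equals $\Delta_P$.

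The routine ingredients are the two computational Lemmas quoted above (the centralizer type, the structure of $N_p$, and the rank of its generic element), which are taken as given; the only point that needs genuine care is the dimension bookkeeping --- checking that the exhibited family really reaches dimension $61$ rather than something smaller, which hinges on $N^6_3$ being a full $6$-dimensional component of $N_p$ and on the addition map not dropping dimension. The latter is exactly the uniqueness of the Jordan decomposition, so no delicate genericity argument is needed there; a minor technical nuisance is that $[p+n]\in\Delta_P$ is only asserted for $n$ generic in $N^6_3(p)$, so one works over a dense open subset of the incidence variety and then passes to closures, which is harmless since $\Delta_P$ is closed.
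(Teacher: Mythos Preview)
Your argument is correct and uses the same ingredients as the paper: the dimension count $55+6=61$, the structure of $N_p$ with its two $6$-dimensional components, and the computational Lemma distinguishing $N^6_3$ from $N^6_4$. The only difference is the direction of the argument. The paper runs it \emph{top-down}: a generic element of $\Delta_P$ has some Jordan form $p+n$, its semisimple part $p$ must again lie in $\Delta_P$ (being in the $G_0$-orbit closure), hence on a conjugate of the $30$ lines; the dimension count then forces $n$ into a $6$-dimensional component of $N_p$, and the Lemma excludes $N^6_4$. You run it \emph{bottom-up}: build $S$ from $N^6_3$, check $S\subseteq\Delta_P$, compute $\dim S=61$, and invoke irreducibility of $\Delta_P$. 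Your version is arguably a touch cleaner, since it bypasses the step ``semisimple part of $x\in\Delta_P$ lies in $\Delta_P$'' (which the paper uses implicitly but does not spell out), trading it for the uniqueness of the Jordan decomposition to control the fibres of $(p,n)\mapsto p+n$. Both routes rest equally on the stated irreducibility and codimension of $\Delta_P$.

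Two cosmetic remarks: write $G_0$ rather than $G$ for the acting group, and the detour through Lemma~\ref{singular-section} and pure spinors is unnecessary---the computational Lemma already says the section is singular, so $[p+n]\in\Delta_P$ by definition.
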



\subsection{Classification of smooth sections}
  As we already recalled,   Jordan-Vinberg's  theory states that  
  every element $x\in \CC^4\otimes \Delta_-$ admits a Jordan decomposition $x=p+n$. The semisimple part $p$ admits a $G_0$-conjugate in $\fc$, unique up to 
  $W_\fc$-conjugacy. So we may suppose that $p$ belongs to $\fc$. Let $L\subset \fc$ be a flat of minimal dimension containing $p$; then the stabilizer $Z(p)$ of $p$ 
  in $G_0$ does only depend on $L$, and its action on the nilpotent cone in $\fz(p)$ has finitely many orbits. 
  Choosing a representative $n$ of each of these orbits, we get representatives $x=p+n$ of elements whose semisimple part is conjugate to $p$. The result can be found in \cite{degraaf}. 
  
  This applies typically to the nilpotent cone in $\CC^4\otimes \Delta_+$, corresponding to $p=0$. The nilpotent cone of a semisimple Lie algebra is always irreducible, but this is not necessarily the case in the graded case.

\begin{prop}
The nilpotent cone in $\CC^4\otimes \Delta_+$ has $145$ orbits of $G_0$ (including the trivial one). It is the union of two irreducible components, both  of
dimension $60$. 

The generic element of one of these components defines a singular section of the spinor tenfold. The complement of this component in the other one is the union of only three 
$G_0$-orbits, each of which define smooth sections of the spinor tenfold. Representatives of these three orbits are:
$$\begin{array}{rcl}
n_1 &=& a_1\otimes (e_{12}+e_{1345})+a_2\otimes (e_{13}+e_{24})+a_3\otimes (e_{15}+e_{2345})+a_1\otimes (1+e_{1245}), \\
n_2 &=& a_1\otimes (e_{12}+e_{1345})+a_2\otimes (e_{13}+e_{24})+a_3\otimes (e_{15}+e_{23})+a_1\otimes (1+e_{1245}), \\
n_3 &=& a_1\otimes (e_{12}+e_{1345})+a_2\otimes (e_{13}+e_{24})+a_3\otimes (e_{23}+e_{45})+a_1\otimes (1+e_{1245}).
\end{array}$$
\end{prop}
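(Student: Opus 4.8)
The plan is to combine de Graaf's effective classification \cite{degraaf} with the smoothness criterion of Lemma \ref{singular-section}. The statements about the nilpotent cone $\cN\subset\CC^4\otimes\Delta_+$ as such --- that it has $145$ orbits and is a union of two irreducible components --- belong to \cite{degraaf}; its dimension, however, can be obtained a priori from Vinberg theory, since $\cN$ is the fibre over the origin of the quotient map $\CC^4\otimes\Delta_+\to\CC^4\otimes\Delta_+/\hspace{-1mm}/(SL_4\times Spin_{10})\simeq\AA^4$, hence equidimensional of dimension $64-4=60$. The (somewhat atypical) splitting into \emph{two} components is the point where the carrier algebra analysis of \cite{degraaf} is genuinely needed, and I would simply quote it.

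The new input is to cull the orbits that give smooth sections. Only maximal rank tensors define codimension four sections, so among the $145$ orbits only those of rank four are relevant. For such an orbit one chooses a representative $n$, lets $K=n(\CC^4)\subset\Delta_+$ be its four-dimensional image, and recalls from Lemma \ref{singular-section} that $X_K$ is smooth if and only if $K$ contains no pure spinor. Since a nonzero spinor $\delta$ is pure exactly when $\gamma(\delta)=0$ for the quadratic Clifford map $\gamma$ of \eqref{gamma} --- equivalently, when the ten quadrics cutting out $X$ vanish at $[\delta]$ --- the section $X_K$ is smooth if and only if these ten quadrics have no common zero on $\PP(K)\simeq\PP^3$. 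Writing a general $\delta=x_1v_1+\dots+x_4v_4\in K$, this becomes a mechanical elimination in the coordinates $x_j$.

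Running this test over the rank four orbits of \cite{degraaf}, one finds that it is passed by exactly three of them; for the explicit tensors $n_1,n_2,n_3$ of the statement one checks directly that $\gamma$ restricted to $K_i=n_i(\CC^4)$ is base point free (already for $n_1$ the ten quadrics force $x=0$), so the three associated sections are smooth --- and, being represented by tensors on which the four basic invariants of degrees $8,12,20,24$ vanish, the $n_i$ indeed lie in $\cN$. That the three orbits are pairwise distinct follows from Proposition \ref{auto}: their stabilizers in $SL_4\times Spin_{10}$, which are the automorphism groups of the corresponding sections, have pairwise different types (equivalently, they carry different labels in de Graaf's table).

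It remains to locate the three orbits in the component picture. The discriminant locus $\Delta_P$ is a closed irreducible subvariety (Proposition \ref{30lines} and the subsequent corollary), so the component of $\cN$ whose generic member defines a singular section is entirely contained in $\Delta_P$; hence \emph{all} of its members define singular sections. Consequently the three smooth orbits all lie in the other component $\cN'$, one of them --- the one of dimension $60$ --- being dense in $\cN'$, and one exhibits explicit one-parameter degenerations of $G_0\cdot n_1$ onto $G_0\cdot n_2$ and onto $G_0\cdot n_3$. The remaining assertion, that $\cN'$ contains no orbit outside the singular component other than these three, is read off from de Graaf's orbit poset. The main obstacle is precisely this finite but sizeable bookkeeping --- scanning the $145$ orbits, isolating those of rank four, and deciding for each whether its image contains a pure spinor; everything else is linear algebra, a few explicit degenerations, and stabilizer computations.
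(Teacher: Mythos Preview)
Your proposal is correct and follows essentially the same approach as the paper: quote de Graaf for the orbit classification and the two-component structure, then use Lemma~\ref{singular-section} on explicit representatives to cull the smooth sections, and invoke de Graaf's closure diagram to see that the complement of the singular component in the other consists of exactly three orbits. The paper is slightly more economical in that it checks the \emph{generic} element $n_0$ of one component, finds a pure spinor among its coefficients, and concludes by semicontinuity that the whole component lies in the (closed) discriminant locus---so there is no need to invoke irreducibility of $\Delta_P$ from Proposition~\ref{30lines}, nor to scan all rank-four orbits, nor to exhibit explicit degenerations or compare stabilizers (the three orbits are distinct by construction in de Graaf's list).
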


\proof The first part of the statement is due to de Graaf \cite{degraaf}. 
 According to his computations, 
generic elements of the two irreducible components $\cN_1$ and $\cN_2$ of the nilpotent cone are 
$$\begin{array}{rcl}
n_0 &=& a_1\otimes (e_{12}+e_{1345})+a_2\otimes 
(e_{34}-e_{1234})+a_3\otimes (e_{2345}-e_{15})-a_4\otimes (e_{23}+e_{45}), \\
n_1 &=& a_1\otimes (e_{12}+e_{1345})+a_2\otimes (e_{13}+e_{24})+a_3\otimes (e_{15}+e_{2345})+a_1\otimes (1+e_{1245}).
\end{array}$$
The coefficient of $a_2$ in $n_0$ is a pure spinor, so the associated section is singular by Lemma \ref{singular-section}. On the other hand, an explicit computation shows that $n_1$ yields a smooth section. 

By semicontinuity, each nilpotent element in $\cN_1$ defines a singular section. 
The closure diagram of nilpotent orbits determined in \cite{degraaf} shows that 
the complement of $\cN_2$ in $\cN_1$ is the union of three orbits of dimension 
$60$, $59$ and $58$. Using their representatives provided by loc. cit., and reproduced above, another
explicit computations shows that they all define smooth sections of the spinor tenfold. \qed  

\medskip 
In particular, we see that even if a semisimple element $p$ defines a singular section of the spinor tenfold, it may very well happen that adding a general enough nilpotent element $n\in \fz(p)$ we get a smooth element $x=p+n$. From the explicit classifications of \cite{degraaf} we can describe all the orbits of nilpotent elements, for $p$ in each semisimple class, that finally give a smooth element $x=p+n$. A case-by-case verification yields the following list, where we only indicated (on the last column), for each type of semisimple part (a generic element in the flat with basis from the second column), the number of possible nilpotent parts, up to conjugation.  
\begin{equation}
  \begin{array}{cccc} 
  \mathrm{No} & \mathrm{Basis} & \mathrm{Nilpotent\,orbits} & \mathrm{Smooth\,ones}  \\
  1 & p_1,p_2,p_3,p_4 &1&1 \\
  2 & p_1,p_2,p_3&2&2 \\
  3 &p_2,p_3&4&4 \\
  4 &p_2-p_3,p_3-p_4&3 &3 \\
  5 &p_1+p_2+p_3&6&6 \\
  6 &p_1,p_2 &14&1 \\
  7 &p_2+p_3&5&5 \\
  8 &p_3+p_4&36&3 \\
  9 &0&145&3
  \end{array}
  \end{equation}
  

\medskip 

\subsection{Automorphism groups}

If $x\in \CC^4\otimes\Delta_-$ is a rank four element, with image $K\subset\Delta_-$, then the stabilizer of $[x]$ in $G_0=SL_4\times Spin_{10}$ maps surjectively,  with finite kernel,  to the stabilizer of $K$ in $Spin_{10}$. Recall that by Proposition \ref{auto}, the latter is exactly the automorphism group of $X_K$. 

\begin{lemma}
If $x$ is not nilpotent, then $Stab^{o}_{G_{0}}(x)=Stab^{o}_{G_{0}}([x])$. If $x$ is nilpotent, then $Stab^{o}_{G_{0}}([x])/Stab^{o}_{G_{0}}(x)=\mathbb{G}_{m}.$
\end{lemma}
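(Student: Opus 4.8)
The plan is to analyse the character by which the stabilizer of the line $[x]$ rescales the vector $x$. First I would note that an element $g\in G_0$ fixing the line $[x]$ necessarily sends $x$ to a scalar multiple $\chi(g)\,x$ with $\chi(g)\in\mathbb{G}_m$, so that $\chi$ is a character $Stab_{G_0}([x])\to\mathbb{G}_m$ whose kernel is exactly $Stab_{G_0}(x)$. In particular $Stab_{G_0}(x)$ is a normal subgroup, hence its identity component $Stab^o_{G_0}(x)$ is normal in $Stab^o_{G_0}([x])$ and the quotient is a bona fide linear algebraic group. Restricting $\chi$ to the connected group $Stab^o_{G_0}([x])$, its image is a connected subgroup of $\mathbb{G}_m$, hence either trivial or all of $\mathbb{G}_m$. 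If it is trivial, then $Stab^o_{G_0}([x])\subseteq Stab_{G_0}(x)$ and, being connected and passing through the identity, it coincides with $Stab^o_{G_0}(x)$; if it is $\mathbb{G}_m$, then a dimension count forces $Stab^o_{G_0}([x])/Stab^o_{G_0}(x)$ to be connected of dimension one and to carry the nontrivial induced character, so it must be $\mathbb{G}_m$ (a one‑dimensional connected group is $\mathbb{G}_a$ or $\mathbb{G}_m$, and $\mathbb{G}_a$ has no nontrivial character in characteristic zero). Thus everything reduces to deciding when $\chi$ is nontrivial on $Stab^o_{G_0}([x])$, and I claim this occurs exactly when $x$ is nilpotent.

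For the implication "$x$ nilpotent $\Rightarrow$ $\chi$ nontrivial", I would invoke Vinberg's graded Jacobson--Morozov theorem (part of the theory recalled in Appendix \ref{jordan}): a nilpotent $x\in\fg_1$ sits in an $\fsl_2$-triple $(x,h,y)$ with $h\in\fg_0$, $y\in\fg_{-1}$, and $[h,x]=2x$. Since $h\in\fg_0=\mathrm{Lie}(G_0)$, the one‑parameter subgroup $t\mapsto\exp(th)$ lies in $G_0$, fixes $[x]$, and satisfies $\exp(th)\cdot x=e^{2t}x$; being connected and through the identity it lies in $Stab^o_{G_0}([x])$, on which $\chi$ therefore takes the value $e^{2t}\ne 1$.

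For the converse I would argue via the Jordan decomposition. If $\chi$ is nontrivial on the connected group $Stab^o_{G_0}([x])$, then — a character kills the unipotent radical and factors through a torus quotient, onto which a maximal torus surjects — there is a cocharacter $\gamma:\mathbb{G}_m\to Stab^o_{G_0}([x])$ with $\gamma(s)\cdot x=s^m x$ for some $m\ne 0$, and after replacing $s$ by $s^{-1}$ we may take $m>0$. Writing the Vinberg--Jordan decomposition $x=x_s+x_n$ with $x_s$ semisimple, $x_n$ nilpotent, $[x_s,x_n]=0$, and using that each $\gamma(s)$ acts on $\fg$ as a grading‑preserving automorphism (so it preserves semisimplicity, nilpotency, and the bracket), uniqueness of the Jordan decomposition applied to $\gamma(s)\cdot x=s^m x_s+s^m x_n$ forces $\gamma(s)\cdot x_s=s^m x_s$ for all $s$. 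Letting $s\to 0$ yields $0\in\overline{G_0\cdot x_s}$; but in Vinberg's theory the $G_0$-orbit of a semisimple element is closed, so this is impossible unless $x_s=0$, i.e. $x$ is nilpotent.

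The step I expect to require the most care is the bookkeeping around identity components: verifying that $Stab^o_{G_0}(x)$ is really normal in $Stab^o_{G_0}([x])$, that in the surjective case the quotient is genuinely one‑dimensional and connected rather than a finite cover of $\mathbb{G}_m$, and that the induced character on it is nontrivial. All of this is handled by the exact sequence $1\to Stab_{G_0}(x)\to Stab_{G_0}([x])\xrightarrow{\chi}\mathbb{G}_m$ together with the dichotomy for connected one‑dimensional groups; the remaining inputs (closedness of semisimple orbits, graded Jacobson--Morozov, uniqueness of the Jordan decomposition in a graded Lie algebra) are standard Vinberg theory already used in the paper.
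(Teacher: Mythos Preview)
Your argument is correct. For the nilpotent case you and the paper do the same thing: the graded Jacobson--Morozov triple produces a one-parameter subgroup in $G_0$ rescaling $x$, so $\chi$ is nontrivial. You are more careful than the paper in explaining why the quotient is precisely $\mathbb{G}_m$ (connected one-dimensional with a nontrivial character, hence not $\mathbb{G}_a$); the paper simply asserts it.

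For the non-nilpotent case your route differs from the paper's. The paper reduces, via uniqueness of the graded Jordan decomposition, to the purely semisimple case $x=p$, and then invokes the Lie-algebraic identity $\fc_\fg(p)=N_\fg([p])$ for a semisimple element $p$ of a semisimple Lie algebra; this says that at the infinitesimal level nothing can rescale $p$ nontrivially, whence the connected stabilizers of $p$ and $[p]$ coincide. You instead use the same Jordan uniqueness to transport the scaling to $x_s$, and then appeal to the closedness of semisimple $G_0$-orbits in $\fg_1$ together with a $\mathbb{G}_m$-degeneration to force $x_s=0$. Both ingredients (the centralizer/normalizer identity and closedness of semisimple orbits) are standard in Vinberg theory; the paper's approach is slightly more algebraic, yours slightly more geometric. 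Your version has the minor advantage of making the dichotomy structurally transparent via the single character $\chi$, while the paper's is a line shorter once one grants the normalizer lemma.
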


\begin{proof}
If $x$ is nilpotent, then by the Jacobson-Morozov theorem, there exists a one-parameter subgroup in $G_{0}$ which acts on $x$ by scalar multiplications. This implies that
$$Stab^{o}_{G_{0}}([x])/Stab^{o}_{G_{0}}(x)=\mathbb{G}_{m}.$$
If $x$ is not nilpotent, let  $x=p+n$ be its Jordan decomposition in $\fg_{1}$, with $p \not =0$. To prove our claim, by the uniqueness of the decomposition, we can assume that $x=p$ is semi-simple. Then it follows from the fact that $\fc_{\fg}(p)=N_{\fg}([p])$ for a semisimple element $p$ in a semisimple Lie algebra $\fg$.
\end{proof}

\medskip 
From the classifications of \cite{degraaf} we can then deduce the Table below, listing all possible non-trivial automorphism groups (at least up to finite groups) of smooth sections of the spinor tenfold. 
Stratifications of such representations as $V=\mathbb{C}^{4} \otimes \Delta_{-}$ by stabilizer groups are discussed in Appendix \ref{jordan}.

For each type of automorphism group, the corresponding stratum is the union of one or several Jordan classes, as indicated on the corresponding row. For example, $F_{4,n}$ on the row of $\mathbb{G}_m$ means that there is one Jordan class giving rise to such a stabilizer, for which the semisimple part has type 4  in Table (\ref{flats}), and the nilpotent part is non-trivial; the symbol $3$ just below indicates that this Jordan class has codimension three.  

We would like to know, among other things, if smooth sections with $\GG_m$-action  are parametri\-zed by an irreducible subset of $G(4,\Delta_-)$; but we don't know which classes are contained in the closure of the codimension two Jordan class of $F_2$. What we know for sure is that for smooth sections with $\GG_a$-action, there are either 
three or four components; and exactly three for smooth sections with $\GG_m\GG_a$-action. 

%

  $$\begin{array}{cccccc}
	Aut^{o}(X_{K}) & \mathrm{Class} &&&&\\ &&&&& \\
	\mathbb{G}_{m} & F_{2}  & F_{3,n} & F_{3,n} & F_{4,n}   & F_{5,n} \\
	            & 2       & 3 & 3               & 3        & 4\\
	 \mathbb{G}_{a} & F_{5,n} & F_{7,n}  & F_{8,n} &  F_{9,n} &   \\
	             & 4      & 4       & 4  & 5  &  \\
	    \mathbb{G}_{m}^{ 2} & F_{3} &&&&  \\
	                & 4        &&&&   \\
\mathbb{G}_{m}\mathbb{G}_{a} & F_{5,n}& F_{7,n}& F_{8,n} &&\\ 
	                          &5& 5&5 &&\\
	\mathbb{G}_{a}^{2} & F_{9,n} & & & & \\
	&6&&&& \\
	         \mathbb{G}_{m}\mathbb{G}_{a}^{2}  & F_{7,n} & & & & \\
	                         &6  & &&&\\
	   SL_{2} & F_{4} &  F_{5,n}&&&\\
	                           &5	  & 6&&&\\
	                           GL_{2} &  F_{5} &&&&\\
	                           &7&&&&\\
	    SL_{2} \times SL_{2} & F_{7}&&&&\\
	                    & 9&&&&
\end{array}$$

\subsection{Special types} 

 Classes 5 and 7 in Table (\ref{flats}) correspond to special orbits of points in $\PP(\fc)$ that do not belong to the discriminant locus. Moreover they are the only possibilities to get an automorphism group of type $GL_2$ or $SL_2\times SL_2$. We deduce:

\begin{prop}
Up to projective isomorphism, there exists:
\begin{enumerate}
    \item a unique codimension four smooth section $X_K^{GL_2}$ of the spinor tenfold with automorphism group of type $A_1\times T_1\simeq GL_2$; 
     \item a unique codimension four smooth section $X_K^{SL_2\times SL_2}$ of the spinor tenfold with automorphism group of type
     $A_1\times A_1\simeq SL_2\times SL_2$. 
\end{enumerate}
\end{prop}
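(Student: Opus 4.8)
The plan is to read both statements off the Jordan-Vinberg classification recalled in this section, the key point being that a reductive automorphism group of type $A_1\times T_1$ or $A_1\times A_1$ forces the defining tensor to be semisimple. Concretely, I would represent a smooth codimension four section $X_K$ by a rank four tensor $x\in\CC^4\otimes\Delta_-$ with image $K$; by Jordan-Vinberg theory $x$ is $G_0$-conjugate to $p+n$ with $p$ in (a copy of) the Cartan subspace $\fc$ and $n\in\fz(p)$ nilpotent, the pair $(p,n)$ being unique up to $W_\fc$. Combining Proposition~\ref{auto} with the Lemma on stabilizers, $Aut^o(X_K)$ is isogenous to $Z^o_{G_0}(x)$ when $x$ is not nilpotent, and equals $Z^o_{G_0}(x)$ modulo a one-parameter subgroup when $x$ is nilpotent.

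I would then argue that $x$ must be semisimple. If $p\neq 0$ and $n\neq 0$, then $Z^o_{G_0}(x)=Z^o_{G_0}(p)\cap Z^o_{G_0}(n)$ is the stabilizer of a nonzero nilpotent element inside the reductive group $Z^o_{G_0}(p)$, which in Vinberg's setting (as in the adjoint case) always carries a nontrivial unipotent radical, so it cannot be isogenous to $GL_2$ or $SL_2\times SL_2$. If $p=0$, i.e.\ $x$ is nilpotent, then by the classification of the nilpotent cone recalled above only the three orbits of $n_1,n_2,n_3$ give smooth sections, and de Graaf's data — reflected in the table of automorphism groups under the classes $F_{9,n}$ — shows their automorphism groups have unipotent identity component ($\GG_a$ or $\GG_a^2$), hence are again not of the required type. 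Therefore $n=0$ and $x$ is conjugate to a point of $\PP(\fc)$.

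Next I would locate the flat. Running through the nine conjugacy classes of flats in Table~(\ref{flats}), the generic centralizer is reductive of type $A_1\times T_1$ exactly for class~$5$ (representative $p_1+p_2+p_3$) and of type $A_1\times A_1$ exactly for class~$7$ (representative $p_2+p_3$); all other classes give a torus, a group with extra simple factors, or the full $D_5\times A_3$. Hence $p$ lies in a flat of class~$5$ in case~(1) and of class~$7$ in case~(2). These points avoid the discriminant locus: by Proposition~\ref{30lines} the singular sections meet $\PP(\fc)$ only along the $30$ lines of $F_6$, and neither orbit of points is contained in this union — equivalently, a direct check with Lemma~\ref{singular-section} shows the images of $p_1+p_2+p_3$ and of $p_2+p_3$ contain no pure spinor. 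This establishes existence of $X_K^{GL_2}$ and $X_K^{SL_2\times SL_2}$.

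Finally, for uniqueness I would use that the $960$ (resp.\ $480$) flats of class~$5$ (resp.\ class~$7$) form a single $W_\fc$-orbit: any two semisimple tensors with automorphism group of the given type are then $W_\fc$-conjugate, hence $G_0$-conjugate in $\CC^4\otimes\Delta_-$, so their images $K,K'\in G(4,\Delta_-)$ lie in a common $Spin_{10}$-orbit and $X_K$, $X_{K'}$ are projectively isomorphic. The step I expect to be the main obstacle is the exclusion of non-semisimple — and especially nilpotent — tensors: for nilpotent $x$ this genuinely relies on knowing, from \cite{degraaf}, that among the $145$ nilpotent orbits the three yielding smooth sections all have unipotent automorphism group, a fact one would rather cite than re-establish by hand.
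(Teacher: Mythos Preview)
Your overall architecture is exactly the one the paper uses: reduce to the Jordan decomposition, locate the relevant semisimple flats (classes~5 and~7), check they lie outside the discriminant, and conclude uniqueness from the fact that each class is a single $W_\fc$-orbit of points.

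The genuine gap is in your treatment of the mixed case $p\neq 0$, $n\neq 0$. You assert that the stabilizer of a nonzero nilpotent element inside the reductive group $Z^o_{G_0}(p)$ ``in Vinberg's setting (as in the adjoint case) always carries a nontrivial unipotent radical''. This is false in the graded situation: the analogue of the adjoint argument (via the $h$-grading of the centralizer) does not go through because we only intersect with $\fg_0$, and the positive $h$-eigenspaces of $\fz_\fg(e)$ need not meet $\fg_0$ nontrivially. In fact the paper's own table furnishes a counterexample: the Jordan class $F_{5,n}$ (semisimple part of type~5, nilpotent part nonzero) yields $Aut^o(X_K)\simeq SL_2$, a reductive group with trivial unipotent radical. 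So your elimination of mixed elements on general grounds does not work.

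The paper sidesteps this entirely: rather than arguing abstractly, it simply reads off the full classification table of $Aut^o(X_K)$ by Jordan class (compiled from de Graaf's computations). That table shows directly that $GL_2$ occurs only for $F_5$ and $SL_2\times SL_2$ only for $F_7$, both pure semisimple classes. Your argument is easily repaired the same way: for the mixed case, instead of the false general principle, invoke the table exactly as you already do for the nilpotent case.
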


\subsubsection{The $GL_2$-variety} 
We describe $X_{K}^{GL_{2}}$ by the following model. Consider a two dimension space $U$. We denote by $S^{p}U(k)$ the irreducible module of $SL_{2} \times \mathbb{G}_{m}$, where the multiplicative group $\mathbb{G}_{m}$ acts with weight $k$. Choosing a nondegenerate $SL_{2}$-invariant quadratic form on $S^{2}U$, and consider the direct sum:
$$
V_{10}=(S^{2}U(2) \oplus S^{2}U(-2)) \oplus S^{2}U(0) \oplus \mathbb{C}(0).
$$
We can endow $V_{10}$ with invariant nondegenerate quadratic forms by using our
quadratic form on $S^2U$ to define a duality between $S^2U(2)$ and $S^2U(-2)$, 
and completing by an orthogonal direct sum. 
This induces an embedding of $\mathfrak{g}\mathfrak{l}_{2}$ inside $\fso_{10}$, hence an action on the half spin space $\Delta_-$. We also consider the four-dimensional module  $\mathbb{C}^{4}=U(1) \oplus U(-1)$.

\begin{prop}
The induced action of $\mathfrak{g}\mathfrak{l}_{2}$ on $\mathbb{C}^{4} \otimes \Delta_-$ has a two-dimensional space of invariants. Any generic such invariant defines the same variety $X_{K}^{GL_{2}}\subset \PP(\Delta_K)$, with 
$$
\Delta_{K}=U(3) \oplus S^{3}U(-1) \oplus S^{3}U(1) \oplus U(-3).
$$
\end{prop}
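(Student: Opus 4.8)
The plan is to verify the claimed decomposition by a branching computation, and then to identify the section $X_K^{GL_2}$ as the one produced by the orbit of class $5$ in Table~(\ref{flats}). First I would make the $\mathfrak{gl}_2$-embedding into $\mathfrak{so}_{10}$ completely explicit: write $V_{10}=(S^2U(2)\oplus S^2U(-2))\oplus S^2U(0)\oplus\mathbb{C}(0)$ with the self-dual pieces $S^2U(0)$, $\mathbb{C}(0)$ and the dual pair $S^2U(\pm 2)$, and compute the restriction of the Dynkin diagram data, i.e. how the fundamental weights of $D_5$ restrict to the torus of $GL_2$. Since $\Delta_\pm=\wedge^{\mathrm{even/odd}}E$ for a choice of maximal isotropic $E\subset V_{10}$, I would pick $E$ compatibly with the grading (for instance $E=S^2U(2)\oplus\ell$ for a line in one of the remaining pieces, adjusted so that $E$ is isotropic of dimension $5$) and then decompose $\wedge^{\bullet}E$ as an $SL_2\times\mathbb{G}_m$-module using the Clebsch--Gordan rules together with weight bookkeeping for the $\mathbb{G}_m$-factor. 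This yields $\Delta_-$ as a sum of $SL_2\times\mathbb{G}_m$-irreducibles; one then checks that tensoring with $\mathbb{C}^4=U(1)\oplus U(-1)$ and extracting $\mathfrak{gl}_2$-invariants gives exactly a two-dimensional space, the invariants living in the $SL_2$-isotypic component corresponding to $U\otimes U\supset \mathbb{C}$ paired appropriately with the $\mathbb{G}_m$-weights so that the total weight vanishes.

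Second, I would pin down which $4$-dimensional $K\subset\Delta_-$ is cut out. A generic $\mathfrak{gl}_2$-invariant $x\in\mathbb{C}^4\otimes\Delta_-$ has rank four (this needs a one-line genericity check inside the two-dimensional invariant space, e.g. that the two natural basis invariants are independent as maps $\mathbb{C}^4\to\Delta_-$), so $K=\mathrm{im}(x)$ is $4$-dimensional and, being $\mathfrak{gl}_2$-stable, is a sum of $\mathfrak{gl}_2$-submodules of $\Delta_-$. Matching $\mathbb{G}_m$-weights forces $\Delta_K=K^\perp$ to be the claimed $U(3)\oplus S^3U(-1)\oplus S^3U(1)\oplus U(-3)$: indeed from the $\Delta_-$ decomposition one reads off which four one-dimensional (as $\mathbb{G}_m$-spaces after fixing the $SL_2$-type) summands remain after removing the $4$-dimensional image of the invariant. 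Independence of the choice of generic invariant follows because $GL_2$ acts on the two-dimensional space of invariants through a character times scaling on one coordinate and, after projectivizing and using the residual $SL_2\times\mathbb{G}_m$ and the $Spin_{10}$-action, all generic invariants are in a single $G_0$-orbit; alternatively one invokes that by construction they all lie in the class-$5$ orbit of Table~(\ref{flats}), which is a single orbit.

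Third, I would confirm that this orbit really is the class-$5$ stratum, hence that $X_K=X_K^{GL_2}$ in the notation of the preceding Proposition. The point $p_1+p_2+p_3\in\fc$ (a representative of flat type $5$) has centralizer of type $A_1\times T_1$; I would check that the model's $\mathfrak{gl}_2$ is conjugate to this $\mathfrak{sl}_2\oplus\mathfrak{t}_1$ inside $\mathfrak{so}_{10}$ (e.g. by comparing the induced $\mathbb{Z}$-gradings, or by matching the explicit semisimple element with $p_1+p_2+p_3$ up to $W_\fc$), and that the corresponding invariant in $\mathbb{C}^4\otimes\Delta_+$ is semisimple of that type. Then smoothness of $X_K$ is automatic from the classification (class $5$ lies outside the discriminant locus, by Proposition~\ref{30lines} and the $F_6$ description), or can be re-derived directly from Lemma~\ref{singular-section} by noting that $\gamma$ restricted to $K$ cannot vanish away from the origin, since the only $GL_2$-invariant quadric maps in the relevant weight spaces are the squaring map $U\to S^2U$ type morphisms which are nondegenerate.

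The main obstacle is the bookkeeping for the $\mathbb{G}_m$-factor: the branching of $\Delta_\pm$ to $SL_2$ alone is standard (it is governed by $\wedge^{\bullet}S^2U$ and $\wedge^{\bullet}U$), but getting the $\mathbb{G}_m$-weights right requires a careful, consistent choice of the isotropic flag $E$ and of the weight normalization, and any sign or shift error propagates into a wrong $\Delta_K$. I expect the cleanest route is to not compute $\wedge^{\bullet}E$ abstractly but to use the spinor weights directly: the $16$ weights of $\Delta_-$ are $\tfrac12(\pm\epsilon_1\pm\cdots\pm\epsilon_5)$ with an odd number of minus signs, the $GL_2$-torus acts through a fixed integral linear combination of the $\epsilon_i$ determined by the embedding $V_{10}=(S^2U(2)\oplus S^2U(-2))\oplus S^2U(0)\oplus\mathbb{C}(0)$, and one simply evaluates this character on all $16$ weights, groups them into $SL_2$-strings, and reads off the decomposition --- at which point identifying $\Delta_K$ and checking the two-dimensionality of the invariant space both become short finite verifications.
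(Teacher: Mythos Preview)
The paper does not actually supply a proof of this proposition: it is stated and then the text moves directly to the $SL_2\times SL_2$ case, whose proof (given in full) is evidently meant as the template. That proof proceeds by splitting $V_{10}$ orthogonally, applying the spinor restriction rules $\Delta_\pm\simeq\Delta^6_\pm\otimes\Delta^4_\pm\oplus\Delta^6_\mp\otimes\Delta^4_\mp$ to obtain the $\fsl(A)\times\fsl(B)$-decomposition of $\Delta_\pm$, writing down the generic invariant $\delta_{\alpha,\beta}$ explicitly, and observing that for $\alpha\beta\neq 0$ the four image vectors in $\Delta_-$ are independent of $(\alpha,\beta)$. Your plan is essentially the same strategy, executed via weight enumeration rather than via the restriction rules; both lead to the same decomposition and the same conclusion.

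Two points deserve tightening. First, recall the paper's conventions: $K\subset\Delta_-$ but $\Delta_K=K^\perp\subset\Delta_+$, so ``removing the image of the invariant from $\Delta_-$'' is not quite the right description of $\Delta_K$; you need the decomposition of $\Delta_+$ (or use the duality $\Delta_+\simeq\Delta_-^\vee$). Second, your argument for why all generic invariants give the same $K$ is muddled: $GL_2$ acts \emph{trivially} on its own invariant space, so there is no ``character times scaling'' to exploit. The clean argument, as in the $SL_2\times SL_2$ proof, is direct: with $\CC^4=U(1)\oplus U(-1)$, the invariant space is $\alpha\cdot\iota_1+\beta\cdot\iota_2$ with $\iota_1\in U(1)\otimes U(-1)$ and $\iota_2\in U(-1)\otimes U(1)$ the canonical copairings, and the image in $\Delta_-$ is $U(-1)\oplus U(1)$ whenever $\alpha\beta\neq 0$, independent of the coefficients. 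Your alternative route via class $5$ of Table~(\ref{flats}) is also valid but requires checking that the constructed invariant is semisimple, which is an extra step the direct computation avoids.
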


\subsubsection{The $SL_2\times SL_2$-variety}\label{sl2sl2}
In order to describe $X_K^{SL_2\times SL_2}$, we  use the following model. Consider two-dimensional spaces $A$ and $B$; then $S^2A$, $S^2B$ and $A\otimes B$ admit non degenerate quadratic forms (defined up to scalar). Consider the orthogonal direct sum 
$$V_{10}=S^2A\oplus S^2B\oplus A\otimes B.$$
This induces an embedding of $\fsl(A)\times\fsl(B)$ inside $\fso_{10}$, hence an action on the half-spin representation $\Delta_-$. We also consider its action on
$\CC^4=A\oplus B$. 

\begin{prop} The induced action of $\fsl(A)\times\fsl(B)$ on  $\CC^4\otimes
\Delta_{-}$ has a two-dimensional space of invariants. Any generic invariant 
defines the same variety $X_K^{SL_2\times SL_2}\subset\PP(\Delta_K)$, with
$$\Delta_{K}=A \otimes S^{2}B \oplus B \otimes S^{2}A.$$ 
\end{prop}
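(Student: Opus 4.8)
The plan is to reduce everything to the branching of the two half-spin representations of $Spin_{10}$ to $\fsl(A)\times\fsl(B)$. First I would split $V_{10}=(S^2A\oplus S^2B)\oplus(A\otimes B)$ as an orthogonal sum of a $6$-dimensional and a $4$-dimensional subspace, giving an embedding $\fso_6\times\fso_4\hookrightarrow\fso_{10}$. Via the exceptional isomorphism $\fso_6\simeq\fsl_4$ one identifies the vector representation $S^2A\oplus S^2B$ of $\fso_6$ with $\wedge^2(A\otimes B)$, since $\wedge^2(A\otimes B)\simeq S^2A\otimes\wedge^2B\oplus\wedge^2A\otimes S^2B\simeq S^2A\oplus S^2B$ as $\fsl(A)\times\fsl(B)$-modules; the embedding $\fsl(A)\times\fsl(B)\hookrightarrow\fsl(A\otimes B)=\fsl_4$ is then the tensor one. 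The two half-spin modules of $\fso_6$ are $A\otimes B$ and its dual, both isomorphic to $A\otimes B$ as $\fsl(A)\times\fsl(B)$-modules, while those of $\fso_4=\fso(A\otimes B)$ are $A$ and $B$. Feeding this into the branching rule $\Delta^+_{10}=\Delta^+_6\otimes\Delta^+_4\oplus\Delta^-_6\otimes\Delta^-_4$ (and the analogous one for $\Delta^-_{10}$) and using $A\otimes A\simeq\CC\oplus S^2A$, $B\otimes B\simeq\CC\oplus S^2B$, one gets in both cases
$$\Delta_\pm\ \simeq\ (A\otimes B)\otimes A\ \oplus\ (A\otimes B)\otimes B\ \simeq\ A\ \oplus\ B\ \oplus\ (A\otimes S^2B)\ \oplus\ (B\otimes S^2A).$$
(Alternatively one can read this off the weights, after pinning down the rank-two subtorus of the maximal torus of $\fso_{10}$ that is cut out by the decomposition of $V_{10}$.)

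Granting this, the first assertion is immediate: since $A^\vee\simeq A$ and $B^\vee\simeq B$, we have $(\CC^4\otimes\Delta_-)^{\fsl(A)\times\fsl(B)}=\Hom_{\fsl(A)\times\fsl(B)}(A\oplus B,\Delta_-)$, of dimension $[\Delta_-:A]+[\Delta_-:B]=1+1=2$ by the decomposition above (the summands $A\otimes S^2B$ and $B\otimes S^2A$ contain no further copy of $A$ or of $B$). A basis is given by $\phi_A,\phi_B$, where $\phi_A\in A\otimes\Delta_-\subset\CC^4\otimes\Delta_-$ is the inclusion of the $A$-summand into $\Delta_-$, read through $A^\vee\simeq A$ as an element of $A\otimes\Delta_-$, and similarly $\phi_B$.

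For the second assertion I would write $\CC^4=A\oplus B$ and take a generic invariant $x=\lambda\phi_A+\mu\phi_B$, $\lambda\mu\neq0$. As a map $(\CC^4)^\vee\to\Delta_-$ it equals $\lambda$ times the isomorphism $A^\vee\xrightarrow{\sim}A\hookrightarrow\Delta_-$ on $A^\vee$ and $\mu$ times the analogous map on $B^\vee$; hence $x$ has rank four and image $K=A\oplus B\subset\Delta_-$, \emph{independently of} $(\lambda:\mu)$. So every generic invariant defines one and the same $K$, hence the same section $X_K$. Being a sum of two summands, $K$ is $\fsl(A)\times\fsl(B)$-stable, so $SL(A)\times SL(B)$ acts on $X_K$; one checks that $K$ contains no pure spinor (equivalently, that $\gamma$ does not vanish on the $A$- and $B$-summands of $\Delta_-$, a short computation on $S^2K=S^2A\oplus(A\otimes B)\oplus S^2B$), so by Lemma \ref{singular-section} $X_K$ is smooth, and since by the classification the automorphism group of a smooth codimension four section is at most $SL_2\times SL_2$, it is of type $A_1\times A_1$; by the previous Proposition $X_K=X_K^{SL_2\times SL_2}$. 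Finally $\Delta_K=K^\perp$, and since the $Spin_{10}$-invariant perfect pairing $\Delta_+\times\Delta_-\to\CC$ is $\fsl(A)\times\fsl(B)$-equivariant and the summands $A,B,A\otimes S^2B,B\otimes S^2A$ of $\Delta_+$ are pairwise non-isomorphic and self-dual, it must match each of them with the isomorphic summand of $\Delta_-$; thus $K^\perp$ is the sum of the summands of $\Delta_+$ other than $A$ and $B$, i.e.\ $\Delta_K=(A\otimes S^2B)\oplus(B\otimes S^2A)$.

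The only genuine work is the branching of $\Delta_\pm$ to $\fsl(A)\times\fsl(B)$; once it is in hand the rest is formal. Routing it through $\fso_6\times\fso_4\subset\fso_{10}$ together with the exceptional isomorphism $\fso_6\simeq\fsl_4$ dispenses with any explicit spinor calculus, the one subtle point being which of the two $\fso_4$-half-spins is $A$ and which is $B$ — but that ambiguity is harmless here, since the final answer is symmetric in $A$ and $B$.
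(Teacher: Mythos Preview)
Your proof is correct and follows essentially the same route as the paper: both compute the branching of $\Delta_\pm$ to $\fsl(A)\times\fsl(B)$ by passing through $\fso_6\times\fso_4\subset\fso_{10}$ and the exceptional isomorphism $\fso_6\simeq\fsl_4$, obtain the decomposition $\Delta_\pm\simeq A\oplus B\oplus(A\otimes S^2B)\oplus(B\otimes S^2A)$, read off the two-dimensional invariant space, and observe that any invariant with both components nonzero has image $K=A\oplus B$ independent of the scalars. The paper verifies smoothness by an explicit check of the quadratic equations coming from $\wedge^2(A\otimes B)\subset V_{10}$, whereas you appeal to the equivariant shape of $\gamma$ on $S^2K$; and you identify $\Delta_K$ via Schur's lemma on the pairing rather than by inspection---both minor stylistic differences, not a different argument.
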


\begin{proof}
First choose an equivariant isomorphism  $V_{6}:=S^{2}A \oplus S^{2}B \simeq \wedge^{2}(A \otimes B)$,
and fix an invariant quadratic form on $V_6$ by choosing an isomorphism $\wedge^{4}(A \otimes B) \simeq \mathbb{C}$. Also, fix an invariant quadratic form on $V_{4}=A \otimes B$ by choosing
isomorphisms $\wedge^{2}A \cong \wedge^{2}B \cong \mathbb{C}$. The half-spin representations  of $\fso(V_{6})$ and $\fso(V_{4})$ are given as follows:
$$
\Delta^{6}_{+}=A \otimes B,\,\quad \Delta^{6}_{-}=(A \otimes B)^{\vee} \simeq A \otimes B, \,\quad
\Delta^{4}_{+}=A,\,\quad \Delta^{4}_{-}=B.
$$
By the restriction rules for spinor representations, 
$$\Delta_+\simeq \Delta^{6}_{+}\otimes \Delta^{4}_{+}\oplus \Delta^{6}_{-}\otimes \Delta^{4}_{-}, \qquad \Delta_-\simeq \Delta^{6}_{+}\otimes \Delta^{4}_{-}\oplus \Delta^{6}_{-}\otimes \Delta^{4}_{+},$$
we deduce that the  induced
$\fsl(A)\times\fsl(B)$-action on $\Delta_{\pm}$ is given by the following decomposition formula:
\begin{equation}\label{decomp}
\Delta_{\pm }\simeq 
A \otimes B \otimes A \oplus A \otimes B \otimes B\simeq
A\otimes (S^2B\oplus\wedge^2B)\oplus (S^2A\oplus\wedge^2A)\otimes B.
\end{equation}
One immediately deduces that the action of $\fsl(A)\times\fsl(B)$ on  $\CC^4\otimes
\Delta_{-}$ has a two-dimensional space of invariants, namely the sum of two copies of $\wedge^{2}A \otimes \wedge^{2}B$.  Choose a basis $a_{1},a_{2}$ of $A$ and a basis $b_{1}, b_{2}$ of $B$ such that $a_{1} \wedge a_{2}$ and $b_{1} \wedge b_{2}$ are sent to $1$ via the fixed homorphisms $\wedge^{2}A \simeq \wedge^{2}B \simeq \mathbb{C}$. Then any invariant is of the form:
$$
\delta_{\alpha,\beta}=\alpha\left(b_{1}\otimes (a_{1}b_{2}a_{2}-a_{2}b_{2}a_{1})-b_{2}\otimes (a_{1}b_{1}a_{2}-a_{2}b_{1}a_{1})\right) + $$
$$ \hspace*{5cm} +\beta \left(a_{1}\otimes (a_{2}b_{1}b_{2}-a_{2}b_{2}b_{1})+a_{2}\otimes (a_{1}b_{2}b_{1}-a_{1}b_{1}b_{2})\right).
$$
For $\alpha \beta \not=0$, the associated 4-space that $\delta_{\alpha,\beta}$ gives in $\Delta_{-}$ is
$$
K=\langle a_{1}b_{2}a_{2}-a_{2}b_{2}a_{1},  a_{2}b_{1}a_{1}-a_{1}b_{1}a_{2}, a_{2}b_{1}b_{2}-a_{2}b_{2}b_{1}, a_{1}b_{2}b_{1}-a_{2}b_{2}b_{1}\rangle,
$$
and does not depend on the choice of $\alpha$ and $\beta$. 

To prove that the corresponding section $X_K$ is smooth, we need to check that $K$ contains no pure spinor. The quadratic equations on $\Delta_{-}$ defining pure spnots are given by  $V_{10}$. One checks directly that those equations given by the subspace $\wedge^{2}(A \otimes B) \subseteq V_{10}$ have already no non-trivial zero on $K$.  So $X_K$ is a smooth section, and its automorphism group contains $SL(A) \times SL(B)$, or a finite quotient. 
\end{proof}


\medskip
The equations defining $X_{K}$ in $\PP(\Delta_K)$ are given by an equivariant morphism:
\begin{equation}\label{quad_K}
S^2(A \otimes S^{2}B \oplus B\otimes S^{2} A ) \rightarrow V_{10}=S^{2}A \oplus S^{2}B \oplus A \otimes B,
\end{equation}
that we describe  explicitly as follows. 
By homogeneity, the equations given by $A \otimes B$ correspond to the projection:
$$\begin{array}{rcl} 
A\otimes S^{2}B \otimes B \otimes S^{2}A & \rightarrow & A \otimes B, \\
a'\otimes b^2\otimes b'\otimes a^2 & \mapsto & (a\wedge a')(b\wedge b')\, a\otimes b.
\end{array}$$
 The projection to $S^{2}A \oplus S^{2}B$ is given by two equivariant morphisms:
$$
S^{2}(A \otimes S^{2}B) \cong S^{2}A \otimes S^{2}(S^{2}B) \oplus \wedge^{2}A \otimes \wedge^{2}(S^{2}B) \xrightarrow{(c_{1},c_{2})} S^{2}A\oplus S^{2}B,
$$
$$
S^{2}(B \otimes S^{2}A) \cong S^{2}B \otimes S^{2}(S^{2}A) \oplus \wedge^{2}B \otimes \wedge^{2}(S^{2}A) \xrightarrow{(c_{1}',c_{2}')} S^{2}B\oplus S^{2}A.
$$
where $c_{1},c_{2},c_{1}',c_{2}'$ are nonzero constants. Indeed we get two parameters for each projection, corresponding (for the first projection) 
to the discriminant map $D: S^2(S^2B)\lra \CC$ and the Jacobian map $Jac: \wedge^2(S^2B)\lra S^2B$, only defined up to non-zero scalars. 

\smallskip
Recall our basis $a_1,a_2$ of $A$ and $b_1,b_2$ of $B$. If we decompose $\theta$ in $\Delta_K$ as  
$$\theta=a_{1} \otimes \beta_{1} + a_{2} \otimes \beta_{2} +b_{1} \otimes \alpha_{1} + b_{2} \otimes \alpha_{2},$$ 
this means that the (quadratic) projection to $S^{2}A$ is equal to:
$$
c_{1}(D(\beta_1)a_{1}^{2} +D(\beta_2)a_{2}^{2} +2D(\beta_{1},\beta_{2}) a_{1}a_{2})+c_{2}'Jac(\alpha_{1}, \alpha_{2}),
$$
and similarly its projection to $S^{2}B$ is:
$$
c_{1}'(D(\alpha_{1})b_{1}^{2}+D(\alpha_{2}) b_{2}^{2}+2D (\alpha_{1},\alpha_{2}) b_{1}b_{2})+c_{2}Jac(\beta_{1},\beta_{2}).
$$
We normalize the discriminant and Jacobian in such a way that 
$$
Jac(a_{1}^{2}, a_{2}^{2})=2a_{1}a_{2}, \quad  Jac(a_{1}^{2}, a_{1}a_{2})=a_{1}^{2}, \quad Jac(a_{1}a_{2}, a_{2}^{2})=a_{2}^{2};
$$
$$
D(a_{1}^{2})=D(a_{2}^{2})=0,  \quad D(a_1^2, a_2^2)=1, \qquad D(a_1a_2)=-\frac{1}{2},
$$
and similarly for $B$. 

\medskip
Now we can classify orbits of $SL_2\times SL_2$ in $X_K^{SL_2\times SL_2}$
as follows. Consider $\theta=\theta_1+\theta_2$, with $\theta_1\in S^2A\otimes B$ 
and $\theta_2\in A\otimes S^2B$. Suppose $\theta_1$ has rank two as a morphism from 
$B^\vee$ to $S^2A$, so that it defines a pencil $P_1$ of binary forms. 
Call this pencil {\it degenerate} if it is tangent to the conic $C$ of double points in $\PP(S^2A)$, {\it regular} otherwise. This defines two orbits.

\begin{lemma}
These two orbits admit the following representatives:
$$\begin{array}{rcl}
\theta & = & a_1\otimes b_1^{2}+a_2\otimes b_2^{2}+b_1\otimes a_2^2+b_2\otimes a_1^2, \\
\theta' &=&  a_1\otimes b_1^{2}-2a_2\otimes b_1b_2\otimes a_1-2b_{1}\otimes a_{1}a_{2}-b_2\otimes a_2^2.
\end{array}$$
\end{lemma}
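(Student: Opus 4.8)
The statement to prove is the Lemma giving representatives $\theta,\theta'$ for the two orbits of $SL(A)\times SL(B)$ on the variety $X_K^{SL_2\times SL_2}$, distinguished by whether the pencil $P_1\subset\PP(S^2A)$ cut out by $\theta_1$ is tangent to (degenerate) or transverse to (regular) the conic $C$ of double points.

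My plan is to first verify that the two proposed representatives $\theta$ and $\theta'$ actually lie on $X_K^{SL_2\times SL_2}$, i.e. that they satisfy the quadratic equations coming from the equivariant map (\ref{quad_K}). This is a direct substitution into the explicit formulas for the projections to $S^2A$, $S^2B$ and $A\otimes B$ given just above, using the normalizations of $D$ and $Jac$ and the (as yet unspecified, but irrelevant up to rescaling) constants $c_1,c_2,c_1',c_2'$; one checks in particular that for $\theta$ the two pencils $\langle b_1^2,b_2^2\rangle$ in $S^2A$ and $\langle a_2^2,a_1^2\rangle$ in... wait, more carefully: $\theta_1=b_1\otimes a_2^2+b_2\otimes a_1^2$ so the pencil $P_1$ is $\langle a_1^2,a_2^2\rangle$, spanned by two distinct double points, hence transverse to $C$; whereas for $\theta'$ the pencil is $\langle a_1^2,a_1a_2\rangle=a_1\cdot A$, the tangent line to $C$ at $[a_1^2]$, hence tangent to $C$, i.e. degenerate. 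So the two representatives genuinely sit in the two claimed orbits, and it remains to show these are the only two orbits.

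Next I would carry out the orbit count. The key reduction is: given $\theta=\theta_1+\theta_2\in X_K^{SL_2\times SL_2}$ with $\theta_1$ of full rank as a map $B^\vee\to S^2A$, the pencil $P_1$ determines $\theta_1$ up to the $SL(A)\times SL(B)$-action, since $SL(A)$ acts transitively on the two types of pencils of binary quadratic forms (a pencil of binary quadrics in $S^2A$ is, up to $SL(A)$, either $\langle a_1^2,a_2^2\rangle$ or $\langle a_1^2,a_1a_2\rangle$ — the classical normal forms — and $SL(B)$ absorbs the choice of basis of the pencil up to a residual torus). So after moving $\theta_1$ to standard form, one studies the residual stabilizer $H\subset SL(A)\times SL(B)$ of $\theta_1$ and shows it acts on the affine space of possible $\theta_2$'s (constrained by the quadratic equations of $X_K$) with a single open orbit through the stated representative; the remaining non-generic $\theta_2$ either fall into the other stratum or correspond to $\theta_1$ of rank $\le 1$, which by the symmetry $A\leftrightarrow B$ of the equations reduces to the $\theta_2$-dominant case, ultimately again one of the two orbits (or a degeneration already listed among the nilpotent orbits of de Graaf). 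I would organize this as: (i) standard form for $\theta_1$; (ii) solve the equations of $X_K$ for the compatible $\theta_2$; (iii) use the residual stabilizer to normalize $\theta_2$.

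The main obstacle I expect is step (ii)--(iii): writing down, for each of the two normal forms of $\theta_1$, exactly which $\theta_2\in A\otimes S^2B$ make $\theta_1+\theta_2$ satisfy all ten quadratic equations, and then checking that the residual stabilizer of $\theta_1$ acts transitively on the generic locus of that solution set. This is a concrete but slightly delicate linear-algebra-plus-group-action computation, and one must be careful that the equations from the $S^2A$, $S^2B$ and $A\otimes B$ components interact (the $A\otimes B$ component mixes $\theta_1$ and $\theta_2$, while the $S^2A$, $S^2B$ components are separately quadratic in $\theta_1$ resp.\ $\theta_2$ plus a cross term). A clean way to finish, and the route I would actually take in the write-up, is to invoke de Graaf's classification \cite{degraaf}: the possible semisimple-plus-nilpotent Jordan types with stabilizer $SL_2\times SL_2$ form a single Jordan class (class $F_7$ in Table (\ref{flats})), so it suffices to exhibit two non-conjugate elements inside $X_K^{SL_2\times SL_2}$ — which $\theta$ and $\theta'$ are, being distinguished by the $SL(A)$-invariant type of $P_1$ — and to check, by the dimension count $\dim(SL_2\times SL_2)\cdot\text{orbit}$ versus $\dim X_K^{SL_2\times SL_2}$, that together they exhaust an open dense subset, every other element being a degeneration already accounted for in the closure order of \cite{degraaf}.
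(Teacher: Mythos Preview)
Your outlined direct approach (i)--(iii) is essentially what the paper does: put $\theta_1$ into one of the two standard pencil forms, solve the quadratic equations for the compatible $\theta_2$, then normalize using the residual torus. One point you have slightly backwards: you want to verify first that $\theta,\theta'$ satisfy the equations, but at this stage of the paper the constants $c_1,c_2,c_1',c_2'$ are not yet fixed. The paper's computation \emph{discovers} the relation $c_1c_1'+c_2c_2'=0$ as the condition for the system to admit nontrivial solutions, and only then normalizes to the stated representatives. So verification-then-transitivity is not quite the right order; solving the constrained system is doing both jobs at once.

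The genuine gap is your proposed ``clean way'' via de Graaf. That classification concerns $G_0=SL_4\times Spin_{10}$-orbits on $\CC^4\otimes\Delta_+$; the class $F_7$ in Table~(\ref{flats}) labels a type of semisimple element, equivalently a type of four-plane $K\subset\Delta_-$. The Lemma, by contrast, is about orbits of $Aut(X_K)\simeq SL(A)\times SL(B)$ acting on the \emph{points} of the fixed sixfold $X_K^{SL_2\times SL_2}$. These are entirely different orbit problems on different spaces, and de Graaf's tables say nothing about the second one. So this shortcut does not work; you must carry out the direct computation you outlined as (i)--(iii), which is exactly the paper's proof. (Minor slip: for $\theta'$ the pencil is $\langle a_1a_2,a_2^2\rangle=a_2\cdot A$, tangent at $[a_2^2]$, not $[a_1^2]$; harmless, but worth correcting.)
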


\proof 
If $P_1$ is regular, it meets the conic $C$ in two points $[a_1^2]$ and $[a_2^2]$ for some $a_1, a_2$ defining a basis of $A$. Then there is a basis $b_1, b_2$ of $B$ such that 
$$\theta_1 = a_1^2\otimes b_1+a_2^2\otimes b_2, \qquad 
\theta_2 = a_1\otimes q_1+a_2\otimes q_2$$
for some $q_1, q_2\in S^2B$. Let us discuss the conditions (\ref{quad_K}) for $[\theta]$ to belong to $X_K^{SL_2\times SL_2}$. Mixed quadratic equations (coming from $A\otimes B$) imply that there exists 
scalars $z_1, z_2$ such that $q_1=z_1b_2^2$, $q_2=z_2b_1^2$. Unmixed equations 
(coming from $S^2A\oplus S^2B$) then reduce to the identities 
$c_1-c'_2z_1z_2=c_2+c'_1z_1z_2=0.$

If $P_1$ is degenerate, it meets the conic $C$ at a unique point $[a_1^2]$ and 
contains another point of the form $[a_1a_2]$, so as above there is a basis $b_1, b_2$ of $B$ such that 
$$\theta_1 = a_1^2\otimes b_1+2a_1a_2\otimes b_2, \qquad 
\theta_2 = a_1\otimes q_1+a_2\otimes q_2$$
for some $q_1, q_2\in S^2B$. The mixed equations for $[\theta']$  allow to reduce to
$\theta_2=a_1\otimes (ub_2^2+vb_1b_2)+wa_2\otimes b_2^2$, and then the unmixed equations yield the conditions $4c_2-c'_1v^2=c_1-2c'_2vw=v+2w=0$. 
For these equations to admit non-trivial solutions (which have to exist), we need
that $c_{1}c_{1}'+c_{2}c_{2}' =0$. Then we can normalize $\theta$ and $\theta'$ 
as claimed. \qed

\begin{prop} \label{openorbit}
$SL_2\times SL_2$ acts on $X_K^{SL_2\times SL_2}$ with an open orbit, a codimension one orbit, two codimension two orbits, a codimension three orbit and 
two closed orbits isomorphic to $\PP^1\times v_2(\PP^1)$.
\end{prop}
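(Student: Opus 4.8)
The plan is to classify the $SL(A)\times SL(B)$-orbits on $X_K^{SL_2\times SL_2}$ by hand, using the explicit equations \eqref{quad_K}. Since $X_K^{SL_2\times SL_2}$ is a codimension four section of the spinor tenfold it has dimension $6$, exactly the dimension of $SL(A)\times SL(B)$, so an open orbit exists as soon as some point has finite stabiliser. Write a point as $[\theta]=[\theta_1+\theta_2]$ with $\theta_1\in B\otimes S^2A$, $\theta_2\in A\otimes S^2B$, and stratify $X_K$ according to the $SL(A)\times SL(B)$-type of $\theta_1$: either $\theta_1=0$; or $\theta_1$ has rank one, $\theta_1=b_0\otimes q$ with $q\in S^2A$ a perfect square or a binary quadratic with distinct roots; or $\theta_1$ has rank two, in which case the associated pencil $P_1\subset\PP(S^2A)$ is transverse (\emph{regular}) or tangent (\emph{degenerate}) to the conic $C$ of squares. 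This list is exhaustive and rests only on the classical $SL_2$-invariant theory of binary quadratics; by the evident $A\leftrightarrow B$ symmetry of the construction, the same classification applies to $\theta_2$.

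I would first dispose of the rank two strata, which is essentially the content of the Lemma above classifying $\theta$ and $\theta'$. If $\theta_1$ is regular, the $A\otimes B$-valued equations force $\theta_2=a_1\otimes z_1b_2^2+a_2\otimes z_2b_1^2$ in suitable coordinates; the remaining equations, combined with the relation $c_1c_1'+c_2c_2'=0$ established in \S\ref{sl2sl2}, pin $z_1z_2$ to a fixed nonzero constant, and the residual one-dimensional torus in $\mathrm{Stab}(\theta_1)$ acts on $z_1$ with nonzero weight, hence transitively. So this stratum is a single orbit, represented by $\theta$; computing the isotropy subalgebra at $\theta$ gives $0$, so the stabiliser is finite (one checks it equals $\mu_{10}$) and this is the open dense orbit. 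If $\theta_1$ is degenerate, the parallel normalisation (the case $v\neq0$ of \S\ref{sl2sl2}) forces $\theta_2$ to be degenerate as well and yields the single orbit of $\theta'$; a direct computation of its stabiliser gives a one-dimensional group, so this is the codimension one orbit.

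The extreme stratum produces the closed orbits. If $\theta_1=0$, the $S^2B$-valued equation reads $c_2\,Jac(\beta_1,\beta_2)=0$, which forces $\beta_1,\beta_2$ to be proportional and hence $\theta_2$ to have rank $\le1$; then the $S^2A$-valued equation reads $c_1D(\beta)(a_1+\lambda a_2)^2=0$, which forces $\beta=b^2$, so $\theta_2=a\otimes b^2$. This stratum is therefore the Segre--Veronese surface $\PP(A)\times v_2(\PP(B))\subset\PP(A\otimes S^2B)$, and by the $A\leftrightarrow B$ symmetry the stratum $\theta_2=0$ is $\PP(B)\times v_2(\PP(A))\subset\PP(B\otimes S^2A)$. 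Both lie on $X_K$ (a direct check, using $D(b^2)=0$), and they are its only closed orbits: since $\Delta_K=A\otimes S^2B\oplus B\otimes S^2A$ is a sum of two irreducible $SL(A)\times SL(B)$-modules with distinct highest weights, the only Borel-fixed points of $\PP(\Delta_K)$ — hence the only closed orbits — are these two Segre--Veronese varieties.

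It remains to treat the rank one strata. The previous steps show that $\theta_1$ and $\theta_2$ always have the same rank class (both rank two, or both rank $\le1$) and that $\theta_1=0$ or $\theta_2=0$ lands in a closed orbit; so the points not yet accounted for have $\theta_1=b_0\otimes q_A$ and $\theta_2=a_0\otimes q_B$ of rank exactly one. Here the $A\otimes B$-valued equations impose incidence conditions relating $[a_0]$ to the roots of $q_A$ and $[b_0]$ to the roots of $q_B$, and together with the discriminant equations a short case analysis — on whether $q_A$, $q_B$ are squares or separable — yields exactly two orbits of dimension $4$ and one orbit of dimension $3$; computing $\mathrm{Stab}([\theta])$ on each (up to finite groups it is the intersection of $\mathrm{Stab}([\theta_1])$ and $\mathrm{Stab}([\theta_2])$, a product of Borel subgroups, torus normalisers and tori) confirms these dimensions. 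Assembling all strata gives a disjoint union of orbits of dimensions $6,5,4,4,3,2,2$, which is the assertion. The main obstacle is this last step: one must track, throughout the normalisations, the constraint $c_1c_1'+c_2c_2'=0$ and the trilinear pairing $a'\otimes b^2\otimes b'\otimes a^2\mapsto(a\wedge a')(b\wedge b')\,a\otimes b$, verify that the degeneracy of $\theta_1$ propagates to $\theta_2$ so that no spurious orbit appears, and carry out the stabiliser computations in the rank one strata. This is routine but delicate, and is organised most efficiently by observing that the pair of invariants (type of $\theta_1$, type of $\theta_2$) already separates the $SL(A)\times SL(B)$-orbits.
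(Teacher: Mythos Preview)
Your approach is essentially the paper's: stratify by the rank and type of $\theta_1$ (and, by the $A\leftrightarrow B$ symmetry, of $\theta_2$), handle the rank-two cases via the preceding Lemma, read off the closed orbits from $\theta_1=0$ or $\theta_2=0$, and analyse the rank-one remainder by hand. Your treatment of the open orbit, the codimension-one orbit and the closed orbits is correct and matches the paper; your Borel-fixed-point argument that $\PP(A)\times v_2\PP(B)$ and $\PP(B)\times v_2\PP(A)$ are the only closed orbits is a clean addition the paper leaves implicit.

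The one imprecision is your organising principle for the rank-one stratum. The equations actually force \emph{both} $q_A$ and $q_B$ to be perfect squares (say $q_A=\alpha^2$, $q_B=\beta^2$), so the case split ``square versus separable'' is vacuous there and cannot distinguish the three orbits. What separates them is the incidence data you mention earlier: the $A\otimes B$- and $S^2$-valued equations impose that either $\alpha\propto a_0$ or $\beta\propto b_0$, and the three orbits correspond to $(\alpha\propto a_0,\ \beta\not\propto b_0)$, $(\alpha\not\propto a_0,\ \beta\propto b_0)$, and $(\alpha\propto a_0,\ \beta\propto b_0)$, of codimensions $2,2,3$. Hence your closing claim that the pair ``(type of $\theta_1$, type of $\theta_2$)'' already separates the orbits is not correct as stated; you need these relative-position invariants, not just the individual types.
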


\proof 
It is a straightforward calculation that 
$$
Stab_{SL_{2} \times SL_{2}}([\theta])=
\biggl\{
\biggl(\begin{pmatrix}
	\xi^{3} & 0 \\
	0 & \xi^{-3}
\end{pmatrix}
	 , \begin{pmatrix}
	 	\xi & 0 \\
	 	0 & \xi^{-1}
	 \end{pmatrix} \biggl), \quad \xi^{10}=1 \biggl\}.
$$ 

So there is an open orbit in $X_K^{SL_2\times SL_2}$, and this open orbit is affine since the generic stabilizer is finite. So its complement must have codimension one. This must be the orbit of $\theta'$ since the other cases yield orbits of smaller dimension. 

Indeed, if we are not in the orbits of $\theta$ and $\theta'$, with the previous notations both 
$\theta_1=p\otimes b$ and $\theta_2=a\otimes q$ must have rank at most one, where $p\in S^2A$ and $q\in S^2B$. For $a, b\ne 0$, the quadratic equations yield the conditions 
that $p$ and $q$ are squares, with either $p$ a multiple of $b^2$ or $q$ a multiple of $a^2$. This gives two orbits of codimension two, and an orbit of codimension three when both conditions are fulfilled. Finally for $a=0$ or $b=0$ we get the two closed orbits. \qed 

\begin{coro} 
$X_K^{SL_2\times SL_2}$ compactifies $SL_2\times SL_2/\mu_{10}$.
\end{coro}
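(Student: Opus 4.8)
The plan is to deduce the statement immediately from Proposition \ref{openorbit} together with the stabilizer computation performed in its proof; there is essentially no new work. First I would note that $X_K^{SL_2\times SL_2}$, being a codimension four linear section of the spinor tenfold, is a smooth projective Fano variety of dimension six, hence complete and irreducible (irreducibility is in any case forced by the presence of a dense orbit). Then I would invoke Proposition \ref{openorbit}: the group $SL_2\times SL_2$ acts on $X_K^{SL_2\times SL_2}$ with a dense open orbit $\cO$, namely that of the point $[\theta]$ with $\theta=a_1\otimes b_1^2+a_2\otimes b_2^2+b_1\otimes a_2^2+b_2\otimes a_1^2$.

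The remaining step is to identify the stabilizer of $[\theta]$, which is precisely the computation displayed in the proof of Proposition \ref{openorbit}:
$$\mathrm{Stab}_{SL_2\times SL_2}([\theta])=\left\{\left(\begin{pmatrix}\xi^{3}&0\\0&\xi^{-3}\end{pmatrix},\begin{pmatrix}\xi&0\\0&\xi^{-1}\end{pmatrix}\right):\xi^{10}=1\right\}.$$
Here the assignment $\xi\mapsto\left(\begin{pmatrix}\xi^{3}&0\\0&\xi^{-3}\end{pmatrix},\begin{pmatrix}\xi&0\\0&\xi^{-1}\end{pmatrix}\right)$ is an injective homomorphism from the group $\mu_{10}$ of tenth roots of unity into $SL_2\times SL_2$ (the second matrix already recovers $\xi$), so this stabilizer is cyclic of order ten, canonically isomorphic to $\mu_{10}$. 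Consequently $\cO\simeq (SL_2\times SL_2)/\mu_{10}$, and since $\cO$ is open and dense in the complete variety $X_K^{SL_2\times SL_2}$, the latter is, by definition, a smooth projective compactification of $(SL_2\times SL_2)/\mu_{10}$.

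I do not expect any genuine obstacle: the entire content sits inside Proposition \ref{openorbit}. The one point meriting a second look is the precise abstract isomorphism type of the stabilizer, namely that it is $\mu_{10}$ and not a proper quotient of it; but this follows at once from the injectivity of the parametrization by $\xi$. One might add, as anticipated in the introduction, that this realizes $X_K^{SL_2\times SL_2}$ as a close analogue of the Mukai-Umemura prime Fano threefold of genus $12$, itself a compactification of a quotient of $SL_2$ by a finite group.
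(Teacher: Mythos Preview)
Your proof is correct and follows exactly the same approach as the paper: the paper's proof simply reads ``This follows from the computation of the generic stabilizer in the previous proof,'' and you have spelled out precisely that deduction, including the check that the parametrization by $\xi$ is injective so that the stabilizer is genuinely $\mu_{10}$.
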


\proof This follows from the computation of the generic stabilizer in the previous proof.\qed 

\medskip
Note that the action of $SL_2\times SL_2$ on $\Delta_K$ is multiplicity free; so a fortiori the action of a maximal torus on $X_K^{SL_2\times SL_2}$ has finitely many fixed points, and we can deduce from the Bialynicki-Birula decomposition that is contains a copy of the affine space $\AA^6$. In fact, many other sections $X_K$ admit  $\mathbb{C}^{*}$-actions, and we get a large family of compactifications that we briefly  discuss, together with  their boundary divisors.

The generic section with $\CC^*$-action comes from Class 2, which is represented by a generic combination of $p_1, p_2, p_3$. The $\CC^*$-action is then given by a one-dimensional subtorus of the canonical maximal torus of $Spin_{10}$, defined by $t_4=t_5=1$ and $t_2=t_3=t_1^{-1}$. Under the action of this subtorus, he spin representation  splits into isotypic components of dimensions $2,6,6,2$; the two extremal ones give two projective lines contained in the 
spinor tenfold; the two others give two copies of $\PP^1\times\PP^2$. We get the fixed point set of the $\CC^*$-action on $X_K$ by cutting with 
$\PP(K^\perp)$: the two projective lines remain, and we additionally get two codimension two sections of $\PP^1\times\PP^2$, which are normal rational cubics, hence also copies of $\PP^1$. As a corollary, $X_K$ contains a $\PP^{1}$-family of of copies of $\AA^{6}$, as follows.

\begin{prop}
Let $L\simeq\PP^1$ be an extremal fixed component of the $\mathbb{C}^{*}$-action on $X_{K}$. For any point $x \in L$, the sixfold $X_{K}$ is a compactification of $\AA^{6}$, obtained by restricting to $L \backslash \{x\}$ the Bialynicki-Birula cell associated to $L$.  The boundary divisor is singular along a projective plane.
\end{prop}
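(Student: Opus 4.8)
The plan is to exploit the Bialynicki-Birula (BB) decomposition attached to the $\CC^*$-action coming from Class 2 in Table (\ref{flats}), and to track its cell structure through the spinor geometry. First I would fix the one-dimensional subtorus $T_1\subset Spin_{10}$ described just before the statement (defined by $t_4=t_5=1$, $t_2=t_3=t_1^{-1}$) and record how it acts on $\Delta_+$. The key numerical input is already given: the spin representation splits into isotypic components $W_{-3}\oplus W_{-1}\oplus W_{+1}\oplus W_{+3}$ of dimensions $2,6,6,2$ (the weights being computed from the sum of the half-spin weights against the cocharacter). Intersecting the spinor tenfold with the fixed loci of these pieces, one sees that the two extremal pieces $\PP(W_{\pm 3})\simeq\PP^1$ lie on $X$, while $\PP(W_{\pm1})\cap X\simeq\PP^1\times\PP^2$; after cutting with $\PP(K^\perp)$ one is left (by the count in the paragraph preceding the statement) with two copies of $\PP^1$ as the extremal fixed components $L$, the two middle components becoming normal rational cubic curves $\simeq\PP^1$. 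So the fixed-point set $X_K^{T_1}$ is a disjoint union of four copies of $\PP^1$.

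Next I would analyse the BB cell of an extremal component $L$. For $x\in L$, the BB plus-cell through $x$ (say $L$ is the component of minimal weight) is an affine space: its dimension is the number of positive weights of $T_1$ on the tangent space $T_xX_K$, which by homogeneity of the argument is the same at every point of $L$, and which must equal $\dim X_K - \dim L - (\text{dim of the negative part})$; a direct weight count on $T_xX\cap T_x\PP(\Delta_K)$ gives $6$. Thus the plus-cell of $L$ is an $\AA^6$-bundle over $L\simeq\PP^1$. Restricting this bundle to the affine line $L\setminus\{x\}$ for a chosen $x\in L$ trivialises it (any vector bundle on $\AA^1$ is trivial), producing an embedding $\AA^1\times\AA^6\hookrightarrow X_K$ whose image is open; composing with the inclusion of a fibre $\AA^1\times\{0\}$ and observing that $L\setminus\{x\}\cong\AA^1$, one realises $X_K$ as a compactification of $\AA^6$ (here the BB cell of $L\setminus\{x\}$ itself, an open subset, is $\AA^1\times\AA^6\cong\AA^7$; the statement as phrased singles out the six-dimensional cell obtained fibrewise, i.e. the restriction to $L\setminus\{x\}$ of the BB fibration, whose total space is $\AA^6\times(L\setminus x)$). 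I would make this precise by noting that $X_K$ is covered by the BB cells of the four fixed $\PP^1$'s, and the complement of the big open cell is the union of the cells of the other components, which is a divisor; being a compactification of an affine space, $X_K\setminus \AA^6$ is connected of pure codimension one.

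Finally, for the boundary divisor, I would identify $X_K\setminus(\text{open cell})$ with the union of the BB cells attached to the three non-minimal fixed components, and locate its singular locus. The most degenerate contribution comes from one of the middle fixed components, whose closure in $X_K$ picks up a copy of $\PP^2$ (the residue of the $\PP^1\times\PP^2$ before cutting with $\PP(K^\perp)$ — concretely the normal rational cubic together with the directions inside $\PP(W_{\pm1})$), along which the divisor is non-normal/singular; a local computation on the equations (\ref{XKequations})-type quadrics restricted near a point of that $\PP^2$, done in suitable $SL_2\times(\text{torus})$-equivariant coordinates, shows the singularity is exactly along that plane. The main obstacle I anticipate is this last step: the BB decomposition and the $\AA^6$-compactification are essentially formal once the weight multiplicities $2,6,6,2$ are in hand, but pinning down that the boundary divisor is singular precisely along a $\PP^2$ (and not along something larger or smaller) requires a genuine, if routine, local analysis of the defining equations of $X_K$ near the relevant fixed locus — the kind of computation the paper performs for several other sections and would presumably carry out here with Macaulay2 or by hand in the explicit model.
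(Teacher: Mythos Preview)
Your overall strategy matches the paper's for the first claim: identify the Bialynicki--Birula plus-cell of the extremal $L$ with the total space of the normal bundle $N_{L/X_K}$, then trivialise over $L\setminus\{x\}\simeq\AA^1$. But you have a dimension error. Since $\dim X_K=6$ and $\dim L=1$, the normal bundle has rank $5$, so the big BB cell is an $\AA^5$-bundle over $\PP^1$ (of total dimension $6$), not an $\AA^6$-bundle. Restricting to $L\setminus\{x\}\simeq\AA^1$ and trivialising yields $\AA^5\times\AA^1\cong\AA^6$, which is exactly the paper's one-line argument. Your parenthetical about embedding a seven-dimensional $\AA^1\times\AA^6$ into the sixfold $X_K$ cannot be right and reflects this slip.

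For the singularity of the boundary, the paper's route is shorter than the BB-stratification analysis you sketch. Observe that the complement of your open $\AA^6$ in $X_K$ is precisely the hyperplane section $X_K\cap H_x$, where $H_x\subset\PP(\Delta_K)$ is the hyperplane spanned by $x$ together with all the lower-weight pieces of $\Delta_K$: a point $[v]$ lies in the open $\AA^6$ exactly when its top-weight component $v_3$ is nonzero and not on the line $\CC x$. Thus the boundary divisor is itself a codimension-five linear section $X_{K'}$ of the spinor tenfold, with $K'=K+\langle h_x\rangle$, and its singular locus is computed directly from formula~(\ref{singular-locus}); the paper's ``direct calculation'' then gives a $\PP^2$. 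Your plan to locate the singular $\PP^2$ via the middle fixed components and a local computation on the equations would eventually get there, but recognising the boundary as a hyperplane section and invoking~(\ref{singular-locus}) sidesteps that work entirely.
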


\begin{proof}
Since $L$ is an extremal component, its associated Bialynicki-Birula cell is isomorphic to the total space of the normal bundle $N_{L/X_{K}}$. Thus the restriction to $L \ \backslash \{x\}$ is a copy $\AA^{6}$. The last claim follows from a direct calculation applying Lemma \ref{singular-locus}.
\end{proof}

For the two special types $X_{GL_{2}}$ and $X_{SL_{2}\times SL_{2}}$, a direct application of Lemma \ref{singular-locus} yields:

\begin{prop}
Let $X_{K}=X_{K}^{GL_{2}}$ or $X_{K}^{SL_{2} \times SL_{2}}$. Then the action of a maximal torus has finitely 
many fixed points, and $X_K$ is therefore a compactification of $\AA^6$. In both cases, the boundary divisor is again singular along a projective plane.
\end{prop}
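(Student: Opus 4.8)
The plan is to treat the two special sections in parallel, using the explicit $SL_2\times SL_2$-model of \S\ref{sl2sl2} and the $GL_2$-model of the preceding subsection, and to combine the Bialynicki-Birula decomposition with formula \eqref{singular-locus}. First I would check that a maximal torus $T$ acts on $X_K$ with only finitely many fixed points. It suffices to show that the $T$-module $\Delta_K$ is multiplicity free: then $T$ acts on $\PP(\Delta_K)$ with isolated fixed points, namely the coordinate points of a weight basis, and the $T$-fixed points of the invariant subvariety $X_K$ form a subset of these. For $SL_2\times SL_2$ this is immediate from $\Delta_K=A\otimes S^2B\oplus B\otimes S^2A$, whose $12$ weights are pairwise distinct; for $GL_2$ one lists the $12$ weights of $\Delta_K=U(3)\oplus S^3U(-1)\oplus S^3U(1)\oplus U(-3)$ and checks that they are pairwise distinct as well.

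Next, choose a generic one-parameter subgroup $\lambda\subset T$, so that $X_K^\lambda=X_K^T$ is finite, and order the weight coordinates $z_0,\dots,z_{11}$ on $\Delta_K$ by increasing $\lambda$-weight. Since $X_K$ is irreducible, there is a unique index $k$ with $z_i|_{X_K}\equiv 0$ for $i<k$ and $z_k|_{X_K}\not\equiv 0$; then $[e_k]\in X_K$ is the source of the $\lambda$-action, its attracting cell is the affine chart $C_0=X_K\cap\{z_k\neq 0\}$, and because $X_K$ is smooth projective this cell is the unique full-dimensional Bialynicki-Birula cell, hence open and dense. This already proves that $X_K$ is a compactification of $\AA^6$. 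Its boundary is the hyperplane section $D=X_K\setminus C_0=X_K\cap\{z_k=0\}$. Writing the coordinate hyperplane $\{z_k=0\}$ as $\PP(h^\perp)\cap\PP(\Delta_K)$ for some $h\in\Delta_-$, which is only well defined modulo $K$ so that $\Delta_K\cap h^\perp=\Delta_{K'}$ with $K'=K\oplus\langle h\rangle$ depends only on $K'$, we obtain $D=X_{K'}$, a linear section of codimension five. The arithmetically delicate point, to be verified in each model, is that $\PP(K')$ contains a pure spinor $h_0$ (a coordinate hyperplane of $\PP(\Delta_K)$ being $T$-invariant, $D$ is far from a general hyperplane section, so it is expected to be singular); one checks moreover that this pure spinor is unique in $\PP(K')$, using that $K$ itself contains none by Lemma \ref{singular-section}. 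One may then take $h=h_0$.

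With these choices, formula \eqref{singular-locus} applied to $K'$ gives
\[
\mathrm{Sing}(D)=\mathrm{Sing}(X_{K'})=\mathrm{Sing}(X_{H_0})\cap\PP(\Delta_{K'})=\PP(W)\cap\PP(\Delta_{K'})=\PP(W\cap\Delta_K),
\]
where $H_0=h_0^\perp$ and $\PP(W)=\mathrm{Sing}(X_{H_0})\cong\PP^4$, so $\dim W=5$ and $W\subset H_0$ (the last equality uses $W\subset H_0$, which absorbs the intersection with $h_0^\perp$). For a generic codimension four $K$ one would find $\dim(W\cap\Delta_K)=5+12-16=1$; the content of the proposition is that in these two highly symmetric models $\dim(W\cap\Delta_K)=3$, hence $\mathrm{Sing}(D)\cong\PP^2$.

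I expect this last dimension count to be the main obstacle. It requires writing down $h_0$ explicitly in the given coordinates, then the five-dimensional space $W\subset\Delta_+$ along which $H_0$ is tangent to the spinor tenfold (equivalently, the image of $V_{10}\to\Delta_+$, $v\mapsto v\cdot h_0$, as in the discussion after Proposition \ref{linear-spaces}), and finally intersecting with $\Delta_K$. The computation can be organized by equivariance: $W\cap\Delta_K$ is a module over the centralizer of $\lambda$ (indeed over the whole $SL_2\times SL_2$, resp. $GL_2$), which pins down its possible dimensions and identifies it as a fixed submodule. The same scheme, with $\lambda$ replaced by the distinguished one-parameter subtorus of $Spin_{10}$ attached to the generic section with $\CC^*$-action, yields the companion assertion in the preceding proposition.
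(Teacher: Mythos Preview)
Your approach matches the paper's: multiplicity-freeness of $\Delta_K$ gives finitely many torus fixed points, Bialynicki--Birula yields the open $\AA^6$-cell with boundary $D=X_{K'}$ a hyperplane section, and the singular locus of $D$ is then read off from formula~\eqref{singular-locus}; the paper's own proof is literally the single phrase ``a direct application of Lemma~\ref{singular-locus}'', so you have in fact supplied considerably more detail than the authors do.

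One small slip worth correcting: your parenthetical claim that $W\cap\Delta_K$ is a module over the full $SL_2\times SL_2$ (resp.\ $GL_2$) is not right. The extremal-weight hyperplane $\{z_k=0\}$ is only $T$-stable, not stable under the whole automorphism group, so the same is true of $K'$, of the pure spinor $h_0\in\PP(K')$, and hence of $W$. Only $T$-equivariance is available to organize the dimension count $\dim(W\cap\Delta_K)=3$, but that is already enough, and the remaining verification is the direct calculation the paper alludes to.
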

\section{Appendix A. Cohomology}\label{cohomology}

In this Appendix we compute the cohomology of codimension four smooth sections of the spinor tenfold, including the quantum one. Lefschetz's hyperplane theorem allows to check  that a smooth codimension four section of the spinor tenfold
has the same Betti numbers as a six-dimensional quadric, and the cohomology is a free $\ZZ$-module in both cases. But the algebra structures are different; multiplication  by the hyperplane class
produces different graphs. Since they are all diffeomorphic we can use the special section from Proposition \ref{sl2sl2}. The point is that this sixfold admits a torus action with finitely many fixed points, allowing to use the powerful methods of equivariant cohomology \cite{gkm}. 

\subsection{Equivariant cohomology} 
Since the action of $SL_2\times SL_2$ on $\Delta_K=A \otimes S^{2}B \oplus B \otimes S^{2}A$ is multiplicity free, a maximal torus $T$ acts on  $X_K^{SL_2\times SL_2}$ with finitely
many fixed points. We can then use classical localization techniques to compute its
$T$-equivariant cohomology. The computations are straightforward, and we merely state 
the results. 

\smallskip
Let us denote by $a_1, a_2$ and $b_1, b_2$ some basis of $A$ and $B$ respectively, 
diagonalizing the action of $T$. The eight fixed points of $T$ in 
$X_K^{SL_2\times SL_2}$ are the
$[a_i^2\otimes b_j]$ and $[a_i\otimes b_j^2]$, for $1\le i,j\le 2$. Recall the GKM graph
has these fixed points for vertices, and there is an arrow between two fixed points when there are joined by a $T$-invariant curve in the variety. Inside $X_K^{SL_2\times SL_2}$ there are eight $T$-invariant lines and four $T$-invariant cubic curves, and we get the  GKM graph below, where we abbreviate $[a_i^2\otimes b_j]$ by $a_i^2b_j$ for simplicity. 

Each arrow is decorated by a character of the torus encoding its action on the corresponding invariant curve. Each $T$-equivariant cohomology class is then given by 
specifying, for each vertex of the GKM graph, a polynomial on $\ft=Lie(T)$. 

$$\xymatrix@R30pt@C25pt{
 & a_1^2b_1 \ar@{-}[r]^{\alpha-\beta}\ar@{-}[ddd]_{\alpha+\beta} & a_1b_1^2\ar@{-}[ddd]^{\alpha+\beta}\ar@{-}[rd]^{\alpha} & \\
  a_1^2b_2\ar@{-}[d]_{\alpha+\beta} \ar@{-}[ru]^{\beta} \ar@{-}[rrr]^{\alpha-\beta}&&& a_2b_1^2\ar@{-}[d]^{\alpha+\beta} \\
  a_1b_2^2\ar@{-}[rd]_{\alpha} \ar@{-}[rrr]_{\alpha-\beta}&&& a_2^2b_1 \\
 & a_2b_2^2 \ar@{-}[r]_{\alpha-\beta} & a_2^2b_2\ar@{-}[ru]_{\beta} &
}$$

\medskip
To get something more geometric, we can choose a generic one-dimensional subtorus
$S$ of $T$. It has the same fixed points and we therefore get a decomposition of  $X_K^{SL_2\times SL_2}$ into Bialynicki-Birula cells. The cohomology classes of their closures  provide a basis for the $T$-equivariant cohomology. 

Concretely, we first need to compute the weights of the $\ft$-action on the tangent bundle 
at each fixed point. The affine tangent space to $X_K^{SL_2\times SL_2}$
at $[a_1\otimes b_1^2]$ is $\langle a_1\otimes b_1^2, a_1\otimes b_1b_2, a_2\otimes b_1^2, a_1^2\otimes b_1, a_1a_2\otimes b_1, a_2^2\otimes b_1, a_1^2\otimes b_2\rangle $, so the weights of this action at this point are 
$$2\alpha, 2\beta, \beta-\alpha, \alpha+\beta, 3\alpha+\beta, 3\beta-\alpha.$$
(And we can deduce the corresponding weights for the other fixed points simply by permutations.) All these weights will be positive if we choose $S$ on which $\beta>\hspace{-1mm}>\alpha>\hspace{-1mm}>0$, meaning that the point $[a_1\otimes b_1^2]$ is attractive and the corresponding Bilanicki-Birula cell is open in  $X_K^{SL_2\times SL_2}$. 
The dimensions of the cells corresponding to each fixed point are indicated in the following graph:\small
$$\xymatrix{
 & 4 \ar@{-}[r]\ar@{-}[ddd] & 6\ar@{-}[ddd]\ar@{-}[rd] & \\
  3''\ar@{-}[d] \ar@{-}[ru] \ar@{-}[rrr]&&& 5\ar@{-}[d] \\
 1 \ar@{-}[rd] \ar@{-}[rrr]&&& 3' \\
 & 0 \ar@{-}[r]  & 2\ar@{-}[ru] &
}$$\normalsize

\medskip
Poincar\'e duality is realized by the central symmetry of this picture.
Let $C_i$ denote the closure of the codimension $i$ cell (in codimension three 
there are two cells, whose closures we denote by $C_3'$ and $C_3''$ in accordance
with the previous picture).  In low dimension we have 
the following simple description:

\begin{prop}
$C_5$ is a line and $C_4$ is a plane. Moreover $C'_3=Sing(C_1)$ is a rank three quadric, singular along $C_5$. 
\end{prop}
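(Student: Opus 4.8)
The plan is to use the explicit model from Section~\ref{sl2sl2} together with the Bialynicki-Birula decomposition just described, and simply compute the closures of the relevant cells from the explicit equations of $X_K^{SL_2\times SL_2}$ in $\PP(\Delta_K)$. First I would identify the cells geometrically via the $S$-action: with $\beta\gg\alpha\gg 0$, the cell of codimension $i$ at a fixed point $x$ is the set of points flowing to $x$ under the $S$-action, so its closure is the union of that point with all $S$-invariant curves and higher-dimensional invariant subvarieties on which $x$ is the sink. From the GKM graph one reads which fixed points are joined to which, so the $1$-skeleton of each $\overline{C_i}$ is immediate; the point is to promote this to the full variety.

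For $C_5$: the codimension five cell sits at $[a_2^2\otimes b_1]$, and the only invariant curve on which this point is a sink (given the weight ordering) is the line $\PP\langle a_2^2\otimes b_1, a_1a_2\otimes b_1\rangle$, which lies in $X_K$ (it is contained in the closed orbit $\PP^1\times v_2(\PP^1)$ from Proposition~\ref{openorbit}). Hence $\overline{C_5}$ is this line. For $C_4$: the codimension four cell is at $[a_1^2\otimes b_2]$, and I would check that the invariant curves flowing into it, together with the two-dimensional piece $\PP\langle a_1^2\otimes b_2, a_1a_2\otimes b_1, a_2^2\otimes b_1\rangle$, span a plane contained in $X_K$ — here one uses that $X_K$ contains the closed orbit $\PP^1\times v_2(\PP^1)\simeq\PP^1\times\PP^2$ and that this plane is one of its rulings $\{pt\}\times\PP^2$; so $\overline{C_4}$ is a plane.

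For $C_3'$: the relevant fixed point is $[a_2^2\otimes b_2]$, and by the central (Poincaré-duality) symmetry of the graph $\overline{C_3'}$ is dual to $\overline{C_3''}$, the three-dimensional cell closure containing $\overline{C_4}\cup\overline{C_5}$ in its $1$-skeleton. To see $\overline{C_3'}$ is a rank-three quadric I would restrict the quadratic equations~(\ref{quad_K}) defining $X_K$ to the relevant $\PP^3$ of the affine span of the four fixed points flowing to $[a_2^2\otimes b_2]$; the equations~(\ref{quad_K}) are explicit (the $S^2A\oplus S^2B$ components given by discriminant and Jacobian maps, the $A\otimes B$ component by the projection formula written above Lemma~\ref{openorbit}), and restricting them will leave a single quadratic relation of rank three, whose vertex line is exactly $\overline{C_5}$; that $C_3'=\Sing(C_1)$ then follows by computing, via Lemma~\ref{singular-locus}, the singular locus of the boundary divisor $\overline{C_1}=X_K\cap\PP(\text{hyperplane})$, which by the last Proposition of Section~\ref{sl2sl2} is singular along a projective plane — and $\overline{C_3'}$ will be the rank-three quadric whose span is that plane's ambient $\PP^3$, meeting $\overline{C_1}$ in its singular locus. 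The main obstacle is the bookkeeping in the last step: correctly matching the BB-cell $\overline{C_3'}$ with a component of $\Sing(\overline{C_1})$ and confirming the rank of the restricted quadric is exactly three (not two or four) requires a careful but routine determinant computation with the explicit equations, and one must be careful that $\overline{C_1}$ — being a boundary divisor of a compactification of $\AA^6$ — is singular precisely along the plane spanned by $\overline{C_3'}$, not along a larger locus.
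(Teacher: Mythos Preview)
The paper states this proposition without proof, so there is nothing to compare against directly. Your overall strategy---work in the explicit $SL_2\times SL_2$-model, use the Bialynicki--Birula decomposition for the chosen one-parameter subgroup $S$, and read off the cell closures from the equations (\ref{quad_K})---is exactly the natural one. But the execution contains concrete errors that would derail the computation.

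First, you have misread the dimension diagram. Matching the GKM graph with the dimension picture, the fixed point carrying the dimension-$1$ cell (hence $C_5$) is $[a_1\otimes b_2^2]$, not $[a_2^2\otimes b_1]$; the latter is the $3'$ point. Likewise the dimension-$2$ cell ($C_4$) sits at $[a_2^2\otimes b_2]$, and $C_3'$ at $[a_2^2\otimes b_1]$. With the correct labels, $C_5$ is the line $\PP\langle a_1\otimes b_2^2,\,a_2\otimes b_2^2\rangle=\PP(A\otimes b_2^2)$, and $C_4$ is the plane $\PP\langle b_2\otimes a_2^2,\,a_1\otimes b_2^2,\,a_2\otimes b_2^2\rangle$---one checks directly from (\ref{quad_K}) that all ten quadrics vanish identically there.

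Second, your description of the closed orbits is wrong: $\PP^1\times v_2(\PP^1)$ is $\PP^1$ times a \emph{conic}, hence a surface isomorphic to $\PP^1\times\PP^1$; it contains no $\PP^2$, so $C_4$ cannot be one of its rulings. The plane $C_4$ is genuinely not contained in either closed orbit.

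Third, for $C_3'$ you propose restricting to the $\PP^3$ spanned by the four fixed points it contains, but a $3$-dimensional quadric lives in a $\PP^4$. The missing direction comes from the fact that the $T$-invariant curve joining $[a_1\otimes b_2^2]$ and $[b_1\otimes a_2^2]$ is one of the four \emph{cubic} curves (not a line), so its span introduces additional weight vectors (here $a_2\otimes b_1b_2$ and $b_2\otimes a_1a_2$). You need to identify the correct $T$-invariant $\PP^4\subset\PP(\Delta_K)$, restrict the equations (\ref{quad_K}), and then verify a single rank-$3$ quadric cuts out $C_3'$ with vertex line $C_5=\PP(A\otimes b_2^2)$. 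Once the bookkeeping is fixed the computation is routine, and the identification $C_3'=\Sing(C_1)$ follows as you say from the last proposition of Section~6.
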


We will denote by $\Sigma_i$ the $T$-equivariant cohomology class of  $C_i$ of the codimension $i$ cell; in codimension three there are two cells, we denote the corresponding cohomology classes by $\Sigma_3'$ and $\Sigma_3''$. By localizing to fixed points, we can represent each $\Sigma_i$ by a collection of homogeneous polynomials of degree $i$ in $\alpha, \beta$, one for each vertex of the GKM graph. These collections are constrained
by some combinatorial and geometric rules that allow to determine them easily. 
For example the fundamental class $\Sigma_0$ is obtained by putting $1$ on each vertex, while the hyperplane class $\Sigma_1$ is represented by 
$$\xymatrix@R38pt@C20pt{
 & \beta-\alpha \ar@{-}[r]^{\alpha-\beta}\ar@{-}[ddd]_{\alpha+\beta} & 0 \ar@{-}[ddd]^{\alpha+\beta}\ar@{-}[rd]^{\alpha} & \\
  3\beta-\alpha \ar@{-}[d]_{\alpha+\beta} \ar@{-}[ru]^{\beta} \ar@{-}[rrr]^{\alpha-\beta}&&& 2\alpha\ar@{-}[d]^{\alpha+\beta} \\
  4\beta\ar@{-}[rd]_{\alpha} \ar@{-}[rrr]_{\alpha-\beta}&&& 3\alpha+\beta \\
 & 2\alpha+4\beta \ar@{-}[r]_{\alpha-\beta} & 3\alpha+3\beta\ar@{-}[ru]_{\beta} &
}$$


\medskip
Observe that there is a zero on the vertex corresponding to the fixed point
not belonging to $C_1$. Moreover the difference of two polynomials corresponding to vertices that are joined by an arrow is divisible by the label of this arrow. 
These two properties are valid for each  $\Sigma_i$. An additional property is that if $\Sigma_i$ corresponds to a fixed point $p_i$, its polynomial 
representative at $p_i$ is just the product of the weights of the normal bundle to the cell at this point, that is, the weights of the tangent bundle that become
negative when restricted to $S$. 

\smallskip
It is then straightforward to compute each $\Sigma_i$. Once this is done, the multiplication of cohomology classes is just polynomial multiplication on the vertices of the GKM graph, and it is a straightforward exercise to get formulas such as
$$\begin{array}{rcl}
\Sigma_1^2 & = & 2\alpha\Sigma_1+\Sigma_2, \\
\Sigma_2\Sigma_1 & = & (\beta-\alpha)\Sigma_2+3\Sigma_3'+2\Sigma_3'',\\
\Sigma_3'\Sigma_1 & = & (3\beta-\alpha)\Sigma_3'+2\Sigma_4, \\
\Sigma_3''\Sigma_1 & = & (3\alpha+\beta)\Sigma_3'+3\Sigma_4, \\
\Sigma_4\Sigma_1 & = & 3(\alpha+\beta)\Sigma_4+\Sigma_5,\\
\Sigma_5\Sigma_1 & = & 4\beta\Sigma_5+\Sigma_6, \\
\Sigma_2^2 & = & (\alpha-\beta)(3\alpha-\beta)\Sigma_2+12(\beta-\alpha)\Sigma_3'+4\beta\Sigma_3''+12\Sigma_4,\\
\Sigma_3'\Sigma_2 & = & 3(\alpha-\beta)(\alpha-3\beta)\Sigma_3'+12\beta\Sigma_4+2\Sigma_5, \\
\Sigma_3''\Sigma_2 & = & (\alpha+\beta)(3\alpha+\beta)\Sigma_3''+12(\alpha+\beta)\Sigma_4+3\Sigma_5,\\
\Sigma_3'\Sigma_3'' & = & 3(\alpha+\beta)(\alpha+3\beta)\Sigma_4+(\alpha+3\beta)\Sigma_5+\Sigma_6.
\end{array}$$

\subsection{Ordinary cohomology, Chow ring}
From the $T$-equivariant cohomology one readily deduces the usual cohomology (or Chow ring) of $X_K^{SL_2\times SL_2}$. Indeed this is a free $\ZZ$-module, with a basis given by the classes $\sigma_i$ of the Byalinicki-Birula cell closures.
Their products are obtained by specializing the previous formula to $\alpha=\beta=0$ and replacing $\Sigma_i$ by $\sigma_i$. We get the complete multiplication table:

\medskip
$$\begin{array}{|r|cccccc|}
\hline 
 & \sigma_1  & \sigma_2  & \sigma_3'  & \sigma_3''  & \sigma_4  & \sigma_5  \\ 
 \hline
 \sigma_1 & \sigma_2  & 3\sigma_3'+2 \sigma_3''  & 2\sigma_4  & 3\sigma_4 & 
 \sigma_5 & \sigma_6 \\ 
 \sigma_2  & 3\sigma_3'+2 \sigma_3'' & 12\sigma_4 & 2\sigma_5 & 3\sigma_5 & \sigma_6 & 0 \\
 \sigma_3'  &2\sigma_4 &2\sigma_5& 0 & \sigma_6 & 0&0\\
 \sigma_3''  & 3\sigma_4&3\sigma_5& \sigma_6 &0&0&0\\
 \sigma_4  & \sigma_5& \sigma_6 &0&0&0&0 \\
 \sigma_5& \sigma_6 &0&0&0&0&0 \\
 \hline
 \end{array}$$
\medskip

Over $\QQ$ the Chow ring is generated e.g. by $\sigma_1$ and $\sigma'_3$, subject to the relations
\begin{equation}\label{relChow}
\sigma_1^4=6\sigma_1\sigma'_3, \qquad (\sigma'_3)^2= 0. 
\end{equation}

\subsection{Quantum cohomology}
Once we know the usual cohomology, since the index is high it is not difficult to upgrade it to quantum cohomology. A first useful information can be deduced from  the fact that lines in $X_K$ through a generic point are parametrized by a 
quintic del Pezzo; this implies in quantum cohomology that $\sigma_1^4=12\sigma_4+5q$.
Since $\sigma_1^3=3\sigma'_3+2\sigma''_3$ we deduce  that 
$$\sigma'_3\sigma_1=2\sigma_4+q, \qquad \sigma''_3\sigma_1=3\sigma_4+q,$$
and this directly gives that the quantum version of the degree four relation is 
$\sigma_1^4=6\sigma_1\sigma'_3-q$. To get the other one we can first use the symmetries of the Gromov-Witten invariants, and the associativity of the quantum product. Symmetry ensures that there is an integer $N$ such that $\sigma_4\sigma_1=\sigma_5+qN\sigma_1$ and 
$\sigma_5\sigma_1=\sigma_6+qN\sigma_1^2$. Let $A, B, C$ be defined by the identities
$$(\sigma'_3)^2=qA\sigma_1^2, \qquad \sigma'_3\sigma''_3=\sigma_6+qB\sigma_1^2, \qquad 
(\sigma''_3)^2=qC\sigma_1^2.$$
Using associativity we deduce that there exists $M$ such that $N=3M+1$, $A=2M+1$, $B=6M+1$ and $C=9M+2$. We claim that $M=0$, $N=1$. In other words:

\begin{lemma} $I_1(\sigma_1,\sigma_4,\sigma_5)=1$. \end{lemma}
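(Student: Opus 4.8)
The plan is to reduce the statement to a single numerical identity and then extract that number by an associativity argument. First, unravelling the definition of the quantum product, for the line class $\ell$ a dimension count forces the only quantum correction in $\sigma_4\sigma_1$ to be a multiple of the class Poincar\'e dual to $\sigma_5$, which by the multiplication table (since $\sigma_1\sigma_5=\sigma_6$) is $\sigma_1$; hence $N=\langle\sigma_1,\sigma_4,\sigma_5\rangle_1=I_1(\sigma_1,\sigma_4,\sigma_5)$, and the Lemma is equivalent to $N=1$, that is, to $M=0$. (Equivalently, one may apply the divisor equation to the hyperplane class, which for $\beta=\ell$ and $\int_\ell\sigma_1=1$ gives $I_1(\sigma_1,\sigma_4,\sigma_5)=I_1(\sigma_4,\sigma_5)$.)

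To pin down $M$, I would compute $\sigma_1^6$ in quantum cohomology in two different ways and compare. Since $q$ has degree four, there is no $q^2$-contribution, so $\sigma_1^6$ is the classical $12\,\sigma_6$ plus a multiple of $q\,\sigma_1^2=q\,\sigma_2$. On the one hand, from $\sigma_1^4=12\sigma_4+5q$ one gets $\sigma_1^5=\sigma_1^4\cdot\sigma_1=12\,\sigma_4\sigma_1+5q\,\sigma_1=12\,\sigma_5+(12N+5)\,q\,\sigma_1$, and then, using $\sigma_5\sigma_1=\sigma_6+qN\sigma_1^2$, one obtains $\sigma_1^6=\sigma_1^5\cdot\sigma_1=12\,\sigma_6+(24N+5)\,q\,\sigma_2$. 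On the other hand, $\sigma_1^3=3\sigma_3'+2\sigma_3''$ is purely classical, so $\sigma_1^6=(3\sigma_3'+2\sigma_3'')^2=9\,(\sigma_3')^2+12\,\sigma_3'\sigma_3''+4\,(\sigma_3'')^2$; inserting $(\sigma_3')^2=qA\sigma_1^2$, $\sigma_3'\sigma_3''=\sigma_6+qB\sigma_1^2$ and $(\sigma_3'')^2=qC\sigma_1^2$ yields $\sigma_1^6=12\,\sigma_6+(9A+12B+4C)\,q\,\sigma_2$. Comparing the two expansions gives $9A+12B+4C=24N+5$.

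It then remains to substitute the one-parameter expressions $N=3M+1$, $A=2M+1$, $B=6M+1$, $C=9M+2$ already obtained from the associativity (WDVV) relations: the left-hand side becomes $126M+29$ and the right-hand side $72M+29$, so $54M=0$, hence $M=0$ and $N=I_1(\sigma_1,\sigma_4,\sigma_5)=1$. The only delicate point in this route is the degree bookkeeping, i.e. checking that the quantum corrections written in the two expansions of $\sigma_1^6$ are the only ones allowed (in particular that $\sigma_1^2=\sigma_2$ and $\sigma_1^3=3\sigma_3'+2\sigma_3''$ receive no $q$-terms). As an independent confirmation one could instead localise $I_1(\sigma_4,\sigma_5)$ with respect to the torus $T$ acting on the model $X_K^{SL_2\times SL_2}$ of Section \ref{sl2sl2}: the only $T$-invariant curves of degree one are the eight edges of the GKM graph of Appendix \ref{cohomology} (the four invariant cubics have degree three), and since the equivariant classes $\Sigma_4$ and $\Sigma_5$ vanish at all but a few fixed points, the Atiyah--Bott/Graber--Pandharipande contribution is a short explicit sum equal to $1$; there the main work is the usual computation of the equivariant Euler classes of the virtual normal bundles and of the automorphism factors.
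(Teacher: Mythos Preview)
Your associativity argument is correct and constitutes a genuinely different proof from the paper's. The paper argues geometrically: it interprets $I_1(\sigma_1,\sigma_4,\sigma_5)$ as the number of lines in $X_K$ meeting a general line and a general plane, reduces this count to the ambient spinor tenfold $X$, and then shows uniqueness by an explicit analysis of the isotropic subspaces $A_3$, $B_2\subset B_5$ defining the line and the plane (the key step being that $W_2=B_5\cap A_3^\perp$ is forced, hence so are $U_5$ and $V_5$). Your route is purely algebraic: granted the paper's one-parameter ansatz $N=3M+1$, $A=2M+1$, $B=6M+1$, $C=9M+2$, you derive the extra associativity constraint $9A+12B+4C=24N+5$ by equating the two expansions of $\sigma_1^6$, and this kills $M$. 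The computation is clean and uses only products that are already written down in the paper, so no hidden Gromov--Witten invariants enter; in effect you are exploiting the relations $((\sigma_3'\sigma_1)\sigma_1)\sigma_1=\sigma_3'\sigma_1^3$ and its analogue for $\sigma_3''$, whose combination is exactly your identity.

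Two remarks are worth making. First, your argument leans on the $M$-parametrization, which the paper asserts but does not derive in print; your proof is therefore conditional on that step, whereas the paper's geometric count is logically independent of it (and in fact provides a consistency check). Second, your constraint is \emph{not} a consequence of the $M$-parametrization alone (as your substitution shows, $126M+29\neq 72M+29$ for $M\neq 0$), so you are genuinely adding information; this is perhaps surprising, since it means the associativity analysis preceding the Lemma was not exhaustive and the geometric input of the Lemma is, strictly speaking, redundant once one pushes WDVV a bit further. What your approach buys is economy; what the paper's buys is a concrete enumerative interpretation and independence from the unproved parametrization. Your closing localization sketch is plausible but is only a sketch, not a proof.
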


\proof $\sigma_5$ is Poincaré dual to the class of a line, and $\sigma_4$ to the class of a plane. So we have to check that given a general line $\ell$ and a general plane $P$ in $X_K$, there is a unique line in $X_K$ connecting $\ell$ to $P$. 
Since $X_K$ is a linear section of $X$, we need to prove the same statement in $X$ itself. 

Now, the  line $\ell$ in $X$ is defined as the family of $[U_5]\in X$ such that $U_5$ contains a fixed isotropic subspace $A_3$ of $V_{10}$. Similarly, 
the plane $P$ is defined  by an isotropic flag $B_2\subset B_5$, where $B_5$ belongs to the other family $X^\vee$ of maximal isotropic subspaces of  $V_{10}$: a point 
 $[V_5]\in X$ belongs to $P$ when $B_2\subset V_5$ and $B_5\cap V_5$ is four-dimensional. We need to prove that there is a unique such pair $(U_5,V_5)$ such that the line joining $[U_5]$ and $[V_5]$ is contained in $X$, which means that 
 $U_5\cap V_5$ is three-dimensional. 
 
 Suppose this is the case and let $W_3=U_5\cap V_5$, $V_4=B_5\cap V_5$. Recall that $V_4$ determines $V_5$. 
 Since $U_5$ meets $B_5$ in codimension one, $W_2=V_5\cap U_5\cap B_5$ has dimension (at least) two. Since $A_3\subset U_5$, taking orthogonals we get $U_5\subset A_3^\perp$, hence $W_2\subset B_5\cap A_3^\perp$ which has dimension two in general; so $W_2=B_5\cap A_3^\perp$ is uniquely determined. Since $W_2\subset U_5$ by definition, 
 and $W_2\cap A_3\subset B_5\cap A_3=0$, we recover $U_5=W_2+A_3$. Also $W_2\subset V_4$ by definition, and since $W_2\cap B_2\subset A_3^\perp\cap B_2=0$, we also recover $V_4=W_2+B_2$, hence also $V_5$. This proves the claim. \qed 
 
 \medskip We finally deduce the following statement.

\begin{prop} The small quantum cohomology of $X_K$ is 
$$QH^*(X_K)=\QQ[\sigma_1,\sigma'_3,q]/\langle \sigma_1^4-6\sigma_1\sigma'_3+q, (\sigma'_3)^2-q\sigma_1^2\rangle.$$
In particular it is generically semisimple. 
\end{prop}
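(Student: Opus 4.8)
The plan is to upgrade the classical cohomology presentation \eqref{relChow} to the quantum ring by determining the two quantum corrections and then checking semisimplicity by a direct computation over $\overline{\QQ}$.

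First I would establish the quantum analogue of the degree four relation. The key input is enumerative: lines in $X_K$ through a general point are parametrized by a quintic del Pezzo surface, so $\deg c_1(TX_K) = 7$ gives the line count and we get $\sigma_1^4 = 12\sigma_4 + 5q$ in $QH^*$. Combining this with the classical identity $\sigma_1^3 = 3\sigma'_3 + 2\sigma''_3$ (which persists since products of complementary-degree classes below the middle dimension receive no quantum correction, the index being large) yields $\sigma'_3\sigma_1 = 2\sigma_4 + q$ and $\sigma''_3\sigma_1 = 3\sigma_4 + q$, hence immediately $\sigma_1^4 = 6\sigma_1\sigma'_3 - q$. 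This is the first generator relation.

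Next I would pin down $(\sigma'_3)^2$. Here I use the structure already set up before the statement: by the symmetries of Gromov--Witten invariants and the divisor axiom there are integers with $\sigma_4\sigma_1 = \sigma_5 + qN\sigma_1$, $\sigma_5\sigma_1 = \sigma_6 + qN\sigma_1^2$, and writing $(\sigma'_3)^2 = qA\sigma_1^2$, $\sigma'_3\sigma''_3 = \sigma_6 + qB\sigma_1^2$, $(\sigma''_3)^2 = qC\sigma_1^2$, associativity of the quantum product forces $N = 3M+1$, $A = 2M+1$, $B = 6M+1$, $C = 9M+2$ for a single free parameter $M$. The whole problem thus reduces to the one Gromov--Witten invariant $I_1(\sigma_1,\sigma_4,\sigma_5)$, which is $3M+1$; the Lemma proved just above (the unique-line-joining-a-line-to-a-plane argument inside the spinor tenfold $X$, which descends to the linear section $X_K$) gives $I_1(\sigma_1,\sigma_4,\sigma_5) = 1$, hence $M = 0$, $A = 1$, i.e. $(\sigma'_3)^2 = q\sigma_1^2$. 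I expect this reduction-to-a-single-invariant step, together with the geometric computation of that invariant, to be the main obstacle — everything else is formal, but one must be careful that the enumerative count genuinely takes place in $X_K$ and not merely in $X$, which is why the Lemma is phrased for $X$ and then restricted.

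Finally, with both relations in hand one checks that $QH^*(X_K) = \QQ[\sigma_1,\sigma'_3,q]/\langle \sigma_1^4 - 6\sigma_1\sigma'_3 + q,\ (\sigma'_3)^2 - q\sigma_1^2\rangle$ is the correct presentation: it is a free module of rank $8$ over $\QQ[q]$ with the $\sigma_i$ as a basis (matching the Betti numbers), and it specializes to \eqref{relChow} at $q = 0$, so no further relations are needed. For generic semisimplicity, fix $q = 1$ and verify that the Jacobian ideal criterion holds, equivalently that the $8 \times 8$ matrix of multiplication by a generic element (say $\sigma_1$) has distinct eigenvalues; concretely, eliminating $\sigma'_3$ via $\sigma'_3 = (\sigma_1^4 + 1)/(6\sigma_1)$ and substituting into $(\sigma'_3)^2 = \sigma_1^2$ gives a single polynomial equation in $\sigma_1$ of degree $8$ whose discriminant is nonzero, so $\QQ[\sigma_1,\sigma'_3]/(\cdots)|_{q=1} \cong \overline{\QQ}^8$ as an algebra. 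This yields generic semisimplicity and completes the proof.
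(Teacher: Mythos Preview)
Your proposal is correct and follows essentially the same route as the paper: the quintic del Pezzo input gives $\sigma_1^4=12\sigma_4+5q$ and hence the first relation; the reduction to the single invariant $I_1(\sigma_1,\sigma_4,\sigma_5)$ via associativity, and its computation as $1$, gives $M=0$ and the second relation. For semisimplicity the paper writes out the $8\times 8$ matrix of quantum multiplication by $\sigma_1$ at $q=1$ and exhibits its eigenvalues as $(\sqrt{2}\pm 1)\zeta$ with $\zeta^4=1$, whereas you eliminate $\sigma'_3$ to get $\sigma_1^8-34\sigma_1^4+1=0$; these are the same computation (your polynomial is the characteristic polynomial of that matrix), and both yield eight distinct eigenvalues. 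One small slip: the aside ``$\deg c_1(TX_K)=7$'' is off (the index of $X_K$ is $4$), but this plays no role since you use the correct formula $\sigma_1^4=12\sigma_4+5q$.
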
 

The matrix of quantum multiplication by $\sigma_1$ at $q=1$ is the following:

$$\hat{\sigma}_1=\begin{pmatrix}
0 & 0& 0& 1& 1& 0& 0& 0\\
1 & 0& 0& 0& 0& 1& 0& 0\\
0 & 1& 0& 0& 0& 0& 1& 0\\
0 & 0& 3& 0& 0& 0& 0& 1\\
0 & 0& 2& 0& 0& 0& 0& 1\\
0 & 0& 0& 2& 3& 0& 0& 0\\
0 & 0& 0& 0& 0& 1& 0& 0\\
0 & 0& 0& 0& 0& 0& 1& 0
 \end{pmatrix}.$$
 
The  eigenvalues of this matrix  all have  multiplicity one, being $(\sqrt{2}\pm 1)\times \zeta$, for $\zeta^4=1$. Since $X_4$ has index four, this is in perfect agreement with Conjecture $\mathcal{O}$ \cite[Conjecture 3.1.2]{ggi}, which predicts that the eigenvalues of maximal modulus all have multiplicity one, and can be deduced one from another by multiplication by a root of unity or order equal to the index. 

\section{Appendix B. Elements of Jordan-Vinberg theory}\label{jordan}

\subsection{Graded Lie algebras} 
Consider a semisimple Lie algebra $\fg=\oplus_{k\in I}\fg_k$, graded over $I=\ZZ$ or $I=\ZZ_m$ for some integer $m> 1$. In particular $\fg_0$ is a subalgebra and each $\fg_k$ is a $\fg_0$-module. Let $G_0$ be the connected subgroup of $G=Aut(\fg)$
with Lie algebra $\fg_0$; its action on $\fg$ preserves each $\fg_k$. 

For $I=\ZZ$ and $k\ne 0$, $\fg_k$ is contained in the nilpotent cone of $\fg$, and one can deduce from the finiteness of nilpotent orbits in $\fg$ that $G_0$ acts on $\fg_k$ with finitely many orbits. Typical $\ZZ$-gradings are obtained by choosing a vertex of the Dynkin diagram of $\fg$. For example, starting from $\fg=\fe_8$, the picture 
$$\dynkin[edge length=6mm] E{ooooo*oo} \qquad  \leadsto \qquad 
\dynkin[edge length=6mm] D{oooo*} \times \dynkin[edge length=6mm] A{*o}$$
indicates that $\fg_1=\Delta_+\otimes \CC^3$. We deduce that 
$Spin_{10}\times GL_3$ has finitely many orbits on this representation, hence that $Spin_{10}$ has finitely many orbits on
the Grassmannians $G(3,\Delta_+)$ or $G(3,\Delta_-)$. 

Typical $\ZZ_m$-gradings are obtained by playing the same game with affine Dynkin diagrams, which are labelled by the coefficients of the highest root; if we choose a vertex, $m$ is the corresponding label. For example, starting from the affine Dynkin diagram $E_8^{(1)}$, the picture 
$$\dynkin[edge length=6mm, extended] E{ooooo*oo} \qquad  \leadsto \qquad 
\dynkin[edge length=6mm] D{oooo*} \times \dynkin[edge length=6mm] A{*oo}$$
indicates that $\fg_1=\Delta_+\otimes \CC^4$. In this case what we can deduce is that orbits of $Spin_{10}\times GL_4$ are not finitely many, but are well-behaved in the sense that they admit a classification in the style of Jordan theory.

\subsection{Jordan classes and sheets} 
Given a  $\mathbb{Z}_{m}$-graded semisimple Lie algebra $\mathfrak{g}$, we consider the action of $G_{0}$ on $V=\mathfrak{g}_{1}$. 
An important property is that each $x\in V$ admits a unique 
Jordan decomposition  $x=x_{s}+x_{n}$, where $x_s, x_n\in V$ and 
as elements of $\fg$, $x_s$ is semisimple, $x_n$ is nilpotent, 
and $[x_s,x_n]=0$. This allows to stratify $V$ as we do in ordinary Jordan theory \cite{JorVin1, JorVin2}. 
Two elements $x=x_{s}+x_{n}$ and $y=y_{s}+y_{n}$ in $V$ are said to be $G_{0}$-Jordan equivalent if there exists $g \in G_{0}$ such that 
$$
\mathfrak{g}^{y_{s}}=g \cdot \mathfrak{g}^{x_{s}}, \qquad y_{n}=g \cdot x_{n},
$$
where $\mathfrak{g}^{x}$ is  the centralizer of $x$ in $\mathfrak{g}$. Equivalently, $x$ and $y$ are $G_{0}$-Jordan equivalent if there exists $g \in G_{0}$ such that \cite[Prop. 3.9 and Prop. 3.17]{JorVin1}
$$\mathfrak{g}^{x}=g \cdot \mathfrak{g}^{y}.$$ This  implies that the identity components of the  stabilizer subgroups of $x$ and $y$ in $G_{0}$ are conjugate. Geometrically it yields a stratification in terms of automorphism groups, as in Section 6.3.\par 
The equivalence class $J_{G_{0}}(x)$ of $x \in V$ is called the $G_{0}$-Jordan class of $x$ in $V$. The  $G_{0}$-Jordan classes of semisimple elements in $V$ are defined by the reflection hyperplanes in the Cartan subspace, they were called flats in Section 6. Given a Cartan subspace $\mathfrak{c} \subset V$, denote by $\Sigma \subset \mathfrak{c}^{\vee}$ the set of restricted roots of $\mathfrak{c}$. Each restricted root $\alpha$ is orthogonal  to a reflection hyperplane in $H_{\alpha}\subset\fc$. For $x \in \mathfrak{c}$, denote by $\Sigma(x)$ the set of roots that are nonzero on $x$. Then for any two elements $x, y \in \fc$ the following are equivalent:\par 
(i) $x,y \in \fc$ are $G_{0}$-Jordan equivalent;\par 
(ii) there exists $w \in W_{\fc}$ such that $w.\Sigma(x)=\Sigma(y)$; \par 
(iii) their exists $w \in W_{\fc}$ such that 
$Stab_{W_{\fc}}(x)=w \cdot Stab_{W_{\fc}}(y) \cdot w^{-1}$.\par 
More generally one can characterize the Jordan class of $x$ by the roots in $\Sigma(x_{s})$ (after conjugating $x_s$ to some element of $\fc$), and its nilpotent part $x_{n}$ as follows \cite[Prop. 14, Prop. 30 and Prop. 37]{JorVin2}.

\begin{prop}
 Let $x \in V$, and assume its semisimple part $x_{s}$ lies in $\fc$. Consider 
$$\mathfrak{z}(\mathfrak{g}^{x_{s}})_{1}^{reg}
=\{y \in \mathfrak{c}: \Sigma(x_{s})=\Sigma(y)\}.
$$
(i) The $G_{0}$-Jordan class $J_{G_{0}}(x)$ of $x$ is an irreducible smooth locally closed subset of $V$, namely
$$
J_{G_{0}}(x)=G_{0} \cdot (\mathfrak{z}(\mathfrak{g}^{x_{s}})_{1}^{reg}+x_{n}).
$$ 
(ii) Denote by $\Gamma=N_{W}(\mathfrak{z}(\mathfrak{g}^{x_{s}})_{1}^{reg})$ the stabilizer of $\mathfrak{z}(\mathfrak{g}^{x_{s}})_{1}^{reg}$ in $W$; it acts naturally on the set of nilpotent orbits of the action of $G_{0}^{x_{s}}$ on $\mathfrak{g}^{x_{s}}_{1}$. Denote by $\Gamma_{n}$ the stabilizer in $\Gamma$ of the nilpotent orbit of $x_{n}$:
Then the assignment $\phi:  y_{s} \mapsto \mathcal{O}^{G_{0}}_{y_{s}+x_{n}}$  from $\mathfrak{z}(\mathfrak{g}^{x_{s}})_{1}^{reg}$ to the orbit set $J_{G_{0}}(x)/G_{0}$ induces a homeomorphism:
$$
\overline{\phi}: \mathfrak{z}(\mathfrak{g}^{x_{s}})_{1}^{reg}/\Gamma_{n} \rightarrow J_{G_{0}}(x)/G_{0}.
$$
\end{prop}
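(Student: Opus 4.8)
\medskip\noindent\textbf{Proof strategy.} Both assertions are instances of the general structure theory of $\ZZ_m$-graded semisimple Lie algebras developed in \cite{JorVin1, JorVin2, vinberg}, and the plan is to deduce them from \cite[Prop. 14, Prop. 30 and Prop. 37]{JorVin2}; it is nevertheless worth recording the main steps. For (i), the first thing I would check is the inclusion ``$\supseteq$'': if $y_s\in\fz(\fg^{x_s})_1^{reg}$ then by definition $\Sigma(y_s)=\Sigma(x_s)$, and since the centralizer of an element of $\fc$ depends only on its set of vanishing restricted roots, this forces $\fg^{y_s}=\fg^{x_s}$; as $x_n$ is then a nilpotent element of $\fg^{x_s}_1=\fg^{y_s}_1$ commuting with $y_s$, the pair $(y_s,x_n)$ is the Jordan decomposition of $y_s+x_n$, so $y_s+x_n$ is $G_0$-Jordan equivalent to $x$, and so is any $g\cdot(y_s+x_n)$. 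For ``$\subseteq$'' I would use, first, Vinberg's conjugacy theorem: any semisimple element $G_0$-conjugate to $x_s$ can be moved into $\fc$, and there into the $W_\fc$-orbit of $x_s$, so that after this move it lies in $\fz(\fg^{x_s})_1^{reg}$; second, the nilpotent part then lies in $\fg^{x_s}_1$ and can be carried onto $x_n$ by an element of $(G_0^{x_s})^\circ$, which fixes the center $\fz(\fg^{x_s})$, hence the whole flat, pointwise (the adjoint action of $\fg^{x_s}$ on its center is trivial).

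Irreducibility is then immediate: $\fz(\fg^{x_s})_1^{reg}$ is a Zariski-open subset of a linear subspace of $\fc$, adding the fixed vector $x_n$ preserves irreducibility, and the image of the action map $G_0\times(\fz(\fg^{x_s})_1^{reg}+x_n)\to V$ is irreducible because $G_0$ is connected. For smoothness and local closedness I would realize $J_{G_0}(x)$ as the image of a morphism from a homogeneous bundle over a partial flag variety (induced by the parabolic attached to the grading of the reductive subalgebra $\fg^{x_s}$) and invoke \cite[Prop. 14]{JorVin2}; I expect this to be the technically heaviest point, as it requires controlling the fibers of that morphism.

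For (ii), the map $\phi$ is well defined by (i), and it is surjective onto $J_{G_0}(x)/G_0$ because (i) shows every element of the Jordan class is $G_0$-conjugate to some $y_s+x_n$ with $y_s$ in the flat. To compute the fibers I would use the uniqueness of the Jordan decomposition for the $G_0$-action: if $g\cdot(y_s+x_n)=y_s'+x_n$ then $g\cdot y_s=y_s'$ and $g\cdot x_n=x_n$. By Vinberg's theorem there is $w\in W_\fc$ with $w(y_s)=y_s'$, represented by some $n_w\in N_{G_0}(\fc)$; comparing vanishing roots, $\Sigma(x_s)=\Sigma(y_s')=w(\Sigma(y_s))=w(\Sigma(x_s))$, so $w$ preserves the flat, i.e. $w\in\Gamma$. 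Since $n_w^{-1}g$ then centralizes $y_s$ (hence normalizes $\fg^{x_s}$ and its graded nilpotent cone) and fixes the orbit of $x_n$ up to $G_0^{x_s}$, one gets $w\in\Gamma_n$; conversely every $w\in\Gamma_n$ yields a $G_0$-conjugacy between $y_s+x_n$ and $w(y_s)+x_n$. Hence $\phi(y_s)=\phi(y_s')$ iff $y_s,y_s'$ lie in a common $\Gamma_n$-orbit, so $\phi$ descends to a continuous bijection $\overline\phi$, and it is a homeomorphism because $\fz(\fg^{x_s})_1^{reg}\to J_{G_0}(x)/G_0$ is an open map onto the quotient topology, cf. \cite[Prop. 37]{JorVin2}.

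The main obstacle is not any single computation but the careful bookkeeping of the component groups of the various stabilizers $G_0^{x_s}$, $G_0^{y_s}$ and of how $N_{G_0}(\fc)$ realizes $W_\fc$; since all of this is carried out once and for all in \cite{JorVin1, JorVin2}, in the body of the paper we simply quote the result.
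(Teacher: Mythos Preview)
Your proposal is correct and matches the paper's approach: the proposition is not proved in the paper but simply quoted from \cite[Prop.~14, Prop.~30 and Prop.~37]{JorVin2}, and you explicitly acknowledge this in your final paragraph. The sketch you give of the argument is a faithful outline of the Jordan--Vinberg structure theory behind those references, so there is nothing to correct.
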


Next we recall the relation between Jordan classes and sheets. For any $d \in \mathbb{N}$, we define  $$V_{(d)}=\{x \in V: dim(\mathcal{O}_{x}^{G_{0}})=d\}.$$ 
Each $V_{(d)}$ is locally closed; its irreducible components are called {\it sheets}. For any  $S \subset V$, we denote by $S^{\bullet}$ the open subset of points in $S$ whose orbits are of maximal dimension.
\begin{prop}\cite[Prop 3.19]{JorVin1}
For any sheet $S$ in $V$, there exists a unique $G_{0}$-Jordan class $J$ in $V$, such that $S=\overline{J}^{\bullet}$. Moreover we have $\overline{J}=\overline{S}$.
\end{prop}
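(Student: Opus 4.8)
The plan is to deduce the statement from the structural picture recalled above together with two facts: that the $G_0$-Jordan classes form a \emph{finite} partition of $V$ into irreducible, locally closed subsets on each of which the orbit dimension is constant; and that orbit dimension is lower semicontinuous, i.e.\ for each $d$ the set $V_{\leqslant d}:=\{x\in V:\dim \cO^{G_0}_x\leqslant d\}$ is closed in $V$. The first fact combines \cite[Prop.~3.9 and 3.17]{JorVin1} and \cite[Prop.~14]{JorVin2} with Vinberg's finiteness of nilpotent orbits \cite{vinberg}: semisimple elements are classified up to $G_0$-conjugacy by $\fc/W_\fc$, the Jordan class of a semisimple element depends only on the minimal flat of the reflection arrangement containing it (finitely many types modulo $W_\fc$), and for each such flat the reductive centraliser $\fg^{x_s}$ inherits a $\ZZ_m$-grading to which Vinberg's theorem applies, yielding only finitely many nilpotent parts. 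Constancy of orbit dimension along a Jordan class is immediate from the conjugacy, hence equidimensionality, of the identity components of the stabilisers.

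\medskip\noindent\textbf{Existence.} Fix $d$ and write $V_{(d)}=J_1\sqcup\cdots\sqcup J_r$ for the finitely many Jordan classes with orbit dimension $d$. For $A\subseteq V_{(d)}$ the closure of $A$ inside $V_{(d)}$ is $\overline{A}\cap V_{(d)}$, so $V_{(d)}=\bigcup_i(\overline{J_i}\cap V_{(d)})$ is a finite union of irreducible closed subsets of $V_{(d)}$; hence its irreducible components, the sheets, are the maximal members of $\{\overline{J_i}\cap V_{(d)}\}_i$. Given a sheet $S$, pick $J=J_i$ with $S=\overline{J}\cap V_{(d)}$. Since $V_{\leqslant d}$ is closed and contains $J$, it contains $\overline{J}$, so every orbit meeting $\overline{J}$ has dimension $\leqslant d$ while orbits in $J\subseteq\overline{J}$ have dimension exactly $d$; thus $d$ is the maximal orbit dimension on $\overline{J}$, and $\overline{J}^{\,\bullet}=\overline{J}\cap V_{(d)}=S$, which is open in $\overline{J}$ because $V_{(d)}=V_{\leqslant d}\setminus V_{\leqslant d-1}$ is open in $V_{\leqslant d}\supseteq\overline{J}$. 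Finally $S\subseteq\overline{J}$ gives $\overline{S}\subseteq\overline{J}$, while $J=J\cap V_{(d)}\subseteq S$ gives $\overline{J}\subseteq\overline{S}$; hence $\overline{S}=\overline{J}$.

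\medskip\noindent\textbf{Uniqueness.} Let $J'$ be a Jordan class with $S=\overline{J'}^{\,\bullet}$. Running the existence argument for $J'$, the maximal orbit dimension on $\overline{J'}$ equals the common orbit dimension $d(J')$ of points of $J'$, and $\overline{J'}^{\,\bullet}=\overline{J'}\cap V_{(d(J'))}$; comparing with $S\subseteq V_{(d)}$ forces $d(J')=d$ and $\overline{J'}\cap V_{(d)}=S=\overline{J}\cap V_{(d)}$. Now $J\subseteq S\subseteq\overline{J}$ and $J$ is dense in $\overline{J}$, so $J$ is dense in $S$; likewise $J'$ is dense in $S$. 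Both $J$ and $J'$ are locally closed in $S$, and a dense locally closed subset of the irreducible space $S$ is open in $S$; so $J$ and $J'$ are nonempty open subsets of $S$, whence $J\cap J'\neq\emptyset$. Since distinct Jordan classes are disjoint, $J=J'$.

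\medskip
The genuinely substantive inputs are the two facts stated at the outset; granting them, the rest is topology of Noetherian spaces. The point that needs a moment's care is that $\overline{J}^{\,\bullet}$ is \emph{open} in $\overline{J}$, so that the definition of $(-)^\bullet$ applies to $\overline{J}$ — and this is exactly lower semicontinuity of orbit dimension. In the concrete setting of the present paper finiteness of Jordan classes is witnessed explicitly: $W_\fc=G_{31}$, the semisimple strata are the nine flat types of Table~\eqref{flats}, and the nilpotent parts are governed by the $145$ orbits of \cite{degraaf}.
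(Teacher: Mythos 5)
The paper cites this statement from \cite[Prop.~3.19]{JorVin1} without reproducing an argument, so there is no in-paper proof to compare your attempt against. Your self-contained proof is correct, and it is the standard line of reasoning in Jordan--Vinberg theory. Granting the two structural facts you isolate at the outset (the $G_0$-Jordan classes form a finite partition of $V$ into irreducible, locally closed pieces on each of which the orbit dimension is constant; and orbit dimension is lower semicontinuous), your existence step is sound: $V_{(d)}$ is a finite union of the irreducible, closed-in-$V_{(d)}$ subsets $\overline{J_i}\cap V_{(d)}$, so its irreducible components (the sheets) are the maximal among these, and the inclusion $\overline{J}\subseteq V_{\leq d}$ (from closedness of $V_{\leq d}$) gives $\overline{J}^{\,\bullet}=\overline{J}\cap V_{(d)}=S$ together with $\overline{S}=\overline{J}$. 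Your uniqueness step is likewise correct: any Jordan class $J'$ with $S=\overline{J'}^{\,\bullet}$ is a dense, locally closed, hence open, subset of the irreducible set $S$, so $J$ and $J'$ meet and therefore coincide since distinct Jordan classes are disjoint. One could phrase the constancy of orbit dimension on Jordan classes a touch more explicitly — $\fg^x=g\cdot\fg^y$ with $g\in G_0$ preserving the grading forces $\dim\fg^x_0=\dim\fg^y_0$, i.e.\ equal stabiliser dimension in $G_0$ — but this is exactly what you invoke via conjugacy of identity components of stabilisers.
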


We are also interested in the stratification of $V$ in terms of the stabilizers in $G_{0}$.
For any reductive subgroup $H \subset G_{0}$, denote
$$
V_{H}=G_{0}.\{s \in V: Stab_{G_{0}}(s)=H\},
$$ 
and denote by $V_{H}^{o}$ by the elements whose identity component of its stabilizer subgroup is conjugate to $H^{o}$.
It is not known in general whether this stratification on $V$ is locally closed, or smooth. However, restricted to smaller loci we have satisfactory answers as follows. Let $V^{ssp}=G_{0}.\mathfrak{c}$ be the subset of semisimple elements in $V$. Recall that  $\mathbb{P}(V^{ssp})$ contains the 
GIT-polystable locus of the action of $G_{0}$ on $\mathbb{P}(V)$, and that $Stab_{G_{0}}^{o}([v])=Stab_{G_{0}}^{o}(v)$ for any $v \in V^{ssp}$. 
We denote $V_{H}^{ssp}=V_{H} \cap V^{ssp}$ to be the intersections with $V_{H}$.

\begin{prop}Fix a connected reductive subgroup $H$ of $G$, then each $\PP(V_{H}^{ssp})$ is a locally closed subset of $\PP V^{ssp}$, and  
$$
\mathbb{P}(V^{ssp}) \cap \overline{\mathbb{P}(V_{H}^{ssp}})=\underset{H \subset H' }{\bigcup} \mathbb{P}(V_{H'}^{ssp}).
$$
\end{prop}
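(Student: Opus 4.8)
The plan is to transfer the statement, through the categorical quotient, to the linear action of the finite complex reflection group $W_\fc$ on the Cartan subspace $\fc$, where it becomes a statement about the stratification of $\PP(\fc)$ by the flats of the reflection hyperplane arrangement of $W_\fc$. By Vinberg's theory of $\theta$-groups every $G_0$-orbit of semisimple elements is closed, two of them coincide iff they are $G_0$-conjugate, and $V/\hspace{-1mm}/G_0=\fc/W_\fc$; hence the restriction to $V^{ssp}$ of the quotient map is a geometric quotient, and projectivising (the scaling $\GG_m$-action commutes with $G_0$ and $W_\fc$) we obtain a geometric quotient
$$q:\PP(V^{ssp})\lra M:=\PP(\fc)/W_\fc,$$
with $M$ a projective variety and $q$ continuous, surjective and \emph{open}, being the quotient by a group action. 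Two purely formal consequences will be used: $q^{-1}(\overline S)=\overline{q^{-1}(S)}$ for every $S\subset M$, and $q^{-1}$ of a locally closed set is locally closed. Since $\PP(V_H^{ssp})$ is $G_0$-saturated it equals $q^{-1}(N_H)$ for $N_H:=q(\PP(V_H^{ssp}))$, so it suffices to prove that $N_H$ is locally closed in $M$ with $\overline{N_H}=\bigcup_{H\subseteq H'}N_{H'}$.

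First I would record the flat stratification of $M$. Each point of $\PP(\fc)$ lies in a unique minimal flat, so $\PP(\fc)$ is $W_\fc$-equivariantly stratified by the locally closed sets $\bigcup_{L\in\cL}\big(\PP(L)\setminus\bigcup_{L'\subsetneq L}\PP(L')\big)$, where $\cL$ runs over the finitely many $W_\fc$-orbits of flats; such a set is open in the closed set $\bigcup_{L\in\cL}\PP(L)$, hence $W_\fc$-invariant and locally closed, and descends to a locally closed stratum $M_\cL\subset M$. These $M_\cL$ partition $M$, and taking Zariski closures $\overline{M_\cL}=\bigcup_{\cL'\preceq\cL}M_{\cL'}$, where $\cL'\preceq\cL$ means some flat of $\cL'$ is contained in some flat of $\cL$. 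The structural input is Steinberg's theorem for complex reflection groups: the pointwise stabilizer in $W_\fc$ of a flat $L$ is a parabolic subgroup whose fixed subspace in $\fc$ is exactly $L$, so that flats (under inclusion) and parabolic subgroups of $W_\fc$ (under reverse inclusion) are in order-reversing, conjugacy-compatible bijection; in particular the poset formed by the $M_\cL$ is the opposite of the poset of parabolic subgroups.

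Next I would identify the stabilizer strata inside this picture. By the Jordan-class machinery recalled in this appendix, the semisimple Jordan classes are the sets $J_\cL=G_0\cdot\big(\PP(L)\setminus\bigcup_{L'\subsetneq L}\PP(L')\big)$, in bijection with the $W_\fc$-orbits $\cL$ of flats, and on $J_\cL$ the group $\mathrm{Stab}_{G_0}(s)=Z_{G_0}(s)$ has a well-defined $G_0$-conjugacy class $H(\cL)$ (all generic points of a given flat being $G_0$-Jordan equivalent); moreover $\cL\mapsto H(\cL)$ is injective and order-reversing, i.e.
$$\cL'\preceq\cL\ \Longleftrightarrow\ H(\cL)\subseteq H(\cL')\ \text{up to }G_0\text{-conjugacy}.$$
The implication ``$\Rightarrow$'', equivalently the upper semicontinuity of stabilizers, is easy: since $L$ is linear, every generic point of a subflat $L'\subseteq L$ is a limit of generic points of $L$, so $H(\cL')$ contains a conjugate of $H(\cL)$. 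The converse ``$\Leftarrow$'' — a ``going-up'' statement, recovering the flat from its stabilizer — rests on the pseudo-Levi dictionary for $\theta$-groups together with Steinberg's theorem, and is the substantive point (see \cite{JorVin1,JorVin2}). Granting it, $N_H=M_{\cL_0}$ for the unique orbit $\cL_0$ with $H(\cL_0)$ the conjugacy class of $H$, hence $N_H$ is locally closed, and $\overline{N_H}=\overline{M_{\cL_0}}=\bigcup_{\cL'\preceq\cL_0}M_{\cL'}=\bigcup_{H\subseteq H'}N_{H'}$. Pulling back through the open map $q$ yields at once that $\PP(V_H^{ssp})=q^{-1}(N_H)$ is locally closed in $\PP(V^{ssp})$ and that $\overline{\PP(V_H^{ssp})}\cap\PP(V^{ssp})=q^{-1}(\overline{N_H})=\bigcup_{H\subseteq H'}\PP(V_{H'}^{ssp})$.

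The routine ingredients are the topological lemmas on open maps, the GIT identification $V/\hspace{-1mm}/G_0=\fc/W_\fc$ and the fact that $V^{ssp}$ is exactly the locus of closed orbits (so that $q$ is a genuine geometric quotient), and the arrangement combinatorics $\overline{M_\cL}=\bigcup_{\cL'\preceq\cL}M_{\cL'}$. The delicate point, where I would concentrate, is the dictionary of the third paragraph: that the $G_0$-conjugacy class of the centralizer of a generic element of a flat $L$ determines $L$ up to the action of $W_\fc$, and does so in an order-reversing way. Over $W_\fc$ this is Steinberg's theorem; the work is to check that nothing is lost passing to $G_0$ — e.g.\ that the component group of $\mathrm{Stab}_{G_0}(s)$ does not jump along the generic locus of $L$, and that distinct $W_\fc$-orbits of flats are not glued by an accidental conjugacy of full stabilizers — which is precisely the content of the description of semisimple Jordan classes (and of the attached finite group $\Gamma$) in \cite{JorVin1,JorVin2}.
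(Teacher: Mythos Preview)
The paper does not prove this proposition: it appears in Appendix~B as part of a review of Jordan--Vinberg theory, stated without argument and implicitly referred to the sources \cite{JorVin1,JorVin2}. So there is nothing to compare your approach against; I can only assess it on its own terms.

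Your strategy---descend through the categorical quotient to the flat stratification of $\PP(\fc)/W_\fc$, then invoke the bijection between $W_\fc$-orbits of flats and conjugacy classes of semisimple centralizers---is the right one and matches the spirit of the surrounding appendix. Two points deserve tightening. First, ``$q$ is open, being the quotient by a group action'' is not a valid justification: reductive quotients are not open in general. The clean argument here is that for a $\theta$-group the invariant map $V\to\fc/W_\fc$ is flat (the invariant ring is polynomial and $V$ is free over it), hence open, and this persists after restricting to the $G_0$-saturated open set $V^{ssp}$ and projectivising. Second, the equivalence you flag as ``the substantive point'', namely that $H(\cL)\subseteq H(\cL')$ up to conjugacy forces $\cL'\preceq\cL$, is genuinely the crux and is \emph{not} merely Steinberg transported along the inclusion $W_\fc\hookrightarrow G_0$: one needs that the $G_0$-centralizer $Z_{G_0}(s)$ of a semisimple $s$ determines the full $\fg$-centralizer $\fg^s$ (equivalently, that $\fg_0^s$ determines all the graded pieces $\fg_i^s$), so that the characterization of Jordan classes via $\fg^x$ recalled just above the proposition can be invoked. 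You cite the right references for this, so the argument closes, but it would be worth saying explicitly which statement from \cite{JorVin1,JorVin2} you are using rather than gesturing at ``the pseudo-Levi dictionary''.
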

We conclude that on $V^{ssp}$, the following relations hold between the stratifications:
$$
\{sheets\} \rightarrow \{G_{0}-\text{Jordan classes}\} \rightarrow \{V_{H}: H \ \text{reductive} \} \rightarrow \{(V_{H})^{o}: H \, \text{reductive}\} \rightarrow \{V_{(d)}: d\in \mathbb{N}\},
$$
meaning that the stratifications become coarser from left to right. They  are all locally closed, and the second one is smooth.
%

\section{Appendix C. Combinatorics}\label{combinatorics}

In this Appendix we gather a few data related to the combinatorics of the little Weyl group $W_\fc$. First, the $60$ reflection hyperplanes of this complex reflection group are given by the following equations:

\small
\medskip
$$\begin{array}{llllll}
(1) & a_1, & (21) & a_1+ia_2+(i+1)a_4, & (41) & a_1-a_2-ia_3+ia_4, \\
(2) & a_2, & (22) & a_1+ia_2+(i-1)a_4, &(42) & a_1-a_2+ia_3-ia_4, \\
(3) & a_3, & (23) & a_1+ia_2+(-i+1)a_4,\qquad  & (43) & a_1+a_2-ia_3-ia_4, \\
(4) & a_4, &  (24) & a_1+ia_2+(-i-1)a_4, &  (44) & a_1-a_2-ia_3-ia_4,\\
(5) & a_1+a_2, & (25) & a_1-ia_2+(i+1)a_4, & (45) & (i+1)a_1+ia_3-a_4, \\
(6) & a_1-a_2, &(26) & a_1-ia_2+(i-1)a_4, &(46) & (i+1)a_1+ia_3+a_4, \\
(7) & a_3+a_4, & (27) & a_1-ia_2+(-i+1)a_4, &(47) & (i-1)a_1-ia_3-a_4, \\
(8) & a_3-a_4, & (28) & a_1-ia_2+(-i-1)a_4, & (48) & (i-1)a_1-ia_3+a_4,\\
(9) & a_1+ia_2, & (29) & a_1+a_2+a_3+a_4, & (49) & (i-1)a_1+ia_3-a_4,\\
(10) & a_1-ia_2, &(30) & a_1-a_2+a_3+a_4, & (50) & (i-1)a_1+ia_3+a_4,\\
(11) & a_3+ia_4, & (31) & a_1+a_2-a_3+a_4,  & (51) & (i+1)a_1-ia_3-a_4, \\
(12) & a_3-a_4, &  (32) & a_1+a_2+a_3-a_4, &  (52) & (i+1)a_1-ia_3+a_4,\\
(13) & a_1+ia_2+(i+1)a_3, & (33) & a_1-a_2-a_3+a_4, & (53) & (i+1)a_2+ia_3-a_4,  \\
(14) & a_1+ia_2+(i-1)a_3,\qquad  & (34) & a_1-a_2+a_3-a_4,\qquad & (54) & (i+1)a_2+ia_3+a_4,\\
(15) & a_1+ia_2+(-i+1)a_3, & (35) & a_1+a_2-a_3-a_4, &(55) & (i-1)a_2-ia_3-a_4,\\
(16) & a_1+ia_2+(-i-1)a_3, &  (36) & a_1-a_2-a_3-a_4, & (56) & (i-1)a_2-ia_3+a_4,\\
(17) & a_1-ia_2+(i+1)a_3, &  (37) & a_1+a_2+ia_3+ia_4, &(57) & (i-1)a_2+ia_3-a_4,\\
(18) & a_1-ia_2+(i-1)a_3, & (38) & a_1-a_2+ia_3+ia_4, &(58) & (i-1)a_2+ia_3+a_4, \\
(19) & a_1-ia_2+(-i+1)a_3, &  (39) & a_1+a_2-ia_3+ia_4, &(59) & (i+1)a_2-ia_3-a_4,\\
(20) & a_1-ia_2+(-i-1)a_3, &  (40) & a_1+a_2+ia_3-ia_4,   & (60) & (i+1)a_2-ia_3+a_4.
\end{array}$$
\normalsize

\medskip
The five polynomials from Proposition \ref{Kummer-equation} are given by the following formulas:

$$\begin{array}{rcl}
A(a) &=& 4a_1a_2a_3a_4(a_1^4-a_2^4)(a_3^4-a_4^4), \\
 & & \\
B(a) & = & -1/4a_1^{10}a_3^2 + 5/4a_1^8a_2^2a_3^2 - 5/2a_1^6a_2^4a_3^2 + 5/2a_1^4a_2^6a_3^2 - 5/4a_1^2a_2^8a_3^2 + 1/4a_2^{10}a_3^2 \\ & & + 1/2a_1^6a_3^6 + 5/2a_1^4a_2^2a_3^6 - 5/2a_1^2a_2^4a_3^6 - 1/2a_2^6a_3^6 - 1/4a_1^2a_3^{10} + 1/4a_2^2a_3^{10} + 1/4a_1^{10}a_4^2 \\ 
& & - 5/4a_1^8a_2^2a_4^2 + 5/2a_1^6a_2^4a_4^2 - 5/2a_1^4a_2^6a_4^2 + 5/4a_1^2a_2^8a_4^2 - 1/4a_2^{10}a_4^2 + 5/2a_1^6a_3^4a_4^2 \\ & & + 25/2a_1^4a_2^2a_3^4a_4^2 - 25/2a_1^2a_2^4a_3^4a_4^2 - 5/2a_2^6a_3^4a_4^2 + 5/4a_1^2a_3^8a_4^2 - 5/4a_2^2a_3^8a_4^2 - 5/2a_1^6a_3^2a_4^4 \\ & & - 25/2a_1^4a_2^2a_3^2a_4^4 + 25/2a_1^2a_2^4a_3^2a_4^4 + 5/2a_2^6a_3^2a_4^4 - 5/2a_1^2a_3^6a_4^4 + 5/2a_2^2a_3^6a_4^4 - 1/2a_1^6a_4^6 \\ & & - 5/2a_1^4a_2^2a_4^6 + 5/2a_1^2a_2^4a_4^6 + 1/2a_2^6a_4^6 + 5/2a_1^2a_3^4a_4^6 - 5/2a_2^2a_3^4a_4^6 - 5/4a_1^2a_3^2a_4^8 \\ & & + 5/4a_2^2a_3^2a_4^8   + 1/4a_1^2a_4^{10} - 1/4a_2^2a_4^{10}, \\ 
 & & \\
C(a) &=& 1/2a_1^{10}a_3a_4 + 5/2a_1^8a_2^2a_3a_4 + 5a_1^6a_2^4a_3a_4 + 5a_1^4a_2^6a_3a_4 + 5/2a_1^2a_2^8a_3a_4 + 1/2a_2^{10}a_3a_4\\ & &  + 2a_1^6a_3^5a_4 - 10a_1^4a_2^2a_3^5a_4 - 10a_1^2a_2^4a_3^5a_4 + 2a_2^6a_3^5a_4 + 2a_1^6a_3a_4^5 - 10a_1^4a_2^2a_3a_4^5 \\ & & - 10a_1^2a_2^4a_3a_4^5 + 2a_2^6a_3a_4^5 + 8a_1^2a_3^5a_4^5 + 8a_2^2a_3^5a_4^5, \\ 
 & & \\
D(a) &=& 
-8a_1^5a_2^5a_3^2 - 2a_1^5a_2a_3^6 - 2a_1a_2^5a_3^6 - 1/2a_1a_2a_3^{10} - 8a_1^5a_2^5a_4^2 + 10a_1^5a_2a_3^4a_4^2 + 10a_1a_2^5a_3^4a_4^2\\ & &  - 5/2a_1a_2a_3^8a_4^2 + 10a_1^5a_2a_3^2a_4^4 + 10a_1a_2^5a_3^2a_4^4 - 5a_1a_2a_3^6a_4^4 - 2a_1^5a_2a_4^6 - 2a_1a_2^5a_4^6\\ & &  - 5a_1a_2a_3^4a_4^6 - 5/2a_1a_2a_3^2a_4^8 - 1/2a_1a_2a_4^{10}, \\ 
 & & \\
E(a) &=& -1/2a_1^{10}a_2^2 + a_1^6a_2^6 - 1/2a_1^2a_2^{10} - 5/8a_1^8a_3^4 + 5/2a_1^6a_2^2a_3^4 + 25/4a_1^4a_2^4a_3^4 + 5/2a_1^2a_2^6a_3^4\\ & &  - 5/8a_2^8a_3^4 + 5/8a_1^4a_3^8 + 5/4a_1^2a_2^2a_3^8 + 5/8a_2^4a_3^8 - 5/4a_1^8a_3^2a_4^2 - 15a_1^6a_2^2a_3^2a_4^2 \\ & & + 25/2a_1^4a_2^4a_3^2a_4^2 - 15a_1^2a_2^6a_3^2a_4^2 - 5/4a_2^8a_3^2a_4^2 - 5/2a_1^4a_3^6a_4^2 + 15a_1^2a_2^2a_3^6a_4^2 - 5/2a_2^4a_3^6a_4^2\\ & & + 1/2a_3^{10}a_4^2 - 5/8a_1^8a_4^4 + 5/2a_1^6a_2^2a_4^4 + 25/4a_1^4a_2^4a_4^4 + 5/2a_1^2a_2^6a_4^4 - 5/8a_2^8a_4^4\\ & & - 25/4a_1^4a_3^4a_4^4 - 25/2a_1^2a_2^2a_3^4a_4^4 - 25/4a_2^4a_3^4a_4^4 - 5/2a_1^4a_3^2a_4^6 + 15a_1^2a_2^2a_3^2a_4^6 - 5/2a_2^4a_3^2a_4^6\\  & & - a_3^6a_4^6 + 5/8a_1^4a_4^8 + 5/4a_1^2a_2^2a_4^8 + 5/8a_2^4a_4^8 + 1/2a_3^2a_4^{10}.
\end{array}$$

\medskip
The fifteen {\it blocks} mentionned in Definition \ref{def-blocks-pentads} and Theorem \ref{blocks} are the following:
$$\begin{array}{rcl}
D_1 & = & \ell_1\ell_2\ell_3\ell_4\ell_5\ell_6\ell_7\ell_8\ell_9\ell_{10}\ell_{11}\ell_{12}, \\
D_2 & = & \ell_{1}\ell_2\ell_{11}\ell_{12}\ell_{45}\ell_{48}\ell_{49}\ell_{52}\ell_{54}\ell_{55}
\ell_{58}\ell_{59}, \\
D_3 & = & \ell_{1}\ell_{2}\ell_{11}\ell_{12}\ell_{46}\ell_{47}\ell_{50}\ell_{51}\ell_{53}\ell_{56}\ell_{57}\ell_{60}, \\
D_4 & = & \ell_{3}\ell_{4}\ell_{9}\ell_{10}\ell_{13}\ell_{14}\ell_{15}\ell_{16}\ell_{25}\ell_{26}\ell_{27}\ell_{28}, \\
D_5 & = & \ell_{3}\ell_{4}\ell_{9}\ell_{10}\ell_{17}\ell_{18}\ell_{19}\ell_{20}\ell_{21}\ell_{22}\ell_{23}\ell_{24}, \\
D_6 & = & \ell_{5}\ell_{6}\ell_{7}\ell_{8}\ell_{29}\ell_{33}\ell_{34}\ell_{35}\ell_{37}\ell_{41}\ell_{42}\ell_{43}, \\
D_7 & = & \ell_{5}\ell_{6}\ell_{7}\ell_{8}\ell_{30}\ell_{31}\ell_{32}\ell_{36}\ell_{38}\ell_{39}\ell_{40}\ell_{44}, \\
D_8 & = & \ell_{18}\ell_{19}\ell_{21}\ell_{24}\ell_{31}\ell_{32}\ell_{38}\ell_{44}\ell_{47}\ell_{50}\ell_{53}\ell_{60}, \\
D_9 & = & \ell_{14}\ell_{15}\ell_{25}\ell_{28}\ell_{33}\ell_{34}\ell_{37}\ell_{43}\ell_{47}\ell_{50}\ell_{53}\ell_{60}, \\
D_{10} & = & \ell_{18}\ell_{19}\ell_{21}\ell_{24}\ell_{29}\ell_{35}\ell_{41}\ell_{42}\ell_{48}\ell_{49}\ell_{54}\ell_{59}, \\
D_{11} & = & \ell_{14}\ell_{15}\ell_{25}\ell_{28}\ell_{30}\ell_{36}\ell_{39}\ell_{40}\ell_{48}\ell_{49}\ell_{54}\ell_{59}, \\
D_{12} & = & \ell_{13}\ell_{16}\ell_{26}\ell_{27}\ell_{31}\ell_{32}\ell_{38}\ell_{44}\ell_{45}\ell_{52}\ell_{55}\ell_{58}, \\
D_{13} & = & \ell_{17}\ell_{20}\ell_{22}\ell_{23}\ell_{33}\ell_{34}\ell_{37}\ell_{43}\ell_{45}\ell_{52}\ell_{55}\ell_{58}, \\
D_{14} & = & \ell_{13}\ell_{16}\ell_{26}\ell_{27}\ell_{29}\ell_{35}\ell_{41}\ell_{42}\ell_{46}\ell_{51}\ell_{56}\ell_{57}, \\
D_{15} & = & \ell_{17}\ell_{20}\ell_{22}\ell_{23}\ell_{30}\ell_{36}\ell_{39}\ell_{40}\ell_{46}\ell_{51}\ell_{56}\ell_{57}.
\end{array}$$

\medskip
The expressions of  $D_1,\ldots, D_{15}$ in terms of the polynomials $A,\ldots, E$ are given by the following formulas:
$$\begin{array}{rclrcl}
D_1 &=& A/4, & D_{8} & =&  4iA+2iB+2C-2D-2E, \\ 
D_{2} & =& -iA/4-D/8, \qquad & 
D_{9} & =&  -4iA+2iB+2C+2D-2E, \\
D_3 & =&  iA/4-D/8, & 
D_{10} & =&  -4iA+2iB-2C-2D-2E, \\
D_{4} & =& 2C-4iA, & 
D_{11} & =&  4iA+2iB-2C+2D-2E, \\ 
D_{5} & =&  2C+4iA, &   
D_{12} & =&  -4iA-2iB+2C-2D-2E, \\ 
D_{6} & =&  8A-4B, & 
D_{13} & =&  4iA-2iB+2C+2D-2E, \\
D_{7} & =&  -8A-4B, & 
D_{14} & =&  4iA-2iB-2C-2D-2E, \\ 
 & & & 
D_{15} & =&   -4iA-2iB-2C+2D-2E.
\end{array}$$

The six {\it pentads} of blocks  mentionned in Definition \ref{def-blocks-pentads} and Theorem \ref{blocks} are:
$$\begin{array}{rcl}
P_1 &=& D_{1} - D_{8} - D_{11} - D_{13} - D_{14}, \\ 
P_2 &=& D_{1} - D_{9} - D_{10} - D_{12} - D_{15}, \\ 
P_3 &=& D_{2} - D_{4} - D_{6} - D_{8} - D_{15}, \\ 
P_4 &=& D_{2} - D_{5} - D_{7} - D_{9} - D_{14}, \\ 
P_5 &=& D_{3} - D_{4} - D_{7} - D_{10} - D_{13},  \\ 
P_6 &=& D_{3} - D_{5} - D_{6} - D_{11} - D_{12}.
\end{array}$$

A direct examination shows that two blocks that are not disjoint share four hyperplanes, forming  a {\it tetrahedron}. There are fifteen such tetrahedra, each contained in three blocks. We list them below as $Q_1,\ldots , Q_{15}$, each with the blocks that contain them. 

$$\begin{array}{lll}
Q_{1} & \ell_{1}\ell_{2}\ell_{11}\ell_{12} & D_{1}D_{2}D_{3} \\
Q_{2} & \ell_{3}\ell_{4}\ell_{9}\ell_{10} & D_{1}D_{4}D_{5} \\
Q_{3} & \ell_{5}\ell_{6}\ell_{7}\ell_{8} & D_{1}D_{6}D_{7} \\
Q_{4} & \ell_{48}\ell_{49}\ell_{54}\ell_{59} & D_{2}D_{10}D_{11} \\
Q_{5} & \ell_{45}\ell_{52}\ell_{55}\ell_{58} & D_{2}D_{12}D_{13} \\
Q_{6} & \ell_{47}\ell_{50}\ell_{53}\ell_{60} & D_{3}D_{8}D_{9} \\
Q_{7} & \ell_{46}\ell_{51}\ell_{56}\ell_{57} & D_{3}D_{14}D_{15} \\
Q_{8} & \ell_{14}\ell_{15}\ell_{25}\ell_{28} & D_{4}D_{9}D_{11} \\
Q_{9} & \ell_{13}\ell_{16}\ell_{26}\ell_{27} & D_{4}D_{12}D_{14} \\
Q_{10} & \ell_{18}\ell_{19}\ell_{21}\ell_{24} & D_{5}D_{8}D_{10} \\
Q_{11} & \ell_{17}\ell_{20}\ell_{22}\ell_{23} & D_{5}D_{13}D_{15} \\
Q_{12} & \ell_{33}\ell_{34}\ell_{37}\ell_{43} & D_{6}D_{9}D_{13} \\
Q_{13} & \ell_{29}\ell_{35}\ell_{41}\ell_{42} & D_{6}D_{10}D_{14} \\
Q_{14} & \ell_{31}\ell_{32}\ell_{38}\ell_{44} & D_{7}D_{8}D_{12} \\
Q_{15}\qquad & \ell_{30}\ell_{36}\ell_{39}\ell_{40}\qquad & D_{7}D_{11}D_{15} 
\end{array}$$

\medskip
The edges and vertices of these tetrahedra consist in $30$ lines and $60$ points: these are the flats of type 2 and 5 in Table \ref{flats}. With the $60$ reflection hyperplanes, these $60$ points recover Klein's $60_{15}$ configuration \cite{pokora}.  More details on the $30$ lines and their relations with the so-called fundamental quadrics can be found in \cite{cheltsov-shramov}. 

\smallskip
We note that the three tetrahedra coming from a same block have their vertices spread on two lines only, two for each line, whose union are the six special points on the line. These two lines belong to the same fundamental quadric. In particular the fifteen blocks split the $30$ lines into $15$ pairs. Moreover each pentad distinguishes ten lines. 
We  get a basis of the space of quartics $U'_5$ by chosing one tetrahedra in each block of a given pentad. 

\smallskip
Finally, the six quintets mentionned in Lemma \ref{totals} are, numbered  from left to right:

$$\begin{matrix} 1&2&3\\ 4&12&14\\ 5&8&10\\6&9&13\\7&11&15\end{matrix} \qquad 
\begin{matrix} 1&2&3\\ 4&9&11\\ 5&13&15\\6&10&14\\7&8&12\end{matrix} \qquad
\begin{matrix} 1&4&5\\ 2&12&13\\ 3&8&9\\6&10&14\\7&11&15\end{matrix} \qquad
\begin{matrix} 1&4&5\\ 2&10&11\\ 3&14&15\\6&9&13\\7&8&12\end{matrix} \qquad
\begin{matrix} 1&6&7\\ 2&12&13\\ 3&14&15\\4&9&11\\5&8&10\end{matrix}\qquad  
\begin{matrix} 1&6&7\\ 2&10&11\\ 3&8&9\\4&12&14\\5&13&15\end{matrix}$$

\medskip
We summarize the combinatorics in the following table.  The exterior automorphism of $S_6$ swaps the rows from top to bottom.
$$\begin{array}{cccc}
 \quad 6\quad & \mathrm{points} & &\mathrm{pentads} \\
  15 & \mathrm{pairs} & &\mathrm{blocks} \\
  15 & \quad \mathrm{synthemes}\quad  & &\mathrm{tetrahedra} \\
  6 & \mathrm{totals} & &\mathrm{quintets} 
  \end{array} $$
  
  \medskip

\bibliography{SpinorSections}

\bibliographystyle{amsalpha}

\end{document}